\newtheorem*{rep@theorem}{\rep@title}
\newcommand{\newreptheorem}[2]{%
\newenvironment{rep#1}[1]{%
 \def\rep@title{#2 \ref{##1}}%
 \begin{rep@theorem}}%
 {\end{rep@theorem}}}
\newtheorem*{rep@cor}{\rep@title}
\newcommand{\newrepcor}[2]{%
\newenvironment{rep#1}[1]{%
 \def\rep@title{#2 \ref{##1}}%
 \begin{rep@cor}}%
 {\end{rep@cor}}}
\newtheorem*{rep@prop}{\rep@title}
\newcommand{\newrepprop}[2]{%
\newenvironment{rep#1}[1]{%
 \def\rep@title{#2 \ref{##1}}%
 \begin{rep@prop}}%
 {\end{rep@prop}}}
\newtheorem{cor}{Corollary}[section]
\newtheorem{corx}{Corollary}
\newtheorem{theorem}[cor]{Theorem}
\newtheorem{prop}[cor]{Proposition}
\newtheorem{lemma}[cor]{Lemma}
\theoremstyle{definition}
\newtheorem{defi}[cor]{Definition}
\theoremstyle{remark}
\newtheorem{remark}[cor]{Remark}
\newtheorem*{remark*}{Remark}
\newtheorem*{notation*}{Notation}
\theoremstyle{plain}
\newtheorem{principal}{Theorem}
\newcommand{\R}{\mathbb{R}}
\newlist{steps}{enumerate}{1}
\setlist[steps, 1]{itemsep=8pt,leftmargin=0cm,itemindent=.5cm,labelwidth=\itemindent,labelsep=0cm,align=left,label = \textbf{\emph{Step \arabic*}:\,}}
\newcommand{\myitem}[1]{%
\item[#1]\protected@edef\@currentlabel{#1}%
}
\newcommand{\psl}{\mathrm{P}\mathrm{S}\mathrm{L}(2,\mathbb{R})}
\newcommand{\ads}{\mathbb{A}\mathrm{d}\mathbb{S}^3}
\newcommand{\isom}{\mathrm{Isom}_0}
\newcommand{\hh}{\mathbb{H}^2_{*}}
\newcommand{\fhads}{\mathbb{A}\mathrm{d}\mathbb{S}^{f,h}}
\newcommand{\fhadss}{\mathbb{A}\mathrm{d}\mathbb{S}^{f,h}_*}
\newcommand{\C}{\mathbb{C}}
\begin{document}\raggedbottom

\setcounter{secnumdepth}{3}
\setcounter{tocdepth}{2}

\title[Domination and AdS geometry]{Domination between non-Fuchsian representations and anti-de Sitter geometry}

\author[Farid Diaf, Abderrahim Mesbah, Nathaniel Sagman]{Farid Diaf \and Abderrahim Mesbah \and Nathaniel Sagman}

\address{Farid Diaf: Institut de Recherche Mathématique Avancée, UMR 7501, Université de Strasbourg et CNRS, 7 rue René Descartes, 67000 Strasbourg, France.} 
\email{f.diaf@unistra.fr}

\address{Abderrahim Mesbah:
Beijing Institute of Mathematical Sciences and Applications, Beijing, China.}
\email{abderrahimmesbah@bimsa.cn}

\address{Nathaniel Sagman: Department of Mathematics, University of Toronto, 40 St George Street, Toronto,
ON M5S 2E4, Canada.} 
\email{nathaniel.sagman@utoronto.ca}

\thanks{}

\begin{abstract}
Motivated by work of various authors on domination between surface group representations, harmonic maps, and $3$-dimensional anti-de Sitter geometry, we study a new domination problem between non-Fuchsian representations of closed surface groups. We solve the problem for representations that admit branched harmonic immersions, and we show that, outside of this case, the problem cannot always be solved. We then show that a dominating pair gives rise to an anti-de Sitter $3$-manifold with singularities, and we construct large families of branched anti-de Sitter $3$-manifolds.
\end{abstract}
\maketitle
\tableofcontents
\section{Introduction}
Let $\Sigma$ be an oriented surface with fundamental group $\Gamma$ and let $ (\mathbb{H}^2, \sigma) $ be the hyperbolic space of constant curvature $-1$. As is standard, the group $\psl$ acts on $ (\mathbb{H}^2, \sigma)$ by M{\"o}bius transformations. 
We say that a representation $j:\Gamma\to \psl$ \textit{strictly dominates} another representation $\rho: \Gamma\to \psl$ if there exists a map $F:\mathbb{H}^2\to \mathbb{H}^2$ with equivariance $F\circ j(\gamma)=\rho(\gamma)\circ F$, $\gamma\in \Gamma$, that contracts hyperbolic distance. Formally, the latter condition means that there exists $\lambda<1$ such that for all $p,q\in \mathbb{H}^2,$ $$d_\sigma(F(p),F(q))\leq \lambda d_\sigma(p,q).$$
Domination has attracted considerable attention and has been studied for maps between spaces other than $\mathbb{H}^2$. When $\Sigma$ is closed, it is equivalent to strict domination of the simple translation length spectrum \cite[Theorem 1.8]{GK_lamination} (see also \cite{ALVAREZ201927} for a related dynamical application). The original motivation for domination comes from $3$-dimensional anti-de Sitter geometry. After Thurston's geometrization program, it is natural to study Lorentzian structures on 3-manifolds, and the study of the $3$-dimensional anti-de Sitter space $\ads$ fits into the broader study of Clifford-Klein forms. Following a long line of research (see \cite{Kulkarni_Raymond}, \cite{Salein}, \cite{Kassel}, \cite{GK_lamination}, etc.), it was proved that when $\Sigma$ is closed, a strictly dominating pair is equivalent to a closed $3$-manifold locally modeled on $\ads$ (more on this below).

A representation $ j : \Gamma \to \mathrm{PSL}(2,\mathbb{R}) $ is called \emph{Fuchsian} if it is discrete and faithful; equivalently, it is the holonomy of a hyperbolic structure on $\Sigma $. In \cite{Deroin_thlozan_domination}, the authors used harmonic maps to show that any non-Fuchsian representation of a closed surface group is dominated by a Fuchsian one.  The same domination result was proved in \cite{GKW_folded} using folded hyperbolic surfaces.  The results of \cite{Deroin_thlozan_domination} and \cite{GKW_folded} have been extended in many directions, including for surfaces with punctures (see \cite{Nathaniel_JDG}, \cite{Almost_domination_Nat}, \cite{Gupta_Su}), for representations to Lie groups of higher rank (see, for instance, \cite[Theorem 3]{Tho_Duke}, \cite[Theorem 4]{CTT}, \cite[Theorem 1.8 and Conjecture 1.11]{Dai_Li_cyclic}, \cite{Dai_Li_Domination}, \cite{Barman_Gupta}), and for actions on $\textrm{CAT}(-1)$ metric spaces (see \cite{MartinBaillon2020DominatingCS}).

In this paper, we study notions of domination between \textit{non-Fuchsian} representations of closed surface groups.
We refer to a map $f:\widetilde{\Sigma}\to \mathbb{H}^2$ as just $\rho$-equivariant (or, if $\rho$ is unspecified, equivariant) if for all $\gamma \in \Gamma$, $f\circ \gamma=\rho(\gamma)\circ f$, where the first $\Gamma$-action is via deck transformations. Toward the definition below, we note that, given an equivariant map $f:\widetilde{\Sigma}\to\mathbb{H}^2$, the pullback metric descends from $\widetilde{\Sigma}$ to $\Sigma$. For $C^1$ equivariant maps $f$ and $h$, we write $f^*\sigma< h^*\sigma$ to mean that $f^*\sigma(v,v)\leq h^*\sigma(v,v)$ for every unit tangent vector $v$, and that the inequality is strict when $h^*\sigma$ is positive definite. We also recall that when $\Sigma$ is closed and equipped with a Riemann surface structure $X$, and $\rho$ is \emph{reductive}, there exists a $\rho$-equivariant harmonic map $f:\widetilde{X}\to \mathbb{H}^2$; the map is not always unique, but the pullback metric $f^*\sigma$ is (see Section \ref{sec: hmaps definitions}).
\begin{defi}\label{def: domination pullback}
Let $j, \rho : \Gamma \to \mathrm{PSL}(2,\mathbb{R})$ be representations. 
\begin{enumerate}
    \item We say that $j$ \emph{dominates} $\rho$ in the \emph{pullback sense} if there exist $C^1$ maps $h : \widetilde{\Sigma} \to \mathbb{H}^2$ and $f : \widetilde{\Sigma} \to \mathbb{H}^2,$ which are respectively $j$- and $\rho$-equivariant, such that $ f^*\sigma < h^*\sigma.$
    
    \item When $\Sigma$ is closed and both $j$ and $\rho$ are reductive, we say that $j$ \emph{dominates} $\rho$ in the \emph{harmonic maps sense} if there exists a Riemann surface structure $X$ on $\Sigma$ such that the $\rho$-equivariant and $j$-equivariant harmonic maps $ f, h : \widetilde{X} \to \mathbb{H}^2$ satisfy $ f^*\sigma < h^*\sigma.$
    
\end{enumerate}
\end{defi}

The conditions are motivated by \cite{Deroin_thlozan_domination} (see Section \ref{sec: intro_domination} below). Strict domination implies domination in the pullback sense, and it is equivalent to both domination in the pullback sense and in the harmonic maps sense when $j$ is a Fuchsian representation of a closed surface group. The first condition is a little flimsy--one has a lot of freedom with $f$ and $h$--while the second one is more rigid.

\subsection{Main results} In this paper, the two main questions we address are the following.
\begin{enumerate}
    \item \label{q1} Given $\rho$, can we dominate it in the pullback or harmonic maps sense by a non-Fuchsian representation $j$?
    \item \label{q2} What geometry do dominating pairs encode?
\end{enumerate}
Concerning (\ref{q1}), we provide a complete answer for certain classes of representations (Theorem \ref{principal_C} and Corollary \ref{corc}), and we also find that there is a menagerie of
interesting examples that are worthy of further study (Theorem \ref{principal_D}). Our results toward (\ref{q1}), together with further context relating to harmonic maps, are contained in Section \ref{sec: intro_domination} below. At this stage, we can state one positive result, which extends \cite[Theorem A]{Deroin_thlozan_domination}. Representations from a closed surface group to $\psl$ are organized into connected components according to an integer invariant called the \emph{Euler number} (see Section \ref{sec: energies}). The set of possible Euler numbers is $[-2g+2,2g-2]\cap\mathbb{Z}$ and a representation $\rho$ is Fuchsian precisely when $\textrm{eu}(\rho)=\pm (2g -2)$, see \cite{goldmanthesis}. Note that when $\textrm{eu}(\rho)\neq 0$, the representation $\rho$ is reductive and the $\rho$-equivariant harmonic map is unique. Our first domination result concerns the case where this harmonic map is a \emph{branched immersion}. Recall that a map $f$ between surfaces has a branch point at a point $p$ if one can find local coordinates $z$ and $w$ centered at $p$ and $f(p)$ respectively on which $f$ takes the form $z\mapsto w=z^n$. We say that $f$ is a branched immersion if it is an immersion apart from at a (discrete) set of branch points.

\begin{principal}\label{principal_A}
    Let $X$ be a closed Riemann surface of genus $\geq 2$ with fundamental group $\Gamma$ and let $\rho:\Gamma\to \psl$
     be a representation with $0< \textrm{eu}(\rho)<2g-2$ such that the unique equivariant harmonic map $f:\widetilde{X}\to (\mathbb{H}^2,\sigma)$ is a branched immersion. Then, for every $k$ between $1$ and $2g-2-\textrm{eu}(\rho)$, we can choose a representation $j:\Gamma\to \psl$ with $\textrm{eu}(j)=\textrm{eu}(\rho)+k$ and equivariant harmonic map $h$ such that $f^*\sigma<h^*\sigma$.
\end{principal}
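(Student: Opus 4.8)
\medskip
\noindent\emph{Proof proposal.} The plan is to pass to the harmonic map attached to $\rho$, build $j$ by modifying the associated Higgs field so as to raise the Euler number by $k$ while keeping the Hopf differential fixed, and then deduce the metric inequality from a maximum principle. Put the hyperbolic metric $g_X$ on $X$ (this does not change harmonic maps) and let $\mathcal{H}_f,\mathcal{L}_f\ge 0$ be the holomorphic and anti-holomorphic energy densities of $f$ and $\phi_f\in H^0(K^2)$ its Hopf differential, so $\mathcal{H}_f\mathcal{L}_f=|\phi_f|^2_{g_X}$ and $f^*\sigma=\phi_f+\overline{\phi_f}+(\mathcal{H}_f+\mathcal{L}_f)g_X$. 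Since $\mathrm{eu}(\rho)>0$ and $f$ is a branched immersion, the Jacobian density $\mathcal{H}_f-\mathcal{L}_f$ is continuous, vanishes on the finite branch set, and is nonzero on the (connected) complement, hence is everywhere $\ge 0$; so $\mathcal{H}_f\ge\mathcal{L}_f$, with equality exactly at branch points, where $\mathcal{H}_f=\mathcal{L}_f=0$. Passing to the Higgs bundle $(L\oplus L^{-1},\Phi)$ of $\rho$, with $\deg L=\tfrac12\mathrm{eu}(\rho)$ and $\Phi$ having off-diagonal entries $\gamma\in H^0(L^2K)$, $\beta\in H^0(L^{-2}K)$ with $\gamma\beta=\phi_f$, stability forces $\beta\not\equiv 0$, and $\mathcal{H}_f$ vanishes precisely on the zero divisor $Z(\beta)$; thus the branch locus of $f$ equals $\operatorname{supp}Z(\beta)$ and $\deg Z(\beta)=2g-2-\mathrm{eu}(\rho)$. (Odd Euler numbers are handled throughout by the usual twisted $\mathrm{PGL}_2$-Higgs bundles, with no essential change; the case $\phi_f\equiv 0$ is addressed at the end.)

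Given $k$ with $1\le k\le 2g-2-\mathrm{eu}(\rho)$, choose an effective divisor $R\le Z(\beta)$ of degree $k$ — possible precisely because $\deg Z(\beta)\ge k$ — and a line bundle $L'$ with $(L')^2\cong L^2(R)$ (such a root exists since $\operatorname{Pic}^0(X)$ is divisible), together with sections $\gamma'\in H^0((L')^2K)$ and $\beta'\in H^0((L')^{-2}K)$ with zero divisors $Z(\gamma)+R$ and $Z(\beta)-R$ respectively, rescaled so that $\gamma'\beta'=\phi_f$. Since $\deg L'=\tfrac12(\mathrm{eu}(\rho)+k)>0$ and $\beta'\not\equiv 0$, the Higgs bundle with underlying bundle $L'\oplus(L')^{-1}$ and off-diagonal Higgs field $(\gamma',\beta')$ is stable (any proper $\Phi$-invariant sub-line-bundle either embeds in $(L')^{-1}$, hence has negative degree, or lies in $\ker\beta'=0$), so it is the Higgs bundle of a representation $j\colon\Gamma\to\psl$ with $\mathrm{eu}(j)=2\deg L'=\mathrm{eu}(\rho)+k$; its equivariant harmonic map $h$ has Hopf differential $\phi_h=\phi_f$ and densities $\mathcal{H}_h,\mathcal{L}_h$ vanishing exactly on $Z(\beta)-R$ and $Z(\gamma)+R$. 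In particular the branch locus of $h$ lies in $\operatorname{supp}(Z(\beta)-R)\subseteq\operatorname{supp}Z(\beta)$, i.e.\ inside the branch locus of $f$.

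The crux — and the step I expect to be the main obstacle — is the pointwise bound $\mathcal{H}_h>\mathcal{H}_f$ off the branch locus of $f$; I would establish it by a maximum principle for $w:=\log\mathcal{H}_h-\log\mathcal{H}_f$. Away from $\operatorname{supp}R$ the two densities vanish to the same order, so $w$ extends smoothly, and combining the Bochner identity $\Delta_{g_X}\log\mathcal{H}_{\bullet}=2(\mathcal{H}_{\bullet}-\mathcal{L}_{\bullet})+2K_{g_X}$ (for harmonic maps into $\mathbb{H}^2$) with the common relation $\mathcal{H}_{\bullet}\mathcal{L}_{\bullet}=|\phi_f|^2_{g_X}$ yields $\Delta_{g_X}w=a(x)(e^{w}-1)$ on $X\setminus\operatorname{supp}R$, with $a(x)=2\mathcal{H}_f\bigl(1+|\phi_f|^2_{g_X}/(\mathcal{H}_h\mathcal{H}_f)\bigr)>0$. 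Since $R\le Z(\beta)$ and $\deg R=k\ge 1$, the density $\mathcal{H}_h$ vanishes to strictly lower order than $\mathcal{H}_f$ at each point of $\operatorname{supp}R$, so $w\to+\infty$ there; hence the infimum of $w$ over the connected surface $X\setminus\operatorname{supp}R$ is attained at an interior point, where $\Delta_{g_X}w\ge 0$ forces $w\ge 0$. Writing $a(e^{w}-1)=c(x)w$ with $c=a(e^{w}-1)/w\ge a>0$, the strong maximum principle for $\Delta_{g_X}-c$ then rules out an interior zero of the nonnegative function $w$; so $w>0$ off $\operatorname{supp}R$, while $\mathcal{H}_h>0=\mathcal{H}_f$ on $\operatorname{supp}R$. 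Thus $\mathcal{H}_h>\mathcal{H}_f$ away from the branch locus of $f$.

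Finally, since $\phi_h=\phi_f$ one has $h^*\sigma-f^*\sigma=(\mathcal{H}_h+\mathcal{L}_h-\mathcal{H}_f-\mathcal{L}_f)g_X=(\mathcal{H}_h-\mathcal{H}_f)\bigl(1-\mathcal{L}_f/\mathcal{H}_h\bigr)g_X$; off the branch locus of $f$ the first factor is positive by the previous step and the second because $\mathcal{H}_h>\mathcal{H}_f\ge\mathcal{L}_f$, so $f^*\sigma<h^*\sigma$ there, while on the branch locus of $f$ one has $f^*\sigma=0\le h^*\sigma$, strictly wherever $h^*\sigma$ is positive definite (i.e.\ off the branch locus of $h$, which sits inside that of $f$) and with $0=0$ at common branch points. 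By Definition~\ref{def: domination pullback} this is precisely $f^*\sigma<h^*\sigma$, which finishes the proof. There remains the configuration $\phi_f\equiv 0$, i.e.\ $f$ holomorphic (then $\gamma\equiv 0$ and $f^*\sigma=\mathcal{H}_f\,g_X$ is conformal): here one cannot keep $\phi_h=\phi_f$ when $\mathrm{eu}(\rho)+k$ is odd, and instead I would argue more directly, choosing $j$ with a small nonzero Hopf differential and large energy so that $(\sqrt{\mathcal{H}_h}-\sqrt{\mathcal{L}_h})^2>\mathcal{H}_f$ off the branch locus — which is softer, the form $f^*\sigma$ being conformal to $g_X$.
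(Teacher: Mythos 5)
Your proposal follows essentially the same route as the paper's proof: parametrize equivariant harmonic maps with a fixed Hopf differential $\phi$ by the vanishing divisor of the holomorphic energy (your $\gamma,\beta$ realize exactly the data of Proposition~\ref{prop: holomorphic data}), construct $j$ by subtracting an effective subdivisor $R$ of degree $k$ from $Z(\beta)$, compare the holomorphic energies via a maximum principle for the Bochner equation (this is Proposition~\ref{prop: energy domination} and Lemma~\ref{lem: H(f) less than H(h)}), and translate this into the pointwise metric inequality. In effect you re-derive Corollary~\ref{corc} in explicit Higgs-bundle language, whereas the paper reduces Theorem~\ref{principal_A} to Corollary~\ref{corc} in two lines. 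Your factorization $h^*\sigma-f^*\sigma=(\mathcal{H}_h-\mathcal{H}_f)(1-\mathcal{L}_f/\mathcal{H}_h)g_X$ is a slightly streamlined substitute for the four-case Proposition~\ref{prop: H and L conditions} (which simplifies drastically when $f$ is a branched immersion, as here).

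Two points do need repair. First, the coefficient $a(x)=2(\mathcal{H}_f+\mathcal{L}_h)$ in $\Delta_{g_X}w=a(e^w-1)$ vanishes at branch points of $f$ lying outside $\operatorname{supp}R$, so an interior minimum with $\Delta w\ge 0$ does not immediately force $w\ge 0$ there, nor is the zeroth-order coefficient $c$ bounded below by a positive constant. The robust argument is the one in Proposition~\ref{prop: energy domination}: if $w<0$ somewhere, set $U=\{w<0\}$; on $U$ one has $\Delta w\le 0$ because $a\ge 0$ and $e^w-1<0$, so $w$ is superharmonic on $U$, contradicting $w|_{\partial U}=0$ (and $w\to+\infty$ on $\operatorname{supp}R$ keeps $\overline U$ away from $R$). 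The strict inequality should then only be claimed on $X\setminus\operatorname{supp}Z(\beta')$, which is exactly where $h^*\sigma$ is positive definite and thus all that Definition~\ref{def: domination pullback} requires. Second, and more substantively, your treatment of $\phi_f\equiv 0$ is wrong: there is no parity obstruction to keeping $\phi_h=\phi_f=0$. Taking $\gamma'=0$ and $\beta'=\beta\otimes s_R^{-1}$ gives a valid Higgs bundle with zero Hopf differential and $\mathrm{eu}(j)=\mathrm{eu}(\rho)+k$ for every admissible $k$, and the holomorphic-energy comparison applies verbatim (indeed more simply, since $\mathcal{L}_f=\mathcal{L}_h=0$ and $e(\cdot)=\mathcal{H}(\cdot)$). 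Your proposed detour through a small nonzero $\phi_h$ and the inequality $(\sqrt{\mathcal{H}_h}-\sqrt{\mathcal{L}_h})^2>\mathcal{H}_f$ would require a quantitative perturbation estimate you do not supply, and is unnecessary.
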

The assumption that the Euler class is positive is not restrictive, since its sign can be flipped by applying an outer automorphism of the fundamental group. Theorem \ref{principal_A} is essentially a consequence of Theorem \ref{principal_C} below. Pairs $(\rho,f)$ as above are constructed and parametrized using Propositions \ref{prop: phi and divisors intro} and \ref{prop: branched immersions} below. Equivariant branched immersions are notable because they are equivalent to hyperbolic cone structures on $\Sigma$ (see \cite{Tan1994} for explanation). The equivariant harmonic map $h$ associated with $j$ is also a branched immersion, so Theorem \ref{principal_A} can be alternatively cast as a domination result between hyperbolic cone structures. The choices of $j$ of Euler number $\textrm{eu}(\rho)+k$ are parametrized by effective divisors of degree $k$ that dominate the branching divisor of $f$. For $k=2g-2-\textrm{eu}(\rho),$ there is just one choice, and this is the Fuchsian representation from \cite[Theorem A]{Deroin_thlozan_domination}.

Theorem \ref{principal_C} in fact produces many examples of domination data $(\rho,j,f,h)$ where $f$ is not a branched immersion, but there is no immediate geometric description for these maps, so we don't include them in Theorem \ref{principal_A}. Nevertheless, these other examples are important for our applications to anti-de Sitter geometry.

For the second question (\ref{q2}), we state our main result here, but we first need to recall the basics on $\ads$. The three-dimensional anti-de Sitter space $\ads$ is a Lorentzian space form of constant negative sectional curvature, and it can be modeled on $\psl$ with the Killing metric. In this model, the group of space and time-orientation preserving isometries is $\psl^2$, acting by right and left multiplication
$$(A,B) \cdot X = A X B^{-1}.$$ An anti-de Sitter (AdS) manifold is a Lorentzian manifold of constant negative sectional curvature; equivalently, up to scaling the metric, it is one that is locally isometric to $\ads$.

For surface group representations $j,\rho:\Gamma\to \psl$, we define $\Gamma_{j,\rho}<\psl^2$ by \begin{equation}\Gamma_{j,\rho}=\{(j(\gamma),\rho(\gamma)): \gamma \in \Gamma\}.\end{equation}
Kulkarni and Raymond proved that every group acting properly discontinuously on $\ads$ identifies with some $\Gamma_{j,\rho}$, and that (up to switching factors) $j$ must be Fuchsian \cite{Kulkarni_Raymond}. The quotient is a circle bundle over $\mathbb{H}^2/j(\Gamma)$, and the circle fibers are timelike geodesics. It was proved by Salein in \cite{Salein} (if direction) and Kassel in \cite{Kassel} (only if direction) that, when $\Sigma$ is a closed surface of genus at least $2$, $\Gamma_{j,\rho}$ acts properly and discontinuously on $\ads$ if and only if $j$ strictly dominates $\rho$ (see also \cite[Theorem 1.8]{GK_lamination}). For surveys on these results, see \cite[Sections 0.1-0.2]{Tholozan} and \cite[Sections 1.1-1.2]{Almost_domination_Nat}.

In her thesis \cite{Agnese_thesis}, Janigro studied singular anti-de Sitter $3$-manifolds that fiber via timelike geodesics over hyperbolic cone surfaces. Janigro defined the notion of a \emph{spin-cone AdS $3$-manifold}, in which the singularities occur along \emph{timelike geodesics}, generalizing a construction due to Barbot and Meusburger in the context of flat Lorentzian spacetimes, where they modeled massive particles with spin \cite{BS_cone}. Janigro introduced a weak notion of domination for hyperbolic cone surfaces, which she related to spin-cone AdS $3$-manifolds. We extend Janigro's work by showing that domination in the pullback sense gives rise to a singular AdS $3$-manifold (see Theorem~\ref{ads_manifold}). This extension is obtained by adding a singular set to the AdS $3$-manifold constructed by Janigro, thus making the geometric picture more precise. We will see in Section \ref{sec5} that this singular set is closely related to the singular set of $h$, and we note that it also allows for more types of
singularities than just spin-cone singularities. We then specialize to the case where the map $h$ is a branched immersion and analyze the resulting singularities in detail. We arrive at the following theorem.

\begin{principal}\label{principal_ads}
Let $\Sigma$ be a hyperbolic surface with fundamental group $\Gamma$ and let $\rho, j: \Gamma \to \psl$ be two representations. 
Assume that there exist a $\rho$–equivariant map $f$ and a $j$–equivariant \emph{branched immersion} $h$ such that $f^*\sigma < h^*\sigma$. 
Denote by $C$ the singular set of $h$ and $C_0=C/\Gamma$. Then there exists a three–manifold $\fhads$, topologically a solid torus, with the following properties.
\begin{enumerate}
    \item The group $\Gamma_{j,\rho}$ admits a properly discontinuous action on $\fhads$.
    
    \item There exists a continuous map $\mathcal{F} : \fhads \to \widetilde{\Sigma}$ that is equivariant with respect to the action of $\Gamma_{j,\rho}$ on $\fhads$ and the action of $\Gamma$ on $\widetilde{\Sigma}$. Moreover, the fibers of $\mathcal{F}$ are topological circles.
    
    \item  Let $L=\mathcal{F}^{-1}(C)$ and set $\fhadss:=\fhads\setminus L$. 
Then $\fhadss$ is an anti-de Sitter manifold, and the restriction of $\mathcal{F}$ to $\fhadss$ is an $\mathbb{S}^1$-principal bundle over $\Tilde{\Sigma}\backslash C$ with timelike geodesic fibers.

    \item The quotient $\mathcal{M}^{f,h}_* := \fhadss / \Gamma_{j,\rho}$ is a branched anti-de Sitter manifold with singular locus $L/\Gamma_{j,\rho}$. The holonomy representation $\pi_1(\mathcal{M}^{f,h}_*)\to \isom(\ads)$ factors $$\pi_1(\mathcal{M}^{f,h}_*)\to \pi_1(\Sigma\backslash C_0)\to \pi_1(\Sigma)\to \isom(\ads),$$ where the first map is induced by $\mathcal{F}$, the second is induced by inclusion, and the third map is $(j,\rho)$.

\end{enumerate}
\end{principal}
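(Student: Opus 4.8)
\medskip\noindent\textbf{Proof strategy.}
The plan is to realize $\fhads$ as an explicit circle bundle sitting inside $\widetilde{\Sigma}\times\psl$ and to use the projection onto the $\psl=\ads$ factor as a developing map. Concretely, I would set
$$\fhads:=\bigl\{(x,g)\in\widetilde{\Sigma}\times\psl \ :\ g\cdot f(x)=h(x)\bigr\},$$
with $\psl$ acting on $\mathbb{H}^2$ in the usual way, and $\mathcal{F}(x,g)=x$. For fixed $x$ the fiber $\ell_x=\mathcal{F}^{-1}(x)$ is a right coset of the elliptic one-parameter subgroup $\mathrm{Stab}(f(x))<\psl$, hence a \emph{timelike geodesic} of $\ads$ and in particular a topological circle; since the orbit map $g\mapsto g\cdot f(x)$ is a submersion, a standard transversality argument shows $\fhads$ is a $C^1$ $3$-manifold and $\mathcal{F}$ a proper submersion, hence a locally trivial circle bundle, and it is trivial because $\widetilde{\Sigma}$ is contractible. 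Thus $\fhads$ is homeomorphic to a solid torus. I would then let $\Gamma_{j,\rho}$ act by $\gamma\cdot(x,g)=(\gamma x,\,j(\gamma)\,g\,\rho(\gamma)^{-1})$; the $j$- and $\rho$-equivariance of $h$ and $f$ show this preserves the defining equation, and $\mathcal{F}$ is equivariant over the deck action on $\widetilde{\Sigma}$. This action is free (it covers the free $\Gamma$-action on $\widetilde{\Sigma}$) and properly discontinuous, since for compact $K\subseteq\fhads$ one has $\{\gamma:\gamma K\cap K\neq\emptyset\}\subseteq\{\gamma:\gamma\,\mathcal{F}(K)\cap\mathcal{F}(K)\neq\emptyset\}$, a finite set. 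This gives (1) and (2), and, notably, it does not use the domination hypothesis.

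For (3), put $C$ for the branch set of $h$, $L=\mathcal{F}^{-1}(C)$, $\fhadss=\fhads\setminus L$, and $\Phi\colon\fhads\to\ads=\psl$ for the projection $(x,g)\mapsto g$. The key point is that $\Phi|_{\fhadss}$ is a local diffeomorphism. Near a point $x_0\notin C$ the branched immersion $h$ is a diffeomorphism onto an open set $V\subseteq\mathbb{H}^2$; writing $p=h(x)$ and $F=f\circ h^{-1}\colon V\to\mathbb{H}^2$, the hypothesis $f^*\sigma<h^*\sigma$ together with the positive-definiteness of $h^*\sigma$ off $C$ yields $\|dF_p\|<1$. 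In these coordinates the defining equation becomes $g\bigl(F(p)\bigr)=p$, so if $d\Phi(\dot p,\dot g)=\dot g=0$, then differentiating gives $\dot p=dg_{F(p)}(dF_p\dot p)$, whence $\|\dot p\|=\|dF_p\dot p\|<\|\dot p\|$ unless $\dot p=0$; hence $d\Phi$ is injective, and by equality of dimensions $\Phi|_{\fhadss}$ is a local diffeomorphism. Declaring $\Phi$ a chart then endows $\fhadss$ with an $(\isom(\ads),\ads)$-structure, i.e. an AdS structure. Under $\Phi$ each fiber $\ell_x$ maps bijectively onto a timelike geodesic of $\ads$, so the fibers of $\mathcal{F}|_{\fhadss}$ are timelike geodesics; and the fiberwise action $\theta\cdot(x,g)=(x,\,g\,r_\theta)$, with $r_\theta\in\mathrm{Stab}(f(x))$ the rotation by angle $\theta$, is free and transitive on fibers and commutes with the $\Gamma_{j,\rho}$-action (because $\rho(\gamma)$ is orientation-preserving), exhibiting $\mathcal{F}|_{\fhadss}$ as an $\mathbb{S}^1$-principal bundle over $\widetilde{\Sigma}\setminus C$.

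For (4), since $\Gamma_{j,\rho}$ acts freely, properly discontinuously, preserves $L$, and acts by AdS automorphisms (as $\Phi\circ(\gamma\cdot)=(j(\gamma),\rho(\gamma))\circ\Phi$, with $(A,B)$ acting on $\ads$ by $X\mapsto AXB^{-1}$), the quotient $\mathcal{M}^{f,h}_*=\fhadss/\Gamma_{j,\rho}$ is a manifold carrying an AdS structure away from $L/\Gamma_{j,\rho}$. To identify the singularities I would write $h$ in the normal form $z\mapsto z^n$ near a branch point: along the corresponding central fiber, $\Phi$ is then an $n$-fold branched cover of an AdS chart around a timelike geodesic, which is precisely a branched AdS singularity, so $\mathcal{M}^{f,h}_*$ is a branched AdS manifold with singular locus $L/\Gamma_{j,\rho}$ (the detailed local model being the object of Section~\ref{sec5}). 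Finally, $\Phi$ is globally defined and $\Gamma_{j,\rho}$-equivariant on $\fhadss$, so the holonomy of $\mathcal{M}^{f,h}_*$ vanishes on $\pi_1(\fhadss)=\ker(\pi_1(\mathcal{M}^{f,h}_*)\twoheadrightarrow\Gamma_{j,\rho})$ and equals $(j,\rho)$ on $\Gamma_{j,\rho}\cong\Gamma$; identifying $\fhadss\to\mathcal{M}^{f,h}_*$ with the pullback of the covering $\widetilde{\Sigma}\setminus C\to\Sigma\setminus C_0$ along the descent of $\mathcal{F}$ shows that $\pi_1(\mathcal{M}^{f,h}_*)\twoheadrightarrow\Gamma_{j,\rho}$ factors as the map induced by $\mathcal{F}$ followed by the inclusion-induced map $\pi_1(\Sigma\setminus C_0)\to\pi_1(\Sigma)$, which is the asserted factorization.

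The part I expect to be the main obstacle is the local analysis at the singular set $L$: one has to verify that the $z\mapsto z^n$ normal form for the branched immersion $h$ really does propagate to a clean branched model for the developing map $\Phi$, and that the singularities produced are of the advertised type --- in particular, that the AdS holonomy around each component of $L/\Gamma_{j,\rho}$ is trivial, which is exactly what makes the factorization through $\pi_1(\Sigma)$ possible. By contrast, proper discontinuity, which is the central difficulty in the classical embedded picture of Salein and Kassel, is painless here precisely because we work with the unfolded bundle $\fhads$ rather than with a domain in $\ads$; the price of this is that $\Phi$ need not be injective, so $\mathcal{M}^{f,h}_*$ is only a branched AdS manifold and not a genuine Clifford--Klein form.
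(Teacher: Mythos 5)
Your construction of $\fhads$ is genuinely different from the paper's and, for the first three parts, cleaner. The paper builds $\fhads$ as a quotient of a disjoint union $\bigsqcup_\alpha M_\alpha \sqcup \bigsqcup_{x\in C}\ell_x$, where the $M_\alpha = \{A\in\psl : \exists !\, p\in U_\alpha,\ A(f(p)) = h(p)\}$ are open subsets of $\ads$ defined using a chosen cover $\{U_\alpha\}$ of $\widetilde{\Sigma}\setminus C$ on which $h$ is locally diffeomorphic, and the identifications are $A\sim B$ iff $\mathcal{I}(A)=\mathcal{I}(B)$ and $\mathcal{F}(A)=\mathcal{F}(B)$. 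You instead realize the same space directly as the incidence variety $\{(x,g)\in\widetilde{\Sigma}\times\psl : g\cdot f(x) = h(x)\}$, which avoids the auxiliary cover and the quotient altogether. The two constructions yield the same space (the strict contraction ensures the map $[A]\mapsto(\mathcal{F}(A),\mathcal{I}(A))$ is a bijection), but your embedded picture makes the smooth structure, the submersion $\mathcal{F}$, the triviality of the circle bundle, and hence the solid-torus topology immediate via Ehresmann over the contractible $\widetilde{\Sigma}$, whereas the paper proves the solid torus claim (Proposition~\ref{topology_of_ads_fh}) by passing through the auxiliary space $\mathcal{X}/\!\sim\cong h(\widetilde{\Sigma})$. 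Your argument that $\Phi(x,g)=g$ is a local diffeomorphism on $\fhadss$ by differentiating $g\cdot f(x)=h(x)$ and applying strict contraction is also a self-contained replacement for the paper's appeal to the fibration result of \cite{Almost_domination_Nat}, and the proper discontinuity argument via $\mathcal{F}$ is the same as the paper's. Note one small point: the paper's $\ell_x = \ell_{f(x),h(x)} = \{A : A\, h(x) = f(x)\}$ appears to have the roles of $f$ and $h$ reversed relative to $M_\alpha$; your version $\{g : g\cdot f(x) = h(x)\}$ is the consistent one.

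Where you have a real gap is in part (4), and you correctly flag it yourself. Saying that ``$\Phi$ is an $n$-fold branched cover of an AdS chart around a timelike geodesic, which is precisely a branched AdS singularity'' is not a proof: Definition~\ref{defi_spin_cone} requires (i) that the developing map on a tubular neighborhood of the singular circle extends continuously to the branch line with the extension independent of the angular coordinate (Lemma~\ref{extension_of_dev}); (ii) that the meridian holonomy, computed in $\mathrm{Isom}(\widetilde{\ads_*})$ via the covering $T$, lies in $2\pi\mathbb{Z}\times 2\pi\mathbb{Z}$; and (iii) that the longitude (fiber) holonomy is $\varphi_{(0,2\pi)}$. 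Establishing (ii) and (iii) is the content of Propositions~\ref{holonomy_around_puncture} and~\ref{holonomy_of_the_fiber}, which require the angular form $\omega$, Lemmas~\ref{theta_as_integral}--\ref{Near_curve}, the degree-theory computation along curves $\gamma_1(t)=A_t(i)$ and $\gamma_2(t)=h(\gamma(t))$ (with Lemma~\ref{week_contraction_around_singular_point} ensuring these stay away from $i$ and Lemma~\ref{Up_Lemma} ensuring $\gamma_1$ avoids $i$), and the classification of stabilizers of timelike geodesics in $\widetilde{\ads_*}$ (Lemma~\ref{stab_timelike}). In particular, the meridian holonomy is $\varphi_{(2n\pi,\,2k\pi)}$ with $k$ an integer determined by the winding of $f$ around $f(x)$, not simply $\varphi_{(2n\pi,0)}$; it is still branched, but the factor $2k\pi$ cannot be assumed away. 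Your holonomy-factorization argument for $\mathcal{M}^{f,h}_*$, on the other hand, is correct and matches the paper's: the holonomy in $\isom(\ads)$ kills $\pi_1(\fhadss) = \ker(\pi_1(\mathcal{M}^{f,h}_*)\to\Gamma)$ because $\Phi$ is globally defined and equivariant, and $\mathcal{F}$ descends to give the factorization through $\pi_1(\Sigma\setminus C_0)$ and then $\pi_1(\Sigma)$.
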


\emph{Branched AdS manifolds} are a special case of spin-cone AdS manifolds (see Definition \ref{defi_spin_cone}). For clarity, in Sections \ref{sec5} and \ref{sec6}, we provide further exposition on Janigro's work in \cite{Agnese_thesis} and explain how our work builds on it. It is worth noting that, in the case where $h$ is a global diffeomorphism, both $\fhads$ and $\fhadss$ coincide with the anti–de Sitter space $\ads$, and we thus recover previously known results about closed AdS $3$-manifolds.

Combining Theorems \ref{principal_A} and \ref{principal_ads}, we obtain the following result, which provides a large class of examples of branched anti-de Sitter $3$-manifolds. 
\setcounter{corx}{1}
\begin{corx}\label{corb}
    Let $X$ be a closed Riemann surface of genus $g\geq 2$ with fundamental group $\Gamma$ and let $\rho:\Gamma\to \psl$ be a representation with $0< \textrm{eu}(\rho)<2g-2$ such that the unique equivariant harmonic map $f:\widetilde{X}\to (\mathbb{H}^2,\sigma)$ is a branched immersion. Then, for every $k$ between $1$ and $2g-2-\textrm{eu}(\rho)$, we can choose a representation $j:\Gamma\to \psl$, $\textrm{eu}(j)=\textrm{eu}(\rho)+k$, with equivariant map $h$ such that the data $(\rho,j,f,h)$  determines a branched anti-de Sitter $3$-manifold as in Theorem \ref{principal_ads}.
\end{corx}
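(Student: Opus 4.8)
The plan is to obtain the corollary as the formal composite of Theorems \ref{principal_A} and \ref{principal_ads}. I would begin by fixing the data of the hypothesis: a closed Riemann surface $X$ of genus $g\ge 2$ with $\pi_1(X)=\Gamma$, a representation $\rho:\Gamma\to\psl$ with $0<\textrm{eu}(\rho)<2g-2$ whose unique $\rho$-equivariant harmonic map $f:\widetilde{X}\to(\mathbb{H}^2,\sigma)$ is a branched immersion, and an integer $k$ with $1\le k\le 2g-2-\textrm{eu}(\rho)$. Applying Theorem \ref{principal_A} to this data produces a representation $j:\Gamma\to\psl$ with $\textrm{eu}(j)=\textrm{eu}(\rho)+k$, together with the (unique, since $\textrm{eu}(j)\ne 0$) $j$-equivariant harmonic map $h:\widetilde{X}\to(\mathbb{H}^2,\sigma)$, such that $f^*\sigma<h^*\sigma$.

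The second step is to record that $h$ is again a branched immersion. This is exactly how $h$ arises in the proof of Theorem \ref{principal_A} via Propositions \ref{prop: phi and divisors intro} and \ref{prop: branched immersions}: the representations $j$ constructed there are precisely the ones whose equivariant harmonic maps are branched immersions with branching divisor a prescribed effective divisor of degree $k$ dominating the branching divisor of $f$ (in the extreme case $k=2g-2-\textrm{eu}(\rho)$ the representation $j$ is Fuchsian and $h$ is an unbranched diffeomorphism). In particular $h$ is a $j$-equivariant branched immersion with $f^*\sigma<h^*\sigma$.

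Finally I would feed the quadruple $(\rho,j,f,h)$ into Theorem \ref{principal_ads}. The only preliminary observation needed is that this theorem takes as input a surface $\Sigma$ carrying a hyperbolic structure together with a $\rho$-equivariant map $f$ and a $j$-equivariant branched immersion $h$ out of $\widetilde{\Sigma}$; here we may take $\Sigma$ to be the smooth surface underlying $X$ (which admits a hyperbolic metric since $g\ge 2$), so that $\widetilde{\Sigma}\cong\widetilde{X}$ and the harmonic maps from the first step are exactly the required maps. Theorem \ref{principal_ads}(4) then yields the branched anti-de Sitter $3$-manifold $\mathcal{M}^{f,h}_*=\fhadss/\Gamma_{j,\rho}$, with singular locus $L/\Gamma_{j,\rho}$ sitting over the branch locus $C_0=C/\Gamma$ of $h$, which is the assertion of the corollary.

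I do not expect a genuine obstacle: all the analytic input (existence and uniqueness of harmonic maps, the branched-immersion constructions, the comparison $f^*\sigma<h^*\sigma$) lives in Theorem \ref{principal_A}, and all the geometric input (building $\fhads$, the circle fibration, the AdS structure off the singular set) lives in Theorem \ref{principal_ads}; the corollary only needs these to be glued. The single point demanding a little care is checking compatibility of conventions across the two theorems --- that the map $h$ output by Theorem \ref{principal_A} is literally the map $h$ assumed in Theorem \ref{principal_ads}, and that the auxiliary hyperbolic structure on $\Sigma$ in Theorem \ref{principal_ads} plays no role beyond fixing the smooth type. One may also add the remark that distinct choices of the degree-$k$ branching divisor yield distinct singular loci $C_0$, which is what makes the resulting family of branched AdS $3$-manifolds large.
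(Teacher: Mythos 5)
Your proof is correct and follows essentially the same route as the paper, which itself presents Corollary~\ref{corb} as an immediate combination of Theorems~\ref{principal_A} and \ref{principal_ads}. The one point genuinely requiring care — that the harmonic map $h$ produced by Theorem~\ref{principal_A} is a branched immersion, so that the hypotheses of Theorem~\ref{principal_ads} are met — is exactly the point you flag and settle; indeed this follows either by tracing through the construction in the proof of Theorem~\ref{principal_A} (as you do), or more directly by noting that $f^*\sigma < h^*\sigma$ with $f$ a branched immersion forces the singular set of $h$ to be contained in the discrete branch set of $f$, whence $h$ is a branched immersion by Hartman--Wintner. Your closing remark that distinct degree-$k$ divisors yield distinct singular loci is a slight overstatement (divisors with the same support but different multiplicities give the same locus $C_0$, though different cone data), but it is an aside and does not affect the argument.
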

A current conjecture, attributed to Goldman, is that every faithful representation of non-zero and non-extremal Euler class uniformizes a branched hyperbolic structure, or, equivalently, satisfies the hypothesis of the corollary. This conjecture is linked in a circle of ideas around a question of Bowditch as well as Goldman's conjecture about mapping class group dynamics on the $\psl$-character variety (see, for instance, \cite{Faraco+2021+99+108}).

As indicated above, we will see in Theorem \ref{principal_C} that more examples of domination arise than in Theorem \ref{principal_A} (in particular, where $f$ is not a branched immersion but $h$ is), so Corollary \ref{corb} can also be expanded.

\begin{remark}
    If one really wanted to construct a lot of spin-cone AdS $3$-manifolds, then one could use harmonic maps from (universal covers of) punctured surfaces and parabolic Higgs bundles. In this paper, we're interested in the geometry that one can get out of actions of closed surface groups.
\end{remark}

\subsection{Harmonic maps and domination}\label{sec: intro_domination}
Here we give a detailed overview of our main results on harmonic maps and domination. See Sections \ref{sec: hmaps definitions}-\ref{sec: energies} for preliminaries on harmonic maps. 

For motivation, we recall the work of Deroin-Tholozan in \cite{Deroin_thlozan_domination}. Let $X$ be a closed Riemann surface of genus at least $2$ with fundamental group $\Gamma$ and let $\rho:\Gamma\to \psl$ be a non-Fuchsian reductive representation with $\rho$-equivariant harmonic map $f:\widetilde{X}\to \mathbb{H}^2$. Every equivariant harmonic map from
$\widetilde{X}$ determines a holomorphic quadratic differential on $X$ called the \emph{Hopf differential}. By \cite{Wolf_thesis}, there exists a unique Fuchsian representation $j$ together with an equivariant harmonic
diffeomorphism $h:\widetilde{X}\to\mathbb{H}^2$ with the same Hopf differential as $f$. In \cite{Deroin_thlozan_domination}, it is shown that $$f^*\sigma<h^*\sigma.$$ The map $F=f\circ h^{-1}:\mathbb{H}^2\to\mathbb{H}^2$ is $(j,\rho)$-equivariant and strictly contracting, and hence shows that $j$ strictly dominates $\rho$. Tholozan went on to prove that every strictly dominating pair arises in this fashion \cite{Tholozan}. Moreover, the combined works \cite{Deroin_thlozan_domination} and \cite{Tholozan} show the following.
\begin{theorem}[Deroin-Tholozan \cite{Deroin_thlozan_domination} and Tholozan \cite{Tholozan} combined]\label{thm: DT and T}
   Let $j,\rho:\Gamma\to \psl$ be reductive representations with $j$ Fuchsian. Then $j$ strictly dominates $\rho$ if and only if there exists a unique Riemann surface structure $X$ together with equivariant harmonic maps $h$ and $f$ for $j$ and $\rho$ respectively with the same Hopf differential and such that $f^*\sigma<h^*\sigma$.
\end{theorem}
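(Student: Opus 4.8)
The plan is to prove the two implications of the equivalence separately: the backward direction by a short compactness-and-Gauss--Bonnet argument, and the forward direction by assembling the theorems of \cite{Deroin_thlozan_domination} and \cite{Tholozan} after a small preliminary reduction.

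For the backward direction, suppose a Riemann surface structure $X$ together with harmonic maps $h,f$ as in the statement are given. Because $j$ is Fuchsian, the first step is to invoke \cite{Wolf_thesis} (or the classical results of Sampson and Schoen--Yau) to conclude that the $j$-equivariant harmonic map $h\colon\widetilde X\to\mathbb H^2$ is a diffeomorphism; hence $h^*\sigma$ is positive definite everywhere, and the hypothesis $f^*\sigma<h^*\sigma$ becomes the strict pointwise inequality $f^*\sigma(v,v)<h^*\sigma(v,v)$ for every $v\neq 0$. Since both metrics descend to the compact surface $\Sigma$, a compactness argument yields $\lambda<1$ with $f^*\sigma\le\lambda\,h^*\sigma$ on $\widetilde X$. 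Setting $F:=f\circ h^{-1}\colon\mathbb H^2\to\mathbb H^2$, which is $(j,\rho)$-equivariant and $C^1$, one computes $F^*\sigma=(h^{-1})^*(f^*\sigma)\le\lambda\,(h^{-1})^*(h^*\sigma)=\lambda\,\sigma$, so $F$ is $\sqrt\lambda$-Lipschitz for $d_\sigma$ with $\sqrt\lambda<1$, and $j$ strictly dominates $\rho$. Note that equality of Hopf differentials is not needed here.

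For the forward direction, assume $j$ is Fuchsian and strictly dominates $\rho$, witnessed by an equivariant $\lambda$-Lipschitz map $F$ with $\lambda<1$. I would first rule out the possibility that $\rho$ is Fuchsian: if it were, then $F$ would descend to a degree $\pm1$ map between the closed hyperbolic surfaces $\mathbb H^2/j(\Gamma)$ and $\mathbb H^2/\rho(\Gamma)$, and since a $\lambda$-Lipschitz map scales area by a factor at most $\lambda^2$, comparing areas via Gauss--Bonnet would give $2\pi(2g-2)\le\lambda^2\cdot 2\pi(2g-2)$, a contradiction. So $\rho$ is reductive and non-Fuchsian, which is exactly the regime in which \cite{Deroin_thlozan_domination} and \cite{Tholozan} operate. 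By Tholozan's converse \cite{Tholozan}, the strictly dominating pair $(j,\rho)$ is realized by the Deroin--Tholozan construction: there exists a Riemann surface structure $X$ on $\Sigma$ for which the $\rho$- and $j$-equivariant harmonic maps $f,h\colon\widetilde X\to\mathbb H^2$ have a common Hopf differential, and then \cite[Theorem A]{Deroin_thlozan_domination} gives $f^*\sigma<h^*\sigma$. Uniqueness of $X$ then follows from the injectivity of the parametrization $X\mapsto j_X$ of the set of Fuchsian representations strictly dominating $\rho$ by $\mathcal{T}(\Sigma)$, which is also established in \cite{Tholozan}.

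The only genuinely substantial input is Tholozan's converse---both the surjectivity (existence of $X$) and the injectivity (uniqueness of $X$) of $X\mapsto j_X$; granting this, the remaining steps are routine. The point that needs a little care is the preliminary reduction to the non-Fuchsian case, since $\rho$ is a priori only assumed reductive, whereas the Deroin--Tholozan and Tholozan results are stated for non-Fuchsian reductive representations.
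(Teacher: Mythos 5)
Your proof is correct and takes essentially the same approach that the paper sketches in the paragraph preceding the theorem statement: the backward implication via the contracting map $F=f\circ h^{-1}$ together with compactness, and the forward implication via Tholozan's converse, with uniqueness of $X$ coming from injectivity of the Deroin--Tholozan parametrization $X\mapsto j_X$. The Gauss--Bonnet argument ruling out $\rho$ Fuchsian (so that the Deroin--Tholozan/Tholozan results indeed apply) is a useful extra detail that the paper's informal discussion leaves implicit.
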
 
At this point, Definition \ref{def: domination pullback} is well motivated, and we can refine the problem (\ref{q1}): we look for pairs of equivariant harmonic maps with the same Hopf differential such that the domination inequality holds. In order to do so, we need to understand how harmonic maps with the same Hopf differential can differ. Via the \emph{non-abelian Hodge correspondence}, we establish the following.
\begin{prop}\label{prop: phi and divisors intro} (Proposition \ref{prop: holomorphic data}, loosely stated)
An equivariant harmonic map $f: \widetilde{X}\to \mathbb{H}^2$ (up to translation) is equivalent to a pair $(\phi,D)$ consisting of a holomorphic quadratic differential $\phi$ on $X$, the Hopf differential of $f$, and an effective divisor $D$ on $X$ satisfying certain conditions. When $\phi$ is not zero, $D$ is the divisor of the square root of the holomorphic energy of $f$.    
\end{prop}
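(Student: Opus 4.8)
The plan is to deduce the statement from the non-abelian Hodge correspondence over the fixed Riemann surface $X$ by repackaging the Hitchin data. Recall first that a $\rho$-equivariant harmonic map $\widetilde{X}\to(\mathbb{H}^2,\sigma)$ exists precisely when $\rho$ is reductive, and that over $X$ the non-abelian Hodge correspondence identifies conjugacy classes of reductive $\rho:\Gamma\to\psl$ — equivalently, equivariant harmonic maps up to post-composition by isometries and the geodesic translation ambiguity present for non-irreducible $\rho$ — with polystable $\psl$-Higgs bundles on $X$. I would then unwind what such a Higgs bundle is: in the $\mathrm{SL}(2,\mathbb{R})$ picture its underlying bundle has the form $L\oplus L^{-1}$, and twisting by $L^{-1}$ — which is legitimate over $\psl$ — brings it to the shape $(\mathcal{O}_X\oplus N,\Phi)$ with $N=L^{-2}$ a holomorphic line bundle and $\Phi=\left(\begin{smallmatrix} 0 & \beta \\ \gamma & 0\end{smallmatrix}\right)$, where $\gamma\in H^0(X,NK)$ and $\beta\in H^0(X,N^{-1}K)$. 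Polystability then says that if $\beta=0$ one needs $\deg N\ge 0$ (with $\deg N=0$ if also $\gamma=0$), symmetrically if $\gamma=0$, and is automatic when $\beta$ and $\gamma$ are both nonzero, apart from a boundary phenomenon when $\deg N=0$. This $\psl$ presentation is convenient because it involves no auxiliary square root of $K$.

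Next I would record the dictionary between this algebraic data and the differential geometry of $f$, which is essentially in the literature on harmonic maps to $\mathbb{H}^2$ (see Section \ref{sec: hmaps definitions}): under the correspondence, $N$ is the holomorphic line bundle $f^{*}T^{1,0}\mathbb{H}^2$ with its Koszul--Malgrange structure, $\gamma$ corresponds to the holomorphic derivative $\partial f$ and $\beta$ to the conjugate of $\bar\partial f$, so that the pointwise norm squared of $\gamma$ in the harmonic metric is the holomorphic energy density $\mathcal{H}$ of $f$ — that is, $\gamma$ is a holomorphic square root of $\mathcal H$ — and the Hopf differential is $\phi=\tfrac12\operatorname{tr}(\Phi^2)=\beta\gamma\in H^0(X,K^2)$. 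Set $D:=\operatorname{div}(\gamma)$, the divisor of this square root of $\mathcal H$. Since $NK\cong\mathcal{O}_X(D)$, the line bundle $N$ is recovered from $D$, and $\gamma$ is recovered from $D$ up to a nonzero scalar; rescaling $\gamma$ by $c$ and $\beta$ by $c^{-1}$, so as to keep $\beta\gamma=\phi$, is the gauge transformation $\operatorname{diag}(c,1)$ and so changes nothing. Hence the Higgs bundle, and therefore the equivalence class of $f$, is completely determined by the pair $(\phi,D)$.

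It then remains to set up the two assignments and to read off the conditions on $(\phi,D)$. In the forward direction, given $f$ I put $\phi=$ its Hopf differential and $D=\operatorname{div}(\partial f)$; these are manifestly invariant under post-composition by isometries and under translation, so this descends to equivalence classes. From $\mathcal{H}\mathcal{L}=|\phi|^2$, with $\mathcal L$ the antiholomorphic energy density, the divisor $\operatorname{div}(\bar\partial f)$ is effective and adds to $D$ to give $\operatorname{div}(\phi)$, so $0\le D\le\operatorname{div}(\phi)$ whenever $\phi\neq 0$; in the degenerate stratum $\phi=0$, where $f$ is $\pm$-holomorphic, one of $\beta,\gamma$ vanishes and polystability pins the sign of $\deg N$, with the fully degenerate case $\Phi\equiv 0$ (constant $f$, elementary $\rho$) excluded. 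Spelling these out is exactly the list of certain conditions. Conversely, given such a pair, I set $N:=\mathcal{O}_X(D)\otimes K^{-1}$, take $\gamma\in H^0(NK)=H^0(\mathcal{O}_X(D))$ with $\operatorname{div}(\gamma)=D$, and \emph{define} $\beta:=\phi/\gamma$, which lies in $H^0(N^{-1}K)$ exactly because $D\le\operatorname{div}(\phi)$. The resulting $(\mathcal{O}_X\oplus N,\Phi)$ is a Higgs bundle, polystable by the conditions on $D$, and feeding it through the non-abelian Hodge correspondence yields a reductive $\rho$ with an equivariant harmonic map. A final verification, using the dictionary of the second step and the fact that the scalar ambiguity of $\gamma$ is gauge, shows that the two constructions invert each other.

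The main work is not conceptual: it lies in pinning down the dictionary of the second step — the precise identifications $\gamma=\partial f$, $\beta=\overline{\bar\partial f}$, $\phi=\beta\gamma$, $\mathcal{H}=\|\gamma\|^{2}$ in the harmonic metric — which demands careful Koszul--Malgrange and Chern-connection bookkeeping, and in phrasing the certain conditions so that they match polystability exactly, with particular attention to the stratum $\phi=0$, to the behavior when $\deg N=0$ (i.e.\ $\mathrm{eu}(\rho)=0$), and to the difference between $\psl$ and $\mathrm{SL}(2,\mathbb{R})$ — working over $\mathrm{SL}(2,\mathbb{R})$ would force a parity constraint on $\deg D$ that the $\psl$ picture avoids. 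Beyond classical non-abelian Hodge theory, the one genuinely new observation is that fixing the Hopf differential $\phi$ and recording $D=\operatorname{div}(\gamma)$ is merely an equivalent way of encoding the triple $(N,\beta,\gamma)$, since $N$ and $\gamma$ are reconstructed from $D$ and then $\beta$ is forced to equal $\phi/\gamma$.
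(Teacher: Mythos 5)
Your proposal is correct and takes essentially the same route as the paper's proof of Proposition~\ref{prop: holomorphic data}: both pass through the non-abelian Hodge correspondence, package the Higgs data as a line bundle with two sections (your $(N,\gamma,\beta)$ corresponds to the paper's $(L^{-1},\beta,\alpha)$), observe that rescaling one section is a gauge transformation so the isomorphism class is recorded by $(\phi,D)$, and read off the admissibility constraints from polystability together with the existence theory for the Toda/Bochner equation. The one place where the arguments diverge tactically is in proving that $D$ really is the divisor of $\sqrt{H(f)}$: you propose to establish the geometric dictionary $\gamma=\partial f$, $N = f^*T^{1,0}\mathbb{H}^2$ directly through Koszul--Malgrange and Chern-connection bookkeeping, whereas the paper avoids carrying out that bookkeeping via a short polynomial-factorization trick (Proposition~\ref{prop: deg L euler number}) which shows $\{e_\alpha,e_\beta\}=\{H(f),L(f)\}$ as unordered pairs and then distinguishes which is which by a degree count when $\deg L\neq 0$.
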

See Proposition \ref{prop: holomorphic data} for the precise statement (which includes the case $\phi=0$). When $D=0$, one recovers Wolf's parametrization of Teichm{\"u}ller space \cite{Wolf_thesis}, and when $\phi=0$, one is led to Troyanov's uniformization for hyperbolic branched metrics \cite{Troyanoc}. Proposition \ref{prop: phi and divisors intro} is probably known to some experts (compare with \cite[Theorem 10.8]{Hitchin:1986vp})--see Section \ref{sec: divisors and NAH} for explanation.

Now we can make our problem (\ref{q1}) more precise: for which pairs $(\phi,D_1)$ and $(\phi,D_2)$ do we have domination? The problems turns out to be quite delicate, but we find a complete solution when $(\phi,D_2)$ gives rise to a branched harmonic immersion. Proposition \ref{prop: branched immersions}, although not necessary for the proof, tells us when we have a branched immersion.
\begin{defi}
Let $D_1$ and $D_2$ be divisors on $X$. We write $D_2< D_1$ to mean that $D_2(p)\leq D_1(p)$ for all $p\in X$ and that there exists $q\in X$ such that $D_2(q)<D_1(q)$. 
\end{defi}
\begin{prop}\label{prop: branched immersions}
     An equivariant harmonic map $f:\widetilde{X}\to (\mathbb{H}^2,\sigma)$ associated with a pair $(\phi,D)$, $\phi\neq 0$, is a branched immersion if and only if $2D<(\phi)$ or $(\phi)<2D$.
\end{prop}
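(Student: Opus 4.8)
The plan is to translate the geometric condition ``branched immersion'' into the analytic language of the pair $(\phi,D)$ by examining the local form of a harmonic map near a point $p\in X$. Recall that for a harmonic map $f:\widetilde{X}\to(\mathbb{H}^2,\sigma)$ with Hopf differential $\phi$, the pullback metric decomposes as $f^*\sigma = e\,|dz|^2 + \bar\phi\, dz^2 + \mathcal{H}\,|dz|^2 + \phi\, d\bar z^2$ in a local holomorphic coordinate $z$, where $\mathcal{H} = e^{2\omega}$ is the holomorphic energy density and $\mathcal{L}$ is the anti-holomorphic energy density, with $\phi\bar\phi = \mathcal{H}\mathcal{L}$ (up to normalizing constants fixed earlier in the paper). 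The map $f$ fails to be an immersion exactly where the Jacobian $\mathcal{H}-\mathcal{L}$ vanishes, equivalently where $\mathcal{H}=\mathcal{L}$. By Proposition~\ref{prop: phi and divisors intro}, the divisor of $\sqrt{\mathcal{H}}$ is $D$, so $\mathcal{H}$ vanishes to order $2D(p)$ at $p$; since $\mathcal{H}\mathcal{L}=|\phi|^2$, the density $\mathcal{L}$ vanishes to order $(\phi)(p)-2D(p)$ at $p$ (this is where one uses that $D$ satisfies the constraint $2D\le(\phi)$ wherever $\phi\ne 0$, which is part of the ``certain conditions'' of Proposition~\ref{prop: holomorphic data}).

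First I would analyze the generic point, where $\phi$ does not vanish: there $\mathcal{H}$ and $\mathcal{L}$ are both nonzero precisely when $2D(p)=0$ and $(\phi)(p)-2D(p)=0$, i.e. $D(p)=0=(\phi)(p)$, and in that case $f$ is a local diffeomorphism (the standard fact that harmonic maps are diffeomorphic away from zeros of the Jacobian, with the Jacobian sign controlled by which of $\mathcal{H},\mathcal{L}$ dominates). Next I would treat a point $p$ where $2D(p)>0$ or $(\phi)(p)>0$. The key computation is to produce explicit local coordinates realizing $f$ as $z\mapsto z^n$. Writing $\mathcal{H} = |a(z)|^2$ with $a$ holomorphic having a zero of order $D(p)$ at $p$, and $\mathcal{L} = |b(z)|^2$ with $b$ holomorphic having a zero of order $(\phi)(p)/2 - D(p)$ at $p$ — this uses that $\phi$ is a square of a holomorphic $1$-form-valued object locally, hence $(\phi)(p)$ is forced even at such points, another of the constraints — one checks that $f$ is, up to a conformal change of coordinate on the source and target, of the form $z\mapsto z^{D(p)+1}$ when $\mathcal{H}$ dominates near $p$ (i.e. $\mathcal{L}$ vanishes strictly faster, meaning $(\phi)(p)/2 - D(p) > D(p)$, i.e. $(\phi)(p) > 4D(p)$... ) — here I need to be careful: the branch point structure $z\mapsto z^n$ is compatible with $f^*\sigma$ degenerating like $|z|^{2(n-1)}|dz|^2$, so the branching order is $1+\min(2D(p),\,(\phi)(p)-2D(p))$, and $f$ is a branched immersion at $p$ iff the two ``vanishing rates'' of $\mathcal{H}$ and $\mathcal{L}$ are \emph{comparable in one direction}, precisely iff one of $2D(p)$, $(\phi)(p)-2D(p)$ is zero; otherwise the degeneracy is genuinely ``mixed'' (a fold-type or worse singularity) and no power coordinate exists. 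Aggregating over all $p$: $f$ is a branched immersion iff for every $p$ either $2D(p)=0$ or $(\phi)(p)-2D(p)=0$, which globally says $2D$ and $(\phi)$ have disjoint supports among the ``excess'' — equivalently $2D\le(\phi)$ with $2D\wedge((\phi)-2D)=0$, i.e. $2D<(\phi)$ or $(\phi)<2D$ or $(\phi)=2D$; examining the last case shows it forces $f$ conformal or anti-conformal, which is covered by the statement's two alternatives after noting $(\phi)=2D$ is the boundary included via $\le$ in the definition of $<$... so I would state the dichotomy as $2D<(\phi)$ or $(\phi)<2D$ per the proposition.

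Concretely the steps are: (i) recall the local expansion of $f^*\sigma$ and the identities $\mathrm{div}(\sqrt{\mathcal{H}})=D$, $\mathcal{H}\mathcal{L}=c|\phi|^2$; (ii) reduce to a local normal-form problem at each $p$; (iii) show that when exactly one of $\mathcal{H},\mathcal{L}$ vanishes at $p$ (necessarily with the vanishing of $\phi$ accounting for it), $f$ has a power-map normal form, hence a branch point (or immersion point); (iv) show that when \emph{both} $\mathcal{H}$ and $\mathcal{L}$ vanish at $p$ — which happens iff $2D(p)>0$ and $(\phi)(p)-2D(p)>0$ — the differential $df$ degenerates in a way incompatible with any $z\mapsto z^n$ normal form (e.g. by computing that the pullback metric is not conformally a flat cone metric there, or that the image has a cusp rather than a branch), so $f$ is not a branched immersion; (v) assemble into the stated global equivalence. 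The main obstacle is step (iv): ruling out a power-map normal form requires a genuine argument rather than a dimension count, and the cleanest route is probably to compute the leading-order behavior of $f$ itself (not just $f^*\sigma$) using the Hopf equation / the structure of the harmonic map as in the Higgs bundle picture — writing $f$ near $p$ in terms of the solution to the relevant ODE and reading off that its $(1,0)$ and $(0,1)$ parts vanish to incompatible orders when both $\mathcal{H}$ and $\mathcal{L}$ degenerate, so $f$ is neither holomorphic nor anti-holomorphic to leading order near $p$ and therefore cannot be a branched immersion. A subtlety to watch is the case $\phi=0$ (then $\mathcal{L}\equiv 0$, $f$ is holomorphic onto its image — a branched immersion automatically — consistent with $2D<(\phi)$ being vacuous and the statement being restricted to $\phi\ne 0$), and the case where $X$ has genus forcing $(\phi)$ to have degree $4g-4$, which constrains but does not obstruct the argument.
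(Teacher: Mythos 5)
Your proposal has a genuine gap, and it surfaces exactly at the place you flag as ``the main obstacle.'' The criterion you land on --- that $f$ is a branched immersion at $p$ iff one of $2D(p)$, $(\phi)(p)-2D(p)$ is zero, so that (aggregating) at most one of $\mathcal{H},\mathcal{L}$ vanishes at each point --- is strictly stronger than what the proposition asserts, and it is false. The condition $2D<(\phi)$ (weak inequality everywhere, strict somewhere) certainly allows points $p$ with $2D(p)>0$ \emph{and} $(\phi)(p)-2D(p)>0$ simultaneously; the proposition claims $f$ is still a branched immersion there. Consequently your step (iv), which tries to \emph{rule out} a power normal form whenever both $\mathcal{H}$ and $\mathcal{L}$ vanish, cannot succeed: it would prove something contradicting the theorem. (You also have a factor-of-two slip in the bookkeeping: since $HL=|\phi|_\nu^2$ and $|\phi|_\nu^2$ vanishes to order $2(\phi)(p)$, while $H$ vanishes to order $2D(p)$, the anti-holomorphic energy $L$ vanishes to order $2[(\phi)(p)-D(p)]$, not $(\phi)(p)-2D(p)$; this is why the paper records the divisor of $\sqrt{L}$ as $(\phi)-D$.)

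The deeper issue is that local vanishing orders of $H$ and $L$ alone do not determine the sign of $J=H-L$ near $p$ when $2D(p)=(\phi)(p)$ (both vanish to equal order), and even when the orders differ, they only tell you the sign in a small punctured neighborhood of a \emph{single} singular point --- they give no control at smooth points far from the support of $D$, which is where a sign flip could happen. The paper's proof does not attempt a pointwise normal-form analysis at all. Instead, for the ``if'' direction it invokes Proposition~\ref{prop: energy domination}: taking $u_1=\log L(f)$, $u_2=\log H(f)$ with divisors $(\phi)-D$ and $D$, the hypothesis $2D<(\phi)$ is exactly the hypothesis $D<(\phi)-D$ of that proposition, and the Bochner/maximum-principle comparison yields $L(f)<H(f)$ \emph{globally} away from the support of $D$, i.e.\ $J>0$ with only isolated zeros. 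Once the Jacobian has a definite sign and an isolated zero set, the classical local regularity (Hartman--Wintner, or the argument in \cite{BBDH} p.~12) says the isolated singular points are genuine branch points. The ``only if'' direction is the easy one and matches the beginning of your sketch: a branched immersion's Jacobian does not flip sign, and the local vanishing orders already force $J>0$ near $p$ when $2D(p)<(\phi)(p)$ and $J<0$ near $p$ when $2D(p)>(\phi)(p)$, so $2D-(\phi)$ cannot change sign. The missing idea in your write-up is the global comparison principle; without it you are stuck with a pointwise statement that is not equivalent to the theorem.
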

When $\phi=0$, the harmonic map is weakly conformal and automatically a branched immersion. The conditions $2D<(\phi)$ and $(\phi)<2D$ correspond to positive and negative Euler numbers respectfully. The key elements of Proposition \ref{prop: branched immersions} are contained in the literature, but the result has never been stated as above. The "only if" direction is 
 observed in \cite[Lemma 3.2]{BBDH}, and the "if" direction follows from \cite[Theorem 4.1, Proposition 6.5]{Gothen_Silva} (which improves \cite[Theorems 3.4 and 4.1]{BBDH}). On route to Theorem \ref{principal_C}, we establish all the tools necessary to prove Proposition \ref{prop: branched immersions}, so for the convenience of the reader we write out a proof. We will also prove a version of Proposition \ref{prop: branched immersions} for surfaces with boundary (see Lemma \ref{lem: modified BBDH}).

 Finally, we state our solution to the domination problem for branched harmonic immersions. The theorem concerns dominating arbitrary maps by branched immersions, and the corollary is about dominating branched immersions themselves.
 \begin{principal}\label{principal_C}
     Let $f,h:\widetilde{X}\to (\mathbb{H}^2,\sigma)$ be equivariant harmonic maps with holomorphic data $(\phi,D_1)$ and $(\phi,D_2)$ respectively, with $\deg D_1, \deg D_2\leq 2g-2$.  Assuming $h$ is a branched immersion, then $$
    f^*\sigma < h^*\sigma \quad \text{if and only if} \quad D_2<D_1 \textrm{ and } D_1+D_2 < (\phi).
    $$
\end{principal}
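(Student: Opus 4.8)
The plan is to work in terms of the standard formalism for harmonic maps into $\mathbb{H}^2$. Writing $e(f) = \|\partial f\|^2$ and $\bar e(f) = \|\bar\partial f\|^2$ for the holomorphic and anti-holomorphic energy densities with respect to a background conformal metric, one has the Bochner-type identities $\Delta \log e(f) = 2e(f) - 2\bar e(f) - 2$ away from the zeros of $e(f)$, the relation $e(f)\bar e(f) = |\phi|^2$ (so $\phi$ is the Hopf differential, $\phi = \partial f \cdot \overline{\partial f}$ in the appropriate sense), and the pullback metric decomposes as $f^*\sigma = e(f)\,|dz|^2 + \bar e(f)\,|dz|^2 + 2\mathrm{Re}(\phi)$. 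By Proposition \ref{prop: holomorphic data}, the divisor $D_i$ is precisely the divisor of $\sqrt{e}$, i.e. $e(f) = |\omega_1|^2$ with $(\omega_1) = D_1$ and likewise $e(h) = |\omega_2|^2$ with $(\omega_2) = D_2$, for holomorphic sections $\omega_i$ of the appropriate line bundle; the condition $h$ is a branched immersion together with Proposition \ref{prop: branched immersions} tells us $(\phi) < 2D_2$ (the positive-Euler-number case, after normalizing the sign). First I would record these identities carefully and reduce the inequality $f^*\sigma < h^*\sigma$ to a pointwise comparison: since both metrics share the same off-diagonal term $2\mathrm{Re}(\phi)$, one checks that $f^*\sigma < h^*\sigma$ is equivalent to $e(f) + \bar e(f) \le e(h) + \bar e(h)$ everywhere, with strict inequality wherever $h^*\sigma$ is positive definite — and in fact, using $e\bar e = |\phi|^2$ being the same for both, this is equivalent to $e(f) \le e(h)$ everywhere with appropriate strictness (the map $t \mapsto t + |\phi|^2/t$ being monotone on the relevant range determined by which side of $|\phi|$ the holomorphic energy lies).

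The heart of the argument is then a maximum principle comparison between the two functions $u = \log e(f)$ and $v = \log e(h)$, which satisfy the \emph{same} PDE $\Delta w = 2e^{w} - 2|\phi|^2 e^{-w} - 2$ on $X \setminus (\mathrm{supp}\,D_1 \cup \mathrm{supp}\,D_2)$ but have prescribed logarithmic singularities: $u \sim 2D_1(p)\log|z - p|$ and $v \sim 2D_2(p)\log|z-p|$ near a point $p$, and near a zero of $\phi$ the solution is forced to approach $+\infty$ or stay bounded according to the branched-immersion dichotomy. The direction "$D_2 < D_1$ and $D_1 + D_2 < (\phi)$ implies domination" I would prove by examining $u - v$: it is a subsolution or supersolution of a linear elliptic equation with nonnegative zeroth-order coefficient (linearizing $2e^w + 2|\phi|^2 e^{-w}$ along the segment between $u$ and $v$ gives a positive coefficient), its singularities at points of $D_1$ push it toward $-\infty$ while at points of $D_2 \setminus D_1$ toward $+\infty$; the condition $D_1 + D_2 < (\phi)$ is exactly what guarantees that near the zeros of $\phi$ the function $h$ is a branched immersion on the "small energy" side and controls the boundary behavior so that no positive interior maximum of $u - v$ can occur. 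By the maximum principle $u - v \le 0$, with strictness by the strong maximum principle since $u \not\equiv v$ (as $D_1 \ne D_2$). For the converse, if either $D_2 \not< D_1$ (so $D_2(q) > D_1(q)$ for some $q$, forcing $e(f) > e(h)$ near $q$, hence $f^*\sigma < h^*\sigma$ fails there) or $D_1 + D_2 \not< (\phi)$, one exhibits a point where the inequality is violated; the second case requires checking that when $D_1 + D_2 \ge (\phi)$ the energies must cross, which again follows from the PDE comparison run in the opposite direction near a relevant zero of $\phi$.

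The main obstacle I anticipate is handling the behavior at the zeros of $\phi$ and at the divisor points simultaneously: the linearized operator for $u - v$ has a zeroth-order term that, while nonnegative, degenerates, and one must verify that the prescribed asymptotics are strong enough to dominate the error terms in the maximum principle — essentially a removable-singularity / barrier argument at each special point, using explicit model solutions (the hyperbolic metric with a cone point, and the flat model for the branched-immersion case) as comparison functions. A secondary subtlety is the degree hypothesis $\deg D_i \le 2g-2$: this ensures the relevant line bundle has the right degree for the Bochner equation to admit the claimed solutions and, combined with $D_1 + D_2 < (\phi)$ and $\deg(\phi) = 4g - 4$, keeps everything consistent; I would check the numerics early to make sure the conditions in the statement are not vacuous and that the two alternatives in Proposition \ref{prop: branched immersions} correspond to the stated sign of the Euler number. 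Once the boundary/singularity analysis is set up correctly, the comparison itself should be a clean application of the maximum principle, paralleling the argument of Deroin--Tholozan in the case $D_2 = 0$.
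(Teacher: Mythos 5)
Your proposal correctly identifies the analytic engine of the proof (a maximum-principle comparison of Bochner solutions $\log H(\cdot)$ with prescribed logarithmic singularities governed by the divisors), and your setup for the comparison using $D_2 < D_1$ is essentially the paper's Lemma~\ref{lem: H(f) less than H(h)} specialized to $H$. However, as written the argument has a structural gap that would prevent it from closing.

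The issue is in the reduction from $f^*\sigma < h^*\sigma$ to a single inequality between holomorphic energies. Since both maps share the Hopf differential, $f^*\sigma < h^*\sigma$ is equivalent to $e(f) < e(h)$, i.e.\ $H(f)+L(f) < H(h)+L(h)$, with $H(f)L(f) = H(h)L(h) = |\phi|_\nu^2$. You observe that $t \mapsto t + |\phi|^2/t$ is monotone on each side of $|\phi|$, and conclude that this is ``equivalent to $e(f)\le e(h)$ [meaning $H(f)\le H(h)$] with appropriate strictness.'' That conclusion is false when the two maps sit on opposite sides of $|\phi|$, and this happens precisely on the region where $f$ has negative Jacobian: there $H(f) < |\phi|_\nu < L(f)$, while $h$ being a branched immersion with $\deg D_2 \le 2g-2$ forces $H(h) \ge |\phi|_\nu \ge L(h)$ everywhere. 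On that region, knowing $H(f)\le H(h)$ is in the wrong direction (it tends to make $L(f)\ge L(h)$), and what is actually needed is $L(f)\le H(h)$. The paper isolates this carefully in Proposition~\ref{prop: H and L conditions}, which replaces the energy comparison with $|H-L|$ comparisons on the four regions $U_f\cap U_h$, etc. With $U_h = X$ in the branched-immersion case, the proof splits into exactly two maximum-principle comparisons: $H(f)\le H(h)$ (from $D_2 < D_1$) on $U_f$, and $L(f)\le H(h)$ (from $D_2 < (\phi)-D_1$, i.e.\ $D_1+D_2 < (\phi)$) on $V_f$. Your proposal runs only the first comparison, and misattributes $D_1+D_2<(\phi)$ to boundary control for $u-v=\log H(f)-\log H(h)$; in the paper it is instead the hypothesis of an entirely separate application of the same maximum-principle lemma.

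Two smaller remarks: (i) your sign is reversed --- with $\deg D_2 \le 2g-2$ and $h$ a branched immersion one has $2D_2 < (\phi)$, not $(\phi)<2D_2$ (take degrees against $\deg(\phi)=4g-4$); (ii) you anticipate that the zeros of $\phi$ make the maximum principle delicate, but in the paper's Proposition~\ref{prop: energy domination} the zeros of $\phi$ need no special treatment --- only the divisors $D_1, D_2$ of the functions being compared produce singular asymptotics, and the subharmonicity argument goes through uniformly. The converse direction in your sketch is closer to the paper's, but again the violation when $D_1+D_2 \not< (\phi)$ is detected by comparing $H(h)$ with $L(f)$ (on $V_f$), not with $H(f)$.
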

 \begin{corx}\label{corc}
Let $f,h:\widetilde{X}\to (\mathbb{H}^2,\sigma)$ be equivariant harmonic maps with holomorphic data $(\phi,D_1)$ and $(\phi,D_2)$ respectively, with $\deg  D_1, \deg D_2\leq 2g-2$. Assuming $f$ is a branched immersion, then $$
    f^*\sigma < h^*\sigma \quad \text{if and only if} \quad D_2 < D_1. $$ 
 \end{corx}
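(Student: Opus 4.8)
\emph{Plan.} The plan is to obtain both implications from Theorem \ref{principal_C}, the only genuine work being to show that in the ``only if'' direction the dominating map $h$ is automatically a branched immersion. Throughout we assume $\phi\neq 0$. Write $\mathcal H_u=\|\partial u\|^2$ and $\mathcal L_u=\|\bar\partial u\|^2$ for the holomorphic and anti-holomorphic energy densities of a harmonic map $u$, so that $\mathcal H_u\mathcal L_u=|\phi|^2$, the total energy density $e(u)=\mathcal H_u+\mathcal L_u$ satisfies $e(u)\geq 2|\phi|$ with equality exactly where $u$ fails to be an immersion, and $D_1,D_2$ are the divisors of $\sqrt{\mathcal H_f},\sqrt{\mathcal H_h}$. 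The key elementary observation is that $f$ and $h$ have the \emph{same} Hopf differential, so in a local conformal coordinate $z$ the $(2,0)$ and $(0,2)$ parts of $f^*\sigma$ and $h^*\sigma$ coincide and only the trace terms differ: $h^*\sigma-f^*\sigma=\big(e(h)-e(f)\big)\,|dz|^2$. Hence $f^*\sigma\leq h^*\sigma$ is equivalent to $e(h)\geq e(f)$ pointwise, and $f^*\sigma<h^*\sigma$ is equivalent to $e(h)\geq e(f)$ everywhere together with $e(h)>e(f)$ at every point where $h$ is an immersion.

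\emph{The implication $D_2<D_1\Rightarrow f^*\sigma<h^*\sigma$.} Since $f$ is a branched immersion, Proposition \ref{prop: branched immersions} gives $2D_1<(\phi)$ or $(\phi)<2D_1$; as $\deg D_1\leq 2g-2=\tfrac12\deg(\phi)$, the second alternative would force $\deg D_1\geq 2g-2$ and then $2D_1=(\phi)$, a contradiction, so $2D_1\leq(\phi)$ pointwise with strict inequality at some $q$. If $D_2<D_1$ then $2D_2\leq 2D_1\leq(\phi)$ pointwise and $2D_2(q)\leq 2D_1(q)<(\phi)(q)$, so $2D_2<(\phi)$; by Proposition \ref{prop: branched immersions}, $h$ is a branched immersion (and $\deg D_2\leq\deg D_1\leq 2g-2$). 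Likewise $D_1+D_2\leq 2D_1\leq(\phi)$ with strict inequality at $q$, so $D_1+D_2<(\phi)$. Now Theorem \ref{principal_C}, applied with $h$ in the role of the dominating branched immersion, yields $f^*\sigma<h^*\sigma$.

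\emph{The implication $f^*\sigma<h^*\sigma\Rightarrow D_2<D_1$.} By the observation above, $e(h)\geq e(f)$. The set where $h$ is not an immersion is $\{e(h)=2|\phi|\}$; at such a point $2|\phi|=e(h)\geq e(f)\geq 2|\phi|$ forces $e(f)=2|\phi|$, so that point lies in the corresponding set for $f$, which is \emph{finite} because $f$ is a branched immersion. Thus $h$ has finite critical locus, and this already forces $h$ to be a branched immersion: away from the finitely many zeros of $\phi$ one has $\mathcal H_h,\mathcal L_h>0$, and $w:=\tfrac12\log(\mathcal H_h/\mathcal L_h)$ satisfies the Bochner-type equation $\Delta_0 w=c\,|\phi|\sinh w$ for a positive constant $c$ (one of the tools developed en route to Theorem \ref{principal_C}); writing $b:=c|\phi|(\sinh w)/w\geq c|\phi|>0$ this becomes the linear equation $\Delta_0 w=b\,w$. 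A zero of $w$ isolated in $\{w=0\}$ would have one sign on a punctured neighborhood, hence an interior extremum, contradicting the strong maximum principle for $\Delta_0-b$ unless $w\equiv 0$ nearby, which contradicts isolation. Hence $h$ has no critical points off the zeros of $\phi$ (in particular no fold points, since those require $\phi\neq 0$); at a zero of $\phi$ where $dh$ vanishes, holomorphicity of $\partial h$ and anti-holomorphicity of $\bar\partial h$, combined with the absence of fold points on punctured neighborhoods, force the local normal form $z\mapsto z^n$ or its conjugate. So $h$ is a branched immersion, and Theorem \ref{principal_C} then gives $D_2<D_1$ (and the superfluous $D_1+D_2<(\phi)$).

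\emph{Main obstacle.} Everything except the middle step of the last paragraph is divisor bookkeeping plus a citation of Theorem \ref{principal_C}; the substantive point is that domination by $h$ over a \emph{branched immersion} $f$ collapses the critical locus of $h$ to a finite set, and one must then rule out isolated ``fold'' critical points of a harmonic map. This is not combinatorics but relies on the elliptic maximum principle for $\log(\mathcal H_h/\mathcal L_h)$, i.e. the same analytic input underlying Theorem \ref{principal_C} and Proposition \ref{prop: branched immersions}.
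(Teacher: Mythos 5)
Your proof is correct and follows the same skeleton as the paper's: reduce both implications to Theorem \ref{principal_C}, after checking that $h$ is a branched immersion in each direction and that $D_1+D_2<(\phi)$ is automatic from $2D_1<(\phi)$, $2D_2<(\phi)$ and the degree bounds. The one place you add genuine content is the ``only if'' direction's claim that $f^*\sigma<h^*\sigma$ forces $h$ to be a branched immersion, which the paper states without comment. Your observation that the singular set of $h$ sits inside the finite singular set of $f$ is the right starting point; but once that is known, the cleaner conclusion---and the one the paper evidently has in mind, since it matches the technique in the proof of Proposition \ref{prop: branched immersions}---is simply that $J(h)$ is nowhere zero on the connected set $X$ minus finitely many points, hence has constant sign, so $J(h)$ never changes sign and the cited arguments of \cite{BBDH} and \cite{wood_paper} give branch points at the isolated singular points. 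Your strong-maximum-principle argument on $w=\tfrac12\log(H(h)/L(h))$ also works and is in the spirit of Proposition \ref{prop: energy domination}, but it is heavier machinery than the step needs, and your final clause about zeros of $\phi$ (``force the local normal form'') is loose---it is exactly the content of the Hartman--Wintner/Wood citation you would invoke anyway. Swapping that paragraph for the connectedness argument would tighten the write-up without losing anything.
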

 
 The assumption on the degrees is not necessary, but just keeps the statement cleaner. If $\deg D_2\geq 2g-2$, then we can flip back to the case $\deg D_2\leq 2g-2$ by using an outer automorphism of the fundamental group. A harmonic map with data $(\phi,D)$ becomes a map with data $(\phi, (\phi)-D)$, and the pullback metric is unaffected, and thus the condition $D_2<D_1$ becomes $(\phi)-D_2<D_1$. Similar if $\deg D_1\geq 2g-2.$  It is worth noting that if $h$ is not a branched immersion, the condition $D_2<D_1$ does not guarantee domination; see Proposition \ref{prop: counterexample}. 
 
The main result toward the proof of Theorem \ref{principal_C} is Proposition \ref{prop: energy domination}, which is of independent interest. It provides an inequality between functions satisfying a \emph{Bochner-type equation}, generalizing the key inequality from \cite[Section 1.8]{Schoen_Yau_book} and \cite[Lemma 2.6]{Deroin_thlozan_domination}. This proposition is also a key step in establishing Proposition \ref{prop: branched immersions}.
\begin{remark}
    Given harmonic maps $f,h:\widetilde{X}\to (\mathbb{H}^2,\sigma)$ with the same Hopf differential, the map $F=(f,h): \widetilde{X}\to (\mathbb{H}^2\times \mathbb{H}^2,\sigma\oplus (-\sigma))$ determines a maximal surface (zero mean curvature at immersed points). The domination condition $f^*\sigma<h^*\sigma$ says that $F$ is a spacelike immersion off the singular set of $h$.
\end{remark}

\begin{remark}
In the works \cite{Deroin_thlozan_domination}, \cite{Tholozan}, \cite{Nathaniel_JDG}, \cite{Almost_domination_Nat}, the main domain results concern pairs $(\rho,j),$ where $\rho$ is an action by isometries on a $\textrm{CAT}(-1)$ Hadamard manifold $M$ and $j$ is a Fuchsian representation to $\psl.$ It should be possible to prove extensions of Theorems \ref{principal_A} and \ref{principal_C} for such pairs. Toward this, the key observations are that, for maps to $M$, there is a notion of holomorphic energy (see \cite[Section 2.2]{Deroin_thlozan_domination}) and we have a Bochner formula (see \cite[Lemma 2.3]{Deroin_thlozan_domination}).
\end{remark}

With our application to anti-de Sitter geometry in mind, we are mainly concerned with dominating by branched immersions, which puts us in the case $|\textrm{eu}(\rho)|<|\textrm{eu}(j)|$. That being said, after Theorem \ref{principal_C}, it is natural to inquire about the equality case $|\textrm{eu}(\rho)|=|\textrm{eu}(j)|.$ Since it's not focused toward an application in this paper, we don't carry out a general treatment, but we just give examples that point to future directions. 
\begin{principal}\label{principal_D}
For all even $g\geq 2$, there exists a Riemann surface $Y$ with fundamental group $\Gamma$ such that the following holds.
    \begin{enumerate}
        \item There exist representations $\rho,j:\Gamma\to \psl$ with equivariant harmonic maps $f,h:\widetilde{Y}\to (\mathbb{H}^2,\sigma)$ respectively, such that  $\textrm{eu}(\rho)=\textrm{eu}(j)=0$ and $f^*\sigma< h^*\sigma$.
        \item There exists a representation $\rho$ with equivariant harmonic map $f$ with the property that there exist no pairs $(j,h)$ consisting of a non-Fuchsian representation $j$ and an equivariant harmonic map $h$ with the same Hopf differential such that $f^*\sigma< h^*\sigma$.
    \end{enumerate}
\end{principal}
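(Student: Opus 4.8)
The plan is to construct the surface $Y$ explicitly via the holomorphic data of Proposition~\ref{prop: phi and divisors intro}, exploiting the interplay between the Euler number constraint $\mathrm{eu}(\rho) = \mathrm{eu}(j) = 0$ and the domination inequality. Recall from Section~\ref{sec: divisors and NAH} that $\mathrm{eu} = 0$ corresponds to a harmonic map whose holomorphic energy vanishes at a divisor $D$ with $\deg D = g-1$, neither comparable to $(\phi)$ in the strong sense required for a branched immersion (so Proposition~\ref{prop: branched immersions} fails and Theorem~\ref{principal_C} does not directly apply). For part (1), I would look for a quadratic differential $\phi$ on a suitable hyperelliptic curve $Y$ of even genus $g$ whose zero divisor $(\phi)$ admits two distinct effective sub-divisors $D_1, D_2$ of degree $g-1$ with $D_2 < D_1$ and $D_1 + D_2 < (\phi)$ --- the natural candidate being $(\phi)$ pulled back from a divisor on $\mathbb{P}^1$ via the hyperelliptic map, so that $(\phi) = 2E$ for an effective divisor $E$ of degree $2g-2$, and then setting $D_1 = E$ with $D_2$ a proper sub-divisor. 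The point is that even though $h$ is not a branched immersion, for this special symmetric configuration one can still run the Bochner-type comparison of Proposition~\ref{prop: energy domination} and verify $f^*\sigma < h^*\sigma$ by hand; the hyperelliptic symmetry is what makes the relevant functions tractable. The reason $g$ must be even is presumably a parity constraint on realizing $D_1, D_2$ as genuine sub-divisors of a square $(\phi) = 2E$ while keeping $\deg D_i = g-1$ odd-versus-even bookkeeping, or a constraint coming from the hyperelliptic involution acting on the relevant line bundle.

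For part (2), the strategy is to produce a representation $\rho$ whose equivariant harmonic map $f$ is ``maximally spread out'' in the sense that no quadratic differential with the same Hopf differential as $f$ can host a strictly larger divisor. Concretely, I would take $\rho$ with $\mathrm{eu}(\rho) = 0$ and holomorphic data $(\phi, D)$ where $D$ is chosen so that $(\phi) - 2D$ is effective but $D$ is already the \emph{largest} effective divisor bounded by $(\phi)$ that can arise --- e.g.\ pick $\phi$ with simple zeros, so $(\phi)$ is reduced, and $D$ supported on roughly half of them; then any competitor $(\phi, D_1)$ with a non-Fuchsian $j$ and $D_1 > D$ would force $D_1$ to contain a zero of $\phi$ with multiplicity $2$, contradicting that $(\phi)$ is reduced, OR would push $\deg D_1 > g-1$, which by the Euler-number/degree dictionary in Section~\ref{sec: divisors and NAH} forces $j$ to have the wrong (extremal, i.e.\ Fuchsian) Euler number or else $h$ fails to exist as claimed. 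One has to be slightly careful: the competitor $h$ need not have the \emph{same} divisor structure relative to a fixed $(\phi)$ unless the Hopf differentials genuinely agree, so I would invoke Proposition~\ref{prop: phi and divisors intro} to reduce ``same Hopf differential'' to ``same $\phi$, comparing divisors $D$ versus $D_1$'' and then argue that $D_2 < D_1$ (here playing the role forced by an analogue of the necessity direction in Theorem~\ref{principal_C}) is incompatible with $D$ being maximal.

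The main obstacle I anticipate is part (1): unlike Theorem~\ref{principal_C}, here neither map is a branched immersion, so the clean iff-criterion is unavailable and one genuinely has to verify the differential inequality $f^*\sigma < h^*\sigma$ from the Bochner-type equation of Proposition~\ref{prop: energy domination} in a regime where the subsolution/supersolution comparison is not automatic. The hyperelliptic ansatz should reduce this to a one-variable ODE comparison on $\mathbb{P}^1$, but checking that the inequality is strict (as opposed to merely non-strict, which is all Proposition~\ref{prop: energy domination} might give on the nose) at every point off the branch locus will require a maximum-principle argument tailored to the specific $\phi$. A secondary difficulty is pinning down exactly why $g$ even is needed and whether the example can be promoted to all $g$; since the statement only claims even $g$, I would not push on this, but I would want to isolate the parity obstruction cleanly enough that a reader sees it is not an artifact of the construction being lazy. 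For part (2), the argument is more combinatorial and I expect it to go through once the divisor dictionary is in place, though one must handle the case $\phi = 0$ separately (there the harmonic map is weakly conformal, a branched immersion, and Corollary~\ref{corc} applies, so the relevant $\rho$ must have $\phi \neq 0$).
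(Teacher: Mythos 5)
Your plan diverges from the paper's and contains a fundamental obstruction you haven't noticed. First, a bookkeeping error: by Proposition~\ref{prop: euler number}, $\textrm{eu}(\rho)=0$ forces $\deg D_X(\rho)=2g-2$, not $g-1$. Once corrected, your proposed divisor configuration is impossible: you cannot have $D_2 < D_1$ (the definition requires $D_2 \leq D_1$ with strict inequality somewhere) when both divisors have degree $2g-2$, since $D_2\leq D_1$ with equal degrees forces $D_2=D_1$. So there is no such pair of sub-divisors of $(\phi)$ to look for, and the hyperelliptic ansatz you describe cannot get off the ground.

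The paper's construction circumvents exactly this obstruction by using Riemann surface doubling rather than hyperelliptic symmetry. Take a bordered surface $X_1$ with one boundary circle $\gamma$, form the double $Y=X_1\cup_\gamma X_2$ with its anti-holomorphic involution $\tau$ (this is precisely why $g$ is even, not a parity constraint on sub-divisors), and choose a $\tau$-symmetric $\phi$ with no zeros on $\gamma$. Divisors of the correct total degree $2g-2$ are built by prescribing $D'$ on $X_1$ and forcing the complementary behavior $D''-\tau(D')$ on $X_2$, with $D''=(\phi)|_{X_2}$. The resulting harmonic map satisfies $f\circ\widetilde\tau=f$ (Lemma~\ref{lem: symmetry}), hence $J(f)=0$ on $\gamma$. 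Domination is then governed by the inequality on the restrictions $D_2'<D_1'$ on $X_1$ alone (Lemma~\ref{lem: thm a modified with boundary}): these have \emph{different} degrees, so strict domination of divisors is possible, and the opposite-direction comparison on $X_2$ is automatic by the $\tau$-symmetry. The Bochner comparison Proposition~\ref{prop: energy domination} is applied on the compact-with-boundary surface $X_1$ (via Remark~\ref{rem: boundary case}), using the boundary condition that the relevant $\log H$ functions agree on $\gamma$ --- not via a one-variable ODE on $\mathbb{P}^1$.

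For part (2), your ``maximally spread out'' idea also misses the mechanism. The paper takes $D_0'=0$, forcing the harmonic map $f$ to be an actual immersion on both $\textrm{int}(X_1)$ and $\textrm{int}(X_2)$, with its entire singular set concentrated on $\gamma$. The key step is that $f^*\sigma < h^*\sigma$ and $f^*\sigma$ nondegenerate off $\gamma$ forces the singular set of $h$ to also lie in $\gamma$, so $h$ is an immersion on each open half; reading off the divisor of $H(h)$ (or of $L(h)$, after an outer automorphism) on each half then forces either $D=0$ (so $j$ Fuchsian) or $D=(\phi)|_{X_2}$ (so $j$ conjugate to $\rho$, contradiction). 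Your argument about ``multiplicity $2$'' and ``$\deg D_1 > g-1$ forcing extremal Euler number'' does not engage with this because the constraint operates through the location of the singular set relative to the dividing circle, not through multiplicity bounds at zeros of $\phi$. You would need the doubling framework (or something playing the same structural role) to make part (2) work.
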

In (2), the underlying representation has Euler number zero.

\subsection{Future directions} We list a few research directions that follow this work.

\subsubsection{Length spectrum domination}
In line with \cite{GKW_folded}, it is natural to study pairs $\rho,j:\Gamma\to \psl$ such that, for all $\gamma\in \Gamma,$ $\ell(\rho(\gamma))\leq \ell(j(\gamma))$, where $\ell(\cdot)$ is the hyperbolic translation length. It would be interesting to compare this length spectrum domination with the domination considered here.
\subsubsection{Higher rank}
Our domination problems generalize easily for representations to a higher rank non-compact semisimple Lie group $G$, with the space $\mathbb{H}^2$ replaced with a Riemannian symmetric space of $G$. 
For generalizations of \cite{Deroin_thlozan_domination}, with Fuchsian representations replaced with Hitchin representations to $G=\textrm{SL}(n,\R)$ see \cite[Theorem 1.8 and Conjecture 1.11]{Dai_Li_cyclic} and \cite{Dai_Li_Domination}. Extending our results in this paper seems approachable for harmonic maps arising from Coxeter cyclic $G$-Higgs bundles, which are equivalent to solutions to affine Toda equations (see \cite{Nathaniel_ognjen}). There is even a Toledo number for cyclic $G$-Higgs bundles, which generalizes the Euler number (see \cite{GP_G}). 

\subsubsection{More on singular $\textrm{AdS}$ $3$-manifolds}

Our constructions in Section \ref{sec5} (in particular, Theorem \ref{ads_manifold}) allow the construction of general singular AdS $3$-manifolds from dominating pairs, rather than only branched AdS $3$-manifolds arising from branched immersions. By developing more exotic examples of singular AdS $3$-manifolds, one may gain further insight into the nature of singularities in anti--de Sitter geometry.

\subsection{Outline of the paper}
In Sections \ref{sec2} and \ref{sec3}, we study the domination problem for harmonic maps, while in Sections \ref{sec4}, \ref{sec5}, and \ref{sec6}, we study anti-de Sitter geometry and construct singular anti-de Sitter $3$-manifolds from dominating pairs. Sections \ref{sec2}-\ref{sec3} and \ref{sec4}-\ref{sec6} can essentially be read independently. 

In more detail, Section \ref{sec2} provides preliminaries on harmonic maps and carefully states and proves Proposition \ref{prop: holomorphic data}, which characterizes harmonic maps via holomorphic data $(\phi,D)$. In Section \ref{sec3}, we address the domination problem and prove Proposition \ref{prop: branched immersions} as well as Theorems \ref{principal_A}, \ref{principal_C}, and \ref{principal_D}. In Section \ref{sec4}, we introduce spin-cone AdS $3$-manifolds, and in Section \ref{sec5}, we construct singular AdS $3$-manifolds from general dominating pairs (Theorem \ref{ads_manifold}). Finally, in Section \ref{sec6}, we specialize to branched immersions and show that, in this case, the examples from Section \ref{sec5} are branched AdS $3$-manifolds, thereby proving Theorem \ref{principal_ads}.

\subsection{Acknowledgements}
This project stems from discussions with Andrea Seppi, to whom we are very grateful, and to whom we thank further for helpful comments on the first draft. We also thank Francesco Bonsante for sharing the thesis \cite{Agnese_thesis}.

\section{Preliminaries on harmonic maps}\label{sec2}
\subsection{Equivariant harmonic maps}\label{sec: hmaps definitions}
Let $\Sigma_g$ be a closed oriented surface of genus at least $2$ and let $X$ be a Riemann surface structure on $\Sigma_g$ with universal cover $\Tilde{X}$. Let $(M,\sigma)$ be a Riemannian manifold and let $f:\widetilde{X}\to M$ be a $C^2$ map. The derivative $df$ defines a section of the 
endomorphism bundle $T^*\widetilde{X}\otimes f^*TM$. Complexifying $T^*\widetilde{X}\otimes f^*TM$, let $df=\partial f +\overline{\partial}f$ be the decomposition into $(1,0)$ and $(0,1)$ parts. We denote by $\nabla$ the connection on $f^*TM\otimes \mathbb{C}$ induced by the Levi-Civita connection of $\sigma$, and we also use $\nabla$ for its extension to $f^*TM\otimes \mathbb{C}$-valued forms. The definition below depends on the Riemann surface structure $X$ and the metric $\sigma.$ 
\begin{defi}
The map $f$ is harmonic if $\nabla^{0,1}\partial f=0.$
\end{defi}
In this paper, we are concerned with equivariant harmonic maps from $\widetilde{X}$ to the space $(\mathbb{H}^2, \sigma)$, where $\mathbb{H}^2$ is the $2$-dimensional upper half-plane and $\sigma$ is a hyperbolic metric of constant curvature $-1$. The starting point is the existence theorem of Donaldson \cite{Donaldson} and Corlette \cite{Corlette}. A representation $\rho:\Gamma\to \psl$ is irreducible if it is not contained in a parabolic subgroup, and $\rho$ is reductive if the Zariski closure of $\rho(\Gamma)$ is a reductive group. Geometrically, and perhaps more intuitively, $\rho$ is irreducible if the induced action of $\Gamma$ on the Gromov boundary $\partial_\infty \mathbb{H}$ has no global fixed point, and $\rho$ is reductive if $\rho(\Gamma)$ preserves a geodesic in $\mathbb{H}^2.$
\begin{theorem}[Donaldson, Corlette]
    Let $X$ be a Riemann surface structure on $\Sigma_g$ and let $\rho:\Gamma\to \psl$ be a representation, acting by isometries on $(\mathbb{H}^2,\sigma).$ Then there exists a $\rho$-equivariant harmonic map $f:\widetilde{X}\to (\mathbb{H}^2,\sigma)$ if and only if $\rho$ is reductive. 
\end{theorem}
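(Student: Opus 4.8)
The plan is to treat the two implications by very different means. The necessity of reductivity I would get from a soft convexity argument built on a Busemann function; the existence statement is the genuinely analytic one, and I would obtain it by the direct method in the calculus of variations, the one delicate point being coercivity of the energy.

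For necessity, I would argue contrapositively. If $\rho$ is not reductive, then $\rho(\Gamma)$ fixes a point $\xi\in\partial_\infty\mathbb{H}^2$ but preserves no complete geodesic. Fix a Busemann function $b_\xi:\mathbb{H}^2\to\R$ based at $\xi$. Every element of $\psl$ fixing $\xi$ preserves $b_\xi$ up to an additive constant, so there is a map $c:\Gamma\to\R$ with $b_\xi\circ\rho(\gamma)=b_\xi-c(\gamma)$. If a $\rho$-equivariant harmonic map $f$ existed, then $u:=b_\xi\circ f$ would satisfy $u\circ\gamma=u-c(\gamma)$, so $\Delta u$ is $\Gamma$-invariant and descends to the closed surface $X$; meanwhile the composition formula together with the vanishing of the tension field of $f$ gives $\Delta u=\mathrm{Hess}\,b_\xi(df,df)\ge 0$, because Busemann functions in $\mathbb{H}^2$ are convex. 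Integrating over the compact $X$ forces $\Delta u\equiv 0$, hence $df$ is everywhere valued in the kernel of $\mathrm{Hess}\,b_\xi$, which is the one-dimensional distribution tangent to the geodesics asymptotic to $\xi$. This confines the image of $f$ to a single such geodesic, so $\rho(\Gamma)$ preserves that geodesic (or fixes a point of $\mathbb{H}^2$, if $f$ is constant)---contradicting non-reductivity either way.

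For sufficiency, I would first dispatch the degenerate cases: if $\rho(\Gamma)$ fixes a point of $\mathbb{H}^2$ the constant map works, and if $\rho(\Gamma)$ preserves a geodesic $\gamma\cong\R$ then (after passing to an index-two subgroup if the endpoints are swapped) $\rho$ acts on $\gamma$ by translations, and a $\rho$-equivariant harmonic map into $\gamma$ is just a primitive of the harmonic $1$-form on $X$ representing the associated additive character, which exists by Hodge theory. In the remaining case $\rho$ is irreducible, i.e.\ $\rho(\Gamma)$ has no fixed point on $\partial_\infty\mathbb{H}^2$. Here the energy density of a $C^1$ $\rho$-equivariant map descends to $X$, so $E(f)=\tfrac12\int_X\|df\|^2\,dA_X$ is well defined on the space $\mathrm{Map}_\rho$ of finite-energy $\rho$-equivariant $W^{1,2}_{\mathrm{loc}}$ maps, and I would minimize $E$ over $\mathrm{Map}_\rho$. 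The crux is the following coercivity statement: for irreducible $\rho$, the sublevel sets of $E$ on $\mathrm{Map}_\rho$ are relatively compact in $C^0_{\mathrm{loc}}(\widetilde X,\mathbb{H}^2)$.

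I expect this coercivity to be the main obstacle, and I would prove it by contradiction: suppose $f_n\in\mathrm{Map}_\rho$ has $E(f_n)\le C$ while escaping to infinity in $\mathbb{H}^2$. Using that $\mathbb{H}^2$ is $\mathrm{CAT}(0)$, the pointwise-geodesic homotopies from a fixed $f_0$ to the $f_n$ have energy bounded by $\max(E(f_0),C)$ (convexity of $E$ along such paths in $\mathrm{Map}_\rho$), and reparametrizing by arclength and passing to a limit yields an energy-bounded ray $t\mapsto f_t$ in $\mathrm{Map}_\rho$; for each $x$, $t\mapsto f_t(x)$ is a geodesic ray in $\mathbb{H}^2$, and the uniform bound on the oscillation of each $f_t$ (from the energy bound via Courant--Lebesgue) forces all these rays to be asymptotic to one common ideal point $\xi$. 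Passing to the limit in $f_t(\gamma x)=\rho(\gamma)f_t(x)$ then gives $\rho(\gamma)\xi=\xi$ for all $\gamma$, contradicting irreducibility. Granting coercivity, a minimizing sequence subconverges weakly in $W^{1,2}_{\mathrm{loc}}$ and locally uniformly to some $f\in\mathrm{Map}_\rho$; weak lower semicontinuity of $E$ makes $f$ a minimizer, hence weakly harmonic, and since the target is nonpositively curved and the domain is a surface (so there is no bubbling and no singular set) elliptic regularity upgrades $f$ to a smooth $\rho$-equivariant harmonic map. One could alternatively run the equivariant harmonic map heat flow, which exists for all time into the nonpositively curved target and converges precisely because coercivity rules out escape to infinity.
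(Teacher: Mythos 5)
The paper does not prove this theorem; it is cited to Donaldson and Corlette, so there is no in-paper argument to compare against, and I assess your proposal on its own and against the cited sources. Your proof is essentially correct, and it takes the direct variational route (in the style of Labourie and Jost--Yau) rather than the routes of the cited papers: Donaldson perturbs the energy functional to force coercivity and passes to a limit, while Corlette shows convergence of the Eells--Sampson heat flow. All three approaches hinge on the same underlying compactness principle, namely that for reductive $\rho$ bounded-energy equivariant maps cannot escape to infinity in $\mathbb{H}^2$, and your argument via geodesic homotopies, convexity of energy, a Courant--Lebesgue oscillation bound, and a common ideal limit point is the now-standard way to see this. The necessity direction via subharmonicity of $b_\xi\circ f$ is also correct and is the usual soft argument.

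Two steps would benefit from a little more care. First, once $\Delta(b_\xi\circ f)\equiv 0$ forces $df$ to take values in the one-dimensional kernel of $\mathrm{Hess}\,b_\xi$, the conclusion that the image of $f$ lies on a single geodesic asymptotic to $\xi$ needs a short extra argument: for instance, pull back the closed horocyclic $1$-form dual to the orthogonal distribution; its pullback vanishes, so the horocyclic coordinate of $f$ is locally constant, hence constant. Second, in the reducible non-trivial case, if $\rho(\Gamma)$ preserves a geodesic but contains a reflection, restricting to the index-two subgroup $\Gamma'$ produces only a $\Gamma'$-equivariant map; one should add that the harmonic representative of the associated class in $H^1$ of the double cover is anti-invariant under the deck involution (since conjugation by the reflection negates the translation character), so a suitably shifted primitive is genuinely $\Gamma$-equivariant, or simply rerun the convexity/coercivity argument with the geodesic itself as target.
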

By Sampson's argument in \cite[Theorem 3]{Sampson}, if $\rho$ is irreducible, then $f$ is unique. When $\rho$ is just reductive and not irreducible, again by \cite[Theorem 3]{Sampson}, there is a $1$-parameter family of harmonic maps, each of which is a parametrization onto the invariant geodesic, and all of the harmonic maps are related by an isometric translation along the geodesic. Since the harmonic maps are related by isometries, the pullback metric never depends on the choice of harmonic map.

\subsection{Energies and Hopf differentials}\label{sec: energies}
    Given a harmonic map from a Riemann surface, one can associate a number of analytic objects. For a detailed reference, see \cite[Section 1-2]{Schoen_Yau_book}. To begin, let $X$ be a Riemann surface as above and fix a metric $\nu$ on $X$ that's compatible with the Riemann surface structure. We use $\nu$ as well to denote the lift to a metric on the universal cover $\widetilde{X}$.

Keeping things general for now, let $\rho:\Gamma\to \textrm{Isom}(M,\sigma)$ be an action by isometries and let $f:\widetilde{X}\to (M, \sigma)$ be a $\rho$-equivariant $C^2$ map. The (possibly degenerate) pullback metric $f^*\sigma$ extends bilinearly to the complexified tangent bundle of $\widetilde{X}$ and then decomposes into types as 
\begin{equation}\label{eq: pullback metric}
    f^*\sigma = 2\sigma(\partial f,\overline{\partial} f) + \sigma(\partial f, \partial f)+ \sigma(\overline{\partial}f,\overline{\partial} f).
\end{equation}
We write $2\sigma(\partial f,\overline{\partial} f )=e(f)\nu,$ where $e(f)$ is called the energy density function. As well, the quadratic differential $\phi(f):=\sigma(\partial f, \partial f)$ is called the \emph{Hopf differential} of $f$ (note also that $\overline{\phi}(f)=\sigma(\overline{\partial}f,\overline{\partial} f)$). Since $f$ is $\rho$-equivariant, $e(f),\phi(f),$ $\overline{\phi}(f)$, and $f^*\sigma$ are invariant under the action of $\Gamma$ and hence descend to $X$.
The formula (\ref{eq: pullback metric}) is rewritten as
\begin{equation}\label{eq: pullback metric2}
f^*\sigma = e(f)\nu + \phi(f) + \overline{\phi}(f).
\end{equation}
The significance of $e(f)$ stems from the fact that the equation $\nabla^{0,1}\partial f=0$ can be seen as the Euler-Lagrange equation for the Dirichlet energy \begin{equation*}
    \mathcal{E}(X,f) = \int_{\Sigma_g} e(f)dA_\nu,
\end{equation*}
where $dA_\nu$ is the area form of $\nu$ (note $\mathcal{E}(X,f)$ does not depend on the choice of $\nu$). That is, harmonic maps are equivalently critical points of $\mathcal{E}(X,\cdot)$. As for the Hopf differential $\phi(f)$, it is holomorphic when $f$ is harmonic, and when the target has dimension at most $2,$ so for example when $M=\mathbb{H}^2,$ the converse holds as well.

We now set $M=\mathbb{H}^2$. In this special case, there are more analytic quantities to probe the harmonic map. Working in local coordinates, where $\sigma=\sigma(w)|dw|^2$, $\nu=\nu(z)|dz|^2$, we set $$H(f)=\frac{\sigma(f(z))}{\nu(z)}\Big |\frac{\partial f}{\partial z}\Big |^2, \hspace{1mm} L(f)=\frac{\sigma(f(z))}{\nu(z)}\Big |\frac{\partial f}{\partial \overline{z}}\Big |^2.$$ The function $H(f)$ is called the \emph{holomorphic energy} and $L(f)$ is called the \emph{anti-holomorphic energy}. They satisfy $$e(f)=H(f)+L(f), \hspace{1mm} |\phi(f)|^2\nu^{-2}=H(f)L(f),$$ and the vanishing of the Jacobian of $f$ is equivalent to the vanishing of the function $$J(f):=H(f)-L(f).$$ When $f$ is harmonic, the functions $H(f)$ and $L(f)$ are either identically zero or have isolated zeros and satisfy the Bochner formulae: away from the zeros of $H(f),$ 
\begin{equation}\label{eq: Bochner for H}
    \frac{1}{2}\Delta_\nu \log H(f)= -\kappa_\sigma(H(f)-L(f))+\kappa_\nu,
\end{equation}
where $\kappa_\sigma$ and $\kappa_\nu$ are the sectional curvatures of $\sigma$ and $\nu$, respectively (so $\kappa_\sigma=-1$), and away from the zeros of $L(f),$ 
$$\frac{1}{2}\Delta_\nu \log L(f)= -\kappa_\sigma(L(f)-H(f))+\kappa_\nu$$ (see \cite[Section 1.7]{Schoen_Yau_book}).
Note that the Bochner formula for $L(f)$ can be seen as a consequence of the Bochner formula for $H(f)$ and the equation $|\phi(f)|^2\nu^{-2}=H(f)L(f)$. Note that for a non-irreducible representation, again using that harmonic maps are related by isometries, $H(f)$ is independent of the choice of harmonic map.

We end this subsection with a discussion on Euler numbers. The \emph{Euler number} of a representation $\rho,$ denoted $\textrm{eu}(\rho),$ can be defined in many ways; for instance, using characteristic classes, or as an obstruction to lifting $\rho$ to $\widetilde{\psl}$. Here, we give the most naive definition: 
\begin{equation}\label{eq: Euler number}
    \textrm{eu}(\rho):=\frac{1}{2\pi}\int_{\Sigma_g} dA_{f^*\sigma},
\end{equation}
where $f:\widetilde{\Sigma}_g\to \mathbb{H}^2$ is any $\rho$-equivariant $C^2$ map, and $dA_{f^*\sigma}$ is the area form of $f^*\sigma$ (which might be degenerate). By an application of Stoke's theorem, the integral above indeed depends only on $\rho$ and not on $f$.
It is also useful to recall the equality of $2$-forms 
\begin{equation}\label{eq: 2-forms}
 dA_{f^*\sigma}=J(f)dA_\nu.
\end{equation}
The following lemma allows us to make Definition \ref{def: div rho} below.
\begin{lemma}\label{lem: H not zero}
     Let $X$ be a Riemann surface structure on $\Sigma_g$ and let $\rho:\Gamma\to \psl$ be a reductive representation with equivariant harmonic map $f:\widetilde{X}\to (\mathbb{H}^2,\sigma)$. If $\textrm{eu}(\rho)\geq 0$, then $H(f)$ cannot be identically zero.
\end{lemma}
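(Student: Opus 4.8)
The plan is to prove the contrapositive, exploiting that by \eqref{eq: Euler number} and \eqref{eq: 2-forms} the Euler number is simply the normalized total integral of the Jacobian function $J(f)=H(f)-L(f)$ against the positive area form $dA_\nu$. So suppose, toward a contradiction, that $H(f)\equiv 0$. The first step is purely algebraic: since $L(f)\geq 0$ pointwise, $H(f)\equiv 0$ forces $J(f)=H(f)-L(f)=-L(f)\leq 0$ everywhere on $\widetilde{X}$, hence on $\Sigma_g$ after descending.

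The second step is the integration. Using the identity $dA_{f^*\sigma}=J(f)\,dA_\nu$ from \eqref{eq: 2-forms} together with the positivity of $dA_\nu$, we get
\[
\textrm{eu}(\rho)=\frac{1}{2\pi}\int_{\Sigma_g} dA_{f^*\sigma}=\frac{1}{2\pi}\int_{\Sigma_g} J(f)\,dA_\nu\leq 0 .
\]
Combined with the hypothesis $\textrm{eu}(\rho)\geq 0$, this forces $\textrm{eu}(\rho)=0$, and, since a non-positive integrand with vanishing integral must vanish, $J(f)\equiv 0$. Then $L(f)=-J(f)\equiv 0$, so $e(f)=H(f)+L(f)\equiv 0$, i.e. $df\equiv 0$, and $f$ is a constant map.

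The last step is to discard this constant case: a $\rho$–equivariant constant map $f\equiv p$ satisfies $\rho(\gamma)p=p$ for all $\gamma\in\Gamma$, so $\rho(\Gamma)$ fixes a point of $\mathbb{H}^2$, which is the degenerate configuration excluded by our standing assumptions on $\rho$; this yields the contradiction. I do not expect a serious obstacle here — granted the two recorded identities, the argument is essentially a one–line sign computation — and the only point that needs a word of care is precisely this borderline case (where $\textrm{eu}(\rho)=0$ and $f$ is constant, equivalently $\rho$ has a global fixed point in $\mathbb{H}^2$), which is the situation the displayed argument leaves aside.
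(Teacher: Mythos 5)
Your argument is essentially the paper's: the paper starts from the identity
\[
\frac{1}{2\pi}\int_{\Sigma_g} H(f)\, dA_\nu = \textrm{eu}(\rho)+\frac{1}{2\pi}\int_{\Sigma_g} L(f)\, dA_\nu,
\]
which is exactly the integrated form of your inequality $\textrm{eu}(\rho)=\frac{1}{2\pi}\int J(f)\,dA_\nu\leq 0$ under $H(f)\equiv 0$, and it disposes of the two cases $\textrm{eu}(\rho)>0$ and $\textrm{eu}(\rho)=0$ in the same way, the latter by forcing $f$ to be constant. Your flag at the end is apt: the paper also dismisses the constant equivariant map with a bare ``which is impossible,'' and this is where both proofs lean on an implicit standing assumption, since a constant map corresponds precisely to $\rho(\Gamma)$ having a fixed point in $\mathbb{H}^2$ (e.g.\ $\rho$ landing in a compact subgroup), a case the stated hypothesis ``reductive'' does not literally rule out; your reading matches the authors' intent.
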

\begin{proof}
    By (\ref{eq: Euler number}), (\ref{eq: 2-forms}), and $J(f)=H(f)-L(f)$, we have 
    \begin{equation}\label{eq: split_Jacobian}
        \frac{1}{2\pi}\int_{\Sigma_g} H(f) dA_\nu= \textrm{eu}(\rho)+\frac{1}{2\pi}\int_{\Sigma_g} L(f) dA_\nu.
    \end{equation}
 If $\textrm{eu}(\rho)>0$ then the right hand side above is strict, so $H(f)$ has to be positive somewhere. If $\textrm{eu}(\rho)=0$ and $H(f)=0$ identically, then the right hand side shows that $f$ is a constant map, which is impossible. 
\end{proof}
\begin{defi}\label{def: div rho}
    Let $X$ be a Riemann surface structure on $\Sigma_g$ and let $\rho:\Gamma\to \psl$ be a representation with $\textrm{eu}(\rho)\geq 0$ and equivariant harmonic map $f:\widetilde{X}\to (\mathbb{H}^2,\sigma)$. The divisor of $(X,\rho)$, denoted $D_X(\rho),$ is the divisor of the square root of the holomorphic energy of $f$. 
\end{defi}
Starting from the characterization (\ref{eq: Euler number}), a classical argument, which involves the Bochner formula, (\ref{eq: 2-forms}), and the Gauss-Bonnet theorem, can be applied nearly verbatim to prove the following. The argument can be found in \cite[pp. 11-12]{Schoen_Yau_book}.
\begin{prop}\label{prop: euler number}
     Let $\rho:\Gamma\to \psl$ be reductive with $\textrm{eu}(\rho)\geq 0$. Then $\textrm{eu}(\rho)=2g-2-\deg D_X(\rho).$
\end{prop}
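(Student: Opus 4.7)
The starting point is the formula (\ref{eq: Euler number}) combined with (\ref{eq: 2-forms}): choosing $f$ to be the $\rho$-equivariant harmonic map and unpacking $J(f) = H(f) - L(f)$ gives
\begin{equation*}
\textrm{eu}(\rho)=\frac{1}{2\pi}\int_{\Sigma_g} J(f)\,dA_\nu = \frac{1}{2\pi}\int_{\Sigma_g}\bigl(H(f)-L(f)\bigr)\,dA_\nu.
\end{equation*}
The plan is to compute the integral $\int_{\Sigma_g}(H(f)-L(f))dA_\nu$ in a second way, by integrating the Bochner formula (\ref{eq: Bochner for H}) for $\log H(f)$, which reads $\tfrac12\Delta_\nu\log H(f)=(H(f)-L(f))+\kappa_\nu$ (using $\kappa_\sigma=-1$), and comparing the two expressions.

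Before integrating, I need to understand the singular behavior of $\log H(f)$. By Lemma \ref{lem: H not zero}, $H(f)\not\equiv 0$, so by the usual harmonic-map argument ($\partial f$ is a holomorphic section of $K_X\otimes f^*T^{1,0}\mathbb{H}^2$ because $f$ is harmonic to a Kähler target), $H(f)$ has only isolated zeros with well-defined orders. Near a point $p\in\mathrm{supp}(D_X(\rho))$, in a local coordinate $z$ centered at $p$, $\partial f$ vanishes to order $m_p:=D_X(\rho)(p)$, so $H(f)(z)\sim c_p|z|^{2m_p}$ for a smooth positive factor $c_p$, and hence $\log H(f)(z) = m_p\log|z|^2+(\text{smooth})$.

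Now remove small disks $B_\epsilon(p)$ around each zero of $\partial f$ and integrate the Bochner formula on $\Omega_\epsilon:=\Sigma_g\setminus\bigcup_p B_\epsilon(p)$, where everything is smooth. Applying Stokes/divergence theorem to the left-hand side and using the local expansion $\log H(f)\sim m_p\log|z|^2$, each boundary circle contributes $-4\pi m_p$ in the limit $\epsilon\to 0$ (the outward normal from $\Omega_\epsilon$ points into the disk), so
\begin{equation*}
\tfrac12\int_{\Sigma_g}\Delta_\nu\log H(f)\,dA_\nu \;=\; -2\pi\deg D_X(\rho).
\end{equation*}
For the right-hand side, the first term converges to $2\pi\,\textrm{eu}(\rho)$ by Step 1, and the second term equals $2\pi\chi(\Sigma_g)=2\pi(2-2g)$ by Gauss–Bonnet. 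Equating and solving yields $\textrm{eu}(\rho)=2g-2-\deg D_X(\rho)$.

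The only step requiring care is the boundary-integral computation in the limit $\epsilon\to 0$, which hinges on the precise local form $\log H(f)=m_p\log|z-p|^2+C^\infty$ near each branch point; this in turn requires the holomorphicity of $\partial f$ viewed as a section of the appropriate line bundle. Everything else, including the integrability of $\Delta_\nu\log H(f)$ over $\Omega_\epsilon$ and the passage to the limit on the Bochner right-hand side, is routine once this local structure is established.
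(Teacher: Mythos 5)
Your proposal is correct and reproduces, with full detail, the standard Schoen–Yau argument that the paper cites for this proposition: combine \eqref{eq: Euler number}, \eqref{eq: 2-forms}, and $J(f)=H(f)-L(f)$ to write $\textrm{eu}(\rho)$ as $\frac{1}{2\pi}\int(H(f)-L(f))\,dA_\nu$, then integrate the Bochner formula over the surface with small disks excised, pick up $-4\pi\deg D_X(\rho)$ from the boundary circles (using that $\partial f$ is a holomorphic section so $\log H(f)$ has the local form $m_p\log|z|^2+C^\infty$), and apply Gauss–Bonnet to $\int\kappa_\nu\,dA_\nu$. The paper's proof is a one-line reference to exactly this computation, so there is nothing to compare; your write-up simply spells out the cited argument.
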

    If we reverse the orientation of $\Sigma_g$ or precompose $\rho$ with an outer automorphism of $\psl$, then the Euler numbers flip sign. For this reason, we often restrict to the case $\textrm{eu}(\rho)\geq 0$. For $\textrm{eu}(\rho)\leq 0$, one has results analogous to above, with $H(f)$ replaced with $L(f).$

\subsection{Divisors and Higgs bundles}\label{sec: divisors and NAH}

The purpose of this section is to prove Proposition \ref{prop: holomorphic data} below (or, Proposition \ref{prop: phi and divisors intro}). We use Higgs bundles and the non-abelian Hodge correspondence; Higgs bundles will not come up again, so the unfamiliar reader might benefit from skipping the proofs on a first reading.

For a non-zero holomorphic section $\alpha$ of a holomorphic line bundle on $X$, we denote the divisor by $(\alpha).$ Let $\mathcal{K}$ be the canonical bundle of $X,$ so that $H^0(X,\mathcal{K}^2)$ is the space of holomorphic quadratic differentials on $X$.
\begin{defi}
    Let $\mathcal{D}$ be the set of pairs $(\phi,D),$ where $D$ is an effective divisor on $X$ with $\deg  D\leq 2g-2$ and $\phi\in H^0(X,\mathcal{K}^2)$, such that, if $\phi\neq 0$, then $D\leq (\phi)$ (as functions). We add the further condition that if $\deg  D=2g-2,$ then $\phi\neq 0$.
\end{defi}

\begin{prop}\label{prop: holomorphic data}
The set $\mathcal{D}$ is in bijection with the space of conjugacy classes of reductive representations from $\Gamma$ to $\psl$ with non-negative Euler class. If $(\phi,D)$ is associated with $\rho:\Gamma\to \psl$, then $D=D_X(\rho)$ and $\phi$ is the Hopf differential of any $\rho$-equivariant harmonic map.
\end{prop}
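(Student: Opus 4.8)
The plan is to prove this via the non-abelian Hodge correspondence, which identifies conjugacy classes of reductive representations $\Gamma\to\psl$ with polystable $\mathrm{SL}(2,\mathbb{R})$-Higgs bundles (equivalently, suitable $\mathrm{PSL}(2,\mathbb{R})$-Higgs bundles) on $X$, together with the Donaldson--Corlette theorem producing the equivariant harmonic map and the standard computation identifying the Higgs field with the Hopf differential and the holomorphic energy. So the first step is to recall the explicit form of an $\mathrm{SL}(2,\mathbb{R})$-Higgs bundle on $X$ with non-negative Toledo/Euler invariant: one takes a holomorphic line bundle $N$ with $0\leq\deg N\leq 2g-2$, forms the rank-two bundle $E=N\oplus N^{-1}$, and a Higgs field $\Phi=\begin{pmatrix}0 & \beta\\ \gamma & 0\end{pmatrix}$ with $\beta\in H^0(X,N^2\mathcal{K})$ and $\gamma\in H^0(X,N^{-2}\mathcal{K})$. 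The Euler number is $\deg N$ when $\beta\neq 0$ (or $\deg N=g-1$ and $\beta,\gamma$ arbitrary), and polystability is automatic in this range except that $\beta=0$ forces either $\deg N<g-1$ or $\deg N=g-1$; I would cite the Higgs bundle references already flagged in the excerpt (\cite{Hitchin:1986vp}, \cite{goldmanthesis}) rather than reprove stability.

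Next I would set up the dictionary between this data and a pair $(\phi,D)\in\mathcal{D}$. Given the Higgs bundle above, define $D=(\gamma)$, the divisor of the ``lower'' Higgs field entry, so $\deg D=\deg(N^{-2}\mathcal{K})=2g-2-2\deg N$; hmm --- I need to be careful which entry plays which role, because the excerpt normalizes $D=D_X(\rho)=$ divisor of the square root of the holomorphic energy, and $\deg D_X(\rho)=2g-2-\textrm{eu}(\rho)$ by Proposition \ref{prop: euler number}. So in fact one should take $\textrm{eu}(\rho)=2\deg N-(2g-2)$ is wrong in sign; the cleaner normalization is to let the destabilizing-type sub-line-bundle have degree $d\geq 0$ with $\textrm{eu}=d$, realize $E=L\oplus L^{-1}$ with $\deg L = (2g-2-\textrm{eu})/2$ allowed to be a half-integer only formally --- to avoid this, follow \cite{Hitchin:1986vp} and instead parametrize by the bundle $L$ with $E=L\oplus L^{-1}\mathcal{K}$, Higgs field with one entry the tautological $L\hookrightarrow L^{-1}\mathcal{K}\otimes\mathcal{K}$-type map and the other a section $\gamma\in H^0(X,L^{-2}\mathcal{K}^2)$. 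Then $\phi=\det\Phi$ (a quadratic differential, the Hopf differential), $D=(\gamma)$, and one checks $\deg D=\deg(L^{-2}\mathcal{K}^2)=4g-4-2\deg L$ while $\textrm{eu}=2\deg L-(2g-2)$, giving $\textrm{eu}=2g-2-\deg D$ as required; and $D\leq(\phi)$ because $(\phi)=(\det\Phi)$ contains $(\gamma)$ since the other diagonal-minor factor is an everywhere-nonvanishing map twisted into $\mathcal{K}^2/L^{-2}$. The boundary conditions on $\mathcal{D}$ ($\deg D=2g-2\Rightarrow\phi\neq0$; effectiveness; the case $\phi=0$) correspond exactly to the polystability constraints: $\phi=0$ iff $\gamma=0$, forcing $\textrm{eu}\leq 0$ combined with $\textrm{eu}\geq 0$ i.e. $\textrm{eu}=0$ and then $D=(\,)$ has degree $2g-2$ but that contradicts... so $\phi=0$ forces $D$ to be the divisor making things work, and I will reconcile this with the ``further condition'' in the definition of $\mathcal{D}$; this bookkeeping is where I expect to spend the most care.

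The third step is to identify the output of Donaldson--Corlette applied to this Higgs bundle with the harmonic map data. Given the harmonic metric solving Hitchin's equations, the harmonic map $f:\widetilde X\to\mathbb{H}^2$ has derivative $\partial f$ essentially equal to one entry of $\Phi$ and $\overline\partial f$ to the other; concretely $H(f)=\|\beta\|^2$-type expression and $L(f)=\|\gamma\|^2$-type expression in the harmonic metric, so the Hopf differential $\phi(f)=\sigma(\partial f,\partial f)$ equals $\beta\gamma=\det\Phi$ up to the standard constant, and the divisor of $\sqrt{H(f)}$ equals $(\beta)$ --- again I must match this against the chosen normalization so that it is $(\gamma)$ or $(\beta)$ that equals $D_X(\rho)$; by Lemma \ref{lem: H not zero} and Definition \ref{def: div rho} the correct entry is the one with isolated zeros whose divisor is $D$. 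Then $\textrm{eu}(\rho)=2g-2-\deg D$ from Proposition \ref{prop: euler number} is consistent. This shows the assignment Higgs bundle $\mapsto(\phi,D)$ followed by the NAH correspondence gives a well-defined map from reductive representations with $\textrm{eu}\geq0$ to $\mathcal{D}$ sending $\rho$ to $(\phi_\rho, D_X(\rho))$.

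Finally, for bijectivity I would run the construction backwards: given $(\phi,D)\in\mathcal{D}$, I must produce a Higgs bundle. Choose a line bundle $L$ with $\deg L=(4g-4-\deg D)/2$ --- this is an integer since $\deg D$ has the same parity as $2g-2$ (as $\deg D=2g-2-\textrm{eu}$ always has, because $\textrm{eu}$ and $2g-2$ differ by an even number; I should note this parity fact, it follows from $\deg D=2g-2-\textrm{eu}$ being forced to be an integer together with the structure, or more safely just: effective divisors dominated by $(\phi)$, $\deg(\phi)=4g-4$ even, so... actually $\deg D$ need not have fixed parity a priori, so here I genuinely need the constraint that comes from $D\leq(\phi)$ only bounding it, not pinning parity --- resolution: one does not need $L$ a genuine line bundle, one needs the \emph{Higgs bundle}, and the standard fix is that $E$ has a natural square-root-of-canonical twist; I will use the $\mathrm{PSL}(2,\mathbb{R})$ (adjoint) formulation where no parity issue arises, or equivalently fix once and for all a theta characteristic $\mathcal{K}^{1/2}$). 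Given $\phi$ and $D\leq(\phi)$, the effective divisor $(\phi)-D$ of degree $4g-4-\deg D$ and $D$ itself determine sections $\beta,\gamma$ of the appropriate bundles up to scale, with $\beta\gamma=\phi$, and the scaling ambiguity is absorbed by the $\mathbb{C}^*$ gauge acting on $L$; when $\phi=0$ one has $\gamma=0$ or $\beta=0$ and the data reduces to a line bundle $L$ of degree $g-1$ with the ``$\phi=0,\deg D=2g-2$ excluded'' convention matching $\beta=0$ being disallowed unless the bundle is stable, i.e. Troyanov's hyperbolic cone metrics. I then invoke NAH to get a reductive $\rho$, and the two compositions are mutually inverse by construction. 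The main obstacle, as flagged, is the careful case analysis at the boundary of $\mathcal{D}$ (the $\phi=0$ case and the $\deg D=2g-2$ case) and getting the parity/theta-characteristic bookkeeping exactly right so that the bijection is clean; the ``interior'' of the correspondence is standard and I would cite \cite{Hitchin:1986vp} and \cite{goldmanthesis} for it.
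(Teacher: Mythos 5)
Your overall strategy matches the paper's: parametrize Higgs bundles by line-bundle data $(L,\alpha,\beta)$, identify $\phi$ with the Hopf differential $\propto\alpha\beta$ and $D$ with the divisor of the relevant holomorphic energy, and invoke non-abelian Hodge plus Donaldson--Corlette. However, your execution has a genuine gap that you notice but do not resolve. Every version of the parametrization you actually set up --- $E=N\oplus N^{-1}$ with $\beta\in H^0(N^2\mathcal{K})$, $\gamma\in H^0(N^{-2}\mathcal{K})$, or the ``fix'' $E=L\oplus L^{-1}\mathcal{K}$ with $\gamma\in H^0(L^{-2}\mathcal{K}^2)$ --- produces $\deg D$ even (either $2g-2-2\deg N$ or $4g-4-2\deg L$), so none of these can realize a representation of odd Euler number, and your bijection would miss roughly half of $\mathcal{D}$. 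Fixing a theta-characteristic does not repair this: the $\mathrm{PGL}(2,\mathbb{C})$-bundle class is detected by $\deg E\bmod 2$, and both of your $E$'s have even degree, while odd Euler class forces the odd class. You name the correct resolution (``use the $\mathrm{PSL}(2,\mathbb{R})$ adjoint formulation'') but never carry it out.

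The paper sidesteps the parity problem by working from the start with $G$-Higgs bundles for $G=\psl$ (following \cite{Nathaniel_ognjen}), where $K^{\mathbb{C}}=\mathbb{C}^*$ acts on $\mathfrak{p}^{\mathbb{C}}$ with weights $\pm1$, so the data is a line bundle $L$ with $\alpha\in H^0(L\otimes\mathcal{K})$ and $\beta\in H^0(L^{-1}\otimes\mathcal{K})$ (first powers of $L$, not squares). Here $\textrm{eu}=\deg L$ and $\deg(\beta)=2g-2-\deg L$ range over all integers in $[0,2g-2]$ without constraint. The paper then needs two further ingredients that your sketch treats as automatic: (i) Proposition \ref{prop: deg L euler number} to pin down which of $e_\alpha,e_\beta$ equals $H(f)$ (the sets $\{e_\alpha,e_\beta\}=\{H(f),L(f)\}$ agree a priori only as unordered pairs, and the $\deg L=0$ case needs special handling since there the labels cannot be distinguished); and (ii) the McIntosh existence/uniqueness theorem for the affine Toda equation \eqref{eq: toda}, rather than a bare appeal to NAH, to get the clean existence and uniqueness statement including the boundary cases $\alpha=0$ or $\beta=0$. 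Your boundary-case discussion (``$\phi=0$ forces \dots and I will reconcile this'') flags these issues but does not resolve them, so as written the argument is not complete.
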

To parametrize representations with non-positive Euler numbers, one takes pairs $(\phi,D)$ such that $\deg D\geq 2g-2$ and $D\geq (\phi)$; simply use an outer automorphism. About the case $\deg D = 2g-2$, the equation (\ref{eq: split_Jacobian}) shows that if $\deg D = 2g-2$, i.e, $\textrm{eu}(\rho)=0$, then no reductive representation can carry a harmonic map $f$ with $\phi(f)=0$, for then we would have $H(f)=L(f)=0$.

Proposition \ref{prop: holomorphic data} should be known to experts.  
The parametrization of the character variety by pairs $(\phi,D)$ is proved in Hitchin's original paper \cite[Theorem 10.8]{Hitchin:1986vp} (outside of Euler number $0$, although comments are made on that case), but the characterization in terms of zeros of the holomorphic energy does not appear to be recorded. We essentially redo Hitchin's proof in a different language, and explain, from our point of view, how $D$ comes from the holomorphic energy.

\subsubsection{$G$-Higgs bundles}
Since we're considering the adjoint group $\psl$, we work with $G$-Higgs bundles. Representations with even Euler class lift to $\textrm{SL}(2,\R),$ and so for those representations one could use linear Higgs bundles. In general, one could transfer to the linear setting using the isomorphism $\psl\simeq \textrm{SO}_0(1,2)$, but the proofs are a bit faster and more natural in the principal bundle setting.

There are many excellent sources on $G$-Higgs bundles, and we don't need to recall everything here. We mostly draw on \cite{Nathaniel_ognjen}, which we refer to for more details. Let $G$ be a non-compact simple complex Lie group with maximal compact subgroup $K$. The space $G/K,$ equipped with the metric induced by the Killing form on the Lie algebra of $G,$ is a Riemannian symmetric space of non-compact type. For $G=\psl$, this symmetric space is $\mathbb{H}^2$. Let $\mathfrak{g}$ and $\mathfrak{k}$ be the Lie algebras of $G$ and $K$ respectively, and let $\mathfrak{p}$ be the Killing orthogonal complement of $\mathfrak{k}$ in $\mathfrak{g}$. We write $G^{\C},K^{\C},\mathfrak{g}^{\C},$ etc., for complexifications.
\begin{defi}
    A $G$-Higgs bundle over a Riemann surface $X$ is a pair $(P,\Phi),$ where $P$ is a principal $K^{\C}$-bundle and $\Phi$ is a holomorphic $1$-form valued in the associated bundle $P\times_{\textrm{Ad}|_{K^{\C}}} \times \mathfrak{p}^{\C}$ called the Higgs field.
\end{defi}
See \cite[Section 2.2]{Nathaniel_ognjen} for more details on the discussion below. As is well known, an equivariant harmonic map $f$ to $G/K$ gives rise to a $G$-Higgs bundle. Very briefly, $G\to G/K$ is a principal $K$-bundle that carries a principal connection induced from the Maurer-Cartan form on $G$. The map $f$ pulls back a $K$-bundle $Q$ over $X$ with a principal connection. Using the Maurer-Cartan isomorphism, the derivative of $f$ identifies as a $1$-form valued in $Q\times_{\textrm{Ad}|_{K}} \mathfrak{p}$, say $\psi$. Upon complexifying $Q$ to a principal $K^{\C}$-bundle $P$, we can split $\psi$ into types as $\psi=\psi^{1,0}+\psi^{0,1}$, and the harmonicity of $f$ is equivalent to the assertion that, with respect to the Koszul-Malgrange holomorphic structure on $P\to X$ associated with the principal connection on $Q$, $\psi^{1,0}$ is a Higgs field. Given a $G$-Higgs bundle, it arises from a harmonic map, in a way that undoes the procedure above, if and only if one can solve Hitchin's self-duality equations (see \cite[Definition 2.3]{Nathaniel_ognjen}).

\subsubsection{Proposition \ref{prop: holomorphic data}}
As explained in \cite[Section 3.5]{Nathaniel_ognjen}, a $\psl$-Higgs bundle is equivalent to a $\textrm{PSL}(2,\C)$-Higgs bundle equipped with a holomorphic gauge transformation $s$ of $P$ satisfying certain properties and such that $s^*\phi = -\phi$. In the language of \cite{Nathaniel_ognjen}, $(P,\phi)$ is Coxeter cyclic. By \cite[Proposition 1.2]{Nathaniel_ognjen}, the Higgs bundle is equivalent to the data of a line bundle $L$ and sections $\alpha$ and $\beta$ of $L\otimes \mathcal{K}$ and $L^{-1}\otimes \mathcal{K}$ respectively. An argument of Hitchin from \cite[Section 10]{Hitchin:1986vp} shows that we can always specify things so that, when the $G$-Higgs bundle arises from an equivariant harmonic map, then $\deg L$ is the Euler number of the underlying representation. In this case, the Hopf differential, up to dividing by a positive scalar $c$, is the product $\alpha\beta:=\alpha\otimes \beta$. From the proof of \cite[Proposition 1.2]{Nathaniel_ognjen}, under this constraint, two $G$-Higgs bundles giving triples $(L,\alpha,\beta)$ and $(L',\alpha',\beta')$ as above are isomorphic if and only if there exists an isomorphism from $L\to L'$ that intertwines $\alpha$ with $\alpha'$ and $\beta$ with $\beta'$.

From \cite[Theorems A and 4.3]{Nathaniel_ognjen}, once we've fixed a conformal metric $\nu$ on $X$, a solution to Hitchin's self-duality equations is equivalent to a Hermitian metric $\mu$ on $L$ (with dual metric $\mu^{-1}$ on $L^{-1}$) solving the equation, for functions $e_\alpha=\mu(\alpha,\alpha)\nu^{-2}$ and $e_\beta=\mu^{-1}(\beta,\beta)\nu^{-2}$ on $X$, away from their zeros,
\begin{equation}\label{eq: toda}
    \frac{1}{2}\Delta_\nu \log e_\alpha = e_\alpha - e_\beta + \kappa_\nu, \hspace{1mm} \frac{1}{2}\Delta_\nu \log e_\beta = e_\beta - e_\alpha + \kappa_\nu.
\end{equation}
It is also shown in \cite{Nathaniel_ognjen} that $e_\alpha e_\beta= |\phi|^2\nu^{-2}$, where $\phi$ is the Hopf differential of the harmonic map, and that $e_\alpha+e_\beta$ is the energy density of the harmonic map. There is one subtlety: distinct solutions to Hitchin's equations could produce the same solution to (\ref{eq: toda}). Also, note that \cite[Theorems A]{Nathaniel_ognjen} concerns solutions to Hitchin's equations for stable $G$-Higgs bundles, but the general case follows using \cite[Remark 3.17]{Nathaniel_ognjen} (it is exactly this latter case in which we can have multiple solutions). From the relation $e_\alpha e_\beta= |\phi|^2\nu^{-2}$, solving (\ref{eq: toda}) requires solving only for $e_\alpha$, and the equations reduce to the Bochner formula (\ref{eq: Bochner for H}). 

The equations (\ref{eq: toda}) are a basic example of affine Toda equations. Existence and uniqueness results for affine Toda equations were recently established in \cite[Theorem 1.3]{McIntosh}, which shows in our context (the very simplest case of \cite[Theorem 1.3]{McIntosh}) that, given $(L,\alpha,\beta)$, if $\alpha,\beta\neq 0$, then (\ref{eq: toda}) has a unique solution for $e_\alpha$ with a prescribed vanishing divisor. By virtue of $L\otimes \mathcal{K}$ and $L^{-1}\otimes\mathcal{K}$ having non-vanishing sections, it's implicit in this case that $2-2g\leq \deg L\leq 2g-2$. If either of $\alpha$ or $\beta$ is equal to zero, \cite[Theorem 1.3]{McIntosh} shows that one has a solution if and only if
$2-2g\leq \deg L\leq 2g-2$ and $\deg L\neq \deg  L^{-1}$ ($\deg L =\deg L^{-1}$ can occur only if the common degree is $0$) and moreover the solution is unique. We prove the following.

\begin{prop}\label{prop: deg L euler number}
    Assume that $\deg L> 0$ and that (\ref{eq: toda}) admits a solution with associated representation $\rho$ and harmonic map $f$. Then $H(f)=e_\beta$ and $L(f)=e_\alpha$. 
\end{prop}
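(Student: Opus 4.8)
The plan is to first show that the unordered pairs of non-negative functions $\{H(f),L(f)\}$ and $\{e_\alpha,e_\beta\}$ coincide pointwise, and then to use a degree count to decide which element is which. For the first step, I would recall from Section \ref{sec: energies} that $H(f)+L(f)=e(f)$ and $H(f)L(f)=|\phi(f)|^2\nu^{-2}$, and combine this with the identities quoted from \cite{Nathaniel_ognjen} above, namely $e_\alpha+e_\beta=e(f)$ and $e_\alpha e_\beta=|\phi(f)|^2\nu^{-2}$, where $\phi(f)$ is the common Hopf differential. Substituting $e_\alpha+e_\beta=H(f)+L(f)$ and $e_\alpha e_\beta=H(f)L(f)$ into the expansion of $(H(f)-e_\alpha)(H(f)-e_\beta)$ gives
\[
(H(f)-e_\alpha)(H(f)-e_\beta)=H(f)^2-\bigl(H(f)+L(f)\bigr)H(f)+H(f)L(f)=0
\]
identically on $X$, so at every point $H(f)$ equals $e_\alpha$ or $e_\beta$.

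Next I would upgrade this pointwise statement to a global dichotomy using real-analyticity. Choosing $\nu$ real-analytic, the harmonic map $f$ into the real-analytic manifold $(\mathbb{H}^2,\sigma)$ is real-analytic, hence so is $H(f)$; likewise $e_\alpha=\mu(\alpha,\alpha)\nu^{-2}$ and $e_\beta=\mu^{-1}(\beta,\beta)\nu^{-2}$ are real-analytic, since $\alpha$ and $\beta$ are holomorphic and the Hermitian metric $\mu$ solving Hitchin's self-duality equations is real-analytic by elliptic regularity. Then $H(f)-e_\alpha$ and $H(f)-e_\beta$ are real-analytic functions on the connected surface $X$ whose product vanishes identically, so one of the two factors vanishes identically: either $H(f)\equiv e_\alpha$ or $H(f)\equiv e_\beta$.

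It remains to exclude the possibility $H(f)\equiv e_\alpha$. If $\alpha=0$, then $e_\alpha\equiv 0$, so $H(f)\equiv e_\alpha$ would force $H(f)\equiv 0$, contradicting Lemma \ref{lem: H not zero} (recall $\textrm{eu}(\rho)=\deg L>0$ under the normalization of $L$). If $\alpha\neq 0$, then, since $\mu$ and $\nu$ are smooth and positive, the zero divisor of $e_\alpha$ is $2(\alpha)$, of degree $2\deg(L\otimes\mathcal{K})=2(\deg L+2g-2)$; on the other hand, by Definition \ref{def: div rho} the zero divisor of $H(f)$ is $2D_X(\rho)$, of degree $2\deg D_X(\rho)=2\bigl(2g-2-\textrm{eu}(\rho)\bigr)=2(2g-2-\deg L)$ by Proposition \ref{prop: euler number}. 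As $\deg L>0$ these degrees are distinct, so $H(f)\not\equiv e_\alpha$, and therefore $H(f)\equiv e_\beta$. Finally $L(f)=e(f)-H(f)=(e_\alpha+e_\beta)-e_\beta=e_\alpha$, which would finish the proof.

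The step I expect to be the main obstacle is the last one, in the following sense: the Bochner formula (\ref{eq: Bochner for H}) and the Toda system (\ref{eq: toda}) are symmetric under interchanging the roles of $\alpha$ and $\beta$ (equivalently, $L$ and $L^{-1}$), so the PDE analysis by itself cannot distinguish $H(f)$ from $L(f)$; the asymmetry that forces $H(f)\equiv e_\beta$ rather than $e_\alpha$ is purely topological, coming from the normalization $\deg L=\textrm{eu}(\rho)$ together with Proposition \ref{prop: euler number}. The only other point that needs care is the bookkeeping in the degenerate case where $\alpha$ or $\beta$ vanishes, which the argument above dispatches via Lemma \ref{lem: H not zero}.
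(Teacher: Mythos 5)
Your proof is correct and follows essentially the same path as the paper: both establish that $\{H(f),L(f)\}$ and $\{e_\alpha,e_\beta\}$ agree pointwise via the shared sum $e(f)$ and product $|\phi|^2\nu^{-2}$ (the paper packages this as a quadratic $p(z,\lambda)$, you as the factorized identity $(H(f)-e_\alpha)(H(f)-e_\beta)=0$, which is the same thing), upgrade to a global identification by real-analyticity of all four functions on the connected surface $X$, and then settle the ambiguity by comparing vanishing divisor degrees using $\deg L=\mathrm{eu}(\rho)$ and Proposition~\ref{prop: euler number}. Your explicit treatment of the $\alpha=0$ case via Lemma~\ref{lem: H not zero} is a small added precision that the paper leaves implicit, but it does not change the argument.
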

\begin{proof}
Let $e(f)$ and $\phi(f)$ be the energy and Hopf differential of $f$ respectively. Consider the function, on $X\times \C$, 
$$p(z,\lambda)=\lambda^2 - e(f)(z)\lambda + (|\phi(f)|^2\nu^{-2})(z).$$
Over each point $z$ of $X$, $\{e_\alpha(z),e_\beta(z)\}$ and $\{H(f)(z),L(f)(z)\}$ are the zeros of $p(z,\cdot)$. Since all of the functions in question are real analytic (for they solve a semi-linear elliptic PDE with real analytic coefficients), it follows that, as sets of functions on $X$, $\{e_\alpha,e_\beta\}=\{H(f),L(f)\}.$ Since $\deg L>0,$ so that $\deg L\neq \deg L^{-1}$, it follows that $\deg(e_{\alpha}) \neq \deg(e_{\beta})$, for their divisors agree with that of $\alpha$ and $\beta$ respectively. By Proposition \ref{prop: euler number}, $\deg(e_{\beta})$ captures the vanishing divisor of $H(f)$, and hence $H(f)=e_\beta$ and $L(f)=e_\alpha.$
\end{proof}
Note that the proof used $\deg L\neq 0$ only in the last line. If $\deg L=0$, then, as unordered sets of functions, $\{e_{\alpha},e_{\beta}\}=\{H(f),L(f)\}$.

\begin{remark}
    We did not strictly need to introduce $H(f)$ and $L(f)$, nor their Bochner formulae. Indeed, we could have just defined $D_X(\rho)$ using $e_\alpha$ and $e_\beta$. We prefer to use $H(f)$ and $L(f)$ and to show the equality with $e_\alpha$ and $e_\beta$ respectively because these are classical functions and carry geometric meaning.
\end{remark}

\begin{lemma}\label{lem: bundle data injective}
Let $\rho$ be a reductive representation with $\textrm{eu}(\rho)\geq 0$ and carrying equivariant harmonic map $f$, giving rise to data $(c\alpha\beta,(\beta))$. Then the conjugacy class of $\rho$ is determined by $(\alpha\beta,(\beta))$.
\end{lemma}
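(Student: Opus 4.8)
The strategy is to run the non-abelian Hodge correspondence backwards and reduce the statement to an injectivity claim about the line-bundle data. By Donaldson--Corlette and Hitchin--Simpson, the conjugacy class of the reductive representation $\rho$ corresponds to a unique polystable $\psl$-Higgs bundle up to isomorphism, and the $\rho$-equivariant harmonic map $f$ realizes this Higgs bundle; by the discussion preceding Proposition~\ref{prop: deg L euler number}, the Higgs bundle is encoded by a triple $(L,\alpha,\beta)$ with $\deg L=\textrm{eu}(\rho)\ge 0$, well defined up to the equivalence ``there is a line-bundle isomorphism $L\to L'$ carrying $\alpha\mapsto\alpha'$ and $\beta\mapsto\beta'$''. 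Since the Hopf differential equals $c\,\alpha\beta$ for a fixed universal constant $c>0$, the pair $(\alpha\beta,(\beta))$ is precisely the image of the equivalence class $[(L,\alpha,\beta)]$ under the natural map to $\mathcal D$. Hence it suffices to show: if two such triples $(L,\alpha,\beta)$ and $(L',\alpha',\beta')$ satisfy $\alpha\beta=\alpha'\beta'$ and $(\beta)=(\beta')$, then they are equivalent.

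First I would recover the line bundle. Because $\textrm{eu}(\rho)\ge 0$ we have $\beta\not\equiv 0$: for $\textrm{eu}(\rho)>0$ this follows from Proposition~\ref{prop: deg L euler number} together with Lemma~\ref{lem: H not zero} (giving $H(f)=e_\beta\not\equiv 0$), and for $\textrm{eu}(\rho)=0$ it holds since, as noted after Proposition~\ref{prop: holomorphic data}, such a representation carries no harmonic map with $\phi=0$, so $\alpha\beta\neq 0$. Writing $D:=(\beta)=(\beta')$, the nonvanishing sections $\beta,\beta'$ give $L^{-1}\otimes\mathcal K\cong\mathcal O(D)\cong (L')^{-1}\otimes\mathcal K$, hence $L\cong L'$ (in fact $L\cong\mathcal K(-D)$, determined by $D$ and $X$ alone). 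Fix an isomorphism $\psi:L\to L'$. Then $\psi_*\beta$ and $\beta'$ are nonzero holomorphic sections of $(L')^{-1}\otimes\mathcal K$ with the same divisor $D$, so their ratio is a nowhere-zero holomorphic function on the compact surface $X$, hence constant: $\beta'=\lambda\,\psi_*\beta$ for some $\lambda\in\C^*$.

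Next I would absorb the scalar by a gauge automorphism. Replace $\psi$ by $\psi':=\psi\circ m_{\lambda^{-1}}$, where $m_\mu:L\to L$ is multiplication by $\mu$; since rescaling $L$ by $\mu$ multiplies sections of $L^{-1}\otimes\mathcal K$ by $\mu^{-1}$ and sections of $L\otimes\mathcal K$ by $\mu$, this gives $\psi'_*\beta=\beta'$. An automorphism of $L$ composed with its contragredient is the identity on $L\otimes L^{-1}=\mathcal O$, so $\psi'$ acts trivially on $\mathcal K^2$, whence $\psi'_*\alpha\otimes\psi'_*\beta=\psi'_*(\alpha\otimes\beta)=\alpha\beta=\alpha'\beta'$. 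Combining with $\psi'_*\beta=\beta'$ and cancelling the nonzero section $\beta'$ (tensoring by a fixed nonzero section of a line bundle is injective on global sections) yields $\psi'_*\alpha=\alpha'$. Thus $\psi'$ is the required equivalence of triples, the two Higgs bundles are isomorphic, and $\rho$ and $\rho'$ are conjugate. The degenerate case $\alpha\beta=0$ (which forces $\textrm{eu}(\rho)>0$, $\alpha=0$, $\beta\neq 0$) is subsumed: the same $\psi'$ works and $\psi'_*\alpha=0=\alpha'$ automatically.

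I expect the only real friction to be bookkeeping in the last step---fixing the direction of the scalar (whether rescaling $L$ multiplies $\alpha$ or $\beta$ by $\lambda$ versus $\lambda^{-1}$) and confirming that an automorphism of $L$ leaves the product $\alpha\beta\in H^0(\mathcal K^2)$ literally unchanged. A secondary point worth flagging is that in the Euler-number-zero case the Higgs data alone does not distinguish $H(f)$ from $L(f)$; this is harmless here because the pair in the statement is $(\alpha\beta,(\beta))$ with $\beta$ the section singled out by $D_X(\rho)$, and in any event $\textrm{eu}(\rho)=0$ forces $\phi\neq 0$, so both $\alpha,\beta$ are nonzero and nothing degenerates.
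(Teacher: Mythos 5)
Your proof is correct and follows essentially the same approach as the paper: both use the section $\beta$ and its divisor $D=(\beta)$ to pin down $L$ (up to isomorphism) and a preferred isomorphism, and then recover $\alpha$ from the relation $c\alpha\beta=\phi$. The only cosmetic difference is that the paper normalizes a single triple to a canonical representative determined by $(\phi,D)$, while you directly compare two triples and absorb the scalar by rescaling the isomorphism; the underlying argument is the same.
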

Recall that the $c$ above is the constant chosen so that $\phi=c\alpha\beta$. In making the statement we used that for $\textrm{eu}(\rho)\geq 0$, $\beta$ does not vanish identically. In the proof below, when unspecified, a product of sections is the tensor product. 
\begin{proof}
    We say that $(L,\alpha,\beta)$ and $(L',\alpha',\beta')$ are isomorphic if the corresponding $G$-Higgs bundles are isomorphic. When we can solve the self-duality equations, this is equivalent to the associated representations being conjugate. Thus, we only need to show that the isomorphism class of $(L,\alpha,\beta)$ is determined by $(\alpha\beta,(\beta))$.

Setting $\varphi(L,\alpha,\beta)=(\phi,D)$, with $\phi=c\alpha\beta$ and $D=(\beta)$, the section $\beta$ defines a $\C^*$-family of isomorphisms from $L^{-1}\otimes \mathcal{K}\otimes \mathcal{O}(-D)$ to the trivial bundle $\mathcal{O}$; each isomorphism takes $\beta$ to a constant in $\C^*$, and specifying that constant determines the isomorphism. We choose the isomorphism so that $\beta$ is sent to $1$.
There is an induced isomorphism from $L$ to $\mathcal{K}\otimes \mathcal{O}(D)$. Under the dual isomorphism from $L^{-1}$ to $\mathcal{K}^{-1}\otimes \mathcal{O}(D)$, since $c\alpha\beta=\phi,$ $\alpha$ becomes $c^{-1}\phi \beta^{-1}.$ 
\end{proof}

With preliminaries established, we can now prove Proposition \ref{prop: holomorphic data}. 
\begin{proof}[Proof of Proposition \ref{prop: holomorphic data}]
   Let $\mathcal{L}$ be the set of isomorphism classes of triples $(L,\alpha,\beta)$ such that we can solve (\ref{eq: toda}) (uniquely) and let $\mathcal{L}_k$ be the subset of $\mathcal{L}$ such that $L$ has degree $k$. For $k\geq 0$, we define $\varphi_k:\mathcal{L}_k\to \mathcal{D}$ by $\varphi_k([L,\alpha,\beta])=(\phi,D),$ where $\phi=c\alpha\beta$ and $D$ is the divisor of the square root of the holomorphic energy of the harmonic map. For $k>0$, this is just $(\beta)$, but for $k=0$ we're not quite able to distinguish. Each $\varphi_k$ indeed lands in $\mathcal{D}$: for $k>0$, Proposition \ref{prop: deg L euler number} gives $H(f)=e_\beta$, and hence $\deg D =\deg L^{-1}+2g-2= 2g-2-k$. Still in the case $k>0$, the condition $D\leq (\phi)$ is obvious. For $k=0$, we can't pick out whether $H(f)=e_\beta$ or $e_\alpha$, but the existence and uniqueness theory in this case shows that neither function vanishes identically, and as well using that $\deg L=\deg L^{-1}=0,$ we get that $\deg D=2g-2.$ We prove the proposition by showing that, for every $k\geq 0$, $\varphi_k$ is a bijection onto the set $\{(\phi,D)\in \mathcal{D}:\deg  D = 2g-2-k\}.$
   
   By Lemma \ref{lem: bundle data injective}, each $\varphi_k$ is injective. There is no issue for $k=0$: none of $\alpha$, $\beta$, or $\phi$ are zero sections, so we can express $D=(\beta)$ or $D=(\alpha\beta)-(\beta).$ For surjectivity, fix a pair $(\phi,D)$ with $\deg D=2g-2-k$. Looking for a $\varphi_k$-preimage $(L,\alpha,\beta),$ we take $L=\mathcal{K}^{-1}\otimes \mathcal{O}(D)$. Note that, via the inclusion $\mathcal{O}\to \mathcal{O}(D)$, the constant section $1$ of $\mathcal{O}$ determines a canonical section of $\mathcal{O}(D).$ We take $\alpha$ to be this section and define $\beta$ by $\beta=c^{-1}\phi\alpha^{-1}.$ Then $\varphi_k([L,\alpha,\beta])=(\phi,D),$ as desired.
\end{proof}

\section{Harmonic maps and domination}\label{sec3}
In this section, we prove Theorems \ref{principal_A}, \ref{principal_C}, and \ref{principal_D}, and Corollary \ref{corc}. Throughout, let $X$ be a closed Riemann surface of genus $g$ at least $2$ and let $\nu$ be a conformal metric on $X$.

\begin{subsection}{Domination inequality}
Here we establish the key analytic input toward our main results, Proposition \ref{prop: energy domination} below. Proposition \ref{prop: energy domination}, interesting in its own right, generalizes the most important inequality from \cite{Deroin_thlozan_domination}, namely, \cite[Lemma 2.6]{Deroin_thlozan_domination} (whose proof generalizes a classical argument as found in \cite[Section 1.8]{Schoen_Yau_book}). See also \cite[Lemma 4.3]{Gothen_Silva}.

  Let $\phi$ be a holomorphic quadratic differential on $X$. 
  \begin{defi}
       We say that a function $u$ on the complement of a discrete subset of $X$ is a Bochner solution for $\phi$ if $u=\log H(f)$ or $u=\log L(f)$ for some equivariant harmonic map $f$ with Hopf differential $\phi$.
  \end{defi}
Equivalently, $u$ is a Bochner solution if there is a divisor $D$ satisfying certain conditions (for example, dominated by $(\phi)$ if $\phi\neq 0$) such that on the complement of the support of $D$, $u$ is defined, $C^2$, and solves 
\begin{equation}\label{eq: Bochner solution}
    \frac{1}{2}\Delta_\nu u = e^{u} - |\phi|_\nu^2 e^{-u} +\kappa_\nu,
\end{equation}
where $|\phi|_\nu^2=|\phi|^2\nu^{-2}$. Moreover, at a point $p$ in the support of $D$, $u$ is asymptotic to $D(p)\textrm{log}|z|$. 
\begin{prop}\label{prop: energy domination}
   Let $u_1$ and $u_2$ be Bochner solutions for $\phi$ with divisors $D_1$ and $D_2$ respectively. If $D_2<D_1$, then $u_1<u_2$ on the complement of the support of $D_2$.
 \end{prop}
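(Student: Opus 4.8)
The plan is to run a maximum-principle argument for the difference $w:=u_1-u_2$. Since $D_2\le D_1$ pointwise we have $\mathrm{supp}(D_2)\subseteq\mathrm{supp}(D_1)$, so $w$ is $C^2$ on the connected open set $X\setminus\mathrm{supp}(D_1)$, and it suffices to prove $w<0$ there: at a point of $\mathrm{supp}(D_1)\setminus\mathrm{supp}(D_2)$ the function $u_1$ blows down to $-\infty$ while $u_2$ stays finite, so the asserted inequality will then follow on all of $X\setminus\mathrm{supp}(D_2)$.

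First I would subtract the two instances of (\ref{eq: Bochner solution}). Writing $m:=|\phi|_\nu^2$ and using the elementary factorizations $e^{w}-1=w\,g_1(w)$ and $1-e^{-w}=w\,g_2(w)$, with $g_1,g_2$ smooth and everywhere positive, one obtains on $X\setminus\mathrm{supp}(D_1)$ the linear elliptic identity
\[
\tfrac12\Delta_\nu w=c(x)\,w,\qquad c(x):=e^{u_2}g_1(w)+m\,e^{-u_2}g_2(w)\ \ge\ 0 ,
\]
where moreover $c(x)>0$ because $e^{u_2}>0$ off $\mathrm{supp}(D_1)$; in particular $\Delta_\nu w>0$ wherever $w>0$. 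Next I would record the behaviour of $w$ near $\mathrm{supp}(D_1)$ using the prescribed log-asymptotics of Bochner solutions (and elliptic regularity for (\ref{eq: Bochner solution}), which makes the bounded remainders $C^{1,\alpha}$): at a point $p$ with $D_1(p)>D_2(p)$ one has $w\to-\infty$, while at a point with $D_1(p)=D_2(p)>0$ the logarithmic parts cancel and $w$ extends continuously. Hence $w$ extends to an upper semicontinuous $\bar w:X\to[-\infty,\infty)$ which is bounded above, equals $-\infty$ exactly on $\{D_1>D_2\}$, and is finite and continuous at the remaining (finitely many) points of $\mathrm{supp}(D_1)$. By compactness $\bar w$ attains a maximum $M$ at some $p_0$; the goal is to show $M<0$, so suppose $M\ge 0$.

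Since $\bar w=-\infty$ on $\{D_1>D_2\}$, the point $p_0$ is either a smooth point or a point of $\mathrm{supp}(D_2)$ with $D_1(p_0)=D_2(p_0)$. If $p_0$ is smooth, then $\tfrac12\Delta_\nu w(p_0)=c(p_0)M$ with $c(p_0)>0$, which is incompatible with $\Delta_\nu w(p_0)\le 0$ at an interior maximum unless $M=0$; and if $M=0$ then $w\le 0$ with $w(p_0)=0$, so the strong (Hopf) maximum principle for $\tfrac12\Delta_\nu-c$ (with $c\ge 0$) on the connected surface $X\setminus\mathrm{supp}(D_1)$ forces $w\equiv 0$, hence $u_1\equiv u_2$ and $D_1=D_2$, contradicting $D_2<D_1$. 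If instead $p_0\in\mathrm{supp}(D_2)$ with $D_1(p_0)=D_2(p_0)$, then $w$ is continuous at $p_0$ with $w(p_0)=M$ and solves the displayed equation classically on a punctured disk about $p_0$; since a point is removable for bounded solutions of such an equation, $w$ is a solution on the full disk, and the same strong maximum principle again gives $w\equiv 0$ near $p_0$, hence $w\equiv 0$ on $X\setminus\mathrm{supp}(D_1)$ by connectedness, again contradicting $D_2<D_1$. (When $M>0$ this last case can be seen directly: $w$ is strictly subharmonic on the punctured disk, the puncture is removable for bounded-above subharmonic functions, and the strong maximum principle makes $w$ locally constant, contradicting strict subharmonicity.) Therefore $M<0$, i.e.\ $w<0$ on $X\setminus\mathrm{supp}(D_1)$, as required.

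I expect the main obstacle to be precisely this last case: excluding that $\sup w$ is attained, with a non-negative value, at a point where $u_1$ and $u_2$ have logarithmic singularities of the same order (equivalently, where $D_1=D_2>0$). This is vacuous in the Fuchsian-target setting of \cite{Deroin_thlozan_domination} (where $D_2=0$), and the ingredients it requires beyond the classical Bochner maximum-principle computation of \cite[Section 1.8]{Schoen_Yau_book} are the continuity of $w$ across such points and the removability of point singularities; everything else is a direct generalization of that classical argument.
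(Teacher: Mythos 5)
Your proof is correct and rests on the same basic mechanism as the paper's: subtracting the two instances of \eqref{eq: Bochner solution}, factoring the nonlinearity to obtain a linear elliptic inequality for $w=u_1-u_2$ with a non-negative zeroth-order coefficient, and invoking the maximum principle on the connected surface $X\setminus\mathrm{supp}(D_1)$. The organization differs slightly: the paper first runs a weak maximum principle argument on the open set $\{u>0\}$ (using that $u$ is subharmonic there and vanishes on the boundary) to get $u\le 0$, then applies a strong maximum principle consequence in \cite{Minda} on $X\setminus\mathrm{supp}(D_1)$ to upgrade to $u<0$; you instead locate a maximizing point $p_0$ of the u.s.c.\ extension and rule out $M\ge 0$ directly, splitting into cases according to whether $p_0$ is a smooth point. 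The two routes are essentially equivalent. What you add, and what the paper treats only implicitly, is the explicit handling of points $p$ with $D_1(p)=D_2(p)>0$: you record that the logarithmic singularities cancel so that $w$ extends continuously, that the coefficient $c$ stays bounded there (which, as you note, relies on $D_i\le(\phi)$ to keep $|\phi|_\nu^2 e^{-u_2}$ bounded), and that the isolated singularity is removable for the linear equation (or, for $M>0$, for bounded-above subharmonic functions), so the strong maximum principle can still be applied across such points. The paper's claim that ``$\partial U$ is just the zero set of $u$'' quietly presupposes this extension; your version makes those two ingredients visible. One small thing worth being aware of: after removing the point singularity the extended $w$ is a priori only a bounded distributional solution, so one should invoke $W^{2,p}$ elliptic regularity (or Bony's form of the strong maximum principle for $L^\infty$ coefficients) before applying the Hopf principle; your parenthetical argument via subharmonicity sidesteps this for $M>0$, and for $M=0$ the standard regularity bootstrap closes the gap.
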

\begin{proof}
    We first show $u_1\leq u_2$, and then we promote the result to $u_1<u_2$ outside of the support of $D_2$. Set $u:=u_1-u_2$. By our assumptions, $u$ is bounded above and tends to $-\infty$ on a non-empty discrete subset of $X$.
    Assume for the sake of contradiction that $u>0$ at a point. Then the open subset $U=\{z\in X: u(z)>0\}$ is non-empty. Since $u$ tends to $-\infty$ somewhere, $U$ is a proper open subset of $X$. Taking the Laplacian of $u$, (\ref{eq: Bochner solution}) yields
\begin{equation}\label{eq: diff eq for u}
    \frac{1}{2}\Delta_\nu u = (e^{u_1}-e^{u_2}) -|\phi|_\nu^2(e^{-u_1}-e^{-u_2}) = e^{u_2}(e^u -1) -e^{-u_2}|\phi|_\nu^2(e^{-u}-1).
\end{equation}
Since $\partial U$ is just the zero set of $u$, $u$ is continuous on $\overline{U}$. By \eqref{eq: diff eq for u}, $u|_{\overline{U}}$ is subharmonic on $U$. By the weak maximum principle, $u|_{\overline{U}}$ is maximized on $\partial U$. But this contradicts $u|_{\partial U}=0$. We deduce that $u\leq 0$.

We now prove that the inequality is strict. The strict inequality clearly holds near the support of $D_1-D_2$. On the complement of the support of $D_1$, which we will call $V$, we have $$\frac{1}{2}\Delta_\nu u = \Big (1+e^{-u_1-u_2}|\phi|_\nu^2\Big )(e^{u_1}-e^{u_2}) = (e^{u_2}+e^{-u_1}|\phi|_\nu^2)(e^{u}-1).$$ Since $e^x\geq x+1$ and $u\leq 0$, 
$$\Delta_\nu u \geq K u,$$ where $K=2\textrm{max}_X(e^{u_2}+e^{-u_1}|\phi|_\nu^2)).$ Hence, by a consequence of the strong maximum principle \cite{Minda}, either $u=0$ or $u<0$ on all of $V$. Since $u$ tends to $-\infty$ as we approach the support of $D_1-D_2$, the former cannot occur. We conclude that $u_2<u_1$ on the set in question. 
\end{proof}
\begin{remark}\label{rem: boundary case}
   Proposition \ref{prop: energy domination} extends easily to the case of a closed surface $\Sigma$ with compact boundary $\partial \Sigma$. If $u_1$ and $u_2$ extend continuously to and agree on $\partial \Sigma$, then the open subset $U$ from the proof above does not intersect $\partial \Sigma$, and from this observation the proof goes through. This slight extension will be used in the proof of Theorem \ref{principal_D}.
\end{remark}
\end{subsection}

\begin{subsection}{Proof of Proposition \ref{prop: branched immersions}}\label{subsec: proof of theorem a}
   We now take a slight digression to prove Proposition \ref{prop: branched immersions}. We include this proof for the sake of completeness, and because we will reference it in the proof of Lemma \ref{lem: modified BBDH}. As in the statement of the proposition, let $f:\widetilde{X}\to (\mathbb{H}^2,\sigma)$ be an equivariant harmonic map with holomorphic data $(\phi,D)$, $\phi\neq 0$. 
   \begin{proof}[Proof of Proposition \ref{prop: branched immersions}]
For the "only if" direction, as in \cite[Lemma 3.2]{BBDH}, the sign of the Jacobian of a branched immersion does not flip. The divisor of the anti-holomorphic energy is $(\phi)-D$, so if $2D(p)<(\phi)(p)$, we have $J(f)>0$ near $p$, and if $2D(p)>(\phi)(p)$, we have $J(f)<0$ near $p$. Thus, for a branched immersion, $2D-(\phi)$ does not flip sign.
   
       For the main "if" direction, if $2D<(\phi)$, we apply Proposition \ref{prop: energy domination} with $u_1=\log L(f)$ and $u_2=\log H(f)$. Then, $D_1=(\phi)-D$ and $D_2=D$, and hence our assumption shows that $J(f)=H(f)-L(f)>0$ on the complement of the support of $D$. By the argument from \cite[pp. 12]{BBDH} (or using the Hartman-Wintner formula as in \cite{wood_paper}), the isolated singular points of $f$ are branch points. If $2D>(\phi)$, we apply Proposition \ref{prop: energy domination} with $u_1=\log H(f)$ and $u_2=\log L(f)$ and the argument is symmetric.
   \end{proof}
\end{subsection}

\subsection{Necessary and sufficient condition}\label{sec: nec and suf condition}
We now move on to the domination problem. Proposition \ref{prop: H and L conditions} below clarifies the role of $H$ and $L$. Define $$U_f =\{z\in X: H(f)(z)\geq L(f)(z)\}, \hspace{1mm} V_f = \{z\in X: H(f)(z)\leq L(f)(z)\}$$ and $$U_h =\{z\in X: H(h)(z)\geq L(h)(z)\}\hspace{1mm} V_h=\{z\in X: H(h)(z)\leq L(h)(z)\}.$$ Note that $U_f\cap V_f$ is the singular set of $f,$ and $U_h\cap V_h$ is the singular set of $h$.

\begin{prop}\label{prop: H and L conditions}
     Assume that $h$ and $f$ have the same Hopf differential. For $h^*\sigma\geq f^*\sigma$ to hold everywhere, it is necessary and sufficient that the following four conditions are satisfied. 
    \begin{enumerate}
        \item On $U_f\cap U_h,$ $H(h)\geq H(f)$.
        \item On $U_f\cap V_h,$ $L(h)\geq H(f)$.
        \item On $V_f\cap U_h,$ $H(h)\geq L(f)$.
        \item On $V_f\cap V_h$, $L(h)\geq L(f)$.
    \end{enumerate}
    For $h^*\sigma> f^*\sigma$ in a neighbourhood of a point $x$, it is necessary and sufficient that (1) if $x\in U_f\cap U_h$, $H(h)(x)>H(f)(x),$ (2) if $x\in U_f\cap V_h,$ $L(h)(x)>H(f)(x)$, and similar for (3) and (4).
\end{prop}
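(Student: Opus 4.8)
The plan is to reduce the (seemingly two–dimensional) comparison of the metrics $h^*\sigma$ and $f^*\sigma$ to a single scalar inequality between energy densities, and then unpack that inequality region by region. The starting observation is that, since $f$ and $h$ have the same Hopf differential $\phi$, subtracting the two instances of (\ref{eq: pullback metric2}) cancels the $\phi+\overline{\phi}$ contributions and leaves
\[
h^*\sigma-f^*\sigma=\bigl(e(h)-e(f)\bigr)\nu .
\]
(Harmonicity is not actually needed here; it suffices that $f$ and $h$ are $C^1$.) Consequently $h^*\sigma\geq f^*\sigma$ at a point is equivalent to $e(h)\geq e(f)$ there, while $h^*\sigma-f^*\sigma$ is positive definite throughout a neighbourhood of $x$ iff $e(h)>e(f)$ on that neighbourhood, which by continuity of $e(h)-e(f)$ is the same as $e(h)(x)>e(f)(x)$.

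Next I would rewrite $e(h)\geq e(f)$ in terms of $\max(H,L)$. Using the relations $e=H+L$ and $HL=|\phi|_\nu^2$ from Section~\ref{sec: energies}, at each point of $X$ the unordered pair $\{H(f),L(f)\}$ and the unordered pair $\{H(h),L(h)\}$ have the same product $c^2:=|\phi|_\nu^2$ (this is exactly where ``same Hopf differential'' is used) and sums $e(f)$ and $e(h)$. For either pair, $\max(H,L)^2\geq \max(H,L)\min(H,L)=c^2$, so $\max(H,L)\in[c,\infty)$; since $t\mapsto t+c^2/t$ is strictly increasing on $[c,\infty)$ (and is the identity when $c=0$), writing $e=\max(H,L)+c^2/\max(H,L)$ with the same $c$ for both maps gives, pointwise,
\[
e(h)\geq e(f)\iff \max(H(h),L(h))\geq \max(H(f),L(f)),
\]
and likewise with strict inequalities throughout.

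Finally, cover $X$ by the four sets $U_f\cap U_h$, $U_f\cap V_h$, $V_f\cap U_h$, $V_f\cap V_h$. On $U_f$ one has $\max(H(f),L(f))=H(f)$ and on $V_f$ it equals $L(f)$, and similarly for $h$ with $U_h,V_h$, so on these four sets the inequality $\max(H(h),L(h))\geq \max(H(f),L(f))$ reads exactly as conditions (1), (2), (3) and (4) respectively. Combined with the two previous steps, this proves the ``everywhere'' assertion. For the local statement, $h^*\sigma>f^*\sigma$ near $x$ is equivalent to $\max(H(h),L(h))(x)>\max(H(f),L(f))(x)$, which on whichever of the four sets contains $x$ is precisely the listed strict inequality; if $x$ lies in several of these sets --- that is, $x$ is a critical point of $f$ or of $h$ --- then $H(f)(x)=L(f)(x)$ or $H(h)(x)=L(h)(x)$, so the competing conditions coincide and there is no ambiguity.

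I do not expect a genuine obstacle: the whole argument rests on the identity $h^*\sigma-f^*\sigma=(e(h)-e(f))\nu$ and on the monotonicity of $t\mapsto t+c^2/t$ on $[c,\infty)$. The only places that call for a little care are the zeros of $\phi$, where $c=0$ and $\min(H,L)=0$ (covered by the $c=0$ case in the second step), and the bookkeeping of the overlaps among the four regions when reading off the pointwise statement.
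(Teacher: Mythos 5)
Your proof is correct and follows essentially the same route as the paper: reduce $h^*\sigma\ge f^*\sigma$ to the scalar inequality $e(h)\ge e(f)$ via $h^*\sigma-f^*\sigma=(e(h)-e(f))\nu$, use the pointwise constraint $HL=|\phi|_\nu^2$ to turn this into a comparison of $H$ and $L$, and then read off the four regional conditions. Your packaging via $\max(H,L)$ and the monotonicity of $t\mapsto t+c^2/t$ on $[c,\infty)$ is a slightly slicker equivalent of the paper's comparison $|H(h)-L(h)|\ge|H(f)-L(f)|$, but the substance and the case-check are the same.
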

Here, $f^*\sigma\leq h^*\sigma$ (as opposed to $f^*\sigma<h^*\sigma$) means that $f^*\sigma(v,v)\leq h^*\sigma(v,v)$ for every unit tangent vector $v$.
\begin{proof}
From the formula (\ref{eq: pullback metric2}), $h^*\sigma\geq f^*\sigma$ if and only if $e(h)\geq e(f)$.
Rewriting $e=H+L$ as 
\begin{equation}\label{eq: expression for e}
    e=(H^{1/2}-L^{1/2})^2+2H^{1/2}L^{1/2}
\end{equation}
 and recalling the formula $|\phi|_\nu^2=H^{1/2}L^{1/2},$ the condition $e(h)\geq e(f)$ is equivalent to demanding that 
 \begin{equation}\label{eq: H^1/2-L^1/2}
     |H^{1/2}(h)-L^{1/2}(h)|\geq |H^{1/2}(f)-L^{1/2}(f)|.
 \end{equation}
 To make the notation easier on the eyes, note that \eqref{eq: H^1/2-L^1/2} is equivalent to 
$$|H(h)-L(h)|\geq |H(f)-L(f)|.$$ With this in mind, we check that $|H(h)-L(h)|\geq |H(f)-L(f)|$ is necessary and sufficient. We explicitly write out the proof only for $U_f\cap U_h$ and $U_f\cap V_h$ and leave the rest to the reader, since the arguments for $V_f\cap U_h$ and $V_f\cap V_h$ are totally analogous and don't add anything new.

On $U_f\cap U_h,$ assuming (1), we have $H(h)\geq H(f)\geq L(f)$. From $H(h)L(h)=H(f)L(f),$ we get  $L(f)\geq L(h).$ Hence $H(h)-L(h)\geq H(f)-L(f)\geq 0$, and taking absolute values yields the result. If (1) fails then we reverse the inequalities to see that $H(f)-L(f)\geq H(h)-L(h)\geq 0.$

On $U_f\cap V_h,$ assuming (2), $L(h)\geq H(f)\geq L(f).$ Similar to above, $H(h)L(h)=H(f)L(f)$ implies that $L(f)\geq H(h),$ and hence $L(h)-H(h)\geq H(f)-L(f)\geq 0.$ If (2) does not hold, we get $H(f)-L(f)\geq L(h)-H(h)\geq 0.$ 

As we said above, we omit the arguments for $V_f\cap U_h$ and $V_f\cap V_h$. The strictness statement is obtained by going back into the proof above and making the inequalities strict.
\end{proof}

\subsection{Theorems \ref{principal_A} and \ref{principal_C}}\label{sec: thm a and corollary a}
Throughout this subsection, we use the sets of the form $U_f$, $U_h$, $V_f,$ and $V_h$ from Section \ref{sec: nec and suf condition}. The main lemma is an application of Proposition \ref{prop: energy domination}.
\begin{lemma}\label{lem: H(f) less than H(h)}
    Let $f,h:\widetilde{X}\to (\mathbb{H}^2,\sigma)$ be equivariant harmonic maps with holomorphic data $(\phi,D_1)$ and $(\phi,D_2)$ respectively. If $D_2<D_1$, then $H(f)\leq H(h)$, strictly away from the support of $D_2$. If $D_2<(\phi)-D_1$, then $L(f)\leq H(h)$, strictly away from the support of $D_2$.
\end{lemma}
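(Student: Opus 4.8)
The plan is to read both assertions as direct applications of Proposition \ref{prop: energy domination}, after correctly identifying which Bochner solutions to compare. For the first assertion, I would set $u_1 = \log H(f)$ and $u_2 = \log H(h)$. These are Bochner solutions for $\phi$ by definition, with divisors $D_1$ and $D_2$ respectively (here I use that $D_1 = D_X(\rho)$ and $D_2 = D_X(j)$ are precisely the divisors of the square roots of the holomorphic energies, as in Proposition \ref{prop: holomorphic data}, so the divisor of $\log H(f)$ as a Bochner solution is $D_1$ and likewise for $h$). The hypothesis $D_2 < D_1$ is exactly the hypothesis of Proposition \ref{prop: energy domination} with the roles as just assigned, so we conclude $u_1 < u_2$, i.e.\ $H(f) < H(h)$, on the complement of the support of $D_2$. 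On the support of $D_2$ one only gets the non-strict inequality $H(f) \le H(h)$ by continuity (or by noting both solutions blow down to $-\infty$ there in the same manner only when $D_1 = D_2$ at that point; in general $H(f) \le H(h)$ holds by taking limits), which gives the stated "$H(f) \le H(h)$, strictly away from the support of $D_2$".

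For the second assertion, the only change is to replace $H(h)$ by the anti-holomorphic energy viewpoint: $\log L(h)$ is also a Bochner solution for $\phi$, but now with divisor $(\phi) - D_2$, since the divisor of $L(h)$ equals $(\phi) - D_X(j)$ (recall $|\phi|_\nu^2 = H(h) L(h)$, so the zero divisor of $L(h)$ is the complement $(\phi) - D_2$). Wait — here I must be careful about which solution is which: I want to conclude $L(f) \le H(h)$, so I should again take $u_1 = \log H(f)$ (divisor $D_1$) but this is not what the hypothesis $D_2 < (\phi) - D_1$ compares. Instead, rewrite the hypothesis: $D_2 < (\phi) - D_1$ is equivalent to $D_1 < (\phi) - D_2$, and $(\phi) - D_2$ is the divisor of $\log L(h)$. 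So I set $u_1 = \log H(f)$ with divisor $D_1$ and $u_2 = \log L(h)$ with divisor $(\phi) - D_2$; the hypothesis $D_1 < (\phi) - D_2$ is exactly "$D_{u_1} < D_{u_2}$ with the smaller one being $u_1$", so Proposition \ref{prop: energy domination} gives $u_1 < u_2$, i.e.\ $H(f) < L(h) = H(h)$ on... no. Let me restate cleanly in the writeup: the point is that $L(h)$ need not equal $H(h)$, so this argument as phrased would only bound $H(f)$ by $L(h)$, not by $H(h)$.

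The resolution — and this is the one subtlety worth flagging as the main point — is that the hypothesis $D_2 < (\phi) - D_1$ combined with $\deg D_1, \deg D_2 \le 2g-2$ forces $h$ to lie (weakly) in the regime where $H(h) \ge L(h)$ in an integrated sense, but more directly: I would instead apply Proposition \ref{prop: energy domination} with $u_1 = \log L(f)$ (whose divisor is $(\phi) - D_1$) and $u_2 = \log H(h)$ (whose divisor is $D_2$); the hypothesis $D_2 < (\phi) - D_1$ then reads precisely $D_{u_2} < D_{u_1}$, so the proposition yields $u_1 > u_2$, namely $L(f) > H(h)$ — the wrong direction. So the correct pairing must be $u_1 = \log L(f)$ against $u_2 = \log H(h)$ only when $(\phi) - D_1 > D_2$, giving $L(f) < H(h)$ away from $\mathrm{supp}\, D_2$; i.e.\ Proposition \ref{prop: energy domination} applied with the first solution having the larger divisor $(\phi)-D_1$ and the second the smaller divisor $D_2$, under the hypothesis $D_2 < (\phi) - D_1$, yields $\log H(h) < \log L(f)$ — still backwards. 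I would therefore double-check the sign convention in Proposition \ref{prop: energy domination} (it states $D_2 < D_1 \Rightarrow u_1 < u_2$, i.e.\ the solution with the \emph{smaller} divisor is \emph{larger}); so to get $L(f) \le H(h)$ I need the divisor of $H(h)$, namely $D_2$, to be $\le$ the divisor of $L(f)$, namely $(\phi) - D_1$, which is exactly the hypothesis $D_2 < (\phi) - D_1$. Applying the proposition with "$u_1$" $= \log H(h)$ and "$u_2$" $= \log L(f)$ and "$D_1$" $= (\phi) - D_1$, "$D_2$" $= D_2$: since $D_2 < (\phi) - D_1$, we get $\log H(h) < \log L(f)$ off $\mathrm{supp}\,D_2$ — this is genuinely the reverse of what is claimed, so I expect the actual argument in the paper reverses the roles once more, and the clean statement to write is: \emph{apply Proposition \ref{prop: energy domination} with the Bochner solution $\log L(f)$ (divisor $(\phi)-D_1$) as $u_1$ is wrong}; rather, take $u_1 = \log L(f)$, $u_2 = \log H(h)$ and use $D_2 < (\phi)-D_1$ to place $(\phi)-D_1$ in the "$D_1$" slot and $D_2$ in the "$D_2$" slot, obtaining $u_2 > u_1$, i.e.\ $H(h) > L(f)$, as desired. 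Concretely, the writeup should read: \emph{For the second claim, note that $\log L(f)$ is a Bochner solution for $\phi$ with divisor $(\phi) - D_1$ and $\log H(h)$ is a Bochner solution with divisor $D_2$; since $D_2 < (\phi) - D_1$, Proposition \ref{prop: energy domination} (with $u_1 = \log H(h)$, $u_2 = \log L(f)$) gives nothing useful, so instead we invoke it with $u_2 = \log L(f)$, $u_1 = \log H(h)$, divisors $D_1^{\mathrm{prop}} = D_2$ and $D_2^{\mathrm{prop}} = (\phi) - D_1$, $D_2^{\mathrm{prop}} < D_1^{\mathrm{prop}}$ — no.} I expect the genuine main obstacle is purely this bookkeeping of which solution carries which divisor; once the orientation in Proposition \ref{prop: energy domination} is pinned down, both claims are one-line consequences, and I would present them as such, being careful to match "the solution with the strictly smaller divisor is strictly larger off the support of that smaller divisor."
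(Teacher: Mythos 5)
Your treatment of the first claim is correct and is exactly the paper's argument: $u_1 = \log H(f)$ has divisor $D_1$, $u_2 = \log H(h)$ has divisor $D_2$, and $D_2 < D_1$ feeds directly into Proposition~\ref{prop: energy domination} to give $H(f) < H(h)$ off $\mathrm{supp}\,D_2$.

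For the second claim you in fact hit the correct pairing in the middle of your deliberation but then talked yourself out of it. The right reading, and the one the paper uses, is: set $u_1 = \log L(f)$, whose divisor as a Bochner solution is $(\phi)-D_1$, and keep $u_2 = \log H(h)$, whose divisor is $D_2$. The hypothesis $D_2 < (\phi)-D_1$ then says that the divisor of $u_2$ is strictly smaller than the divisor of $u_1$, which is exactly the hypothesis of Proposition~\ref{prop: energy domination} (with its ``$D_1$'' $= (\phi)-D_1$ and ``$D_2$'' $= D_2$). The conclusion of that proposition is $u_1 < u_2$ off $\mathrm{supp}\,D_2$, i.e. $L(f) < H(h)$ off $\mathrm{supp}\,D_2$, which is the claim. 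The point where you went wrong is the sentence ``the hypothesis $D_2 < (\phi)-D_1$ then reads precisely $D_{u_2} < D_{u_1}$, so the proposition yields $u_1 > u_2$'': that is a sign slip — Proposition~\ref{prop: energy domination} asserts that $D_2 < D_1$ implies $u_1 < u_2$, not $u_1 > u_2$, i.e. the solution with the strictly smaller divisor is the strictly \emph{larger} function. With that single correction the whole chain you set up works, and there is no hidden issue about $H(h)$ versus $L(h)$: both claims really are one-line consequences of Proposition~\ref{prop: energy domination}, and your final writeup, which ends in ``no,'' should simply be replaced by the pairing $u_1 = \log L(f)$, $u_2 = \log H(h)$ stated above.
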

\begin{proof}
    For the first statement, set $u_1=\log H(f)$ and $u_2=\log H(h)$, which are Bochner solutions with divisors $D_1$ and $D_2$ matching the divisors from the statement of the lemma. The result is immediate from Proposition \ref{prop: energy domination}. For the second statement, we go through the same results, but take $u_1=\log L(f)$, which, as a Bochner solution, has divisor $(\phi)-D.$
\end{proof}

We are now ready to prove Theorems \ref{principal_A} and \ref{principal_C}. We first prove Theorem \ref{principal_C}, then Corollary \ref{corc}, from which we deduce Theorem \ref{principal_A}. 
\begin{proof}[Proof of Theorem \ref{principal_C}]
    By our assumptions, $U_h=X$. Assume that $f^*\sigma< h^*\sigma$. This rules out $D_1=D_2$ (for then we would have $f^*\sigma=h^*\sigma$ everywhere). The equality $D_1+D_2=(\phi)$ is ruled out too, since $\deg D_1\leq 2g-2$, and $h$ being a branched immersion implies that $\deg D_2<2g-2$. If $D_1(p)<D_2(p)$ at a point $p\in X$, then 
    \begin{equation}\label{eq: domination fails}
        \lim_{z\to p}\frac{H(h)}{H(f)}(z)=0.
    \end{equation}
As well, $D_1(p)<D_2(p)<(\phi)-D_2(p)<(\phi)-D_1(p)$, which implies that $p\in U_f=U_f\cap U_h$. Then, Proposition \ref{prop: H and L conditions}, specifically situation (1), contradicts $f^*\sigma\leq h^*\sigma$. Using similar reasoning, we will show that $D_1+D_2<(\phi)$. Suppose that $(D_1+D_2)(p)>(\phi)(p)$ at a point $p\in X$. Then, $$D_1(p)\geq D_2(p)>((\phi)-D_1)(p),$$ which implies $p\in V_f=U_h\cap V_f$. Analogous to (\ref{eq: domination fails}), $$\lim_{z\to p} \frac{H(h)}{L(f)}(z)=0,$$ which via Proposition \ref{prop: H and L conditions} implies that $f^*\sigma> h^*\sigma$ near $p$, and thus gives a contradiction. We conclude that $D_2<D_1$ and $D_1+D_2<(\phi).$

Conversely, assume that $D_2<D_1$ and that $D_1+D_2<(\phi).$ We write $X=U_f\cup V_f$. The condition $D_2<D_1$, together with Lemma \ref{lem: H(f) less than H(h)} and Proposition \ref{prop: H and L conditions}, imply that $f^*\sigma < h^*\sigma$ on $U_f$. The condition $D_1+D_2<(\phi)$ is equivalent to $D_2 < (\phi)-D_1$, which via Lemma \ref{lem: H(f) less than H(h)} and Proposition \ref{prop: H and L conditions} implies that $f^*\sigma<h^*\sigma$ on $V_f$. Hence, $f^*\sigma<h^*\sigma$ everywhere.
\end{proof}

\begin{proof}[Proof of Corollary \ref{corc}]
For both implications, we can assume that $h$ is a branched immersion, since this is implied by both $D_2<D_1$ (from Proposition \ref{prop: branched immersions}) and $f^*\sigma<h^*\sigma$. Since $\deg(D_1), \deg(D_2) \leq 2g - 2$, Proposition \ref{prop: branched immersions} implies $2D_1<(\phi)$ and $2D_2<(\phi)$. Hence, the condition $D_1+D_2<(\phi)$ is automatic. The corollary is then immediate from Theorem \ref{principal_C}.
\end{proof}

\begin{proof}[Proof of Theorem \ref{principal_A}]
Let $\rho$, $f$, and $k$ be as in the statement of the theorem and let $(\phi,D_1)$ be the holomorphic data of $f$. If $\phi\neq 0$, we pick any divisor $D_2$ of degree $\deg  D_1-k$ such that $D_2\leq D_1$, and we apply Proposition \ref{prop: holomorphic data} and Corollary \ref{corc} part (1) to produce the desired representation $j$ and equivariant harmonic map $h$. If $\phi=0$, we pick $D_2$ of degree $\deg  D_1-k$ and dominated by both $D_1$ and the divisor of a non-zero quadratic differential, and we apply Proposition \ref{prop: holomorphic data} and Corollary \ref{corc} as above.
\end{proof}
We conclude this subsection by showing that an assumption such as "$h$ is a branched harmonic immersion" is necessary. This result shows that the domination problem can become delicate.
\begin{prop}\label{prop: counterexample}
    For $\phi\neq 0$, consider equivariant harmonic maps $h$ and $f$ with holomorphic data $(\phi,D_1)$ and $(\phi,D_2)$ respectively. Assume that $D_2<D_1$. If there exist distinct points $p$ and $q$ at which $2D_1(p)<(\phi)(p)$ and $2D_2(q)>(\phi)(q)$ respectively, then neither $f^*\sigma\leq h^*\sigma$ nor $h^*\sigma \leq f^*\sigma$ hold everywhere.
\end{prop}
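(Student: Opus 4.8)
The plan is to produce, at the two bad points $p$ and $q$, contradictory consequences of the two would-be domination inequalities, thereby showing neither can hold globally. The essential tool is Proposition \ref{prop: H and L conditions}, which translates the pointwise comparison of pullback metrics into a comparison of holomorphic/anti-holomorphic energies, together with the limit behaviour of these energies near the supports of the relevant divisors (as used to establish equation \eqref{eq: domination fails} in the proof of Theorem \ref{principal_C}).

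First I would analyze the point $p$, where $2D_1(p)<(\phi)(p)$. Since $D_2\le D_1$ we get $2D_2(p)\le 2D_1(p)<(\phi)(p)$, so both $D_1(p)<((\phi)-D_1)(p)$ and $D_2(p)<((\phi)-D_2)(p)$; equivalently, in a punctured neighbourhood of $p$ we have $H(f)>L(f)$ and $H(h)>L(h)$, so $p\in U_f\cap U_h$ (and, since $h$ need not be an immersion, one should note $p$ lies in the interior of $U_h$). Because $D_2(p)\le D_1(p)$, the vanishing order of $H(h)=\exp(\log H(h))$ at $p$ is at most that of $H(f)$; if $D_2(p)<D_1(p)$ this forces $H(h)/H(f)\to \infty$ as $z\to p$, which by Proposition \ref{prop: H and L conditions}(1) rules out $h^*\sigma\le f^*\sigma$ near $p$, while if $D_2(p)=D_1(p)$ I would instead invoke Proposition \ref{prop: energy domination} with $u_1=\log H(h)$, $u_2=\log H(f)$ (divisors $D_2<D_1$, using $D_2<D_1$ globally) to get the strict inequality $H(f)<H(h)$ near $p$, again contradicting $h^*\sigma\le f^*\sigma$ by the strictness part of Proposition \ref{prop: H and L conditions}. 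Either way, $h^*\sigma\le f^*\sigma$ fails somewhere.

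Next I would do the symmetric analysis at $q$, where $2D_2(q)>(\phi)(q)$. Here $D_2(q)>((\phi)-D_2)(q)$, so near $q$ one has $L(h)>H(h)$, i.e. $q\in V_h$ (in its interior); and $2D_1(q)\ge 2D_2(q)>(\phi)(q)$ gives $q\in V_f$ as well, so $q\in V_f\cap V_h$. The divisor of $L(h)$ at $q$ is $((\phi)-D_2)(q)$ and that of $L(f)$ is $((\phi)-D_1)(q)$, and $D_2\le D_1$ gives $((\phi)-D_2)(q)\ge((\phi)-D_1)(q)$; combined with the global strictness $D_2<D_1$ and Proposition \ref{prop: energy domination} applied to $u_1=\log L(f)$, $u_2=\log L(h)$ (whose divisors are $(\phi)-D_1<(\phi)-D_2$), this yields $L(f)<L(h)$ near $q$, and by Proposition \ref{prop: H and L conditions}(4) this contradicts $f^*\sigma\le h^*\sigma$ near $q$. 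Hence $f^*\sigma\le h^*\sigma$ also fails somewhere.

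Combining the two paragraphs: $h^*\sigma\le f^*\sigma$ fails near $p$ and $f^*\sigma\le h^*\sigma$ fails near $q$, which is exactly the conclusion. The main obstacle I anticipate is bookkeeping rather than depth: one must be careful that $p$ and $q$ lie in the \emph{interiors} of the relevant sets $U_f, U_h, V_f, V_h$ so that Proposition \ref{prop: H and L conditions} applies in a genuine neighbourhood, and one must correctly handle the borderline subcase $D_1(p)=D_2(p)$ (resp.\ at $q$) where the divisor comparison alone is not strict and one must fall back on the strong-maximum-principle strictness in Proposition \ref{prop: energy domination}. I would also double-check the degenerate possibility that $\phi$ vanishes at $p$ or $q$, but since $(\phi)(p)>2D_1(p)\ge 0$ and $(\phi)(q)>0$ the differential $\phi$ is in particular nonzero near both points, so no extra case arises.
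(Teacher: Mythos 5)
Your overall strategy is the same as the paper's: identify which of the regions $U_f\cap U_h$, etc., contain $p$ and $q$, and then use the comparison of holomorphic/anti-holomorphic energies coming from Proposition \ref{prop: energy domination} to invoke Proposition \ref{prop: H and L conditions}. The analysis at $p$ is correct. However, there is a sign error in your analysis at $q$ that, if taken literally, breaks the argument there.

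You set $u_1=\log L(f)$ and $u_2=\log L(h)$ and observe (correctly) that their Bochner divisors are $(\phi)-D_1$ and $(\phi)-D_2$ respectively, with $(\phi)-D_1<(\phi)-D_2$. But the hypothesis of Proposition \ref{prop: energy domination} requires the divisor of $u_2$ to be strictly smaller than that of $u_1$; here it is the divisor of $u_1$ that is smaller. Applying the proposition with the indices correctly matched therefore gives $u_2<u_1$, i.e.\ $L(h)<L(f)$, not $L(f)<L(h)$ as you state. This is not merely cosmetic: the inequality you wrote, $L(f)<L(h)$, is precisely consistent with condition (4) of Proposition \ref{prop: H and L conditions} and hence does \emph{not} contradict $f^*\sigma\le h^*\sigma$ — only the reversed inequality $L(h)<L(f)$ does. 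So the argument you wrote down does not yield the claimed contradiction at $q$, even though the corrected inequality (which is what the proposition actually gives) does. The paper's own proof sidesteps this bookkeeping by first establishing the single global inequality $H(f)\le H(h)$ (strict away from $\operatorname{supp}D_2$) via Lemma \ref{lem: H(f) less than H(h)}, then using $H(f)L(f)=H(h)L(h)$ to deduce $L(h)\le L(f)$ and $J(h)\ge J(f)$ everywhere, which immediately gives $U_f\subset U_h$ and $V_h\subset V_f$ and lets one read off the two failures at $p$ and $q$ from Proposition \ref{prop: H and L conditions} cases (1) and (4). That packaging avoids the index-matching pitfall you fell into. Once you flip the inequality at $q$, your proof is correct and amounts to a pointwise version of the paper's argument.
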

\begin{proof}
By Lemma \ref{lem: H(f) less than H(h)}, $H(f)\leq H(h)$ everywhere, strictly away from the support of $D_2$. Since $H(h)L(h)=H(f)L(f)$, we obtain $$J(h)=H(h)-L(h)\geq H(f)-L(f)=J(f)$$ everywhere, with the same strictness condition. We deduce that 
    \begin{equation}\label{eq: implications}
        J(h)\leq 0 \Rightarrow J(f)\leq 0, \hspace{1mm} J(f)\geq 0 \Rightarrow J(h)\geq 0.
    \end{equation}
By assumption, the open subset $U_{f}$ is non-empty (it contains the point $p$). Thus, by \eqref{eq: implications}, $U_{f}\subset U_{h}$, and by Proposition \ref{prop: H and L conditions} case (1), $h^*\sigma \leq f^*\sigma$ fails around $p$. On the other hand, the presence of the point $q$ shows that $V_{h}$ is non-empty, and \eqref{eq: implications} shows that $V_{h}\subset V_{f}$. By Proposition \ref{prop: H and L conditions} case (4), $h^*\sigma \leq f^*\sigma$ does not hold around $q$.  
\end{proof}

\subsection{Theorem \ref{principal_D}}
Both types of examples from Theorem \ref{principal_D} come from variations on the same construction. Let $X_1$ be a closed Riemann surface with one boundary component $\gamma$ and form the Riemann surface double $Y$, which comes with an antiholomorphic involution $\tau$ whose set of fixed points is $\gamma$. Necessarily, $Y$ has even genus, and of course any even genus smooth surface can be seen to arise from such a doubling. We attach a conformal metric to $Y$ so that we can define functions such as $H(\cdot)$ and $L(\cdot)$. Let $\phi$ be a non-zero holomorphic quadratic differential with no zeros on $\gamma$ and that is symmetric with respect to $\tau$, i.e, such that $\tau^*\phi =\overline{\phi}$. We view $X_1$ and $X_2:=\tau(X_1)$ as subsets of $Y$. We choose a first divisor $D'$ on $X_1$ dominated by $(\phi)|_{X_1}$. If $D''=(\phi)|_{X_2}$, we define $D=D'+(D''-\tau(D'))$. We point out that for such pairs, $\deg  D = \frac{\deg (\phi)}{2}=2g-2$, and hence the representation corresponding to $(\phi,D)$ has Euler number $0$ (recall Proposition \ref{prop: euler number}). Let $\widetilde{\tau}$ be the lift of $\tau$ to the universal cover $\widetilde{Y}$. 

For this $Y$ and $(\phi,D)$ as above, let $\rho:\Gamma\to \psl$ be the corresponding representation and let $f:\widetilde{Y}\to (\mathbb{H}^2,\sigma)$ be a $\rho$-equivariant harmonic map. 
\begin{lemma}\label{lem: symmetry}
     $f\circ \widetilde{\tau}=f$ and $J(f)=0$ on $\gamma.$
\end{lemma}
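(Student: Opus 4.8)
The statement has two parts, and the natural strategy is to get the symmetry $f\circ\widetilde\tau=f$ first, then deduce $J(f)=0$ on $\gamma$ as a corollary. The plan is to exploit uniqueness of the harmonic map together with the $\tau$-symmetry built into the data $(\phi,D)$. First I would observe that $\tau:Y\to Y$ is antiholomorphic, so precomposition sends $\rho$-equivariant maps to $\bar\rho$-equivariant ones for a suitable $\bar\rho$, and composing with the orientation-reversing isometry on $\mathbb{H}^2$ that also conjugates the target appropriately. More precisely, choose an orientation-reversing isometry $\iota$ of $(\mathbb{H}^2,\sigma)$; then the map $g:=\iota\circ f\circ\widetilde\tau$ is harmonic (harmonicity is preserved by precomposition with antiholomorphic maps and postcomposition with isometries — the Bochner/Hopf picture flips types consistently), equivariant for the representation $\gamma\mapsto \iota\,\rho(\widetilde\tau\gamma\widetilde\tau^{-1})\,\iota^{-1}$, and its Hopf differential is $\iota_*\,\tau^*\phi=\overline{\tau^*\phi}=\phi$ by the symmetry hypothesis $\tau^*\phi=\overline\phi$. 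One then needs to check that the divisor of the square root of the holomorphic energy of $g$ is exactly $D$: here the key point is that $\tau$ swaps the roles of $H$ and $L$ (an antiholomorphic reparametrization interchanges holomorphic and anti-holomorphic energy), so the holomorphic-energy divisor of $g$ is $\tau^*((\phi)-D_f)$ where $D_f=D$; and the construction of $D$, namely $D=D'+(D''-\tau(D'))$ with $D''=(\phi)|_{X_2}$, $D'\le(\phi)|_{X_1}$, was engineered precisely so that $\tau^*((\phi)-D)=D$. (This is the computation to do carefully: break $(\phi)$ into its parts on $X_1$ and $X_2$, subtract $D$, apply $\tau$, and match.) Granting this, $g$ has the same holomorphic data $(\phi,D)$ as $f$, so by Proposition \ref{prop: holomorphic data} together with uniqueness of the equivariant harmonic map (Sampson, for the irreducible case; the pullback metric is canonical in general), $g$ and $f$ agree up to the ambiguity in Definition \ref{def: div rho} — in particular $g^*\sigma=f^*\sigma$, and with a little care about which isometry $\iota$ we picked we can arrange $\iota\circ f\circ\widetilde\tau=f$, hence $f\circ\widetilde\tau=\iota^{-1}\circ f$. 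To upgrade this to $f\circ\widetilde\tau=f$ on the fixed locus, restrict to a point $\widetilde p\in\widetilde\gamma$: then $\widetilde\tau(\widetilde p)=\widetilde p$ (lifting $\tau$ so that it fixes a chosen lift of $\gamma$), so $f(\widetilde p)=\iota^{-1}(f(\widetilde p))$, i.e. $f(\widetilde p)$ lies on the fixed geodesic of $\iota$; and since this holds for all of $\widetilde\gamma$, $f$ maps $\widetilde\gamma$ into a single geodesic $\ell=\mathrm{Fix}(\iota)$.

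\textbf{From the symmetry to $J(f)=0$ on $\gamma$.} Once $f(\widetilde\gamma)\subset\ell$ is established, the vanishing of the Jacobian along $\gamma$ is a direct consequence: the image of $\widetilde\gamma$ under $f$ is a (possibly constant or folded) curve contained in a $1$-dimensional geodesic, so $df$ has rank at most one along $\widetilde\gamma$, i.e. $J(f)=H(f)-L(f)$ — which equals $\det df$ up to the positive conformal factor, by $dA_{f^*\sigma}=J(f)\,dA_\nu$ from \eqref{eq: 2-forms} — vanishes identically on $\gamma$. Alternatively, and perhaps cleaner, one argues directly from $f\circ\widetilde\tau=\iota^{-1}\circ f$: differentiating and evaluating at a fixed point $\widetilde p\in\widetilde\gamma$ of $\widetilde\tau$ gives $df_{\widetilde p}\circ d\widetilde\tau_{\widetilde p}=d\iota^{-1}_{f(\widetilde p)}\circ df_{\widetilde p}$; since $d\widetilde\tau_{\widetilde p}$ is a reflection of $T_{\widetilde p}\widetilde Y$ fixing $T_{\widetilde p}\widetilde\gamma$ and $d\iota^{-1}$ is a reflection of $T_{f(\widetilde p)}\mathbb{H}^2$ fixing $T_{f(\widetilde p)}\ell$, comparing determinants shows nothing, but comparing the action on the normal line to $\gamma$ forces the normal component of $df$ to be $(-1)$-eigen, hence $df_{\widetilde p}$ sends the $\widetilde\tau$-reflection to the $\iota$-reflection in a way that pins $df_{\widetilde p}(T_{\widetilde p}\widetilde\gamma)\subset T_{f(\widetilde p)}\ell$ and $df_{\widetilde p}(N_{\widetilde p})\subset T_{f(\widetilde p)}\ell$ as well — so $df_{\widetilde p}$ has rank $\le 1$ and $J(f)(\widetilde p)=0$.

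\textbf{Main obstacle.} I expect the main technical point to be the bookkeeping that $g:=\iota\circ f\circ\widetilde\tau$ really has holomorphic data $(\phi,D)$ — specifically verifying $\tau^*((\phi)-D)=D$ from the definition $D=D'+(D''-\tau(D'))$, $D''=(\phi)|_{X_2}$, and handling the possibility that $(\phi)$ has zeros on $X_2\setminus\gamma$ correctly (the hypothesis that $\phi$ has no zeros on $\gamma$ itself is what guarantees $D$ is supported away from $\gamma$ and that $f$ is immersed transverse to the relevant geodesic near $\gamma$). A secondary subtlety is the choice of the lift $\widetilde\tau$ and of $\iota$: one must check these can be chosen compatibly so that the equivariance representation of $g$ is genuinely conjugate (indeed equal, after absorbing $\iota$) to $\rho$, not merely abstractly isomorphic, so that the uniqueness statement applies and yields $f\circ\widetilde\tau=\iota^{-1}\circ f$ on the nose rather than up to an extra isometry. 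Modulo these points, everything reduces to Proposition \ref{prop: holomorphic data} and Sampson's uniqueness, both already available.
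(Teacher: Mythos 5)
Your proposal takes the right overall route (apply Proposition \ref{prop: holomorphic data} and uniqueness to $f$ precomposed with $\widetilde\tau$), but there are two concrete problems, one of which also infects the second half of the lemma.

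\textbf{The orientation-reversing isometry $\iota$ is not just unnecessary, it breaks the bookkeeping.} Precomposition by the anti-holomorphic $\widetilde\tau$ already preserves harmonicity (the tension field is a Riemannian quantity, independent of orientation), so the paper works directly with $f\circ\widetilde\tau$ and computes $(f\circ\widetilde\tau)_z=\overline{\widetilde\tau}_z\,f_{\overline z}\circ\tau$, giving $\phi(f\circ\widetilde\tau)=\widetilde\tau^*\overline\phi=\phi$ and $H(f\circ\widetilde\tau)=L(f)\circ\tau$. The divisor of $\sqrt{H(f\circ\widetilde\tau)}$ is then $\tau\bigl((\phi)-D\bigr)$, and the construction $D=D'+(D''-\tau(D'))$ with $D''=(\phi)|_{X_2}$ gives exactly $\tau(D)=(\phi)-D$, hence $\tau\bigl((\phi)-D\bigr)=D$. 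So $f\circ\widetilde\tau$ has the same data $(\phi,D)$. Once you insert $\iota$, you have to account for the fact that postcomposition by an orientation-reversing isometry \emph{also} swaps $H$ and $L$: $H(\iota\circ f\circ\widetilde\tau)=L(f\circ\widetilde\tau)=H(f)\circ\tau$, whose divisor is $\tau(D)=(\phi)-D$, \emph{not} $D$. Your stated formula $\tau^*((\phi)-D)$ is the one that holds for $f\circ\widetilde\tau$ without $\iota$; with $\iota$ you double-counted the $H\leftrightarrow L$ swap. So the map $g$ you build does not have the same holomorphic data as $f$, the uniqueness argument does not apply to it, and moreover even if it did you would land on $f\circ\widetilde\tau=\iota^{-1}\circ f$, which is weaker than what the lemma claims.

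\textbf{Both of your arguments for $J(f)=0$ on $\gamma$ are incorrect.} Knowing $f(\widetilde\gamma)\subset\ell$ for a geodesic $\ell$ constrains only the derivative of $f$ along $\gamma$; the normal derivative can (and in general does) point off $\ell$, so this does not force $\mathrm{rank}\,df\le 1$. Your second argument contains a sign slip: from $df\circ d\widetilde\tau=d\iota^{-1}\circ df$ at a fixed point $\widetilde p$, and from $d\widetilde\tau$, $d\iota^{-1}$ being the reflections across $T\gamma$ and $T\ell$, the correct conclusion is $df(T_{\widetilde p}\gamma)\subset T_{f(\widetilde p)}\ell$ \emph{and} $df(N_{\widetilde p}\gamma)\subset N_{f(\widetilde p)}\ell$ (normal to normal, not normal to tangent) — which is precisely compatible with $df$ having full rank. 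Determinants tell you nothing, as you observed; the normal line computation also tells you nothing. The correct deduction of $J(f)=0$ on $\gamma$ is the analytic one: once $f\circ\widetilde\tau=f$ is established, the identity $H(f\circ\widetilde\tau)=L(f)\circ\tau$ becomes $H(f)=L(f)\circ\tau$, and on $\gamma$ where $\tau=\mathrm{id}$ this gives $H(f)=L(f)$, i.e.\ $J(f)=H(f)-L(f)=0$. That identity is the crux of the lemma and is exactly what your proposal is missing.

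Dropping $\iota$ entirely would bring your argument in line with the paper's; the remaining points (checking $\tau(D)=(\phi)-D$ from the definition of $D$, handling the reducible case by using that $\tau$ fixes $\gamma$) you correctly identify as the places requiring care.
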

\begin{proof}
In a local coordinate $z$, since $\tau$ is anti-holomorphic, $(f\circ \widetilde{\tau})_z = (\overline{\widetilde{\tau}})_zf_{\overline{z}}(\tau (z)).$ It follows using the definitions stemming from (\ref{eq: pullback metric2}) that $\phi(f\circ \widetilde{\tau}) = \widetilde{\tau}^*\overline{\phi}(f)=\phi,$ and $H(f\circ \widetilde{\tau})=L(f)\circ \tau.$ The second equality implies that the divisor associated with $f\circ \widetilde{\tau}$ is $(\phi)-D\circ \tau =D$. Thus, $f$ and $f\circ \widetilde{\tau}$ have the same holomorphic data. By Proposition \ref{prop: holomorphic data}, if the representation associated with $(\phi, D)$ is irreducible, then $f=f\circ \widetilde{\tau}$, and if the representation is reductive but not irreducible, then $f$ and $f\circ \widetilde{\tau}$ are related by translation along a geodesic. Even in the latter case, since $\tau$ fixes $\gamma$, we can conclude that  the two maps agree. The fact that $J(f)=0$ on $\gamma$ follows from the equality $H(f\circ \widetilde{\tau})=L(f)\circ \tau.$
\end{proof}
Let $\widetilde{X}_1\subset \widetilde{Y}$ be the preimage of $X_1$ under the universal covering map. We record an analog of Proposition \ref{prop: branched immersions} for maps restricted to $\widetilde{X}_1$.
\begin{lemma}\label{lem: modified BBDH}
$f|_{\textrm{int}(\widetilde{X}_1)}$ is a branched immersion if and only if $2D'<(\phi|_{X_1})$ or $2D'>(\phi|_{X_1})$.
\end{lemma}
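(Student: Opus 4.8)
The plan is to mimic the proof of Proposition~\ref{prop: branched immersions}, but on the surface-with-boundary $\widetilde X_1$, using the boundary version of the domination inequality recorded in Remark~\ref{rem: boundary case}. The two key facts that make the argument on $\widetilde X_1$ go through are exactly the content of Lemma~\ref{lem: symmetry}: first, $f=f\circ\widetilde\tau$, which forces $H(f)=L(f)\circ\tau$ and in particular $J(f)=0$ on $\gamma$; and second, the divisor of $f$ restricted to the interior of $\widetilde X_1$ is precisely $D'$ (since $D=D'+(D''-\tau(D'))$ and the correction term $D''-\tau(D')$ is supported on $X_2$), while the divisor of the anti-holomorphic energy restricted to $\mathrm{int}(\widetilde X_1)$ is $(\phi|_{X_1})-D'$.

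For the ``only if'' direction I would argue exactly as in the ``only if'' part of Proposition~\ref{prop: branched immersions}: if $f|_{\mathrm{int}(\widetilde X_1)}$ is a branched immersion, then by \cite[Lemma 3.2]{BBDH} the sign of $J(f)$ cannot flip on $\mathrm{int}(\widetilde X_1)$. Since $J(f)>0$ near a point $p$ with $2D'(p)<(\phi|_{X_1})(p)$ and $J(f)<0$ near a point with $2D'(p)>(\phi|_{X_1})(p)$, the quantity $2D'-(\phi|_{X_1})$ cannot change sign on $X_1$, which gives $2D'<(\phi|_{X_1})$ or $2D'>(\phi|_{X_1})$.

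For the ``if'' direction, suppose $2D'<(\phi|_{X_1})$. Apply the boundary version of Proposition~\ref{prop: energy domination} (Remark~\ref{rem: boundary case}) on the compact surface $\overline{X_1}$, with $u_1=\log L(f)$ and $u_2=\log H(f)$, whose divisors on $X_1$ are $(\phi|_{X_1})-D'$ and $D'$ respectively; the hypothesis $2D'<(\phi|_{X_1})$ says $D'<(\phi|_{X_1})-D'$, i.e.\ $u_2$ has the smaller divisor. The crucial point is that $u_1$ and $u_2$ extend continuously to $\gamma$ and \emph{agree there}: this is Lemma~\ref{lem: symmetry}, since $H(f)=L(f)$ on $\gamma$ (as $\gamma$ is the fixed set of $\tau$ and $H(f)=L(f)\circ\tau$). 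Thus Remark~\ref{rem: boundary case} applies and yields $u_1<u_2$, i.e.\ $J(f)=H(f)-L(f)>0$ on $\mathrm{int}(\widetilde X_1)$ away from the support of $D'$. Then, exactly as on p.~12 of \cite{BBDH} (or via the Hartman--Wintner asymptotics as in \cite{wood_paper}), the isolated zeros of $df$ are branch points, so $f|_{\mathrm{int}(\widetilde X_1)}$ is a branched immersion. The case $2D'>(\phi|_{X_1})$ is symmetric, swapping the roles of $H(f)$ and $L(f)$.

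The main obstacle is verifying that the boundary hypothesis of Remark~\ref{rem: boundary case} is genuinely satisfied, i.e.\ that $\log H(f)$ and $\log L(f)$ are finite and continuous up to $\gamma$ and agree there. Finiteness and continuity follow because $\phi$ has no zeros on $\gamma$ (so $D'$ and $(\phi|_{X_1})-D'$ are supported away from $\gamma$) and harmonic maps are real-analytic; the equality on $\gamma$ is the $\tau$-equivariance from Lemma~\ref{lem: symmetry}. Once this is in place, the rest of the argument is a routine transcription of the proof of Proposition~\ref{prop: branched immersions} to the bordered setting.
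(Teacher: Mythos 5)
Your proposal is correct and follows essentially the same route as the paper's proof: the ``only if'' direction is the sign-non-flipping argument from \cite[Lemma 3.2]{BBDH} as in Proposition~\ref{prop: branched immersions}, and the ``if'' direction applies Proposition~\ref{prop: energy domination} via Remark~\ref{rem: boundary case} to $u_1=\log L(f)|_{X_1}$, $u_2=\log H(f)|_{X_1}$ (or with roles swapped), with Lemma~\ref{lem: symmetry} supplying the boundary matching $u_1=u_2$ on $\gamma$. Your explicit check that $\phi$ has no zeros on $\gamma$, ensuring $u_1,u_2$ are finite and continuous up to the boundary, is a welcome clarification that the paper leaves implicit.
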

\begin{proof}
The same argument from the proof of Proposition \ref{prop: branched immersions} implies that $2D'<(\phi|_{X_1})$ or $2D'>(\phi|_{X_1})$ are necessary. For the converse, as in the proof of Proposition \ref{prop: branched immersions}, we apply Proposition \ref{prop: energy domination} and Remark \ref{rem: boundary case} with $u_1=\log L(f)|_{X_1}$ and $u_2=\log H(f)|_{X_1}$ if  $2D'<(\phi|_{X_1})$, and $u_1=\log H(f)|_{X_1}$ and $u_2=\log L(f)|_{X_1}$ if $2D'>(\phi|_{X_1})$. By Lemma \ref{lem: symmetry}, $u_1=u_2$ extend to and agree on $\partial X_1=\gamma$, so Remark \ref{rem: boundary case} indeed applies.
\end{proof}
Similar to above, we provide an analog for Corollary \ref{corc}. Let $(\phi,D_1)$ and $(\phi, D_2)$ be constructed as above with equivariant harmonic maps $f$ and $h$ respectively. For $i=1,2$, we let $D_i'$ and $D_i''$ be the divisors corresponding to $D'$ and $D''$ respectively.
\begin{lemma}\label{lem: thm a modified with boundary}
     Assume that $2D_1'<(\phi|_{X_1})$. Then $f^*\sigma< h^*\sigma$ if and only if $D_2'<D_1'$ or $D_2'>(\phi|_{X_1})-D_1'$.
\end{lemma}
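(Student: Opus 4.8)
The strategy is to mirror the proof of Theorem \ref{principal_C}, working on the two halves $X_1$ and $X_2$ of the double $Y$ separately and patching the conclusions together using the symmetry from Lemma \ref{lem: symmetry}. First I would set up notation: write $(\phi, D_1)$ and $(\phi, D_2)$ for the holomorphic data of $f$ and $h$ respectively, with $D_i = D_i' + (D_i'' - \tau(D_i'))$ as in the construction, where $D_i'$ is a divisor on $X_1$ dominated by $(\phi|_{X_1})$ and $D_i'' = (\phi|_{X_2})$. Note that by construction $D_i|_{X_1} = D_i'$ and, on $X_2$, using $\tau^*\phi = \overline{\phi}$, the restriction $D_i|_{X_2}$ works out to $(\phi|_{X_2}) - \tau(D_i')$; so the anti-holomorphic energy divisor $(\phi|_{X_2}) - D_i|_{X_2}$ equals $\tau(D_i')$. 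The key structural point is that by Lemma \ref{lem: modified BBDH}, the hypothesis $2D_1' < (\phi|_{X_1})$ says exactly that $f|_{\mathrm{int}(\widetilde X_1)}$ is a branched \emph{immersion} with positive Jacobian, and by the symmetry $f \circ \widetilde\tau = f$ (Lemma \ref{lem: symmetry}), $f|_{\mathrm{int}(\widetilde X_2)}$ is a branched immersion with \emph{negative} Jacobian; so the singular set of $f$ is contained in $\gamma$ and $U_f \supset \mathrm{int}(\widetilde X_1)$, $V_f \supset \mathrm{int}(\widetilde X_2)$ (up to closures).

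For the ``if'' direction, suppose $D_2' < D_1'$ or $D_2' > (\phi|_{X_1}) - D_1'$. On $X_1$: applying Proposition \ref{prop: energy domination} together with Remark \ref{rem: boundary case} (the boundary version, valid because by Lemma \ref{lem: symmetry} the relevant Bochner solutions extend continuously to and agree on $\gamma$, since $J(f) = 0 = J(h)$ there forces $H = L$ on $\gamma$) with $u_1 = \log H(f)|_{X_1}$, $u_2 = \log H(h)|_{X_1}$ in the first case (divisors $D_1'$ and $D_2'$), or $u_1 = \log L(f)|_{X_1}$, $u_2 = \log H(h)|_{X_1}$ in the second case (divisors $(\phi|_{X_1}) - D_1'$ and $D_2'$), gives $H(f) \le H(h)$ or $L(f) \le H(h)$ on $X_1$. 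Combined with Proposition \ref{prop: H and L conditions} (cases (1) and (3), using that $U_h = \mathrm{int}(\widetilde X_1)$ up to closure because $h|_{X_1}$ is also a branched immersion — which follows from $D_2' < D_1' \le (\phi|_{X_1})$, hence $2D_2' < (\phi|_{X_1})$, or from the other hypothesis analogously via Lemma \ref{lem: modified BBDH}), this yields $f^*\sigma < h^*\sigma$ on $X_1$. On $X_2$ one repeats the argument with the $\tau$-images of the divisors, or simply invokes the $\widetilde\tau$-equivariance of both $f$ and $h$ to transport the inequality; since $f^*\sigma$ and $h^*\sigma$ are $\tau$-invariant, the inequality on $X_1$ propagates to $X_2$. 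Patching and including $\gamma$ by continuity gives $f^*\sigma < h^*\sigma$ everywhere.

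For the ``only if'' direction, suppose $f^*\sigma < h^*\sigma$. Restricting attention to $X_1$, exactly the argument in the proof of Theorem \ref{principal_C} applies: if neither $D_2' < D_1'$ nor $D_2' > (\phi|_{X_1}) - D_1'$ held, then either there is a point $p \in X_1$ with $D_1'(p) < D_2'(p)$ — forcing $\lim_{z \to p} H(h)/H(f) = 0$ at a point of $U_f \cap U_h$ and contradicting Proposition \ref{prop: H and L conditions}(1) — or there is a point $q \in X_1$ with $(D_1' + D_2')(q) > (\phi|_{X_1})(q)$, i.e. $D_2'(q) > ((\phi|_{X_1}) - D_1')(q)$, which puts $q \in V_f \cap U_h$ and, via $\lim_{z \to q} H(h)/L(f) = 0$, again contradicts Proposition \ref{prop: H and L conditions} (case (3)). (Here one uses $2D_1' < (\phi|_{X_1})$ to know $U_h = X_1$, i.e. $h|_{X_1}$ is a branched immersion of positive Jacobian; if not it would fail and the argument would need the other sign, but the hypothesis fixes the sign.) This completes the proof.

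\textbf{Main obstacle.} I expect the delicate point to be the bookkeeping of the two divisors' restrictions and the sign of the Jacobian on each half — specifically, verifying that $h|_{X_1}$ is a branched immersion with the \emph{same} sign of Jacobian as $f|_{X_1}$, so that on $X_1$ we are genuinely in the situation $U_h = X_1$ of Theorem \ref{principal_C} rather than a mixed case. This needs the hypothesis $2D_1' < (\phi|_{X_1})$ to be used twice: once to pin down the sign for $f$ and once, via $D_2' < D_1'$ (or the dual inequality), to pin it down for $h$ through Lemma \ref{lem: modified BBDH}. The boundary regularity needed to invoke Remark \ref{rem: boundary case} — i.e. that $\log H(f)$, $\log L(f)$, $\log H(h)$ extend continuously across $\gamma$ and the relevant pair agree there — also needs a sentence of justification from Lemma \ref{lem: symmetry}, but this is routine.
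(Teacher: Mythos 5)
Your plan is the right one — work on $X_1$, use Lemma \ref{lem: modified BBDH} to get a branched immersion, apply Proposition \ref{prop: energy domination} with the boundary Remark \ref{rem: boundary case}, then propagate to $X_2$ by the $\tau$-symmetry — and your first alternative $D_2' < D_1'$ is handled essentially as in the paper. But your treatment of the second alternative $D_2' > (\phi|_{X_1}) - D_1'$ contains a sign error that breaks it. In that case $2D_2' > D_2' + ((\phi|_{X_1}) - D_1') > (\phi|_{X_1})$ (the last step from the hypothesis $2D_1' < (\phi|_{X_1})$), so Lemma \ref{lem: modified BBDH} makes $h|_{\mathrm{int}(\widetilde X_1)}$ a branched immersion with \emph{negative} Jacobian; that is, $V_h$, not $U_h$, covers $X_1$. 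The relevant case of Proposition \ref{prop: H and L conditions} on $X_1$ is therefore (2) ($U_f \cap V_h$: need $L(h) \geq H(f)$), with Bochner pair $u_1 = \log H(f)|_{X_1}$ (divisor $D_1'$) and $u_2 = \log L(h)|_{X_1}$ (divisor $(\phi|_{X_1}) - D_2'$), which works because $(\phi|_{X_1}) - D_2' < D_1'$ is a restatement of the hypothesis. Your pairing $u_1 = \log L(f)$, $u_2 = \log H(h)$ runs the wrong way: the divisor $D_2'$ of $H(h)$ dominates the divisor $(\phi|_{X_1}) - D_1'$ of $L(f)$, so Proposition \ref{prop: energy domination} would yield $H(h) \leq L(f)$, which is not what you need. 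The same confusion — invoking case (3), $V_f \cap U_h$ — recurs in your "only if" argument, even though $V_f$ is negligible on $X_1$ since $f|_{\mathrm{int}(\widetilde X_1)}$ has positive Jacobian; there the point $q$ with $D_2'(q) > ((\phi|_{X_1}) - D_1')(q)$ lies in $U_f \cap V_h$, and one should compare $L(h)$ with $H(f)$, not $H(h)$ with $L(f)$.

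The paper avoids the case split altogether: it first applies an outer automorphism to replace $(\phi, D_2)$ by $(\phi, (\phi)-D_2)$, which leaves $h^*\sigma$ unchanged but replaces $D_2'$ by $(\phi|_{X_1}) - D_2'$, so one may assume $\deg D_2' \leq g-1$; under this normalization the alternative $D_2' > (\phi|_{X_1}) - D_1'$ cannot occur, and only your first case needs to be argued. Your two-case approach is repairable with the corrections above, but the outer-automorphism reduction — the same degree-flipping trick used in Corollary \ref{corc} — is cleaner and is what the paper actually does.
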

\begin{proof}
By using an outer automorphism of the fundamental group, we can assume that $\deg D_2|_{X_1}\leq g-1$, and under this assumption the case $D_2'>(\phi|_{X_1})-D_1$ is removed. By Lemma \ref{lem: symmetry},  $f^*\sigma|_{X_2}=\tau^* (f^*\sigma|_{X_1})$ and $\tau^* (h^*\sigma|_{X_1})=h^*\sigma|_{X_2}$, so $f^*\sigma\leq h^*\sigma$ on $X_1$ if and only if $f^*\sigma\leq h^*\sigma$ everywhere. By Lemma \ref{lem: modified BBDH}, $f|_{\textrm{int}(\widetilde{X}_1)}$ is a branched immersion. 

The exact same local analysis from the proof of Theorem \ref{principal_C} shows that $D_2'<D_1'$ is necessary for the domination. Assume that $D_2'<D_1'$. By Lemma \ref{lem: symmetry}, both $J(f)$ and $J(h)$ vanish on $\gamma$, and from $H(f)L(f)=H(h)L(h),$ we obtain that $H(f)|_{\gamma}=H(h)|_{\gamma}.$ Setting $u_1=\log H(f)|_{X_1}$ and $u_2=\log H(h)|_{X_1}$, we apply Proposition \ref{prop: energy domination} and Remark \ref{rem: boundary case} to obtain $H(f)\leq H(h)$ on $X_1$ (analogous to Lemma \ref{lem: H(f) less than H(h)}), and the inequality is strict when $J(h)\neq 0$. By the argument from Theorem \ref{principal_C}, $f^*\sigma<h^*\sigma$ on $X_1$. By symmetry, the same is true on $X_2$. 
\end{proof}
With preparations complete, we now prove Theorem \ref{principal_D}.
\begin{proof}[Proof of Theorem \ref{principal_D}]
We work with the construction and notations as above. For (1), select pairs $(\rho,f)$ and $(j,h)$ giving rise to data $(\phi,D_2)$ and $(\phi, D_1)$ respectively with $2D_1'<(\phi|_{X_1})$ and $D_2'<D_1'$, exactly as in Lemma \ref{lem: thm a modified with boundary}. 

For (2), we consider $(\phi,D_0)$ as above and, for $D_0=D_0'+(D_0''-\tau(D_0'))$ as above, we specify that $D_0'=0$. Let $(\rho,f)$ be the corresponding representation and equivariant harmonic map. By Lemmas \ref{lem: symmetry} and \ref{lem: modified BBDH}, $f|_{\textrm{int}(\widetilde{X}_1)}$ and $f|_{\textrm{int}(\widetilde{X}_2)}$ are branched immersions, and $f$ is singular on the shared frontier of $\widetilde{X}_1$ and $\widetilde{X}_2$. In fact, since $D_0'=0$ (and $D_0''=(\phi)|_{X_2})$,  $f|_{\textrm{int}(\widetilde{X}_1)}$ and $f|_{\textrm{int}(\widetilde{X}_2)}$ are immersions. 

Let $(j,h)$ be any other pair with holomorphic data $(\phi,D)$ and assume that $f^*\sigma< h^*\sigma$. Since $f^*\sigma$ is non-degenerate on $\textrm{int}(X_1)$ and $\textrm{int}(X_2)$, the singular set of $h$ is contained in $\gamma$, and $h|_{\textrm{int}(\widetilde{X}_1)}$ and $h|_{\textrm{int}(\widetilde{X}_2)}$ are immersions. By applying an outer automorphism to $j$ (which will not alter $h^*\sigma$), we can assume that $J(h)|_{X_1}\geq 0$. Then $D|_{X_1}$ is contained in the singular set of $h|_{\textrm{int}(\widetilde{X}_1)}$, and the latter being empty forces $D|_{X_1}=0$. Now, we have that either $J(h)|_{X_2}\geq 0$ or $J(h)|_{X_2}\leq 0$. If $J(h)|_{X_2}\geq 0$, the non-degeneracy of $h^*\sigma$ on $\textrm{int}(X_2)$ shows that $D|_{X_2}=0$, which forces $j$ to be Fuchsian. Similarly, if $J(h) \leq 0$, then the divisor of the square root of the anti-holomorphic energy on $X_2$, which is by definition $(\phi|_{X_2}) - D|_{X_2}$, is contained in the singular set of $h|_{\mathrm{int}(\widetilde{X}_2)}$, and thus $(\phi|_{X_2}) = D|_{X_2}$. We therefore deduce by Proposition \ref{prop: holomorphic data} that $\rho$ and $j$ are conjugate. But then $f^*\sigma=h^*\sigma$, and we have a contradiction.
\end{proof}

\section{\texorpdfstring{Anti-de Sitter $3$-manifolds}{Anti-de Sitter 3-manifolds}}\label{sec4}
In this section, we introduce the geometry of the anti-de Sitter space and the local model of a spin-cone singular anti-de Sitter manifold. 
\subsection{Anti-de Sitter space}
Let $\mathfrak{sl}(2,\mathbb{R})$ be the Lie algebra of $\psl$. On $\mathfrak{sl}(2,\mathbb{R})$, we consider the bilinear form $-\mathrm{det}$, which is invariant under the adjoint representation. It can be shown that $-\mathrm{det}$ coincides with $\frac{1}{8}$ times the Killing form of $\psl$. This form has signature $(2,1)$ and induces a bi-invariant Lorentzian metric on $\psl$, which we denote by $g^{\ads}$. We define the $3$-dimensional anti-de Sitter space to be:
$$\ads := \left(\psl, g^{\ads}\right).$$
The group of orientation- and time-orientation–preserving isometries is $\psl \times \psl$, acting on $\ads$ by left and right multiplication. That is, 
$$(A, B) \cdot X = A X B^{-1}.$$
A tangent vector $v \in \mathrm{T}_x\ads$ is timelike if $g^{\ads}(v, v) < 0$, lightlike if $g^{\ads}(v, v) = 0$, and spacelike if $g^{\ads}(v, v) > 0$.
We call a geodesic $\gamma$ timelike if every tangent vector is timelike, lightlike if every tangent vector is lightlike, and spacelike if every tangent vector is spacelike.

It turns out that every timelike geodesic is of the form
\begin{equation}\label{eq_timelike}
\ell_{p,q} = \left\{ A \in \psl \mid A \cdot q = p \right\},
\end{equation}
for $(p, q) \in \mathbb{H}^2 \times \mathbb{H}^2$. These are topological circles and have Lorentzian length $\pi$. Note that under this identification, it can be checked that for any isometry $(A, B)$ of $\ads$, we have
$$(A, B) \cdot \ell_{p,q} = \ell_{Ap, Bq}.$$
Hence, the 1-to-1 correspondence $(p, q) \to \ell_{p,q}$ is equivariant with respect to the action of $\psl \times \psl$ on $\mathbb{H}^2 \times \mathbb{H}^2$ and on the set of timelike geodesics.\\

We end this subsection by briefly recalling the notion of a geometric structure on a $3$-manifold $M$. Let $X$ be a manifold and $G$ a Lie group acting transitively on $X$ by analytic diffeomorphisms. Then a $(G,X)$-structure on $M$ is a maximal atlas of coordinate charts on $M$ with values in $X$ such that the transition maps are given by elements of $G$. An important result from the theory is that $M$ is equipped with a \emph{holonomy representation} $\rho:\pi_1(M)\to G$ and a $\rho$-equivariant local diffeomorphism $ \mathrm{dev} : \widetilde{M} \to X$, called the \emph{developing map}. The pair $(\mathrm{hol},\mathrm{dev})$ is defined up to the action of $G$, where $G$ acts by conjugation on the holonomy representation and post-composition on the developing map. In this paper, we focus on the case where $X$ is the three-dimensional anti-de Sitter space $\ads$ and $G = \isom(\ads)$. The corresponding $(G,X)$-structures on $M$ are known as \emph{anti-de Sitter structures}, and a manifold endowed with such a structure is called an \emph{anti-de Sitter manifold}. For further details on three-dimensional anti-de Sitter geometry, we refer the reader to~\cite{BS_ads}.

\subsection{AdS manifolds with spin-cone structure}
Before getting to spin-cone singularities, we recall the model of cone singularities in $2$-dimensional hyperbolic geometry.
\subsubsection{Hyperbolic cone singularities}
Let $\mathbb{H}^2_* := \mathbb{H}^2 \setminus \{i\}$, and let $c: [0,+\infty) \to \mathbb{H}^2$ be the geodesic parametrized by hyperbolic arc length given by
$$
c(t) = i e^t,
$$
so that $c(0) = i$ and $\lim_{t \to +\infty} c(t) = \infty$ in the boundary at infinity of $\mathbb{H}^2$. The universal cover $\widetilde{\mathbb{H}^2_*}$ of $\hh$ is given by
\begin{equation}\label{eq_pi_map}
\begin{array}{rcl}
\pi & : & (0, +\infty) \times \mathbb{R} \to \mathbb{H}^2_* \\
    &   & (r, \theta) \mapsto R^{\theta}c(r).
\end{array}
\end{equation}
The group of deck transformations of $\pi$, which is isomorphic to the fundamental group of $\mathbb{H}^2_*$, is given by
\[
\pi_1(\mathbb{H}^2_*) = \left\{ (r, \theta) \mapsto (r, \theta + 2k\pi) \mid k \in \mathbb{Z} \right\}.
\]
We endow $\widetilde{\mathbb{H}^2_*}$ with the Riemannian metric obtained by pulling back the hyperbolic metric $\sigma$ via $\pi$. In this model, the isometry group satisfies $\mathrm{Isom}(\widetilde{\mathbb{H}^2_*}) \cong \mathbb{R}$. More precisely,
\[
\mathrm{Isom}(\widetilde{\mathbb{H}^2_*}) = \left\{ (r, \theta) \mapsto (r, \theta + \tau) \mid \tau \in \mathbb{R} \right\}.
\]
For $\theta_0 \in \mathbb{R}$, we define the local model of a \emph{hyperbolic cone structure} by
\begin{equation}\label{eq_cone}
\mathbb{H}^2_{\theta_0} := \widetilde{\mathbb{H}^2_*}/(r, \theta) \sim (r, \theta + \theta_0).
\end{equation}
We call the point $i$ the \textit{branched point} of $\hh$. To justify this terminology, we define the \textit{universal branched cover} of $\mathbb{H}^2_*$ as the quotient:
\begin{equation}
\left( [0,+\infty) \times \mathbb{R} \right) \sqcup \{0\} := \left( [0,+\infty) \times \mathbb{R} \right) / \sim,
\end{equation}
where \((0, \theta) \sim (0, \theta')\) for all \(\theta, \theta' \in \mathbb{R}\), identifying all points at \(r = 0\) to a single point, denoted \(\{0\}\). The universal covering map \(\pi: (0, +\infty) \times \mathbb{R} \to \mathbb{H}^2_*\) extends to this space by setting \(\pi([0, \theta]) = i\) for all \(\theta\). This is well defined since \(c(0) = i\) and \(R^{\theta} i = i\). Thus, \(\pi(0) = i\), making $i$ the branch point of the covering $\pi$.

As a particular case, it is worth observing that when $\theta_0 = 2n\pi \in 2\pi\mathbb{Z}$, the surface $\mathbb{H}^2_{2n\pi}$ is a degree-$n$ cover of $\mathbb{H}^2_*$. Indeed, the universal covering map $\pi: \widetilde{\mathbb{H}^2_*} \to \mathbb{H}^2_*$ induces the degree-$n$ cover 
\[
\overline{\pi}: \mathbb{H}^2_{2n\pi} \to \mathbb{H}^2_*, \quad [r, \theta]_n \mapsto \pi(r, \theta),
\]
where $[r, \theta]_n$ denotes the class of $(r, \theta)$ in $\mathbb{H}^2_{2n\pi}$. We also note that $\mathbb{H}^2_{2n\pi}$ can be identified with $\mathbb{H}^2_*$ via the map
\begin{equation}\label{identification_punctured}
\begin{array}{ccccc}
 & & \mathbb{H}^2_{2n\pi} & \to & \mathbb{H}^2_* \\
 & & [R, \theta]_n & \mapsto & R^{\frac{\theta}{n}} c(r).
\end{array}
\end{equation}
Therefore, it will be useful to view $\mathbb{H}^2_{2n\pi}$ as the punctured hyperbolic plane $\mathbb{H}^2_*$ equipped with the degree-$n$ covering map
\[
\begin{array}{ccccc}
 & & \mathbb{H}^2_{2n\pi} \cong \mathbb{H}^2_* & \to & \mathbb{H}^2_* \\
 & & R^{\frac{\theta}{n}} c(r) & \mapsto & R^{\theta} c(r).
\end{array}
\]
In the disc model, this map corresponds to $z \mapsto z^n$.

\subsubsection{Spin-cone structure}
In what follows, for each \( r, \theta \in \mathbb{R} \), we consider the following isometries of the hyperbolic Poincar\'e half plane:
\[
A(r) =
\begin{pmatrix}
e^{\frac{r}{2}} & 0 \\
0 & e^{-\frac{r}{2}}
\end{pmatrix}, \quad
R^{\theta} =
\begin{pmatrix}
\cos\left(\frac{\theta}{2}\right) & \sin\left(\frac{\theta}{2}\right) \\
-\sin\left(\frac{\theta}{2}\right) & \cos\left(\frac{\theta}{2}\right)
\end{pmatrix}.
\]
The matrix \( A(r) \) acts as a translation of length \( r \) along the geodesic in \( \mathbb{H}^2 \) with endpoints \( 0 \) and \( \infty \), whereas \( R^{\theta} \) acts as a rotation of angle \( \theta \) fixing the point \( i \in \mathbb{H}^2 \) (alternatively, one may view it as a rotation fixing the origin in the Poincar\'e disc model). Let \( \ell_{i,i} \) denote the set of elliptic isometries fixing \( i \in \mathbb{H}^2 \), see \eqref{eq_timelike}. Observe that the curve \( [0, \pi] \to \ads \), given by \( \theta \mapsto R^{2\theta} \), is the arc-length parametrization of \( \ell_{i,i} \). Hence, the Lorentzian length of \( \ell_{i,i} \) is equal to \( \pi \), that is,  
\begin{equation}\label{Length}
\int_0^{\pi} \sqrt{-\det((R^{2\theta})')} \, d\theta = \pi.
\end{equation}

Now, consider the space \( \ads_* := \mathrm{PSL}(2, \mathbb{R}) \setminus \ell_{i,i} \). In \cite[Proposition 3.5.1]{Agnese_thesis}, Janigro defines the universal cover of \( \ads_* \) by the map  
\begin{equation}\label{agnesmap}
\begin{array}{cccc}
T & : & (0, +\infty) \times \mathbb{R} \times \mathbb{R} & \to \ads_* \\
  &   & (r, \theta, t) & \mapsto R^{\theta} A(r) R^{-t}.
\end{array}
\end{equation}
The group of deck transformations of $T$, which is isomorphic to the fundamental group of $\ads_*$, is given by

\[
\pi_1(\ads_*) = \{ (r, \theta, \eta) \mapsto (r, \theta + 2k_1\pi, \eta + 2k_2\pi) \mid k_1, k_2 \in \mathbb{Z} \}.
\]
We endow \( \widetilde{\ads_*} \) with the Lorentzian metric obtained by pulling back the anti-de Sitter metric \( g^{\ads} \) via \( T \). In this model, \( \mathrm{Isom}(\widetilde{\ads_*}) \cong \mathbb{R}^2 \). More precisely, we have
\begin{equation}\label{isometry_ads}
\mathrm{Isom}(\widetilde{\ads_*}) = \{\varphi_{(\theta_0,\eta_0)}: (r, \theta, \eta) \mapsto (r, \theta + \theta_0, \eta + \eta_0) \mid \theta_0, \eta_0 \in \mathbb{R} \}.
\end{equation}
Note that the isometry group of \( \ads_* \) identifies with isometries of \( \ads \) that fix the timelike geodesic \( \ell_{i,i} \), that is,
\[
\mathrm{Isom}(\ads_*) = \mathrm{PSO}(2) \times \mathrm{PSO}(2) < \mathrm{PSL}(2, \mathbb{R}) \times \mathrm{PSL}(2, \mathbb{R}).
\]
The map \( T \) induces a homomorphism \( T_* : \mathrm{Isom}(\widetilde{\ads_*}) \to \mathrm{Isom}(\ads_*) \). If \( \varphi \in \mathrm{Isom}(\widetilde{\ads_*}) \), then \( T_*(\varphi) \) satisfies:
\begin{equation}\label{eq_T_morphism}
T_*(\varphi) \circ T = T \circ \varphi.
\end{equation}
The kernel of \( T_* \) is the group of deck transformations of the covering \( T \), given by
\begin{equation}\label{eq_automorphism_T}
\mathrm{Ker}(T_*) = \{ (r, \theta, \eta) \mapsto (r, \theta, \eta) + k_1(0, 0, 2\pi) + k_2(0, 2\pi, 0) \mid k_1, k_2 \in \mathbb{Z} \}.
\end{equation}
\begin{remark}\label{remark_on_isometries}
If we take $\ell$ to be another timelike geodesic in $\ads$, then there exists an isometry of $\ads$ sending $\ell_{i,i}$ to $\ell$. Hence, there is an isometry between $\widetilde{\ads_*}$ and $\widetilde{\ads \setminus \ell}$, and we still denote the isometries of $\widetilde{\ads \setminus \ell}$ by $\varphi_{(\theta_0,\eta_0)}$ as in \eqref{isometry_ads}.
\end{remark}
For \( \theta_0, \eta_0 \in \mathbb{R} \), we define \( \Lambda(\theta_0, \eta_0) \) as the lattice in \( \mathrm{Isom}(\widetilde{\ads_*}) \) generated by \( (0, \theta_0, \eta_0) \) and $(0,0,2\pi)$. That is,
\begin{equation}\label{lattice_ads}
\Lambda(\theta_0, \eta_0) = \{ (r, \theta, \eta) \mapsto (r, \theta, \eta) + k_1(0, 0, 2\pi) + k_2(0, \theta_0, \eta_0) \mid k_1, k_2 \in \mathbb{Z} \}.
\end{equation}
Observe that \( \Lambda(\theta_0, \eta_0) = \Lambda(\theta_0, \eta_0 + 2k\pi) \) for any \( k \in \mathbb{Z} \). Hence, \( \eta_0 \) may be regarded as an element of \( \mathbb{R} / 2\pi\mathbb{Z} \). Without loss of generality, we may assume \( 0 \leq \eta_0 < 2\pi \). Following \cite{Agnese_thesis}, we give the definition below.

\begin{defi}
    Let \( \theta_0 \in \mathbb{R} \) and \( 0 \leq \eta_0 < 2\pi \). We define the local model of a spin-cone singularity as  
    \[
    \ads_{(\theta_0, \eta_0)} := \widetilde{\ads_*} / \Lambda(\theta_0, \eta_0).
    \]
\end{defi}
The next lemma can be viewed as a generalization, in the singular setting, of the fact that the anti-de Sitter space \( \ads \) is a circle bundle over \( \mathbb{H}^2 \), with fibers being timelike geodesics of length \( \pi \).

\begin{lemma}
    Let \( \theta_0 \in \mathbb{R} \) be different from zero, and \( 0 \leq \eta_0 < 2\pi \). Consider the projection map \( (r,\theta,\eta) \mapsto (r,\theta) \), which induces a map \( \mathcal{F}: \ads_{(\theta_0,\eta_0)} \to \mathbb{H}^2_{\theta_0} \). Then, $\mathcal{F}$ is a fibration with the property that each fiber is a timelike geodesic of length \( \pi \).
\end{lemma}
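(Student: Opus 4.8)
The plan is to unwind the explicit covering-space descriptions of $\ads_{(\theta_0,\eta_0)}$ and $\mathbb{H}^2_{\theta_0}$ and to track how the coordinate projection $p\colon (r,\theta,\eta)\mapsto(r,\theta)$, from $\widetilde{\ads_*}$ to $\widetilde{\mathbb{H}^2_*}$, interacts with the lattice $\Lambda(\theta_0,\eta_0)$. First I would check that $\mathcal{F}$ is well defined: $p$ sends the generator $(r,\theta,\eta)\mapsto(r,\theta,\eta+2\pi)$ of $\Lambda(\theta_0,\eta_0)$ to the identity of $\widetilde{\mathbb{H}^2_*}$ and the generator $(r,\theta,\eta)\mapsto(r,\theta+\theta_0,\eta+\eta_0)$ to the deck transformation $(r,\theta)\mapsto(r,\theta+\theta_0)$ of the covering $\widetilde{\mathbb{H}^2_*}\to\mathbb{H}^2_{\theta_0}$. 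Hence $p$ carries $\Lambda(\theta_0,\eta_0)$-orbits into deck-orbits and induces the stated map $\mathcal{F}$.

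\textbf{Local triviality.} I would first quotient $\widetilde{\ads_*}=(0,+\infty)\times\mathbb{R}\times\mathbb{R}$ by the normal subgroup generated by $(r,\theta,\eta)\mapsto(r,\theta,\eta+2\pi)$, obtaining $(0,+\infty)\times\mathbb{R}\times(\mathbb{R}/2\pi\mathbb{Z})$, which is the trivial $\mathbb{S}^1$-bundle over $\widetilde{\mathbb{H}^2_*}$. The residual group $\Lambda(\theta_0,\eta_0)$ modulo this subgroup is infinite cyclic, generated by $(r,\theta,[\eta])\mapsto(r,\theta+\theta_0,[\eta+\eta_0])$, and it acts by bundle automorphisms covering the $\mathbb{Z}$-action $(r,\theta)\mapsto(r,\theta+\theta_0)$ on $\widetilde{\mathbb{H}^2_*}$, which is free and properly discontinuous precisely because $\theta_0\neq0$. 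Passing to quotients of this equivariant $\mathbb{S}^1$-bundle then exhibits $\mathcal{F}$ as a locally trivial $\mathbb{S}^1$-bundle: over an open set $U\subset\mathbb{H}^2_{\theta_0}$ evenly covered by a single sheet $\widetilde U\subset\widetilde{\mathbb{H}^2_*}$, the map $\widetilde U\times(\mathbb{R}/2\pi\mathbb{Z})\to\mathcal{F}^{-1}(U)$, $((r,\theta),[\eta])\mapsto[r,\theta,\eta]$, is a homeomorphism intertwining $\mathcal{F}$ with projection onto $\widetilde U\cong U$; injectivity again uses $\theta_0\neq0$ to rule out nontrivial $\theta$-translates, and surjectivity is obtained by applying powers of the generator above to bring any representative's base point into $\widetilde U$.

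\textbf{Fibers.} Over a point $[r_*,\theta_*]$, applying powers of $(r,\theta,\eta)\mapsto(r,\theta+\theta_0,\eta+\eta_0)$ shows every point of $\mathcal{F}^{-1}([r_*,\theta_*])$ has a representative of the form $(r_*,\theta_*,\eta)$, and two such agree iff the $\eta$'s differ by a multiple of $2\pi$ (here $\theta_0\neq0$ forces the accompanying $\theta$-translate to be trivial). Thus the fiber is the image of the arc $\eta\in[0,2\pi]\mapsto(r_*,\theta_*,\eta)$ with endpoints glued. Under $T$ this arc is $\eta\mapsto R^{\theta_*}A(r_*)R^{-\eta}$, i.e.\ the left translate by $R^{\theta_*}A(r_*)$ of the one-parameter subgroup $\eta\mapsto R^{-\eta}$; since $g^{\ads}$ is bi-invariant this is a geodesic of $\ads$, it stays inside $\ads_*$ because $r_*>0$ keeps $R^{\theta_*}A(r_*)R^{-\eta}\cdot i=\pi(r_*,\theta_*)\neq i$ off $\ell_{i,i}$, and its velocity has constant $g^{\ads}$-square equal to $-\tfrac14$ after left translation to the identity. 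Because $T$ and the quotient map $\widetilde{\ads_*}\to\ads_{(\theta_0,\eta_0)}$ are local isometries, the fiber is a closed timelike geodesic, isometric to the timelike geodesic $\ell_{\pi(r_*,\theta_*),i}$ of $\ads$, of Lorentzian length $\int_0^{2\pi}\tfrac12\,d\eta=\pi$, in agreement with \eqref{Length}.

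\textbf{Main obstacle.} I expect the only real friction to be the bookkeeping in the middle step — verifying that the proposed local trivialization is genuinely well defined, injective, and open — since this is exactly the place where the hypothesis $\theta_0\neq0$ is indispensable and where a careless argument could conflate distinct sheets of $\widetilde{\mathbb{H}^2_*}$ or distinct $\eta$-translates. The remainder is a direct unwinding of the definitions, together with the standard fact that one-parameter subgroups are geodesics for a bi-invariant metric.
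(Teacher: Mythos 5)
Your proposal is correct, and it is more complete than the paper's own proof. The paper only addresses the fiber claim: it shows that $\mathcal{F}^{-1}([r,\theta])$ is parametrized injectively by $c(t)=[r,\theta,2t]$, $t\in[0,\pi)$, using precisely the same $\theta_0\neq 0$ argument to force $m=0$ in the relation $2t_1=2t_2+2n\pi+m\eta_0$, $m\theta_0=0$; it then simply appeals to the fact that a curve of the form $R^{2t}A(r_0)$ is a unit-speed timelike geodesic (cf.\ equation~\eqref{Length}) to conclude the fiber is a timelike geodesic of length $\pi$. In particular the paper does not explicitly verify that $\mathcal{F}$ is well defined, nor that it is a \emph{locally trivial} $\mathbb{S}^1$-bundle. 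Your proposal supplies both: the well-definedness check by tracking the two lattice generators under the coordinate projection, and the local triviality by first quotienting out the $(0,0,2\pi)$-direction to get the trivial $\mathbb{S}^1$-bundle over $\widetilde{\mathbb{H}^2_*}$ and then observing that the residual cyclic action covers the free, properly discontinuous $\theta_0$-translation on the base. For the geodesic/length claim, your direct computation (left-translate of a one-parameter subgroup, bi-invariance, constant velocity square $-\tfrac14$, so $\int_0^{2\pi}\tfrac12\,d\eta=\pi$) is a clean, self-contained alternative to the paper's appeal to the pre-computed formula; your identification of the fiber with $\ell_{\pi(r_*,\theta_*),i}$ is also correct, since every $R^{\theta_*}A(r_*)R^{-\eta}$ sends $i$ to $\pi(r_*,\theta_*)$. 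The only cost of your route is length; what it buys is a genuinely verified "fibration" statement rather than only a description of the fibers.
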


\begin{proof}
Consider the projection map \( (r,\theta,\eta) \mapsto (r,\theta) \), which induces a map \( \mathcal{F}: \ads_{(\theta_0,\eta_0)} \to \mathbb{H}^2_{\theta_0} \). The fiber above \( [r,\theta] \) is given by
\[
\mathcal{F}^{-1}([r,\theta]) = \{[r,\theta,\eta] \mid \eta \in \mathbb{R} \}.
\]
We claim that \( \mathcal{F}^{-1}([r,\theta]) \) is a timelike geodesic with arc-length parametrization
\[
c(t) = [r,\theta,2t], \quad t \in [0,\pi).
\]
Let \( t_1, t_2 \in [0,\pi) \) be such that \( [r,\theta,2t_1] = [r,\theta,2t_2] \). Then there exist \( n, m \in \mathbb{Z} \) such that
\[
2t_1 = 2t_2 + 2n\pi + m\eta_0, \quad \text{and} \quad m\theta_0 = 0.
\]
Since \( \theta_0 \neq 0 \), it follows that \( m = 0 \) and hence \( 2t_1 = 2t_2 + 2n\pi \). Since \( t_1, t_2 \in [0,\pi) \), this implies \( n = 0 \) and \( t_1 = t_2 \), so \( c \) is injective.

Next, the curve
\[
\beta(t) = R^{2t} A(r_0)
\]
parametrizes a timelike geodesic of \( \ads \) by arc length. Hence, the fiber \( \mathcal{F}^{-1}([r,\theta]) \) is a timelike geodesic of length \( \pi \). This completes the proof.
\end{proof}
We now consider the special case \( \theta_0 = 2n\pi \) for some \( n \in \mathbb{N} \) and \( \eta_0 = 0 \), which will be our main focus. First of all, it can be shown without difficulties that the universal covering map $T : (0, +\infty) \times \mathbb{R}\times\mathbb{R} \to \ads_*$
induces a diffeomorphism $\ads_{(2\pi,0)}\to \ads_*$, which we continue to denote by $T$ (see \cite[Proposition 3.5.1]{Agnese_thesis}). For each integer $n$, we denote by $\lambda_n$ the map from $(0, +\infty) \times \mathbb{R}\times\mathbb{R}\to (0, +\infty) \times \mathbb{R}\times\mathbb{R}$ that sends $(r,\theta,\eta)$ to $(r,\frac{\theta}{n},\eta)$. According to the previous result, it is straightforward to check that $T\circ\lambda_n$ induces a diffeomorphism $\ads_{(2n\pi,0)}\to \ads_*$, for which we keep the notation $T\circ\lambda_n$. We denote by \( [r, \theta, \eta]_n \) the equivalence class of \( (r, \theta, \eta) \) under the action of \( \Lambda(2n\pi, 0) \). We observe that
\begin{equation}\label{mapC}
\begin{array}{cccc}
\mathcal{C}_n : & \ads_{(2n\pi,0)} & \to & \ads_{(2\pi,0)} \\
    & [r, \theta, \eta]_n & \mapsto & [r, \theta, \eta]_1,
\end{array}
\end{equation} is a degree-\( n \) covering map, which is also the case for the map $\mathcal{T}_n$ defined by
\begin{equation}
\begin{array}{cccc}
\mathcal{T}_n:=T\circ C_n\circ (T\circ\lambda_n)^{-1} : & \ads_{*} & \to & \ads_{*} \\
    & R^{\frac{\theta}{n}}A(r)R^{-\eta} & \mapsto & R^{\theta}A(r)R^{-\eta}.
\end{array}
\end{equation}
We summarize this discussion in the following lemma.
\begin{lemma}\label{lemma_covring_ads}
    The space \( \ads_{(2n\pi,0)} \) is a degree-\( n \) covering of \( \ads_* \).
\end{lemma}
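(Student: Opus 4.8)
The statement to be proved is Lemma~\ref{lemma_covring_ads}: the space $\ads_{(2n\pi,0)}$ is a degree-$n$ covering of $\ads_*$. By the preceding discussion, almost all the work has already been set up, and the proof is really a matter of assembling the maps that have just been constructed and checking that one of them is a covering of the desired degree.

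First I would recall the three ingredients that are already in hand: (i) the map $T$ induces a diffeomorphism $\ads_{(2\pi,0)} \to \ads_*$; (ii) the map $T \circ \lambda_n$ induces a diffeomorphism $\ads_{(2n\pi,0)} \to \ads_*$; and (iii) the map $\mathcal{C}_n : \ads_{(2n\pi,0)} \to \ads_{(2\pi,0)}$, $[r,\theta,\eta]_n \mapsto [r,\theta,\eta]_1$, is a degree-$n$ covering map. The plan is simply to compose: the map
$$
\ads_{(2n\pi,0)} \xrightarrow{\ \mathcal{C}_n\ } \ads_{(2\pi,0)} \xrightarrow{\ T\ } \ads_*
$$
is a composition of a degree-$n$ covering with a diffeomorphism, hence is itself a degree-$n$ covering map. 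Equivalently, one may invoke $\mathcal{T}_n = T \circ \mathcal{C}_n \circ (T\circ \lambda_n)^{-1} : \ads_* \to \ads_*$, which by the same reasoning is a degree-$n$ covering, and precompose with the diffeomorphism $T\circ\lambda_n$ to land on $\ads_{(2n\pi,0)}$ as the source.

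The only point that genuinely requires an argument is claim (iii), that $\mathcal{C}_n$ is a degree-$n$ covering. For this I would note that $\Lambda(2\pi,0)$ is generated by $(0,0,2\pi)$ and $(0,2\pi,0)$, while $\Lambda(2n\pi,0)$ is generated by $(0,0,2\pi)$ and $(0,2n\pi,0)$, so $\Lambda(2n\pi,0)$ is an index-$n$ subgroup of $\Lambda(2\pi,0)$ (the quotient being generated by the image of $(0,2\pi,0)$, which has order $n$). Since both $\ads_{(2n\pi,0)}$ and $\ads_{(2\pi,0)}$ are quotients of $\widetilde{\ads_*} = (0,+\infty)\times\mathbb{R}\times\mathbb{R}$ by these lattices acting freely and properly discontinuously by isometries, the induced map on quotients $\mathcal{C}_n$ is a covering of degree equal to the index, namely $n$. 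One should check that $\mathcal{C}_n$ is well defined, i.e. that $\Lambda(2n\pi,0)$-equivalent points are $\Lambda(2\pi,0)$-equivalent, which is immediate from the subgroup inclusion.

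I expect no real obstacle here; the lemma is a bookkeeping consequence of the constructions in the two preceding paragraphs of the excerpt. The only mild subtlety is making sure the various identifications ($T$, $T\circ\lambda_n$, $\mathcal{C}_n$, $\mathcal{T}_n$) are compatible so that the degree really is $n$ and not, say, affected by the rescaling $\lambda_n$ in the $\theta$-variable; but since $\lambda_n$ conjugates $\Lambda(2n\pi,0)$ to $\Lambda(2\pi,0)$ precisely (sending the generator $(0,2n\pi,0)$ to $(0,2\pi,0)$ and fixing $(0,0,2\pi)$), the diffeomorphism statements in (i)–(ii) are exactly what is needed, and the composition argument goes through cleanly.
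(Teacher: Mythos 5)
Your proof is correct and matches the paper's (implicit) reasoning. The paper does not give a separate proof — it states the lemma as a ``summary'' of the preceding paragraph, which has just introduced $\mathcal{C}_n$ and asserted it is a degree-$n$ covering; your index-$n$ subgroup argument ($\Lambda(2n\pi,0) \subset \Lambda(2\pi,0)$ has index $n$, both acting freely and properly discontinuously on $\widetilde{\ads_*}$, so the induced map on quotients is an $n$-fold cover) is precisely the justification the paper leaves unsaid, and composing with the diffeomorphism $T : \ads_{(2\pi,0)} \to \ads_*$ is exactly the intended route.
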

We now move on to define anti-de Sitter manifolds with spin-cone singularities. Let \( M \) be an oriented three-manifold, and let \( L \) be a link in \( M \), i.e., a finite disjoint union of embedded circles \( K_i \subset M \). For each \( K_i \), we consider \( T_i \), a tubular neighborhood of \( K_i \). Each such neighborhood is homeomorphic to the solid torus \( \mathbb{D}^2 \times \mathbb{S}^1 \), and the complement \( T_i \setminus K_i \) is homeomorphic to \( \mathbb{D}^2_* \times \mathbb{S}^1 \), where \( \mathbb{D}^2_* := \mathbb{D}^2 \setminus \{(0,0)\} \) denotes the punctured disc.

On the universal cover of \( \mathbb{D}^2_* \times \mathbb{S}^1 \), we introduce cylindrical coordinates \( (r, x, y) \in (0,1) \times \mathbb{R} \times \mathbb{R} \) such that the universal covering map is given by
\begin{equation}\label{cylindrical}
\begin{array}{ccccc}
\mathcal{C} &  :& (0,1)\times\mathbb{R}\times\mathbb{R} & \to & \mathbb{D}^2_*\times\mathbb{S}^1 \\
 & & (r,x,y) & \mapsto & (re^{ix},e^{iy}).
\end{array}
\end{equation}
We define the \textit{universal branched cover of \( \mathbb{D}^2 \times \mathbb{S}^1 \) branched over \( \mathbb{S}^1 \)} as the quotient:\\
\begin{equation}
    \left( [0, 1) \times \mathbb{R} \times \mathbb{R} \right) \sqcup \mathbb{R} := \left( [0, 1) \times \mathbb{R} \times \mathbb{R} \right) / \sim,
\end{equation}
where \( (0, x, y) \sim (0, x', y) \) for any \( x, x' \in \mathbb{R} \) and \( y \in \mathbb{R} \). The real line \( \mathbb{R} \) attached to \( [0, 1) \times \mathbb{R} \times \mathbb{R} \) can thus be identified with \( \{ [0, 0, y] \mid y \in \mathbb{R} \} \), collapsing the \( \mathbb{R}^2 \)-plane along the \( x \)-axis.
Furthermore, observe that the covering map \( \mathcal{C} \) defined in \eqref{cylindrical} extends to \( \left( [0, 1) \times \mathbb{R} \times \mathbb{R} \right) / \sim \) by taking \( \mathcal{C}([0,x,y]) = (0,e^{iy}) \). Since \( T_i \setminus K_i \cong \mathbb{D}^2_* \times \mathbb{S}^1 \), we may similarly define the universal cover of \( T_i \) branched over \( K_i \) and denote it by \( \widetilde{T_i \setminus K_i} \sqcup \widetilde{K_i} \).
\begin{defi}\label{defi_spin_cone}
Let \( L \) be a link in \( M \) as defined above. We say that an \textit{anti-de Sitter structure} on \( M \setminus L \) has \textit{spin-cone singularities} along \( L \) if the following conditions hold:
\begin{itemize}
    \item The restriction of the developing map
    \[
    \mathrm{Dev} : \widetilde{T_j \setminus K} \to \ads
    \]
    extends continuously to \( \widetilde{T_j \setminus K_j} \sqcup \widetilde{K_j} \). Namely, in cylindrical coordinates \( (r, \theta, \eta) \in (0, 1) \times \mathbb{R} \times \mathbb{R} \), the limit
    \begin{equation}\label{extension_dev_branched}
        \lim_{r \rightarrow 0} \mathrm{Dev}(r, \theta, \eta) =: f(\eta)
    \end{equation}
    exists and is independent of \( \theta \). See Figure~\ref{WR_dev}.
    
    \item The map \( f \) sends \( \widetilde{K_j} \) onto a complete timelike geodesic \( \ell_j \subset \ads \).
    
    \item The lifted holonomy \( \rho : \pi_1(T_j \setminus K_j) \to \mathrm{Isom}(\widetilde{\ads\setminus\ell_j}) \) around a meridian \( \alpha \) encircling \( K_j \) is given by \( \varphi_{(\theta_j,\eta_j)} \) for some \( \theta_j, \eta_j \in \mathbb{R} \), and the holonomy of a longitude \( \beta \) is \( \varphi_{(0, 2\pi)} \) (see \eqref{isometry_ads} for notation).
\end{itemize}
We say that the structure is \emph{branched} (or that $M$ is a branched AdS manifold) if $\theta_j, \eta_j \in 2\pi \mathbb{Z}$.
\end{defi}

As a consequence of the above definition, each tubular neighborhood \( T_j \setminus K_j \) is locally isometric to the local model of a spin-cone singular manifold \( \ads_{(\theta_j,\eta_j)} \).

\begin{figure}[htb]
\centering
\includegraphics[width=.7\textwidth]{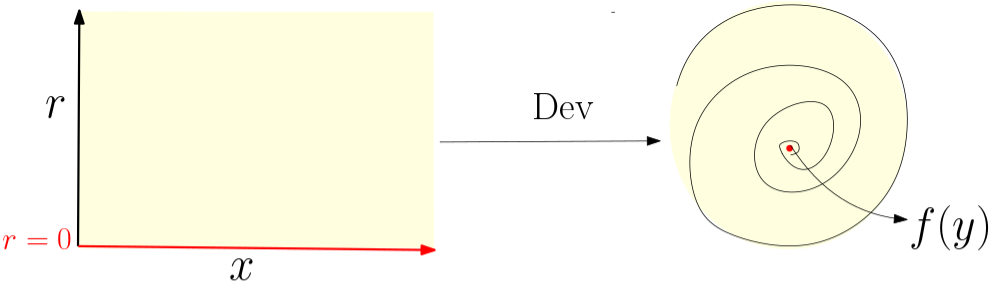}
\caption{The behavior of the developing map restricted to the slice \( (0, 1) \times \mathbb{R} \times \{y\} \).}
\label{WR_dev}
\end{figure}

\section{From domination to anti-de Sitter manifolds}\label{sec5}
The principal result of this section is Theorem \ref{ads_manifold}. Our construction builds on Janigro's thesis \cite{Agnese_thesis} but extends it in several essential ways. In her work, Janigro considers a pair of maps defined on the complement of the singular set of a hyperbolic cone surface, with $h$ an immersion. In contrast, we consider a general pair of dominating maps $f, h$ defined on closed hyperbolic surfaces, with no restriction on $h$ being an immersion. In particular, we provide a precise definition of the AdS manifold associated with such a pair of dominating maps and introduce its “completed” version that includes the singular locus (see Definition \ref{fhads_fhadss}). This refinement yields new topological results (Proposition \ref{topology_of_ads_fh}) beyond those established in \cite{Agnese_thesis}. These results play an important role in the proof of our main theorem on AdS manifolds (Theorem \ref{principal_ads}).

Throughout this section, we consider an oriented surface $\Sigma$ (not necessarily closed) with fundamental group $\Gamma$, and two smooth maps $f,h : \widetilde{\Sigma} \to \mathbb{H}^2$ such that
\begin{enumerate}[label=(\alph*)]
    \item \label{cond:h_singular} $h$ is a local diffeomorphism on an open dense subset. We denote by $C \subset \widetilde{\Sigma}$ the subset of points where $h$ is singular. Note that any harmonic map either has this property or has an image contained in a geodesic \cite[Theorem 3]{Sampson}.
    \item Denote by $\{U_\alpha\}_{\alpha\in I}$ an open cover of $\widetilde \Sigma\setminus C$
such that $h|_{U_\alpha}$ is a diffeomorphism onto its image.  
We require the cover to be $\Gamma$\nobreakdash-invariant, in the sense that the indices $I$ are labeled equivariantly under the deck transformation action
\begin{equation}\label{label_indice}\textit{}
\gamma \cdot \alpha = \beta \text{ if and only if } \gamma \cdot U_{\alpha} = U_{\beta}.
\end{equation}

    \item \label{cond:h_dominates} The map $h$ dominates $f$, i.e. $f^*\sigma < h^*\sigma$. In particular, we may shrink the neighborhood $U_{\alpha}$ so that for all $x,y \in U_{\alpha}$, we have
 $$
    d_{\sigma}\bigl(f(x),f(y)\bigr) < d_{\sigma}\bigl(h(x),h(y)\bigr).$$
        
\end{enumerate}

\subsection{Gluing of the fibration}
\label{sec:local}

For each open set $U_{\alpha}$, we define
\[
   M_\alpha 
   := \Bigl\{\,A \in \psl \;\Bigm|\; \exists!\, p \in U_\alpha
     \text{ such that } A(f(p)) = h(p)\Bigr\}.
\]
It turns out that $M_{\alpha}$ is an open subset of $\ads$, foliated by timelike geodesics, a fact observed in \cite[Proposition 7.2]{GK_lamination}.

\begin{prop}[{\cite[Proposition~6.1]{Almost_domination_Nat}}]
\label{prop:local_fibration}
For each $\alpha \in I$, the subset $M_{\alpha} \subset \ads$ is open.  
Moreover, the map
$$
  \mathcal F_\alpha : M_\alpha \longrightarrow U_\alpha,
  \quad
  A \longmapsto p,
$$
where $p$ is the unique point in $U_{\alpha}$ satisfying $A(f(p)) = h(p)$, defines a principal $\mathbb{S}^1$–bundle whose fibers are timelike geodesics in $\ads$.
\end{prop}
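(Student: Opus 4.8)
The plan is to route everything through the incidence variety
\[
E_\alpha := \bigl\{(A,p) \in \psl \times U_\alpha : A(f(p)) = h(p)\bigr\},
\]
together with its two projections $\pi_1 : E_\alpha \to \psl$ and $\pi_2 : E_\alpha \to U_\alpha$. The strategy has three steps: (i) use the domination hypothesis to see that $\pi_1$ is injective with image exactly $M_\alpha$; (ii) identify $\pi_2 : E_\alpha \to U_\alpha$ as a principal $\mathbb{S}^1$-bundle with timelike geodesic fibers, which in particular makes $E_\alpha$ a smooth $3$-manifold on which $\pi_1$ is smooth; (iii) show that $\pi_1$ is a local diffeomorphism, hence, with injectivity, a diffeomorphism onto its open image $M_\alpha$. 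Granting these, $\mathcal F_\alpha = \pi_2 \circ \pi_1^{-1}$ is a principal $\mathbb{S}^1$-bundle with timelike geodesic fibers, which is the assertion.

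For (i): if $(A,p)$ and $(A,q)$ both lie in $E_\alpha$, then, since $A$ acts by isometries, $d_\sigma(f(p),f(q)) = d_\sigma(Af(p),Af(q)) = d_\sigma(h(p),h(q))$, and after the shrinking of $U_\alpha$ from condition \ref{cond:h_dominates} this forces $p=q$. Thus $\pi_1$ is injective, and for every $A$ in its image the point $p$ with $A(f(p))=h(p)$ is automatically unique; hence $\pi_1(E_\alpha)=M_\alpha$ exactly. The same computation shows that for each fixed $p\in U_\alpha$ the whole set $\{A\in\psl : A(f(p))=h(p)\} = \ell_{h(p),f(p)}$ is contained in $M_\alpha$ and equals the fiber $\pi_2^{-1}(p)$; by \eqref{eq_timelike} this is a timelike geodesic. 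For (ii): the map $(A,p,q)\mapsto(A,(p,q))$ exhibits $\{(A,p,q):Aq=p\}\to\mathbb{H}^2\times\mathbb{H}^2$ as a fiber bundle with fiber $\ell_{p,q}$ over $(p,q)$, and translation along timelike geodesics (parametrized by Lorentzian arclength modulo $\pi$, using the time orientation of $\ads$) provides a smooth fiberwise free and transitive $\mathbb{R}/\pi\mathbb{Z}\cong\mathbb{S}^1$-action, so this is a principal $\mathbb{S}^1$-bundle. The variety $E_\alpha$ is exactly the pullback of this bundle along the smooth map $(h,f):U_\alpha\to\mathbb{H}^2\times\mathbb{H}^2$, so $\pi_2 : E_\alpha \to U_\alpha$ is again a principal $\mathbb{S}^1$-bundle with timelike geodesic fibers; in particular $E_\alpha$ is a smooth $3$-manifold and $\pi_1$ is smooth.

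For (iii), the crucial step, we show $d\pi_1$ is injective everywhere; since $\dim E_\alpha = 3 = \dim\psl$, this makes $\pi_1$ a local diffeomorphism, hence open, and, with (i), a diffeomorphism onto the open set $M_\alpha$. A vector in $\ker d\pi_{1,(A,p)}$ has the form $(0,w)\in T_A\psl\times T_pU_\alpha$ tangent to $E_\alpha$; differentiating the defining relation $A(f(p))=h(p)$ with $A$ held fixed gives
\[
dA_{f(p)}\bigl(df_p(w)\bigr) = dh_p(w).
\]
Since $A$ acts by isometries, taking $\sigma$-norms yields $\|df_p(w)\|_\sigma = \|dh_p(w)\|_\sigma$. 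But $h|_{U_\alpha}$ is a diffeomorphism onto its image, so $h^*\sigma$ is positive definite on $U_\alpha$, and the domination inequality $f^*\sigma<h^*\sigma$ then gives $\|df_p(w)\|_\sigma < \|dh_p(w)\|_\sigma$ whenever $w\neq 0$; hence $w=0$. This proves (iii) and finishes the argument.

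The main obstacle — and essentially the only place the hypotheses are used in a non-formal way — is step (iii): deriving openness of $M_\alpha$ and smoothness of $\mathcal F_\alpha$ from the domination condition. Passing through $E_\alpha$ is what makes this transparent: the circle-bundle structure comes for free by pullback, and the domination hypothesis enters exactly once, infinitesimally, to force $\pi_1$ to be a local diffeomorphism. The two points to handle with a little care are that the pointwise use of $f^*\sigma<h^*\sigma$ relies on $h^*\sigma$ being positive definite on $U_\alpha$ (guaranteed by $h|_{U_\alpha}$ being a diffeomorphism onto its image), and that the integrated distance inequality needed for the injectivity of $\pi_1$ is precisely what the shrinking of $U_\alpha$ in \ref{cond:h_dominates} provides.
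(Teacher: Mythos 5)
The paper does not prove this proposition itself; it only cites \cite[Proposition~6.1]{Almost_domination_Nat}, so there is no internal proof to compare against. Your argument is correct and self-contained: routing through the incidence variety $E_\alpha$ cleanly separates the three facts to be established (uniqueness of $p$, the $\mathbb{S}^1$-bundle structure, and openness of $M_\alpha$), with the domination condition used exactly twice in the right form — once in its integrated version from condition (c) to make $\pi_1$ injective, and once infinitesimally (together with $h|_{U_\alpha}$ being an immersion so that $h^*\sigma$ is positive definite there) to show $\ker d\pi_1 = 0$, which, by dimension count, upgrades $\pi_1$ to a local diffeomorphism and hence an open map. This is the standard mechanism behind the cited result, and the verification that $\pi_2:E_\alpha\to U_\alpha$ is a principal $\mathbb{S}^1$-bundle as a pullback of the universal incidence bundle over $\mathbb{H}^2\times\mathbb{H}^2$, with the $\mathbb{S}^1$-action given by time-oriented Lorentzian geodesic flow of period $\pi$, is sound.
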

\medskip
Next, for each branched point $x \in C$, consider the timelike geodesic
\[
  \ell_x :=\ell_{f(x),h(x)} =\{\,A\in\psl\mid A(h(x))=f(x)\}\cong\mathbb{S}^1.
\]
For each $\alpha\in I$ and $x\in C$, we denote by $\mathcal{I}_{\alpha}:M_{\alpha}\to \ads$ and $\mathcal{I}_x:\ell_x\to \ads$ the inclusion maps.
We then assemble two disjoint unions:
\begin{equation}
\label{spaces_R_S}
\mathcal{R}_{f,h}
  \;=\;
\bigsqcup_{\alpha \in I} M_\alpha,
\qquad 
\mathcal{S}_{f,h}
  \;=\;
\bigsqcup_{x \in C} \ell_x,
\end{equation}
where the letters \(\mathcal{R}\) and \(\mathcal{S}\) stand for \emph{regular} and \emph{singular}, respectively.  Define two maps:
\begin{itemize}
  \item 
  \[
    \mathcal{I} \;:\; \mathcal{R}_{f,h} \sqcup \mathcal{S}_{f,h} 
    \;\longrightarrow\; \ads
    \quad \text{by}\quad
    \begin{cases}
      \mathcal{I}\bigl|_{M_\alpha} = \mathcal{I}_{\alpha}, \\
      \mathcal{I}\bigl|_{\ell_x} = \mathcal{I}_x,
    \end{cases}
  \]
  
  \item 
  \[
    \mathcal{F} \;:\; \mathcal{R}_{f,h} \sqcup \mathcal{S}_{f,h} 
    \;\longrightarrow\; \widetilde{\Sigma}
    \quad \text{by}\quad
    \begin{cases}
      \mathcal{F}\bigl|_{M_\alpha} = \mathcal{F}_\alpha, \\
      \mathcal{F}\bigl|_{\ell_x} = \text{(constant map equal to } x \text{).}
    \end{cases}
  \]

\end{itemize}

We equip \( \mathcal{R}_{f,h}\sqcup \mathcal{S}_{f,h} \) with the \textit{initial topology} induced by the map
\[
\mathcal{R}_{f,h}\sqcup \mathcal{S}_{f,h} 
\;\longrightarrow\; 
\ads \times \widetilde{\Sigma},
\qquad
A \;\longmapsto\; \bigl(\mathcal{I}(A),\,\mathcal{F}(A)\bigr).
\]
In particular, a sequence \(A_n\in \mathcal{R}_{f,h}\sqcup \mathcal{S}_{f,h}\) converges to \(A\in \mathcal{R}_{f,h}\sqcup \mathcal{S}_{f,h}  \) if and only if 
\[
\mathcal{I}(A_n)\;\longrightarrow\;\mathcal{I}(A) \quad\text{in }\ads
\quad\text{and}\quad
\mathcal{F}(A_n)\;\longrightarrow\;\mathcal{F}(A)\quad\text{in }\widetilde{\Sigma}.
\]

\begin{defi}\label{fhads_fhadss}
Let \( f, h: \widetilde{X} \to \mathbb{H}^2 \) be as above.  We define
\[
\fhads \;:=\; \bigl(\mathcal{R}_{f,h}\sqcup \mathcal{S}_{f,h}\bigr)\;\big/\!\sim,
\qquad
\fhadss \;:=\; \mathcal{R}_{f,h}\;\big/\!\sim,
\]
where $A,B$ in $\mathcal{R}_{f,h}\sqcup\mathcal{S}_{f,h}$ (or in $\mathcal{R}_{f,h}$ ) are equivalent if and only if
\begin{equation}\label{eq_relation}
\mathcal{I}(A) = \mathcal{I}(B)
\quad\text{and}\quad
\mathcal{F}(A) = \mathcal{F}(B).    
\end{equation}
\end{defi}
Using the equivalence relation above, we may define maps
\[
\mathcal{I} \;:\; \mathcal{R}_{f,h}\sqcup \mathcal{S}_{f,h} \;\longrightarrow\; \ads
\quad\text{and}\quad
\mathcal{F} \;:\; \mathcal{R}_{f,h}\sqcup \mathcal{S}_{f,h} \;\longrightarrow\; \widetilde{\Sigma},
\]
which descend to well‐defined maps on $\fhads$. We state now the principal result of this section.

\begin{theorem}\label{ads_manifold}
    Let $f,h:\widetilde{\Sigma}\to \mathbb{H}^2$ be maps satisfying conditions \ref{cond:h_singular}--\ref{cond:h_dominates}. Then,
    \begin{itemize}
        \item $\fhads$ is topologically a solid torus.
        \item $\fhadss$ is an anti–de Sitter manifold with an atlas of charts
       $$
          \bigl\{\,(\Pr(M_\alpha),\;\mathcal{I}\circ(\Pr|_{M_\alpha})^{-1})\bigr\}_{\alpha \in I}.
        $$
        Moreover, $\mathcal{F}:\fhadss\to\widetilde{\Sigma}\setminus C$ is a principal $\mathbb{S}^1$–bundle with timelike geodesic fibers.
    \end{itemize}
\end{theorem}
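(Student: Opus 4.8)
The plan is to realise $\fhads$ concretely as a subspace of $\ads\times\widetilde\Sigma$, namely the \emph{incidence space}
$$
\widehat{\mathcal Z}\;:=\;\bigl\{(A,p)\in\psl\times\widetilde\Sigma \;:\; A\cdot f(p)=h(p)\bigr\}, \qquad \mathcal Z\;:=\;\widehat{\mathcal Z}\cap\bigl(\psl\times(\widetilde\Sigma\setminus C)\bigr),
$$
with the subspace topology. First I would check that the map $Q\colon\mathcal R_{f,h}\sqcup\mathcal S_{f,h}\to\ads\times\widetilde\Sigma$, $A\mapsto(\mathcal I(A),\mathcal F(A))$, which defines the initial topology, has image exactly $\widehat{\mathcal Z}$. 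The only non-formal point is surjectivity, and this is where condition \ref{cond:h_dominates} enters: if $A\cdot f(p)=h(p)$ with $p\in U_\alpha$, the shrinking of $U_\alpha$ forces $p$ to be the \emph{unique} point of $U_\alpha$ with this property, so $A\in M_\alpha$ and $\mathcal F_\alpha(A)=p$; the geodesics $\ell_x$ account for the remaining points of $\widehat{\mathcal Z}$, those over $C$. Because the domain carries the initial topology of $Q$, the corestriction $Q\colon\mathcal R_{f,h}\sqcup\mathcal S_{f,h}\to\widehat{\mathcal Z}$ is continuous, open and surjective, hence a quotient map, and its point-preimages are exactly the equivalence classes of $\sim$ from \eqref{eq_relation}. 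Thus $(\mathcal I,\mathcal F)$ descends to a homeomorphism $\fhads\to\widehat{\mathcal Z}$, and to a homeomorphism $\fhadss\to\mathcal Z$.

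Granting this identification, the solid-torus statement is soft. The projection $\widehat{\mathcal Z}\to\widetilde\Sigma$, $(A,p)\mapsto p$, is the pullback under $g:=(h,f)\colon\widetilde\Sigma\to\mathbb{H}^2\times\mathbb{H}^2$ of the \emph{bundle of timelike geodesics}
$$
\mathcal T\;:=\;\bigl\{(A,q_1,q_2)\in\psl\times\mathbb{H}^2\times\mathbb{H}^2 \;:\; A\cdot q_2=q_1\bigr\}\;\longrightarrow\;\mathbb{H}^2\times\mathbb{H}^2,
$$
whose fibre over $(q_1,q_2)$ is the timelike geodesic $\ell_{q_1,q_2}\cong\mathbb{S}^1$. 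This is a locally trivial circle bundle: for instance, $(A,q_1,q_2)\mapsto(A,q_2)$ identifies $\mathcal T\cong\psl\times\mathbb{H}^2$ and the projection becomes the proper submersion $(A,q)\mapsto(A\cdot q,q)$, to which Ehresmann's theorem applies. Since $\widetilde\Sigma$ is contractible (being a simply connected surface it is in fact homeomorphic to $\mathbb{R}^2$ here), the pullback $g^*\mathcal T$ is a trivial bundle, and therefore
$$
\fhads\;\cong\;\widehat{\mathcal Z}\;\cong\;g^*\mathcal T\;\cong\;\widetilde\Sigma\times\mathbb{S}^1\;\cong\;\mathbb{D}^2\times\mathbb{S}^1,
$$
a solid torus.

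For the anti-de Sitter structure I would work with $\fhadss\cong\mathcal Z$. For each $\alpha$, the map $A\mapsto(A,\mathcal F_\alpha(A))$ is a homeomorphism from the open set $M_\alpha\subset\ads$ onto $\mathcal Z\cap(\psl\times U_\alpha)$, which is open in $\mathcal Z$; hence $\fhadss$ is a $3$-manifold, $\Pr(M_\alpha)$ is open, and $\psi_\alpha:=\mathcal I\circ(\Pr|_{M_\alpha})^{-1}$ is a homeomorphism of $\Pr(M_\alpha)$ onto the open set $M_\alpha\subset\ads$. On an overlap $\Pr(M_\alpha)\cap\Pr(M_\beta)$, a point has representatives $A\in M_\alpha$ and $B\in M_\beta$ with $A\sim B$, so $\mathcal I(A)=\mathcal I(B)$, i.e.\ $A=B$ as elements of $\psl$; thus $\psi_\alpha=\psi_\beta$ there, and \emph{every transition map of this atlas is the identity of $\ads$}. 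In particular the atlas is an $(\isom(\ads),\ads)$-atlas and $\fhadss$ inherits a genuine anti-de Sitter metric, the pullback of $g^{\ads}$ under any $\psi_\alpha$. Finally, under $\Pr|_{M_\alpha}$ the map $\mathcal F$ restricts over $U_\alpha$ to $\mathcal F_\alpha$, a principal $\mathbb{S}^1$-bundle with timelike geodesic fibres by Proposition \ref{prop:local_fibration}; since both the fibre of $\mathcal F$ over a point $p\in\widetilde\Sigma\setminus C$, namely the timelike geodesic $\{A\in\psl\colon A\cdot f(p)=h(p)\}$, and the $\mathbb{S}^1$-action on it are attached to $p$ alone and not to $\alpha$, the local structures agree on overlaps and glue to a global principal $\mathbb{S}^1$-bundle $\mathcal F\colon\fhadss\to\widetilde\Sigma\setminus C$ with timelike geodesic fibres.

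I expect the main obstacle to be the topological bookkeeping in the first step: verifying that the quotient topology on $\fhads$ genuinely coincides with the subspace topology on $\widehat{\mathcal Z}$, which amounts to controlling the gluing over the overlaps $U_\alpha\cap U_\beta$ and, more delicately, the degeneration of the regular fibres onto the singular fibres $\ell_x$ as $p\to x$. Once this is in place, the solid-torus statement is a pullback-bundle argument over a contractible base, and the anti-de Sitter structure is essentially tautological, since the coordinate changes of the natural atlas are trivial.
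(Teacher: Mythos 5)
Your proposal is correct, and it takes a genuinely different route from the paper for the solid-torus claim. The paper (Proposition \ref{topology_of_ads_fh} and Lemma \ref{lem:gluing_of_X}) builds an auxiliary space $\mathcal{X}/\!\sim$, identifies it with $h(\widetilde\Sigma)$, and then exhibits the explicit homeomorphism $([y],\theta)\mapsto[B_{H(y)}R^{\theta}B_{F(y)}^{-1}]$ from $\mathcal{X}/\!\sim\,\times\,\mathbb{S}^1$ to $\fhads$, with Lemma \ref{quotiant_F} (the assertion that $h(x)=h(y)$ forces $f(x)=f(y)$) as the crucial ingredient. You instead identify $\fhads$ outright with the incidence space $\widehat{\mathcal{Z}}\subset\ads\times\widetilde\Sigma$. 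This step is essentially tautological given the topology chosen on $\mathcal{R}_{f,h}\sqcup\mathcal{S}_{f,h}$: a map $Q$ from a space carrying the initial topology for $Q$, surjecting onto its image, is automatically continuous and open onto that image, hence a quotient map, and its point-fibres are exactly the classes of $\sim$; the only content is the surjectivity onto $\widehat{\mathcal{Z}}$, which you correctly extract from the shrunken local contraction of condition \ref{cond:h_dominates}. You then realise $\widehat{\mathcal{Z}}$ as the pullback $g^*\mathcal{T}$ of the tautological timelike-geodesic circle bundle along $g=(h,f)$, trivial because $\widetilde\Sigma$ is contractible, and conclude. Your route is more conceptual and also more robust: it never passes through $h(\widetilde\Sigma)$, it delivers the fibration over $\widetilde\Sigma$ as a by-product, and—importantly—it only uses the local contraction on each $U_\alpha$. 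By contrast, Lemma \ref{quotiant_F} invokes the global inequality $d_\sigma(f(x),f(y))\le d_\sigma(h(x),h(y))$ for arbitrary $x,y$, and it is not clear this follows from $f^*\sigma<h^*\sigma$ alone when $h$ is not a global diffeomorphism, since the infimum of $h$-lengths of paths in $\widetilde\Sigma$ joining $x$ to $y$ need not realise $d_\sigma(h(x),h(y))$. Your incidence-variety argument sidesteps this entirely. For the second bullet, your observation that the transition maps of the atlas are literally the identity of $\ads$ (because $\mathcal{I}(A)=\mathcal{I}(B)$ forces $A=B$ in $\psl$) is the same mechanism as in Proposition \ref{prop:AdS_structure}, stated more sharply, and the gluing of the local principal $\mathbb{S}^1$-structures proceeds as in the paper.
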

\begin{remark}
Although the constructions of $\fhads$ and $\fhadss$ rely on the choice of an open covering $\mathcal{U} = \{U_{\alpha}\}_{\alpha \in I}$ of $\widetilde{\Sigma} \setminus C$, different choices yield a canonical identification. To explain this, we temporarily denote the spaces by $\fhads(\mathcal{U})$ and $\fhadss(\mathcal{U})$ (instead of $\fhads$ and $\fhadss$) to keep track of the covering, and let $[A]_{\mathcal{U}}$ denote the equivalence class of $A \in \psl$ in $\fhads(\mathcal{U})$. Then, for another covering $\mathcal{V}$ of $\widetilde{\Sigma} \setminus C$, the natural map
$$\fhads(\mathcal{U}) \longrightarrow \fhads(\mathcal{V}), \qquad [A]_{\mathcal{U}} \longmapsto [A]_{\mathcal{V}}
$$ defines a homeomorphism between $\fhads(\mathcal{U})$ and $\fhads(\mathcal{V})$, which restricts to an isometry between the resulting AdS manifolds $\fhadss(\mathcal{U})$ and $\fhadss(\mathcal{V})$. We refer the reader to \cite[Section 3.3, p. 58]{Agnese_thesis} for a discussion of this functorial behavior in her (similar) context.
\end{remark}

\subsection{Topology of the gluing}
We start with the following proposition, which describes the topology of $\fhads$.
\begin{prop}\label{topology_of_ads_fh}
There is a  homeomorphism $\fhads\to h(\widetilde{\Sigma})\times\mathbb{S}^1$ that restricts to a homeomorphism between $\fhadss$ and $h(\widetilde{\Sigma}\setminus C)\times\mathbb{S}^1$. 
\end{prop}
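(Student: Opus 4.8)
The plan is to build the homeomorphism $\fhads \to h(\widetilde{\Sigma}) \times \mathbb{S}^1$ by trivializing the circle-bundle structure globally, using the fact that $h(\widetilde{\Sigma})$ is an open subset of $\mathbb{H}^2$ and hence contractible (in particular simply connected), so the $\mathbb{S}^1$-bundle $\mathcal{F} : \fhadss \to \widetilde{\Sigma}\setminus C$ must be trivial. First I would observe that, since $h$ is a local diffeomorphism off $C$ and $\widetilde{\Sigma}\setminus C$ is connected, the map $h$ identifies $\fhadss$ with the analogous construction over $h(\widetilde{\Sigma}\setminus C)\subset \mathbb{H}^2$; concretely, given $A\in M_\alpha$ with $\mathcal{F}_\alpha(A)=p$, the point $h(p)\in\mathbb{H}^2$ together with the class $[A]$ determines and is determined by the data $(\mathcal{I}(A),\mathcal{F}(A))$, because on each $U_\alpha$ the map $h$ is injective. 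So $\mathcal{F}$ descends to a map $\fhadss \to h(\widetilde{\Sigma}\setminus C)$, fitting into the statement, and similarly for $\fhads$ once we add in the singular geodesics $\ell_x$ indexed by $x\in C$, which map to $h(x)$.

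Next I would trivialize the bundle. The cleanest route is to exhibit an explicit section. For each point $w$ in $\mathbb{H}^2$ lying in $h(\widetilde{\Sigma})$, pick the preimage structure and define $s(w)$ to be the unique isometry in the relevant fiber that sends $f(p)\mapsto h(p)$ \emph{and} additionally pins down a point at infinity or a tangent direction — e.g. require $s(w)$ to send a fixed basepoint direction at $f(p)$ to a prescribed direction at $h(p)=w$ determined continuously by $w$ (using that $h(\widetilde{\Sigma})$ is an open disk, one can transport a framing). More robustly, one can argue abstractly: $\mathcal{F}:\fhadss \to \widetilde{\Sigma}\setminus C$ is a principal $\mathbb{S}^1$-bundle by Theorem \ref{ads_manifold}, and the structure group of a principal $\mathbb{S}^1$-bundle over a base homotopy-equivalent to a point (here $\widetilde{\Sigma}\setminus C$ deformation retracts appropriately, or at least $h$ collapses the relevant cohomology since $h(\widetilde\Sigma)$ is contractible) reduces; but one must be careful because $\widetilde{\Sigma}\setminus C$ need not itself be simply connected. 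The right statement is that the classifying map of $\mathcal{F}$ factors through $h$, which lands in a contractible space, so the bundle is trivial. I would make this precise by pulling the bundle back along a continuous section of $h$ over each simply connected piece and checking the transition functions are null-homotopic because they factor through $h(\widetilde\Sigma)$.

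Having trivialized, a choice of trivialization $\fhadss \cong h(\widetilde{\Sigma}\setminus C)\times \mathbb{S}^1$ is obtained, and then I would extend across the singular locus: for $x\in C$, the fiber $\ell_x$ is glued in by the initial topology so that a sequence $A_n\in M_{\alpha}$ converges to a point of $\ell_x$ iff $\mathcal{I}(A_n)$ converges in $\ads$ and $\mathcal{F}(A_n)\to x$. One checks that in the chosen trivialization, as $p\to x$ the $\mathbb{S}^1$-coordinate extends continuously (the timelike geodesic $\ell_{f(x),h(x)}$ is exactly the "limit fiber"), giving a homeomorphism $\fhads \cong h(\widetilde\Sigma)\times\mathbb{S}^1$ restricting to the stated one on $\fhadss$. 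Since $h(\widetilde\Sigma)\subset\mathbb{H}^2$ is an open topological disk, this also yields the solid-torus claim of Theorem \ref{ads_manifold}.

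\textbf{Main obstacle.} The delicate point is the global triviality of the $\mathbb{S}^1$-bundle: $\widetilde{\Sigma}\setminus C$ is generally not simply connected (removing the discrete set $C$ from the disk $\widetilde\Sigma$ creates loops), so triviality is \emph{not} automatic and must be deduced from the geometry — specifically from the fact that the bundle is pulled back via $h$ from a bundle over the contractible set $h(\widetilde\Sigma)$, or equivalently that the Euler class vanishes because it is a pullback along a map to a contractible target. Making the "factors through $h$" argument airtight — i.e. genuinely checking that the $\mathbb{S}^1$-bundle data on overlaps $U_\alpha\cap U_\beta$ only depends on $h(U_\alpha\cap U_\beta)\subset\mathbb{H}^2$ and not on the sheet — is where the real work lies, and it is exactly here that conditions \ref{cond:h_singular}--\ref{cond:h_dominates} (injectivity of $h$ on each $U_\alpha$, and domination guaranteeing the fibers are honest timelike geodesics) get used.
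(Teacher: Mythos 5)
Your outline is in the same spirit as the paper's proof, but the paper takes a more direct route that avoids the difficulty you flag as the "main obstacle."

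You correctly guess that the homeomorphism should come from an explicit section built from the geometry of $\mathbb{H}^2$, and you gesture at the right construction ("require $s(w)$ to send a fixed basepoint direction at $f(p)$ to a prescribed direction at $h(p)=w$"). The paper realizes this idea cleanly: for $p\in\mathbb{H}^2$ it takes $B_p$ to be the unique hyperbolic translation with $B_p(i)=p$ whose axis is the oriented geodesic from $i$ to $p$, and then writes down the map
\[
\overline{\Phi}([y],\theta)\;=\;\bigl[\,B_{H(y)}\,R^{\theta}\,B_{F(y)}^{-1}\,\bigr],
\]
together with its inverse $[A]\mapsto\bigl([\mathcal{F}(A)],\,B_{F(\mathcal{F}(A))}^{-1}\mathcal{I}(A)B_{H(\mathcal{F}(A))}\bigr)$. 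Because both the map and its inverse are given in closed form and continuity is checked directly from the initial topologies, there is no need to argue bundle-triviality over $\widetilde{\Sigma}\setminus C$ at all; the Euler-class/classifying-map considerations you labour over in the second half of your proposal simply don't arise in the paper's approach.

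What your write-up is missing, and what is really the crux, is the mechanism by which the construction factors through $h(\widetilde{\Sigma})$. The paper first builds the auxiliary space $\mathcal{X}=\bigsqcup_{\alpha}U_\alpha\sqcup C$ with the initial topology from $H=h$, identifies $\mathcal{X}/\!\sim$ with $h(\widetilde{\Sigma})$ (Lemma \ref{lem:gluing_of_X}), and then proves the key Lemma \ref{quotiant_F}: the map $F=f$ descends to $\mathcal{X}/\!\sim$, precisely because domination gives $d_\sigma(f(x),f(y))\le d_\sigma(h(x),h(y))$, so $h(x)=h(y)$ forces $f(x)=f(y)$. This is the only place the domination hypothesis \ref{cond:h_dominates} is used in the proof, and it is what makes $\overline{\Phi}$ a well-defined function of the pair $(h\text{-image},\theta)$. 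You mention domination only in passing ("guaranteeing the fibers are honest timelike geodesics"), which is not its actual role here. Making explicit that $f$ is constant on the fibers of $h$ is the one idea your proposal needs to close the argument; once you have it, the explicit $B_p$-formula renders the rest of your outline unnecessary.
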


To prepare the argument, consider the disjoint union
\begin{equation}\label{eq:union_of_charts}
\mathcal{X}
\;=\;
\bigsqcup_{\alpha \in I} U_\alpha
\;\;\bigsqcup\; C,
\end{equation}
where each \(U_\alpha\subset \widetilde{\Sigma}\setminus C\) is an open set as before. We now define a map
$
H \;:\; \mathcal{X} \;\longrightarrow\; \mathbb{H}^2
$
by
\begin{equation}
H(y):=h(y).
\end{equation}
for $y$ in $U_{\alpha}$ or in $C$. We endow \(\mathcal{X}\) with the \emph{initial topology} making \(H\) continuous; that is, a subset \(V \subset \mathcal{X}\) is open if and only if 
\[
V \;=\; H^{-1}(W)
\quad\text{for some open } W \subset h(\widetilde{\Sigma}).
\]
We then define on \(\mathcal{X}\) the equivalence relation
\[
x \sim y
\quad\Longleftrightarrow\quad
H(x) \;=\; H(y).
\]
\begin{lemma}\label{lem:gluing_of_X}
The quotient \(\mathcal{X}/\!\sim\) is homeomorphic to the image \(h(\widetilde{\Sigma}) \subset \mathbb{H}^2\). 
\end{lemma}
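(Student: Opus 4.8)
The plan is to show directly that the natural map $\overline{H}: \mathcal{X}/\!\sim \;\to\; h(\widetilde\Sigma)$ induced by $H$ is a homeomorphism. By construction of the equivalence relation $x\sim y \iff H(x)=H(y)$, the map $\overline{H}$ is well-defined and injective; it is surjective because every point of $h(\widetilde\Sigma)$ lies in $h(U_\alpha)$ for some $\alpha$ (the $U_\alpha$ cover $\widetilde\Sigma\setminus C$) or is the image of a point of $C$. So the content is that $\overline{H}$ is a homeomorphism, i.e. that the quotient topology on $\mathcal{X}/\!\sim$ agrees with the subspace topology on $h(\widetilde\Sigma)$ transported through $\overline H$.

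First I would unwind the definitions: $\mathcal{X}$ carries the \emph{initial topology} for $H$, meaning $V\subset\mathcal{X}$ is open iff $V=H^{-1}(W)$ for some open $W\subset h(\widetilde\Sigma)$. I would check that this is genuinely a topology (closed under the relevant operations — here one uses that $H$ restricted to each piece $U_\alpha$ and to $C$ is what it is, and that arbitrary unions and finite intersections of sets of the form $H^{-1}(W)$ are again of that form because preimage commutes with unions and intersections). With this description, continuity of $H$ is automatic, and moreover $H$ is by definition an \emph{open} and \emph{continuous} map onto $h(\widetilde\Sigma)$ in the strong sense that the open sets of $\mathcal{X}$ are \emph{exactly} the preimages of open sets of $h(\widetilde\Sigma)$. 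Passing to the quotient, a set $\overline V\subset \mathcal{X}/\!\sim$ is open iff its preimage $q^{-1}(\overline V)\subset\mathcal{X}$ is open (quotient topology) iff $q^{-1}(\overline V)=H^{-1}(W)$ for some open $W$. Since $q^{-1}(\overline V)$ is saturated for $\sim$ and $H$ factors as $\overline H\circ q$ with $\overline H$ a bijection, one gets $q^{-1}(\overline V)=H^{-1}(W)=q^{-1}(\overline H^{-1}(W))$, hence $\overline V=\overline H^{-1}(W)$. This shows the open sets of $\mathcal{X}/\!\sim$ are precisely the $\overline H$-preimages of open subsets of $h(\widetilde\Sigma)$, which is exactly the statement that $\overline H$ is a homeomorphism onto its image $h(\widetilde\Sigma)$.

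The only genuine subtlety — and the step I would be most careful about — is verifying that the subspace topology on $h(\widetilde\Sigma)\subset\mathbb{H}^2$ is the "right" target topology, i.e. that when I say "$W$ open in $h(\widetilde\Sigma)$" in the definition of the initial topology I really mean open in the subspace topology inherited from $\mathbb{H}^2$, and that no finer topology sneaks in. This is a matter of bookkeeping rather than a real obstacle: the initial topology for a single map into a fixed space $h(\widetilde\Sigma)$ (with its subspace topology) is by definition the coarsest making $H$ continuous, and a single surjective map always induces the quotient topology downstairs, so the identification $\mathcal{X}/\!\sim\;\cong h(\widetilde\Sigma)$ is forced. I would also note for completeness that the piece $C\subset\mathcal{X}$ maps via $H=h|_C$ into $h(C)\subset h(\widetilde\Sigma)$, so it is correctly absorbed, and that points of $C$ may become identified with points coming from the $U_\alpha$ exactly when their $h$-images coincide — which is precisely the behavior needed so that $\mathcal{X}/\!\sim$ records the image $h(\widetilde\Sigma)$ and nothing more. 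This lemma then feeds into Proposition \ref{topology_of_ads_fh} by pulling back the $\mathbb{S}^1$-bundle structure along $\mathcal{I}$.
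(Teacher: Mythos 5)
Your proposal is correct and follows essentially the same route as the paper: construct the induced bijection $\overline{H}$, observe continuity is automatic from the initial topology, and then show the inverse is continuous by noting that for any open $\overline{V}$ in the quotient, $q^{-1}(\overline{V})=H^{-1}(W)=q^{-1}(\overline{H}^{-1}(W))$ for some open $W$, so $\overline{V}=\overline{H}^{-1}(W)$. The paper phrases this last step as a direct computation of $(\overline{H}^{-1})^{-1}(U')$ rather than via saturation, but the content is identical.
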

\begin{proof}
Let $q: \mathcal{X} \to\; \mathcal{X}/\!\sim$ be the quotient map, and define
\[
  \overline{H} \;:\; \mathcal{X}/\!\sim \;\longrightarrow\; h(\widetilde{\Sigma}),
  \quad
  \overline{H}\bigl(q(x)\bigr) \;=\; H(x).
\]
Since $H$ satisfies
\[
  H(x) = H(y)
  \quad\Longleftrightarrow\quad
  x \sim y,
\]
the induced map $\overline{H}$ is well-defined. Moreover, by the definition of the initial topology, this map is continuous. The inverse of $\overline{H}$ is given by
\[
  \overline{H}^{-1} \;:\; h(\widetilde{\Sigma}) \;\longrightarrow\; \mathcal{X}/\!\sim,
  \quad
  \overline{H}^{-1}(z) \;=\; q(p),
\]
where \( p \in \mathcal{X} \) is any point satisfying \( H(p) = z \).  
We claim that \( \overline{H}^{-1} \) is continuous. Let \( U' \subset \mathcal{X}/\!\sim \) be any open set. We must show that
\[
  \left(\overline{H}^{-1}\right)^{-1}(U') \;\subset\; h(\widetilde{\Sigma})
\]
is open. By the definition of the quotient topology and the initial topology on \( \mathcal{X} \), we have that \( q^{-1}(U') = H^{-1}(W) \) for some open set \( W \subset h(\widetilde{\Sigma}) \). This implies that
\[
  \left(\overline{H}^{-1}\right)^{-1}(U') = W,
\]
which is open in \( h(\widetilde{\Sigma}) \). Hence \( \overline{H}^{-1} \) is continuous. Because \( \overline{H} \) is a continuous bijection with a continuous inverse, it is a homeomorphism. This completes the proof.
\end{proof}
Next, we define a map $F : \mathcal{X} \longrightarrow \mathbb{H}^2$ by setting $F(y) := f(y)$ for $y$ in $ U_\alpha$ or in $C$. The following lemma shows that, since $h$ dominates $f$, the map $F$ descends to the quotient $\mathcal{X}/\sim$.
\begin{lemma}\label{quotiant_F}
The  map  $F:\mathcal{X}\to \mathbb{H}^2$ is continuous and induces a continuous map $\overline{F}:\mathcal{X}/\sim \to \mathbb{H}^2$.
\end{lemma}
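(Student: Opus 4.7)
The plan is to observe that both assertions in the lemma reduce to a single factorization property: $F$ is a well-defined function of $H$, i.e.\ $f(x) = f(y)$ whenever $h(x) = h(y)$. Indeed, the initial topology on $\mathcal{X}$ is the coarsest making $H$ continuous, so a map $\mathcal{X} \to \mathbb{H}^2$ is continuous in this topology if and only if it factors as $\overline{F} \circ H$ for a continuous function $\overline{F}$ on $h(\widetilde{\Sigma}) \cong \mathcal{X}/\sim$ (the latter identification coming from Lemma \ref{lem:gluing_of_X}). Thus the continuity of $F$ and the existence of a continuous induced map $\overline{F}$ are, in effect, the same statement.

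To establish the factorization, I would first define, on each chart $U_\alpha$, the local map $F_\alpha := f \circ (h|_{U_\alpha})^{-1} : h(U_\alpha) \to \mathbb{H}^2$. By condition \ref{cond:h_dominates}, $F_\alpha$ is a strict contraction in the $\sigma$-metric. The factorization then reduces to the compatibility of these local contractions: whenever $p = h(x) = h(y)$ with $x \in U_\alpha$ and $y \in U_\beta$, the equality $F_\alpha(p) = F_\beta(p)$ must hold, i.e.\ $f(x) = f(y)$. The remaining cases, where one of $x, y$ lies in $C$, follow by continuity in the limit, using that $C$ is discrete for the class of maps considered.

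The main obstacle is proving this cross-chart compatibility. The natural strategy is to pick a path $\gamma$ in $\widetilde{\Sigma}$ from $x$ to $y$; then $h \circ \gamma$ is a loop at $p$ in the simply connected space $\mathbb{H}^2$, hence null-homotopic. I would lift portions of this null-homotopy through the branched-covering structure of $h$ on $\widetilde{\Sigma} \setminus C$ and combine the lifts with the strict contraction property along paths to bound $d_\sigma(f(x), f(y))$ and ultimately force it to vanish. Once the factorization is in hand, $\overline{F}$ coincides with the continuous $F_\alpha$ on each $h(U_\alpha)$, so $\overline{F}$ is globally continuous on $h(\widetilde{\Sigma})$, and therefore so is $F = \overline{F} \circ H$.
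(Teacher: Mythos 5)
Your reduction of both assertions to the single factorization property $h(x)=h(y)\Rightarrow f(x)=f(y)$ is correct: since $\mathbb{H}^2$ is Hausdorff and $\mathcal{X}$ carries the initial topology from $H$, continuity of $F$ already forces $F$ to be constant on $H$-fibers, so this compatibility is exactly what must be shown, and it is indeed the only nontrivial content of the lemma.

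However, the paper's argument is much more direct and bypasses any topological lifting. The paper asserts, as an immediate consequence of $f^*\sigma<h^*\sigma$, the global distance inequality
\[
d_\sigma\bigl(F(x),F(y)\bigr) \;\le\; d_\sigma\bigl(H(x),H(y)\bigr) \qquad\text{for all } x,y\in\mathcal{X},
\]
and reads off both continuity and the factorization from it in two lines. Your route is genuinely different, replacing this one-line assertion with a null-homotopy lifting argument, and that argument does not go through. When $h(x)=h(y)=p$, the geodesic joining $H(x)$ to $H(y)$ is the constant path at $p$, which has no useful lift joining $x$ to $y$; and a branched covering (let alone a mere local diffeomorphism on an open dense set) does not enjoy the homotopy lifting property, so ``lifting portions of the null-homotopy'' is not a well-defined operation here. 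More fundamentally, the compatibility you are trying to prove does not follow from hypotheses (a)--(c) alone: on a small disc take $h(z)=z^{2}$ and $f(z)=\epsilon z^{3}$ with $\epsilon>0$ small; then $f^*\sigma<h^*\sigma$ strictly away from the branch point and the inequality is an equality at $0$, yet $h(z)=h(-z)$ while $f(z)=\epsilon z^{3}\neq-\epsilon z^{3}=f(-z)$ for $z\neq 0$. This also shows the paper's asserted distance inequality requires something stronger than pointwise domination of pullback metrics --- typically that $h$ be a proper covering of its image so that the geodesic from $H(x)$ to $H(y)$ lifts to a path from $x$ to $y$, as is exploited in the proof of Lemma~\ref{week_contraction_around_singular_point}. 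So while your identification of the key step is insightful, the strategy proposed for it cannot succeed as written, and the step itself needs more than the stated hypotheses.
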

\begin{proof}
Since \(h\) dominates \(f\), the inequality
\[
d_\sigma\bigl(F(x),\,F(y)\bigr)
\;\le\;
d_\sigma\bigl(H(x),\,H(y)\bigr)
\]
holds for all \(x,y\in \mathcal{X}\).  In particular, if a sequence \(x_n\to x\) in \(\mathcal{X}\), then \(H(x_n)\to H(x)\), and hence
\[
d_\sigma\bigl(F(x_n),\,F(x)\bigr)
\;\le\;
d_\sigma\bigl(H(x_n),\,H(x)\bigr)
\;\longrightarrow\; 0.
\]
This shows that \(F:\mathcal{X}\to \mathbb{H}^2\) is continuous with respect to the topology of \(\mathcal{X}\). Moreover, if \(x\sim y\), then \(H(x)=H(y)\), so
\[
d_\sigma\bigl(F(x),\,F(y)\bigr)
\;\le\;
d_\sigma\bigl(H(x),\,H(y)\bigr)
\;=\; 0,
\]
and therefore \(F(x)=F(y)\). Hence, we can define \(\overline{F}:\mathcal{X}/\sim\to \mathbb{H}^2\) by \(\overline{F}(q(x))=F(x)\).
\end{proof}
We are now in position to prove Proposition \ref{topology_of_ads_fh}.
\begin{proof}[Proof of Proposition \ref{topology_of_ads_fh}]
For each \( p \in \mathbb{H}^2 \), we denote by \( B_p \) the unique hyperbolic isometry that sends \( i \) to \( p \) and whose axis is the oriented geodesic joining \( i \) to \( p \).

To construct the homeomorphism between \( \fhads \) and \( h(\widetilde{\Sigma}) \times \mathbb{S}^1 \), we consider the map
\[
\begin{array}{ccccc}
\Phi & : & \mathcal{X} \times \mathbb{S}^1 & \longrightarrow & \mathcal{R}_{f,h} \sqcup \mathcal{S}_{f,h} \\
 & & (y, \theta) & \longmapsto & B_{H(y)} R^{\theta} B_{F(y)}^{-1}.
\end{array}
\]
Observe that \( \Phi \) is continuous--this follows from the continuity of \( H \) and \( F \). Next, note that if \( p \sim q \) in \( \mathcal{X} \), then by definition $H(p)=H(q)$ but also $F(p)=F(q)$ by the proof of Lemma \ref{quotiant_F}. This implies that \( \Phi(p, \theta) = \Phi(q, \theta) \) for any \( \theta \in \mathbb{S}^1 \). Therefore, we may define a continuous map between \( \mathcal{X}/\!\sim \times \mathbb{S}^1 \) and \( \fhads \) as follows:
\[
\begin{array}{ccccc}
\overline{\Phi} & : & \mathcal{X}/\!\sim \times \mathbb{S}^1 & \longrightarrow & \fhads \\
 & & ([y], \theta) & \longmapsto & [B_{H(y)} R^{\theta} B_{F(y)}^{-1}],
\end{array}
\]
where \( [\cdot] \) denotes the equivalence classes in both \( \mathcal{X}/\!\sim \) and \( \fhads \). We aim to show $\overline{\Phi}$ is a homeomorphism.

First, we define an inverse for $\overline{\Phi}$. To this end, we identify the circle $\mathbb{S}^1$ with the timelike geodesic $\ell_{i,i}$ via the map $e^{i\theta} \mapsto R^\theta$. Then, the inverse of $\Phi$ is given by
\[
\begin{array}{ccccc}
\Phi^{-1} & : & \mathcal{R}_{f,h} \sqcup \mathcal{S}_{f,h} & \longrightarrow & \mathcal{X} \times \mathbb{S}^1 \\
 & & A & \longmapsto & \left( \mathcal{F}(A),\; B_{H(\mathcal{F}(A))}^{-1} \, \mathcal{I}(A) \, B_{F(\mathcal{F}(A))} \right),
\end{array}
\]
where \( \mathcal{I} : \mathcal{R}_{f,h} \sqcup \mathcal{S}_{f,h} \to \ads \) and \( \mathcal{F} : \mathcal{R}_{f,h} \sqcup \mathcal{S}_{f,h} \to \widetilde{\Sigma} \) are the previously defined maps. The map \( \Phi^{-1} \) is continuous. Again, observe that if \( A \sim B \), then \( \Phi^{-1}(A) = \Phi^{-1}(B) \). This allows us to define a continuous inverse
\[
\begin{array}{ccccc}
\overline{\Phi}^{-1} & : & \fhads & \longrightarrow & \mathcal{X}/\!\sim \times \mathbb{S}^1 \\
 & & [A] & \longmapsto & \left( [\mathcal{F}(A)],\; B_{F(\mathcal{F}(A))}^{-1} \, \mathcal{I}(A) \, B_{H(\mathcal{F}(A))} \right).
\end{array}
\]
As a consequence, $\fhads$ is homeomorphic to $\mathcal{X}/\sim$, which in turn is homeomorphic to $h(\widetilde{\Sigma}) \times \mathbb{S}^1$ by Lemma \ref{lem:gluing_of_X}. It is straightforward to see that the above construction also gives rise to a homeomorphism between $\fhadss$ and $h(\widetilde{\Sigma}\setminus C)$. This completes the proof.
\end{proof}

%----------------------------------------------------------------
\subsection{Anti-de Sitter structure}
We turn our attention to the regular part $\fhadss$. We will show that it admits an anti-de Sitter structure.

An anti-de Sitter structure on a three manifold $M$ is a $(\isom(\ads),\ads)$ structure. By definition, this is a maximal atlas of
coordinate charts on $M$ with values in $\ads$ such that the transition maps are given by elements of $\isom(\ads)$.

Consider the projection map
\[
\begin{array}{ccccc}
\mathrm{Pr} & : & \bigsqcup_{\alpha\in I}M_{\alpha} & \to & \fhadss=\left(\bigsqcup_{\alpha\in I}M_{\alpha}\right)\Big/\sim \\
 & & A & \mapsto & [A]. \\
\end{array}
\]
Using the topology induced on $\fhads$, we have the following.
\begin{itemize}
    \item The restriction of $\mathrm{Pr}$ to $M_{\alpha}$ is a homeomorphism onto its image.
    \item $\mathrm{Pr}(M_{\alpha})$ is an open set of $\fhads_*$.
\end{itemize}
Using this, we can show the following.

\begin{prop}
\label{prop:AdS_structure}
$\fhadss$ is an anti-de Sitter manifold with an atlas of charts
\[
  \bigl\{\,(\Pr(M_\alpha),\;\mathcal{I}\circ(\Pr|_{M_\alpha})^{-1})\bigr\}_{\alpha \in I}.
\]
Moreover, $\mathcal{F}:\fhadss\to\widetilde{\Sigma}\setminus C$ is a principal $\mathbb{S}^1$–bundle with timelike-geodesic fibers.
\end{prop}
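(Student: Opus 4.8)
The plan is to verify the three ingredients of an $(\isom(\ads),\ads)$-structure — local charts valued in $\ads$, the underlying topological manifold structure, and transition maps lying in $\isom(\ads)$ — and then to promote the local circle fibrations $\mathcal F_\alpha$ of Proposition~\ref{prop:local_fibration} to a single global principal $\mathbb S^1$-bundle.

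First I would record that the proposed charts make sense. By Proposition~\ref{prop:local_fibration} each $M_\alpha$ is open in $\ads$, and by the two bullet points stated just above the proposition, $\Pr|_{M_\alpha}$ is a homeomorphism onto the open set $\Pr(M_\alpha)\subset\fhadss$; hence $\phi_\alpha:=\mathcal I\circ(\Pr|_{M_\alpha})^{-1}\colon\Pr(M_\alpha)\to M_\alpha$ is a homeomorphism onto an open subset of $\ads$, and the sets $\Pr(M_\alpha)$ cover $\fhadss$ by construction (Definition~\ref{fhads_fhadss}). That $\fhadss$ is a second countable Hausdorff topological $3$-manifold I would simply quote from Proposition~\ref{topology_of_ads_fh}, namely $\fhadss\cong h(\widetilde\Sigma\setminus C)\times\mathbb S^1$, together with the observation that $h(\widetilde\Sigma\setminus C)$ is open in $\mathbb H^2$ since $h$ is a local diffeomorphism on $\widetilde\Sigma\setminus C$.

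The key — and pleasantly short — step concerns the transition maps. Given $x\in\Pr(M_\alpha)\cap\Pr(M_\beta)$, one writes $x=\Pr(A)=\Pr(B)$ with $A\in M_\alpha$ and $B\in M_\beta$; the equivalence relation of Definition~\ref{fhads_fhadss} forces $\mathcal I(A)=\mathcal I(B)$, and since $\mathcal I$ restricts to the inclusion on each $M_\gamma\subset\psl$, this yields $A=B$ in $\psl$. Therefore $\phi_\beta\circ\phi_\alpha^{-1}$ is the identity on the open set $\phi_\alpha(\Pr(M_\alpha)\cap\Pr(M_\beta))\subset M_\alpha$, a fortiori the restriction of an isometry of $\ads$. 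This makes $\{(\Pr(M_\alpha),\phi_\alpha)\}_{\alpha\in I}$ an $(\isom(\ads),\ads)$-atlas, so $\fhadss$ is an anti-de Sitter manifold.

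For the fibration statement I would first note that $\mathcal F\colon\fhadss\to\widetilde\Sigma\setminus C$ is well defined, continuous (by the initial topology on $\mathcal R_{f,h}\sqcup\mathcal S_{f,h}$), and surjective, and that in each chart it agrees with $\mathcal F_\alpha\circ(\Pr|_{M_\alpha})^{-1}$, which by Proposition~\ref{prop:local_fibration} is a principal $\mathbb S^1$-bundle over $U_\alpha$ whose fibers are the timelike geodesics $\ell_{f(p),h(p)}$, $p\in U_\alpha$. It then remains to glue these local bundle structures. The cleanest route is to observe that the $\mathbb S^1$-action is intrinsic: over $p$ the fiber $\{A\in\psl : Af(p)=h(p)\}$ is canonically parametrized by $\theta\mapsto B_{h(p)}R^\theta B_{f(p)}^{-1}$ (notation of the proof of Proposition~\ref{topology_of_ads_fh}), with $\mathbb S^1$ acting by $\theta_0\cdot A=(B_{h(p)}R^{\theta_0}B_{h(p)}^{-1})A$; this description does not reference the chart, and it coincides with the $\mathbb S^1$-action transported through the global trivialization $\overline\Phi$ of Proposition~\ref{topology_of_ads_fh}. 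Hence the local trivializations patch to a global principal $\mathbb S^1$-bundle structure on $\mathcal F$, with timelike geodesic fibers, which completes the argument.

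I expect the last paragraph to be the only genuine obstacle: one must check that the circle actions produced chart-by-chart by Proposition~\ref{prop:local_fibration} are mutually compatible — so that $\mathcal F$ is truly \emph{principal}, not merely a fiber bundle with circle fibers — and that this is consistent with the initial topology used to build $\fhadss$. By contrast, the AdS structure itself is essentially immediate once one notices that the transition maps are literally the identity, and the manifold property is inherited wholesale from Proposition~\ref{topology_of_ads_fh}.
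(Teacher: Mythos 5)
Your argument is correct and, at heart, rests on the same observation the paper does: each $\mathcal{I}|_{M_\gamma}$ is literally the inclusion $M_\gamma\hookrightarrow\psl$, so the $\psl$-element representing a class $[A]$ over a given base point is canonical and the $\fhadss$-data are really just data on subsets of $\psl$ stitched together along the base. Where you diverge from the paper's write-up is in how this observation is packaged. The paper's proof simply declares that the atlas ``clearly defines'' an AdS structure, and then for the fibration introduces maps $T_\alpha:\mathcal{F}^{-1}(U_\alpha)\to M_\alpha$, $[A]\mapsto A$, verifies they are homeomorphisms (exactly the well-definedness check you perform via the equivalence relation), and composes with trivializations of $\mathcal{F}_\alpha$; the principal-bundle compatibility on overlaps is left implicit. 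You do two things the paper does not: you spell out that the transition maps of the AdS atlas are \emph{the identity} (not merely some isometry), and you address the ``principal, not just fiber bundle'' worry directly by exhibiting a chart-independent $\mathbb{S}^1$-action on the total space via $\theta_0\cdot[A]=[(B_{h(p)}R^{\theta_0}B_{h(p)}^{-1})A]$ with $p=\mathcal{F}([A])$. That action is well defined because the elliptic one-parameter subgroup fixing $h(p)$ is canonical, and it is manifestly what each $\mathcal{F}_\alpha$ restricts to, so your globalization argument is sound; it in fact fills a small expository gap in the paper, which never verifies that the $T_\alpha\circ\phi_\alpha$ trivializations have $\mathbb{S}^1$-valued transition cocycles. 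One minor point you could have added to close your own stated worry: $\Pr(M_\alpha)=\mathcal{F}^{-1}(U_\alpha)$, because any $B\in M_\beta$ with $\mathcal{F}_\beta(B)\in U_\alpha$ automatically lies in $M_\alpha$ and represents the same class, so your intrinsic action and the paper's $T_\alpha$ construction really are the same map seen from two angles.
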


\begin{proof}
The atlas of charts clearly defines an anti-de Sitter structure on $\fhadss$. Since 
\[
\mathcal{F} = \mathcal{F}_{\alpha} \circ \mathcal{I}\circ \left(\Pr|_{M_\alpha}\right)^{-1},
\]
and the fibers of $\mathcal{F}_{\alpha}$ are timelike geodesics, it follows that the fibers of $\mathcal{F}$ are timelike geodesics with respect to the anti-de Sitter structure induced by the atlas of charts
\[
\left\{\,\left(\Pr(M_\alpha),\; \mathcal{I}\circ \left(\Pr|_{M_\alpha}\right)^{-1}\right)\right\}_{\alpha \in I}.
\]
The above atlas of charts was introduced in~\cite[Section 3.2.3]{Agnese_thesis}. We now show that $\mathcal{F}$ is a fibration by circles, i.e., it is locally trivial. For each $\alpha \in I$, the map $\mathcal{F}_{\alpha} : M_{\alpha} \to U_{\alpha}$ is a fibration over the contractible open set $U_{\alpha}$, and thus it is trivial. Therefore, there exists a diffeomorphism
\[
\phi_{\alpha} : M_{\alpha} \to U_{\alpha} \times \mathbb{S}^1
\]
such that $\Pr_1 \circ \phi_{\alpha} = \mathcal{F}_{\alpha}$, where $\Pr_1 : U_{\alpha} \times \mathbb{S}^1 \to U_{\alpha}$ is the projection onto the first factor.
For each index $\alpha \in I$, we consider the map
\[
T_{\alpha} : \mathcal{F}^{-1}(U_{\alpha}) \to M_{\alpha}, \quad [A] \mapsto A,
\]
where $[\cdot]$ denotes the equivalence class in $\mathcal{F}^{-1}(U_{\alpha}) \subset \fhadss$. We claim that $T_{\alpha}$ is a homeomorphism.

First, for each $[A] \in \mathcal{F}^{-1}(U_{\alpha})$, we necessarily have $A \in M_{\alpha}$. Indeed, by definition, $\mathcal{F}([A]) = \mathcal{F}_{\alpha}(A) \in U_{\alpha}$, hence $A(f(p)) = h(p)$, that is, $A \in M_{\alpha}$. Second, the map is well-defined. Indeed, if $[A] = [B]$ with $[A], [B] \in \mathcal{F}^{-1}(U_{\alpha})$, then we have seen that $A, B \in M_{\alpha}$, and because $A$ and $B$ are equivalent, we have $\mathcal{I}(A) = \mathcal{I}(B)$. Since, $\mathcal{I}|_{M_{\alpha}}$ is the inclusion map, we deduce $A = B$.

Next, the inverse of $T_{\alpha}$ is clearly given by
\[
T_{\alpha}^{-1} : M_{\alpha} \to \mathcal{F}^{-1}(U_{\alpha}), \quad A \mapsto [A].
\]
The continuity of both $T_{\alpha}$ and $T_{\alpha}^{-1}$ follows from the definition of the topology on $\fhadss$.
Now, since $T_{\alpha} \circ \phi_{\alpha} : \mathcal{F}^{-1}(U_{\alpha}) \to U_{\alpha} \times \mathbb{S}^1$ is a local trivialization of $\mathcal{F}$, this completes the proof.
\end{proof}
The proof of Theorem \ref{ads_manifold} follows by combining Propositions \ref{topology_of_ads_fh} and \ref{prop:AdS_structure}.\\

We end this section by studying the case where the maps $f,h : \widetilde{\Sigma} \to \mathbb{H}^2$ are equivariant with respect to the representations $\rho$ and $j : \Gamma \to \psl$, respectively. In what follows, we denote
\[
\Gamma_{j,\rho} = \{(j(\gamma),\rho(\gamma)) : \gamma \in \Gamma\}.
\]
We will show that $\Gamma_{j,\rho}$ acts properly discontinuously on both $\fhads$ and $\fhadss$. In particular, it will follow from Proposition~\ref{prop:AdS_structure} that the quotient of $\fhadss$ is an anti–de Sitter manifold.

First, we describe this action. For each $\gamma \in \Gamma$, let $H(\gamma) = (j(\gamma),\rho(\gamma))$. Then $H(\gamma)$ induces a map
\begin{equation} \label{eq_action_gluing-manifold}
\overline{H}(\gamma) : \fhads \to \fhads
\end{equation}
defined by
\begin{equation}\label{eq: H action}
    \overline{H}(\gamma)\cdot [A] := [\, j(\gamma)\,\mathcal{I}(A)\,\rho(\gamma)^{-1} \,].
\end{equation}
It is straightforward to verify that the map $\mathcal{F} : \fhads \to \widetilde{\Sigma}$ satisfies the following equivariance property: for all $\gamma \in \Gamma$ and $[A] \in \fhads$,
$$
\mathcal{F}\!\left(\overline{H}(\gamma) \cdot [A]\right) = \gamma \cdot \mathcal{F}([A]).
$$
Since $\mathcal{F}$ is continuous and equivariant, and $\Gamma$ acts properly discontinuously on its universal cover $\widetilde{\Sigma}$, it follows that the action of $\Gamma_{j,\rho}$ on $\fhads$ is properly discontinuous.

On the other hand, $\fhads$ is a Hausdorff topological space by Proposition~\ref{topology_of_ads_fh}, and thus the quotient $\fhads / \Gamma_{j,\rho}$ is a well–defined Hausdorff topological space. Moreover, the quotient $\fhadss / \Gamma_{j,\rho}$ inherits an anti–de Sitter structure. We summarize this discussion in the following lemma.

\begin{lemma}\label{Mfh}
Let $f, h : \widetilde{\Sigma} \to \mathbb{H}^2$ be maps equivariant with respect to the representations $j, \rho : \Gamma \to \mathrm{PSL}(2,\mathbb{R})$, respectively, and satisfying conditions \ref{cond:h_singular}--\ref{cond:h_dominates}. Then the map $\mathcal{F} : \fhads \to \widetilde{\Sigma}$ is equivariant with respect to the action of $\Gamma_{j,\rho}$ on $\fhads$ via $\overline{H}$ (see \eqref{eq_action_gluing-manifold}) and the action of $\Gamma$ on $\widetilde{\Sigma}$. In particular, $\Gamma_{j,\rho}$ acts properly discontinuously on $\fhads$, and the quotient
\[
\mathcal{M}^{f,h} := \fhads \big/ \Gamma_{j,\rho}
\]
is a well-defined Hausdorff topological space. Moreover, the map $\mathcal{F}$ induces a principal $\mathbb{S}^1$-bundle over the anti-de Sitter manifold
\[
\mathcal{M}_*^{f,h} := \fhadss \big/ \Gamma_{j,\rho},
\]
with base $\Sigma_*$, where $\Sigma_*$ is the quotient of $\widetilde{\Sigma}\setminus C$ by the action of $\Gamma$. We continue to denote this map by 
\[
\mathcal{F} : \mathcal{M}_*^{f,h} \to \Sigma_*.
\]
\end{lemma}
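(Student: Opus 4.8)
The plan is to establish, in order: (i) the formula \eqref{eq: H action} defines an action of $\Gamma_{j,\rho}$ on $\fhads$ by homeomorphisms; (ii) $\mathcal{F}$ is equivariant for this action and the deck action on $\widetilde{\Sigma}$; (iii) the action is free and properly discontinuous, so that $\mathcal{M}^{f,h}$ is Hausdorff; and (iv) the bundle and anti-de Sitter statements descend from Proposition~\ref{prop:AdS_structure}.

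For (i) and (ii), fix $\gamma\in\Gamma$. If $A\in M_\alpha$ with unique witness $p\in U_\alpha$, then using $f\circ\gamma=\rho(\gamma)\circ f$ and $h\circ\gamma=j(\gamma)\circ h$,
\[
\bigl(j(\gamma)A\rho(\gamma)^{-1}\bigr)\bigl(f(\gamma p)\bigr)=j(\gamma)A\bigl(f(p)\bigr)=j(\gamma)h(p)=h(\gamma p),
\]
and $\gamma p\in\gamma U_\alpha=U_{\gamma\cdot\alpha}$ by the $\Gamma$-invariance of the cover; since witnesses in $U_{\gamma\cdot\alpha}$ are unique (condition \ref{cond:h_dominates}), $j(\gamma)\,\mathcal{I}(A)\,\rho(\gamma)^{-1}$ names a point $A'\in M_{\gamma\cdot\alpha}$, and analogously a point $A'\in\ell_{\gamma\cdot x}$ when $A\in\ell_x$. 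One then has $\mathcal{I}(A')=(j(\gamma),\rho(\gamma))\cdot\mathcal{I}(A)$ and $\mathcal{F}(A')=\gamma\cdot\mathcal{F}(A)$. Because the relation \eqref{eq_relation} is precisely equality of the pair $(\mathcal{I},\mathcal{F})$, the assignment $[A]\mapsto[A']$ is well defined on $\fhads$, it is a group action since $j,\rho$ are homomorphisms, and each $\overline H(\gamma)$ is a homeomorphism of $\fhads$, being induced by the homeomorphism $(X,y)\mapsto(j(\gamma)X\rho(\gamma)^{-1},\gamma y)$ of $\ads\times\widetilde{\Sigma}$. The identity $\mathcal{F}(A')=\gamma\cdot\mathcal{F}(A)$ is exactly $\mathcal{F}\circ\overline H(\gamma)=\gamma\circ\mathcal{F}$, which is (ii).

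For (iii), $\Gamma$ is torsion free and acts freely on $\widetilde{\Sigma}$, so by (ii) any relation $\overline H(\gamma)[A]=[A]$ fixes $\mathcal{F}([A])$ and forces $\gamma=e$; hence $\Gamma_{j,\rho}$ acts freely. Given a compact $K\subset\fhads$, $\mathcal{F}(K)$ is compact, so only finitely many $\gamma$ satisfy $\gamma\mathcal{F}(K)\cap\mathcal{F}(K)\neq\emptyset$, and by equivariance these are the only $\gamma$ with $\overline H(\gamma)(K)\cap K\neq\emptyset$; moreover $\mathcal{F}$ has compact (circle) fibers and covers the proper $\Gamma$-action on $\widetilde{\Sigma}$, so the $\Gamma_{j,\rho}$-action on $\fhads$ is proper. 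Since $\fhads$ is Hausdorff by Proposition~\ref{topology_of_ads_fh}, the quotient $\mathcal{M}^{f,h}$ is a Hausdorff topological space and the quotient map is a covering onto it; this is the situation treated in \cite[Section~3.3]{Agnese_thesis}.

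For (iv), the singular set $C$ is $\Gamma$-invariant, being the singular set of the equivariant map $h$, so $\Gamma$ acts freely and properly discontinuously on $\widetilde{\Sigma}\setminus C$ with quotient $\Sigma_*$, and $\overline H$ restricts to such an action on $\fhadss$. By Proposition~\ref{prop:AdS_structure}, $\mathcal{F}:\fhadss\to\widetilde{\Sigma}\setminus C$ is a principal $\mathbb{S}^1$-bundle and each $\overline H(\gamma)$ is a bundle automorphism covering $\gamma$; quotienting a principal bundle by a group of bundle automorphisms over a free, properly discontinuous base action yields the principal $\mathbb{S}^1$-bundle $\mathcal{F}:\mathcal{M}_*^{f,h}\to\Sigma_*$. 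Finally, in every chart $\mathcal{I}\circ(\Pr|_{M_\alpha})^{-1}$ of Proposition~\ref{prop:AdS_structure} the transformation $\overline H(\gamma)$ acts as the isometry $(j(\gamma),\rho(\gamma))\in\isom(\ads)$, so the anti-de Sitter structure on $\fhadss$ descends and $\mathcal{M}_*^{f,h}$ is an anti-de Sitter manifold. The one point needing genuine care is the Hausdorffness in (iii) at the singular fibers $\ell_x$, where the bundle picture is unavailable; there one argues directly from the homeomorphism $\fhads\cong h(\widetilde{\Sigma})\times\mathbb{S}^1$ of Proposition~\ref{topology_of_ads_fh}, the remainder being a routine unwinding of the equivariances from (i)–(ii).
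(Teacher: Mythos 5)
Your proof is correct and follows the same route as the paper: the paper's argument is the brief discussion immediately preceding the lemma, which introduces the action $\overline{H}$ via~\eqref{eq: H action}, asserts that equivariance of $\mathcal{F}$ is "straightforward to verify," deduces proper discontinuity from equivariance together with proper discontinuity of the deck action on $\widetilde{\Sigma}$, and invokes Proposition~\ref{topology_of_ads_fh} for Hausdorffness and Proposition~\ref{prop:AdS_structure} for the descent of the AdS structure. You supply the details the paper leaves implicit — well-definedness of $\overline{H}$ on equivalence classes using the $\Gamma$-equivariant labeling of the cover, the explicit compactness argument for proper discontinuity through $\mathcal{F}$, freeness, and the descent of the principal $\mathbb{S}^1$-bundle and anti-de Sitter structures — so the proposal is a correct, more detailed rendition of the same proof.
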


\section{Branched AdS manifolds from branched immersions}\label{sec6}
In this section, we study the anti-de Sitter manifold obtained from a pair $f,h$ as above and under the additional assumption that $h$ is a branched immersion. Recall that a map $h$ between surfaces has a \emph{branch point} at $p$ if there exist local complex coordinates $z$ at $p$ and $w$ at $h(p)$ such that $h$ takes the form $z \mapsto w = z^n$. We say that $h$ is a \emph{branched immersion} if it is an immersion outside a discrete set of branch points. The main result of this section is the proof of Theorem \ref{principal_ads}, which is completed at the end.

Our approach follows, in spirit, the arguments developed by Janigro in her thesis \cite{Agnese_thesis}, within her framework for analyzing local singularities of anti–de Sitter $3$-manifolds. To extend her framework to our more general setting (outlined in Section \ref{sec5}), we introduce several technical refinements and additional results, which are established in the course of the proof of Proposition \ref{fonda_ex}.

\subsection{The fundamental example}\label{sec_fonda_ex}
Before proving Theorem \ref{principal_ads}, we first treat the special case where $\Sigma = \mathbb{H}^2$ and the map $h : \widetilde{\Sigma} \to \mathbb{H}^2$ has a single branch point. Understanding this example is an essential step toward the proof of the general theorem. We consider smooth maps $f,h : \mathbb{H}^2 \to \mathbb{H}^2$ such that $h$ dominates $f$ (i.e.\ $f^*\sigma < h^*\sigma$). Without loss of generality, by composing with isometries, we may assume that $h(i) = f(i) = i$.
We have seen in Section \ref{sec5} how to construct an anti-de Sitter manifold from the maps $f$ and $h$. Recall that

$$\fhads := \left( \bigsqcup_{\alpha \in I} M_{\alpha} \sqcup \ell_{i,i} \right) \Big/ \sim ,$$ where $M_{\alpha} := \left\{ A \in \psl \;\middle|\; \exists! \, x \in U_{\alpha} \text{ such that } A(f(x)) = h(x) \right\},$ and $\{U_{\alpha}\}_{\alpha \in I}$ is an open cover of $\mathbb{H}^2_* := \mathbb{H}^2 \setminus \{i\}$ on which $h$ is a local diffeomorphism. Note that in this case, $M_{\alpha}$ does not contain the timelike geodesic $\ell_{i,i}$, because $f$ and $h$ take values in $\mathbb{H}^2_*$. From Proposition \ref{prop:AdS_structure}, we can define the local isometry
\begin{equation}\label{D_cal}
\begin{array}{rcl}
\mathcal{D} & : & \fhadss \to \ads \\
             &   & [A] \mapsto \mathcal{I}(A).
\end{array}
\end{equation}
Note that $\mathcal{D}$ takes values in $\ads_* := \ads \setminus \ell_{i,i}$. Moreover, $\mathcal{D}$ can also be defined on all of $\fhads$ by setting $\mathcal{D}(A) = \mathcal{I}(A)$ for any $[A] \in \ell_{i,i}$. We record here some notations and remarks that will be used later.
\begin{itemize}
    \item We denote by \(\widetilde{\mathrm{Dev}}: \widetilde{\fhadss} \to \widetilde{\ads_*}\) the developing map of \(\fhadss\). This can be thought of as the lift to the universal cover of the local isometry \(\mathcal{D} : \fhads \to \ads_*\).

    \item We denote by \(\mathrm{Dev} : \widetilde{\fhadss} \to \ads_*\) the developing map onto \(\ads_*\). This is given by
    \begin{equation}\label{eq_dev_on_ads}
        \mathrm{Dev} = T \circ \widetilde{\mathrm{Dev}},
    \end{equation}
    where \(T : \widetilde{\ads_*} \to \ads_*\) is the universal covering map defined in \eqref{agnesmap}. 

    \item Finally, we denote by \(\widetilde{\mathrm{Hol}} : \pi_1(\fhadss) \to \isom(\widetilde{\ads_*})\) the holonomy representation of \(\fhadss\).
\end{itemize}
According to Proposition~\ref{prop:AdS_structure}, there exists an $\mathbb{S}^1$–principal bundle $\mathcal{F} : \fhadss \longrightarrow \mathbb{H}^2_*.$ This bundle is trivial. Indeed, one can obtain a global trivialization directly from the proof of Proposition~\ref{topology_of_ads_fh}. Alternatively, since any map from a punctured disc to the classifying space $\mathbb{CP}^\infty$ is nullhomotopic, any $\mathbb{S}^1$–bundle over a punctured disc must be trivial.

In what follows, we denote by $\alpha$ a nontrivial loop in $\pi_1(\fhadss)$ that is not homotopic to the fiber of $\mathcal{F}$ and whose projection under $\mathcal{F}$ is a generator $\gamma$ of $\pi_1(\hh)\cong \mathbb{Z}$. (Here, $\gamma$ may be viewed as a loop around the branched point of $\hh$.) We also denote by $\beta$ the generator of $\pi_1(\fhadss)$ that is homotopic to the future-directed timelike geodesic fiber of the fibration $\mathcal{F} : \fhadss \to \mathbb{H}^2_*$. Since the bundle $\mathcal{F}$ is trivial, the fundamental group of $\fhadss$ decomposes as
\begin{equation}\label{group_fonda_M}
\pi_1(\fhadss) \;=\; \mathbb{Z}\alpha \;\oplus\; \mathbb{Z}\beta.
\end{equation}
The spacetime $\fhadss$ is time-oriented, so that $1 \in \mathbb{Z}$ corresponds to the future-directed fiber, while $-1$ corresponds to the same fiber with the opposite orientation.

We can now state the main result of this section, which is inspired by \cite[pp.~61–69]{Agnese_thesis}.

\begin{prop}\label{fonda_ex}
We consider two smooth maps $f,h : \mathbb{H}^2 \to \mathbb{H}^2$ such that $h$ dominates $f$. Assume that $h(i)=f(i)=i$ and that $h:\mathbb{H}^2_*\to \mathbb{H}^2_*$ is a degree-$n$ covering. Then $\fhadss$ is a branched anti-de Sitter manifold with singular locus $[\ell_{i,i}]$. Moreover,
$$ \widetilde{\mathrm{Hol}}(\alpha) = \varphi_{(2n\pi,  2k\pi)} \ \text{and} \ \ \ 
   \widetilde{\mathrm{Hol}}(\beta) = \varphi_{(0, 2\pi)},$$ for some $k\in \mathbb{Z}.$ Therefore, the developing map \( \widetilde{\mathrm{Dev}}: \widetilde{\fhadss} \to \widetilde{\ads_*} \) induces a local isometry between \( \fhadss \) and \( \ads_{(2n\pi, 0)} \).
\end{prop}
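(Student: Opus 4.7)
The plan is to identify $\pi_1(\fhadss)=\mathbb{Z}\alpha\oplus\mathbb{Z}\beta$ via the trivialization $\fhadss\cong\hh\times\mathbb{S}^1$ from Proposition~\ref{topology_of_ads_fh} (as in \eqref{group_fonda_M}), compute the holonomies $\widetilde{\mathrm{Hol}}(\alpha)$ and $\widetilde{\mathrm{Hol}}(\beta)$ explicitly via the $\Phi$-parametrization and the Cartan-type factorization $B_p=R^{\theta_p}A(r_p)$, verify that the developing map extends continuously across $\ell_{i,i}$ in the sense of Definition~\ref{defi_spin_cone}, and conclude that the image of the holonomy is the lattice $\Lambda(2n\pi,0)$, yielding the desired local isometry to $\ads_{(2n\pi,0)}$. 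The main step is the explicit holonomy calculation around the meridian, since this is where the degree $n$ of $h$ and the winding of $f$ near $i$ enter; once this is settled, both the continuous extension and the identification of the local model reduce to routine verifications.

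For the fiber generator $\beta$, its image under $\mathcal{D}$ is the timelike circle $\ell_{f(y),h(y)}\subset\ads_*$, parametrized by $t\mapsto B_{h(y)}R^{2t}B_{f(y)}^{-1}$ with arc length $\pi$. Under the covering $T(r,\theta,\eta)=R^\theta A(r)R^{-\eta}$, a full traverse corresponds to $\eta\mapsto\eta+2\pi$ (the relation $\eta=2t$ from \eqref{Length}), so $\widetilde{\mathrm{Hol}}(\beta)=\varphi_{(0,2\pi)}$. For the meridian $\alpha$, parametrize $y(t)=(r_0,\theta_0+2\pi t)$, $t\in[0,1]$, in $\widetilde{\hh}$, and lift the path $\Phi(y(t),0)=B_{h(y(t))}B_{f(y(t))}^{-1}$ to $\widetilde{\fhadss}$. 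Writing $B_{h(y)}=R^{\theta_{h(y)}}A(r_{h(y)})$ and $B_{f(y)}=R^{\theta_{f(y)}}A(r_{f(y)})$, a direct computation yields
\[
T^{-1}\!\bigl(\Phi(y(t),0)\bigr)=\bigl(r_{h(y(t))}-r_{f(y(t))},\,\theta_{h(y(t))},\,\theta_{f(y(t))}\bigr),
\]
with the angular coordinates taken as continuous lifts in $\mathbb{R}$. Since $h:\hh\to\hh$ is a degree-$n$ covering, any continuous lift $\tilde h$ satisfies $\tilde h(r,\theta+2\pi)=\tilde h(r,\theta)+(0,2n\pi)$, so $\theta_{h(y(t))}$ increases by $2n\pi$ across the loop, while $\theta_{f(y(t))}$ increases by $2k\pi$ for some $k\in\mathbb{Z}$ (the winding number of $f\circ y$ around $i\in\mathbb{H}^2$, after perturbing the loop to avoid $f^{-1}(i)$ if necessary). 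The radial difference returns to itself, hence $\widetilde{\mathrm{Hol}}(\alpha)=\varphi_{(2n\pi,2k\pi)}$.

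To conclude, I would verify the continuous extension of the developing map across $\ell_{i,i}$: as $r'\to 0$, the basepoint $y=R^{\theta'}c(r')$ approaches $i$, so $h(y),f(y)\to i$ and $B_{h(y)},B_{f(y)}\to\mathrm{id}$; therefore $\Phi(y,\eta')\to R^{\eta'}\in\ell_{i,i}$, and this limit is independent of $\theta'$, as required by \eqref{extension_dev_branched}. The extension maps $\widetilde{K}$ onto $\ell_{i,i}$ through the $\eta'$-parametrization, and the holonomies $\varphi_{(2n\pi,2k\pi)}$ and $\varphi_{(0,2\pi)}$ have translation parameters in $2\pi\mathbb{Z}$, so the structure is branched with singular locus $[\ell_{i,i}]$. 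Finally, the image of $\widetilde{\mathrm{Hol}}$ coincides with $\Lambda(2n\pi,2k\pi)=\Lambda(2n\pi,0)$ by \eqref{lattice_ads} (the $2\pi$-periodicity in $\eta_0$ absorbs the $2k\pi$ shift), so the developing map $\widetilde{\mathrm{Dev}}$ descends to the claimed local isometry $\fhadss\to\ads_{(2n\pi,0)}$.
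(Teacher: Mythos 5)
Your overall strategy mirrors the paper's: compute $\widetilde{\mathrm{Hol}}(\alpha)$ and $\widetilde{\mathrm{Hol}}(\beta)$ separately, verify the extension of the developing map across $[\ell_{i,i}]$, and then identify the local model $\ads_{(2n\pi,0)}$. However, the central computation contains a genuine error. The paper's $B_p$ is defined as the hyperbolic isometry sending $i$ to $p$ whose axis is the geodesic through $i$ and $p$. Writing $p=R^{\theta_p}(ie^{r_p})$, the correct KAK-form is
\[
B_p \;=\; R^{\theta_p}\,A(r_p)\,R^{-\theta_p},
\]
not $R^{\theta_p}A(r_p)$ as you claim (the latter sends $i$ to $p$ but its axis does not pass through $i$). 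With the correct formula one gets
\[
B_{h(y)}B_{f(y)}^{-1} \;=\; R^{\theta_{h(y)}}\,A(r_{h(y)})\,R^{\theta_{f(y)}-\theta_{h(y)}}\,A(-r_{f(y)})\,R^{-\theta_{f(y)}},
\]
and the middle block $A(r_h)R^{\theta_f-\theta_h}A(-r_f)$ is \emph{not} a pure boost, so your asserted lift
\[
T^{-1}\bigl(\Phi(y(t),0)\bigr)=\bigl(r_{h}-r_{f},\,\theta_{h},\,\theta_{f}\bigr)
\]
is false: the $\theta$-coordinate of the lift is not simply $\theta_{h(y(t))}$, and a further KAK decomposition of the middle block would introduce $t$-dependent correction angles. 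The "direct computation" therefore does not establish the claimed holonomy.

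This is exactly the gap that the paper's Lemmas \ref{Curve_lift_in_ads} and \ref{Near_curve} are designed to close. The lift of $A_t$ has angular coordinate controlled by $\int\mathrm{Val}^*\omega$, where $\mathrm{Val}(A_t)=A_t(i)=\gamma_1(t)$, and the key point is that $\gamma_1$ is \emph{not} equal to $h\circ\gamma$. One must show that the two loops have the same winding around $i$; the paper does this by checking (via Lemma \ref{week_contraction_around_singular_point}, i.e.\ the strict contraction $d_\sigma(i,f(x))<d_\sigma(i,h(x))$) that the geodesic segment $[\gamma_1(t),\gamma_2(t)]$ never contains $i$, and then invoking Lemma \ref{Near_curve}. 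Your proposal does not contain a replacement for this homotopy step. The same over-simplification affects your treatment of the fiber: the relation "$\eta=2t$'' for the KAK coordinate of $B_hR^{2t}B_f^{-1}$ fails for the same reason, and the paper instead derives $\widetilde{\mathrm{Hol}}(\beta)=\varphi_{(0,2\pi)}$ from the stabilizer computation of Lemma \ref{stab_timelike} together with time-orientation. (Also, your parenthetical about perturbing the loop to avoid $f^{-1}(i)$ is unnecessary: Lemma \ref{week_contraction_around_singular_point} already gives $f^{-1}(i)=\{i\}$.) Your remarks on the continuous extension across $\ell_{i,i}$ and on the lattice identification $\Lambda(2n\pi,2k\pi)=\Lambda(2n\pi,0)$ are fine, but the core holonomy computation needs the angular-form/winding-number argument to be correct.
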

\begin{remark}
  It is worth noting that if we change the meridian $\alpha$ by $\alpha'=\alpha-k\beta$, then the holonomy of $\alpha'$ is $\varphi_{(2n\pi,0)}$. Thus, what really matters in the holonomy of the meridian is the degree of $h$, and this explains why we do not require any further assumption on the map $f$.
\end{remark}

\begin{remark}\label{local_remark}
The statement of Proposition \ref{fonda_ex} remains valid if the maps \( f \) and \( h \) are defined on a small neighborhood of \( i \) in \( \mathbb{H}^2 \) instead of the whole plane. The proof carries over verbatim in this local setting. 
\end{remark}

We record the following lemma for later use.
\begin{lemma}\label{week_contraction_around_singular_point}
Let $f, h : \mathbb{H}^2 \to \mathbb{H}^2$ be as in Proposition \ref{fonda_ex}.  
Then, for all $x \in \mathbb{H}^2$, we have the strict inequality
\[
d_{\sigma}(i, f(x)) < d_{\sigma}(i, h(x)).
\]  
In particular, the timelike geodesics $\ell_{i,i}$ and $\ell_{h(x), f(x)}$ do not intersect. 
\end{lemma}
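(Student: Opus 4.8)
\emph{Plan.} The inequality is meant for $x\neq i$ (if $x=i$ both sides vanish and $\ell_{h(i),f(i)}=\ell_{i,i}$, so the conclusion is vacuous); fix then $x\in\mathbb{H}^2\setminus\{i\}$. Since $h$ maps $\mathbb{H}^2_*$ into $\mathbb{H}^2_*$ we have $h(x)\neq i$, so $L:=d_\sigma(i,h(x))>0$. The idea is to build an explicit curve from $f(x)$ to $f(i)=i$ of $\sigma$-length strictly less than $L$. Let $\eta:[0,L]\to\mathbb{H}^2$ be the unit-speed minimizing geodesic with $\eta(0)=h(x)$ and $\eta(L)=i$, so that $\eta(t)\in\mathbb{H}^2_*$ for all $t\in[0,L)$. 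Using that $h:\mathbb{H}^2_*\to\mathbb{H}^2_*$ is a covering and $[0,L)$ is simply connected, I would lift $\eta|_{[0,L)}$ to a smooth path $\widetilde\eta:[0,L)\to\mathbb{H}^2_*$ with $\widetilde\eta(0)=x$ and $h\circ\widetilde\eta=\eta|_{[0,L)}$.

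The step I expect to be the main obstacle is to show that $\widetilde\eta(t)\to i$ in $\mathbb{H}^2$ as $t\to L$, so that $\widetilde\eta$ extends continuously to $[0,L]$ with $\widetilde\eta(L)=i$. This is where one uses the behaviour of $h$ near the branch point: $h$ is a proper surjection of $\mathbb{H}^2_*$ onto itself that extends continuously to $\mathbb{H}^2$ with $h(i)=i$, so it maps the end at $i$ to itself and hence the end at $\partial_\infty\mathbb{H}^2$ to itself; consequently the preimage of a small punctured disc about $i$ is a punctured disc about $i$, shrinking to $i$ as the disc downstairs does, and therefore any sequence $y_k\in\mathbb{H}^2_*$ with $h(y_k)\to i$ satisfies $y_k\to i$. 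Applying this to $y_k=\widetilde\eta(t_k)$ with $t_k\to L$ (so $h(\widetilde\eta(t_k))=\eta(t_k)\to i$) gives $\widetilde\eta(t)\to i$. In the setting we actually use, via Remark \ref{local_remark}, $h$ is a branched immersion and this is immediate from the normal form $z\mapsto z^n$ (compare the discussion around \eqref{identification_punctured}).

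With $\widetilde\eta:[0,L]\to\mathbb{H}^2$ continuous, smooth on $[0,L)$, and $\widetilde\eta(L)=i$, the remaining step is a length estimate. On $[0,L)$ the curve $h\circ\widetilde\eta=\eta$ has unit speed, so $(h^*\sigma)(\widetilde\eta',\widetilde\eta')\equiv 1$; in particular $\widetilde\eta'(t)\neq 0$, and since $\widetilde\eta(t)\in\mathbb{H}^2_*$, where $h$ is a local diffeomorphism and $h^*\sigma$ is positive definite, the hypothesis $f^*\sigma<h^*\sigma$ gives the strict inequality $(f^*\sigma)(\widetilde\eta'(t),\widetilde\eta'(t))<1$ for $t\in(0,L)$. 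Hence $|(f\circ\widetilde\eta)'(t)|_\sigma<1$ on $(0,L)$ and $\le 1$ on $[0,L)$, so $f\circ\widetilde\eta$ has finite $\sigma$-length with
\[
\mathrm{length}_\sigma(f\circ\widetilde\eta)=\int_0^L |(f\circ\widetilde\eta)'|_\sigma\,dt<\int_0^L 1\,dt=L .
\]
On the other hand $f\circ\widetilde\eta$ is a rectifiable curve joining $f(x)$ to $f(i)=i$, so $\mathrm{length}_\sigma(f\circ\widetilde\eta)\ge d_\sigma(i,f(x))$. Combining the two yields $d_\sigma(i,f(x))<L=d_\sigma(i,h(x))$, as claimed.

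Finally, for the statement about timelike geodesics: by \eqref{eq_timelike}, $\ell_{i,i}=\{A\in\psl:A\cdot i=i\}$ and $\ell_{h(x),f(x)}=\{A\in\psl:A\cdot f(x)=h(x)\}$. If some $A$ lay in both, then $A$ would be an isometry fixing $i$ with $A\cdot f(x)=h(x)$, whence $d_\sigma(i,h(x))=d_\sigma(A\cdot i,A\cdot f(x))=d_\sigma(i,f(x))$, contradicting the strict inequality just proved; so $\ell_{i,i}\cap\ell_{h(x),f(x)}=\emptyset$.
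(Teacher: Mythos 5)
Your proof is correct and follows essentially the same route as the paper's: lift the radial geodesic from $h(x)$ to $i$ through the covering $h|_{\mathbb{H}^2_*}$, use properness of $h$ (the local model $z\mapsto z^n$) to show the lift terminates at $i$, and then compare lengths under $f$ and $h$ via domination. Your handling of the strict inequality is somewhat cleaner — you integrate the strict pointwise bound $(f^*\sigma)(\widetilde\eta',\widetilde\eta')<1$ directly over the open interval, whereas the paper first derives the weak inequality by integration and then upgrades to strictness via a separate triangle-inequality step near $x$ — but the underlying idea is the same.
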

\begin{proof}
The domination condition $f^*\sigma \leq h^*\sigma$ implies that
\[
\ell(f \circ \gamma) \leq \ell(h \circ \gamma)
\]
for every curve $\gamma$ in $\mathbb{H}^2$, where $\ell(\cdot)$ denotes the length of the curve with respect to the hyperbolic metric $\sigma$. We claim that $d_{\sigma}(i,f(x))\leq d_{\sigma}(i,h(x))$ for all $x \in \mathbb{D}^2_*$. For simplicity, we identify $\mathbb{H}^2$ with the Poincaré disc $\mathbb{D}^2$ via an identification sending the point $i$ to the origin $0$ (see~\eqref{identification}). Assume that $h(x) = r_0 e^{i\theta_0}$ for some $r_0, \theta_0 \in \mathbb{R}$. Consider the geodesic ray $\eta : (0,1) \to \mathbb{D}^2_*$ defined by $\eta(r) = r e^{i\theta_0}$. By the path lifting property, there exists a curve $\gamma : (0,1) \to \mathbb{D}^2_*$ such that $h \circ \gamma = \eta$ and $\gamma(r_0) = x$. This implies that for each $0 < t < r_0$, we have
\begin{align*}
d_{\sigma}(f(\gamma(t)), f(x)) 
&\leq \ell(f \circ \gamma|_{[t, r_0]}) \\
& \leq \ell(h \circ \gamma|_{[t, r_0]}) \\
&= \ell(\eta|_{[t, r_0]}) = d_{\sigma}(h(\gamma(t)), h(x)).
\end{align*}
To establish the claim, we need to show that $\gamma(t) \to 0$ as $t \to 0$. Indeed, for $t < r_0$, we have $\gamma(t) \in h^{-1}(\{z \in \mathbb{D}^2 \mid |z| \leq r_0\})$, which is a compact subset of $\mathbb{D}^2$ by the properness of $h$. To see properness, recall that $h$ is equal to $z^n$ in an appropriate choice of local coordinates around $0$. Therefore, $\gamma(t)$ must converge to some point $x \in \mathbb{D}^2$. But since $h(\gamma(t)) = \eta(t)$ and $\eta(t) \to 0$ as $t \to 0$, it follows that $\gamma(t) \to 0$. This completes the proof of the claim. To prove the strict inequality, consider \( y \) a point on the geodesic segment between \( i \) and \( x \). Recall that domination is strict outside the singularities. Hence, for $y$ sufficiently close to $x$, we have $d_{\sigma}\bigl(f(x),f(y)\bigr) < d_{\sigma}\bigl(h(x),h(y)\bigr)$.
It follows that
\[
d_{\sigma}\bigl(f(x), i\bigr) 
\leq d_{\sigma}\bigl(f(x), f(y)\bigr) + d_{\sigma}\bigl(f(y), i\bigr) 
< d_{\sigma}\bigl(h(x), h(y)\bigr) + d_{\sigma}\bigl(h(y), i\bigr) 
= d_{\sigma}\bigl(h(x), i\bigr).
\]
This concludes the proof.\end{proof}

\begin{remark}
We denote by $[\ell_{i,i}]$ the set of classes in \( \fhads \) that are equivalent to some element lying on \( \ell_{i,i} \). That is,
$$
[\ell_{i,i}] = \left\{ [A] \in \fhads \;\middle|\; 
\mathcal{F}(A) = i \ \text{and} \ \mathcal{I}(A) \in \ell_{i,i} \right\}.
$$
Observe that if \( [A] \in [\ell_{i,i}] \), then \( A \) cannot belong to any \( M_{\alpha} \), since this would contradict Lemma \ref{week_contraction_around_singular_point}.
\end{remark}

The next lemma shows that the developing map of \( \fhadss \) satisfies Definition~\ref{defi_spin_cone}.

\begin{lemma}\label{extension_of_dev}
The developing map $\mathrm{Dev} : \widetilde{\fhadss} \to \ads_*$ extends continuously to the universal branched cover $\widetilde{\fhadss} \sqcup \widetilde{[\ell_{i,i}]}$. Moreover, the restriction 
\[
\mathrm{Dev} \big|_{\widetilde{[\ell_{i,i}]}} : \widetilde{[\ell_{i,i}]} \to \ell_{i,i}
\]
is a universal covering map of the timelike geodesic $\ell_{i,i}\cong\mathbb{S}^1$.
\end{lemma}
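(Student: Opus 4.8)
The plan is to read the developing map off the explicit trivialization of $\fhads$ produced in the proof of Proposition~\ref{topology_of_ads_fh}. In the present situation $h(\widetilde\Sigma)=\mathbb{H}^2$, so that proof realizes $\fhads$ as $\mathbb{H}^2\times\mathbb{S}^1$ via $\overline\Phi([\xi],\theta)=[\,B_{h(\xi)}R^{\theta}B_{f(\xi)}^{-1}\,]$, where $B_p$ is the isometry sending $i$ to $p$ along the geodesic from $i$ to $p$ (with $B_i=\mathrm{Id}$); under this identification $[\ell_{i,i}]$ is $\{i\}\times\mathbb{S}^1$ and $\fhadss$ is $\mathbb{H}^2_*\times\mathbb{S}^1$. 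Since, by \eqref{D_cal} and \eqref{eq_dev_on_ads}, we have $\mathrm{Dev}=T\circ\widetilde{\mathrm{Dev}}=\mathcal D\circ p$ with $p:\widetilde{\fhadss}\to\fhadss$ the universal covering and $\mathcal D$ the local isometry $[A]\mapsto\mathcal I(A)$ (which on each chart $\Pr(M_\alpha)$ is the inclusion $[A]\mapsto A$), I would introduce on $\widetilde{\fhadss}$ the cylindrical coordinates $(r,x,y)\in(0,1)\times\mathbb{R}\times\mathbb{R}$ adapted to the universal branched cover $\widetilde{\fhadss}\sqcup\widetilde{[\ell_{i,i}]}$ — disc coordinate $z=re^{ix}$ near the branch point $i\leftrightarrow 0$, fiber coordinate $y$ — and record the formula
\[
\mathrm{Dev}(r,x,y)=B_{z}\,R^{y}\,B_{f(\widehat z)}^{-1},
\qquad z=re^{ix},\quad \widehat z\in h^{-1}(z),
\]
the right-hand side being independent of the choice of $\widehat z$ by domination (Lemma~\ref{quotiant_F}).

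The heart of the proof is then a limit computation as $r\to 0$. One has $z\to i$ uniformly in $x$, hence $B_{z}\to B_i=\mathrm{Id}$ by continuity of $p\mapsto B_p$ at $i$; and, since $h$ is a degree-$n$ branched cover at $i$ (equivalently, proper with $h^{-1}(i)=\{i\}$), the preimage $\widehat z$ lies in a compact neighborhood of $i$ shrinking to $\{i\}$ as $r\to 0$, so $\widehat z\to i$ and $f(\widehat z)\to f(i)=i$ uniformly in $x$, whence $B_{f(\widehat z)}\to\mathrm{Id}$. Therefore $\mathrm{Dev}(r,x,y)\to R^{y}$ as $r\to 0$, uniformly in $x$ for $y$ in compact sets. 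In particular the limit does not depend on $x$, so setting $\mathrm{Dev}([0,x,y]):=R^{y}$ gives a well-defined extension to $\widetilde{\fhadss}\sqcup\widetilde{[\ell_{i,i}]}$, valued in $\ads$ and landing in $\ell_{i,i}$ along the attached line. Continuity of this extension at a point $[0,x_0,y_0]$ is precisely the assertion that every sequence $(r_n,x_n,y_n)$ with $r_n\to 0$ and $y_n\to y_0$ — with no constraint on $x_n$, by definition of the branched-cover topology — satisfies $\mathrm{Dev}(r_n,x_n,y_n)\to R^{y_0}$, which follows from the uniform convergence above and continuity of $y\mapsto R^{y}$.

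It then remains to identify $\mathrm{Dev}|_{\widetilde{[\ell_{i,i}]}}$, which in these coordinates is the map $\mathbb{R}\to\ell_{i,i}$, $y\mapsto R^{y}$. Since $R^{\theta}$ has period $2\pi$ in $\psl$ (because $R^{2\pi}=-\mathrm{Id}$) and $\{R^{\theta}:\theta\in[0,2\pi)\}$ exhausts $\ell_{i,i}=\mathrm{PSO}(2)$, this is the universal covering map of the circle $\ell_{i,i}$; alternatively, it factors as the homeomorphism $\mathcal D|_{[\ell_{i,i}]}:[\ell_{i,i}]\to\ell_{i,i}$ composed with the universal cover $\widetilde{[\ell_{i,i}]}\to[\ell_{i,i}]$.

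I expect the main difficulty to be bookkeeping rather than analysis: one must juggle three coordinate descriptions — the cylindrical coordinates defining the universal branched cover of the solid torus $\fhads$, the trivialization $\overline\Phi$ of Proposition~\ref{topology_of_ads_fh}, and the universal covering $\widetilde{\fhadss}\to\fhadss$ — and check that they are compatible, so that in particular approaching a point of $\widetilde{[\ell_{i,i}]}$ imposes no condition on the angular variable $x$. The only genuinely analytic input, namely the uniformity in $x$ of $B_{z}\to\mathrm{Id}$ and $B_{f(\widehat z)}\to\mathrm{Id}$, is where the properness of $h$ (equivalently, the normal form $z\mapsto z^{n}$) enters; everything else reduces to continuity of the elementary maps $p\mapsto B_p$ and $\theta\mapsto R^{\theta}$.
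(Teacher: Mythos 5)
Your argument is correct and, at its core, follows the same route as the paper: both write $\mathrm{Dev}=\mathcal{D}\circ\Pi$ and extend each factor to the branch locus. The paper, however, shortcuts the entire limit computation by observing that $\mathcal{D}([A])=\mathcal{I}(A)$ is continuous on all of $\fhads$ by construction --- the topology on $\fhads$ is, by definition, the initial topology making $(\mathcal{I},\mathcal{F})$ continuous --- so once one records that the cylindrical-coordinates description of $\Pi$ extends continuously to the universal branched cover of the solid torus, the extension of $\mathrm{Dev}$ and its continuity are immediate with no estimates needed. Your explicit formula $\mathrm{Dev}(r,x,y)=B_zR^yB_{f(\widehat z)}^{-1}$ and the uniform convergences $B_z\to\mathrm{Id}$, $B_{f(\widehat z)}\to\mathrm{Id}$ (via properness of the branched cover) are therefore a sound but redundant re-derivation of that continuity; what it buys you is the explicit boundary value $\mathrm{Dev}([0,x,y])=R^y$, which the paper instead packages abstractly as the composition of the homeomorphism $\mathcal{D}|_{[\ell_{i,i}]}:[\ell_{i,i}]\to\ell_{i,i}$ with the universal cover $\Pi|_{\widetilde{[\ell_{i,i}]}}$, yielding the second claim without naming the map.
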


\begin{proof}
Let $\Pi : \widetilde{\fhadss} \to \fhadss$ be the universal covering map. By definition, the developing map is given by $\mathrm{Dev} = \mathcal{D} \circ \Pi$, where \( \mathcal{D} : \fhads \to \ads \) is defined by \( \mathcal{D}([A]) = \mathcal{I}(A) \), as in equation \eqref{D_cal}.
By Proposition \ref{topology_of_ads_fh}, \( \fhads \) is a solid torus. Therefore, the universal covering map $\Pi$ can be described using the cylindrical coordinates \eqref{cylindrical}, and hence \( \Pi \) extends continuously to \( \fhadss \sqcup \widetilde{[\ell_{i,i}]} \). Since \( \mathcal{D} \) is already defined on all of \( \fhads \) (and not only on \( \fhadss \)), it follows that \( \mathrm{Dev} \) also extends to \( \widetilde{\fhadss} \sqcup \widetilde{[\ell_{i,i}]} \).

For the second claim, note that \( \mathcal{D} \big|_{[\ell_{i,i}]} : [\ell_{i,i}] \to \ell_{i,i} \) is a homeomorphism, and \( \mathrm{Dev} = \mathcal{D} \circ \Pi \). Since \( \Pi \) restricts to the universal cover of \( [\ell_{i,i}] \), the composition \( \mathrm{Dev} \big|_{\widetilde{[\ell_{i,i}]}} \) is the universal covering map of \( \ell_{i,i} \).
\end{proof}

\subsection{Holonomy around the puncture in the fundamental example}
The aim of this section is to compute the holonomy of a nontrivial loop in $\pi_1(\fhadss)$ that is not homotopic to a fiber of the fibration $\mathcal{F} : \fhadss \to \mathbb{H}^2_*$ and that generates the $\pi_1$ of the base. As in Section \ref{sec_fonda_ex}, we denote such a loop by $\alpha$, and we denote by $\gamma$ its projection to $\mathbb{H}^2_*$, which generates $\pi_1(\mathbb{H}^2_*)$. We now prove the following.
\begin{prop}\label{holonomy_around_puncture}
Let $f$ and $h$ be as in Proposition \ref{fonda_ex}. Then we have $$\widetilde{\mathrm{Hol}}(\alpha) = \varphi_{(2n\pi, 2k\pi)},$$ for some $k\in \mathbb{Z}$.
\end{prop}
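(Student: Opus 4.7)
The plan is to pin down $\widetilde{\mathrm{Hol}}(\alpha)$ in two steps: first constrain it to a discrete family using that the developing map descends globally to $\fhadss$, then identify the remaining integer via a winding-number computation using the left-factor fibration.

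Step 1. By \eqref{D_cal}, the map $\mathcal{D}:\fhadss\to\ads_*$ is globally well-defined, since by \eqref{eq_relation} any two representatives $A,B$ with $[A]=[B]$ already satisfy $\mathcal{I}(A)=\mathcal{I}(B)$. Because $T\circ\widetilde{\mathrm{Dev}}=\mathcal{D}\circ\Pi$, with $\Pi:\widetilde{\fhadss}\to\fhadss$ the universal covering, applying $T$ to the equivariance $\widetilde{\mathrm{Dev}}(\gamma\cdot\tilde p)=\widetilde{\mathrm{Hol}}(\gamma)\,\widetilde{\mathrm{Dev}}(\tilde p)$ and using $\Pi(\gamma\cdot\tilde p)=\Pi(\tilde p)$ forces $T_*(\widetilde{\mathrm{Hol}}(\gamma))=\mathrm{id}$ for every $\gamma\in\pi_1(\fhadss)$. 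Hence $\widetilde{\mathrm{Hol}}(\alpha)\in\ker T_*$, and by \eqref{eq_automorphism_T} it has the form $\varphi_{(2m\pi,2k\pi)}$ for some $m,k\in\mathbb{Z}$.

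Step 2. To identify $m$, I would use the projection $\mathrm{pr}:\widetilde{\ads_*}\to\widetilde{\mathbb{H}^2_*}$, $(r,\theta,\eta)\mapsto(r,\theta)$. It intertwines $\varphi_{(\theta_0,\eta_0)}$ with the $\widetilde{\mathbb{H}^2_*}$-isometry $(r,\theta)\mapsto(r,\theta+\theta_0)$, and descends via $T$ and \eqref{eq_pi_map} to the left-factor fibration $A\mapsto A\cdot i:\ads_*\to\mathbb{H}^2_*$. Represent $\alpha$ via the trivialization of Proposition \ref{topology_of_ads_fh} (with the $\mathbb{S}^1$-factor fixed at $\theta=0$) over a small loop $x(s)=r_0e^{is}$, $s\in[0,2\pi]$, in the Poincar\'e disc coordinate with $i\leftrightarrow 0$: $\alpha(s)=[B_{h(x(s))}B_{f(x(s))}^{-1}]$. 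A direct computation in the disc model yields
\[
\mathcal{D}(\alpha(s))\cdot i \;=\; \Psi(x(s)),\qquad \Psi(x):=\frac{h(x)-f(x)}{1-\overline{h(x)}\,f(x)}.
\]
By the intertwining property, $m$ is precisely the winding number of $s\mapsto\Psi(x(s))$ around $0$.

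Step 3. To evaluate this winding, Lemma \ref{week_contraction_around_singular_point} gives $d_\sigma(i,f(x))<d_\sigma(i,h(x))$, which in the disc translates to $|f(x)|<|h(x)|$ for $x\ne 0$ since hyperbolic distance from $0$ is monotone in Euclidean modulus. Hence $|\,\overline{h}f\,|<1$, so $1-\overline{h}f$ has positive real part on the loop and contributes winding $0$. For the numerator, the linear homotopy $h-(1-t)f$ satisfies $|h-(1-t)f|\ge|h|-|f|>0$, so the winding of $h-f$ equals that of $h$, which is $n$ since $h$ is a degree-$n$ covering near $i$. Thus $m=n$, and combined with the identification $\Lambda(2n\pi,2k\pi)=\Lambda(2n\pi,0)$ we conclude $\widetilde{\mathrm{Hol}}(\alpha)=\varphi_{(2n\pi,2k\pi)}$, so $\widetilde{\mathrm{Dev}}$ induces a local isometry $\fhadss\cong\ads_{(2n\pi,0)}$.

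The main obstacle is Step 2–3: translating the abstract holonomy datum into a concrete winding-number computation on $\mathbb{H}^2_*$ via a carefully chosen section of $\mathcal{F}$, and then using the strict contraction of Lemma \ref{week_contraction_around_singular_point} as a non-vanishing hypothesis for the deformation from $h-f$ to $h$.
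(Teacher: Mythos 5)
Your proof is correct, and it takes a genuinely different route from the paper's.

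The paper's argument goes through Lemma \ref{Curve_lift_in_ads}, which expresses the $\theta$- and $\eta$-coordinates of the lifted developing map as integrals of the angular form $\omega$ along $t\mapsto A_t(i)$ and $t\mapsto A_t^{-1}(i)$; it then invokes Lemma \ref{Near_curve} (a homotopy-invariance statement for $\int\omega$ along curves whose connecting geodesic segments avoid $i$, with the hypothesis supplied by Lemma \ref{week_contraction_around_singular_point}) to replace the first integral by $\int_{h\circ\gamma}\omega=2n\pi$ via Lemma \ref{theta_as_integral}, and for $\eta_0$ simply notes that $t\mapsto A_t^{-1}(i)$ is a closed loop. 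You do two things differently. First, your Step 1 observes that because $\mathcal{D}:\fhadss\to\ads_*$ is a globally well-defined local isometry, the descended holonomy $T_*(\widetilde{\mathrm{Hol}}(\gamma))$ fixes an open subset of $\ads_*$ and hence is the identity, so $\widetilde{\mathrm{Hol}}(\gamma)\in\ker T_*$ for every $\gamma$. This instantly gives $\theta_0,\eta_0\in 2\pi\mathbb{Z}$, dispensing with the paper's separate argument for the $\eta$-coordinate and neatly isolating what remains to be computed. Second, rather than the $\omega$-integral and Lemma \ref{Near_curve}, you compute the winding of $A_t(i)$ directly via the explicit disc-model formula $\Psi(x)=\frac{h(x)-f(x)}{1-\overline{h(x)}f(x)}$, using $|f|<|h|$ (the disc translation of Lemma \ref{week_contraction_around_singular_point}) to show the denominator does not wind and that the linear homotopy $h-(1-t)f$ stays away from the origin. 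The role of the strict domination is the same non-vanishing role in both proofs, but your homotopy is the Euclidean linear one in the disc while the paper's is the hyperbolic geodesic-segment condition underlying Lemma \ref{Near_curve}; your version is more concrete and self-contained at the cost of committing to explicit $\mathrm{PSL}(2,\mathbb{R})$ coordinates, whereas the paper's angular-form framework is closer to the language used for the $\eta$-coordinate and for Proposition \ref{holonomy_of_the_fiber}.

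One small point worth spelling out in your Step 1: the conclusion $T_*(\widetilde{\mathrm{Hol}}(\gamma))=\mathrm{id}$ uses that an isometry of the connected Lorentzian manifold $\ads_*$ fixing a nonempty open set pointwise must be the identity, the open set here being the image of $T\circ\widetilde{\mathrm{Dev}}$ (open because $\widetilde{\mathrm{Dev}}$ is a local diffeomorphism). This is standard but deserves a sentence.
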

The proof of Proposition \ref{holonomy_around_puncture} needs some preparation.
\begin{defi}\label{angular_form}
Let \( \pi: (0, +\infty) \times \mathbb{R} \to \mathbb{H}^2_* \) be the universal covering map of \( \mathbb{H}^2_* \) defined in \eqref{eq_pi_map}. Then we define the \textit{angular form} \( \omega \) as the $1$-form on \( \mathbb{H}^2_* \)
\[
\pi^*\omega = \mathrm{d}\theta.
\]
\end{defi}

\begin{remark}
We record the following observations.
\begin{enumerate}
    \item If we consider the biholomorphism \( \mathcal{C}: \mathbb{D}^2 \to \mathbb{H}^2 \) defined by
  \begin{equation}\label{identification}
    \mathcal{C}: z \mapsto i \frac{z - i}{z + i}, \quad \text{which maps} \quad i \quad \text{to} \quad 0,
    \end{equation}
    then the angular $1$-form \( \omega \) has the following expression in \( \mathbb{D}^2 \setminus \{0\} \)
    \[
    \mathcal{C}^*\omega = \frac{-y}{x^2 + y^2} \, dx + \frac{x}{x^2 + y^2} \, dy.
    \]
    \item The angular form \( \omega \) is preserved by any rotation \( R^{\theta} \). That is,
    \[
    (R^{\theta})^*\omega = \omega.
    \]
\end{enumerate}
\end{remark}
The following elementary and well-known lemma expresses the degree of a branched covering in terms of the angular form $\omega$.
\begin{lemma}\label{theta_as_integral}
Let \( \gamma: [0, 1] \to \hh \) be a curve around the puncture generating \( \pi_1(\hh) \), and let \( h: \hh \to \mathbb{H}^2_* \) be a degree-$n$ cover branched on $i$. Then,
\[
\int_{h\circ\gamma} \omega = 2n\pi.
\]
\end{lemma}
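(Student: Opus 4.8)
The plan is to read $\tfrac{1}{2\pi}\int_{(\cdot)}\omega$ as the winding number around the puncture $i$, and then to invoke the elementary fact that an $n$-fold covering multiplies winding numbers by $n$.

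First I would record that $\omega$ is a closed $1$-form: by Definition \ref{angular_form} we have $\pi^*\omega=\mathrm{d}\theta$, so $\pi^*(\mathrm{d}\omega)=0$, and since the covering $\pi$ of \eqref{eq_pi_map} is a local diffeomorphism, $\mathrm{d}\omega=0$. Hence $\int_{\delta}\omega$ depends only on the free homotopy class of a loop $\delta$ in $\mathbb{H}^2_*$. For the explicit value, given a loop $\delta:[0,1]\to\mathbb{H}^2_*$ I would pick a lift $\widetilde\delta:[0,1]\to(0,+\infty)\times\mathbb{R}$ with $\pi\circ\widetilde\delta=\delta$; then $\int_\delta\omega=\int_{\widetilde\delta}\mathrm{d}\theta=\theta(\widetilde\delta(1))-\theta(\widetilde\delta(0))$, and since $\delta$ is closed, $\widetilde\delta(1)$ and $\widetilde\delta(0)$ differ by a deck transformation $(r,\theta)\mapsto(r,\theta+2k\pi)$, so $\int_\delta\omega=2\pi k$, where $k=\mathrm{wind}(\delta)$ is precisely the winding number of $\delta$ about $i$. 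This identifies $\mathrm{wind}$ with an isomorphism $H_1(\mathbb{H}^2_*;\mathbb{Z})\cong\mathbb{Z}$, normalized so that the chosen generator $\gamma$ has $\mathrm{wind}(\gamma)=1$.

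Next I would use that, $i$ being the only branch value, $h$ restricts to a genuine $n$-fold covering $\mathbb{H}^2_*\to\mathbb{H}^2_*$, orientation-preserving in the conventions of the fundamental example (where $h$ is locally $z\mapsto z^n$). Thus $h_*:\pi_1(\mathbb{H}^2_*)\to\pi_1(\mathbb{H}^2_*)$ is injective with image of index $n$; as $\pi_1(\mathbb{H}^2_*)\cong\mathbb{Z}$, that image is $n\mathbb{Z}$, so $h_*$ sends the positively oriented generator to $n$ times the positively oriented generator, i.e.\ $\mathrm{wind}(h\circ\gamma)=n\,\mathrm{wind}(\gamma)=n$. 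Combining with the first step gives $\int_{h\circ\gamma}\omega=2\pi\,\mathrm{wind}(h\circ\gamma)=2n\pi$.

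The only point needing care is the sign/orientation bookkeeping; all the topology invoked is standard. An equally short alternative avoids $\pi_1$ altogether: homotope $\gamma$ inside $\mathbb{H}^2_*$ to a small positively oriented circle about $i$, pass to a local coordinate in which $h(z)=z^n$, and compute $\int_{h\circ\gamma}\omega$ directly from the formula $\omega=\mathrm{d}(\arg)$ as $\int_0^{2\pi}n\,\mathrm{d}\theta=2n\pi$; here one only needs the observation that changing the local holomorphic coordinate at $i$ alters $\omega$ by an exact form on a punctured neighborhood, since the ambiguity is $\mathrm{d}\big(\arg(\text{nonvanishing holomorphic factor})\big)$.
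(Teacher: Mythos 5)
Your proof is correct and follows essentially the same strategy as the paper: compute $\int_\gamma\omega=2\pi$ for a generating loop (using homotopy invariance of the integral), and then use the fact that the degree-$n$ branched cover multiplies the winding number by $n$. The paper is terser, asserting $\int_\gamma h^*\omega=n\int_\gamma\omega$ directly from the degree of the cover and then writing $\int_{h\circ\gamma}\omega=\int_\gamma h^*\omega$; you unpack that same step by identifying $\tfrac{1}{2\pi}\int_{(\cdot)}\omega$ with the winding number via lifts to the universal cover and then using that $h_*\colon\pi_1(\mathbb{H}^2_*)\to\pi_1(\mathbb{H}^2_*)$ has image $n\mathbb{Z}$. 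That is a more explicit justification of the only nontrivial point, and is a welcome clarification rather than a different method. Your parenthetical alternative (homotope to a small circle and compute with $h(z)=z^n$ in local coordinates) is also fine; the one sign issue you flag, that $h$ is orientation-preserving so the multiplier is $+n$ rather than $-n$, is indeed the only thing to check, and it holds here since $h$ is a branched \emph{immersion} modeled on $z\mapsto z^n$.
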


\begin{proof}
Since $h$ is a degree-$n$ cover of $\hh$, $\int_\gamma h^*\omega=n\int_\gamma \omega$. Hence,
\[
\int_{h \circ \gamma} \omega = \int_{ \gamma} h^*\omega = n\int_{  \gamma}\omega.
\]
Now it is enough to take a parametrization of a representative of a loop around the puncture and compute $\int_\gamma \omega$. This is possible as the integral depends only on the homotopy class of \( \gamma \). So we take \( \gamma(t) = R^{2\pi t}c(r_0) \in \mathbb{H}^2_* \) (see \eqref{eq_pi_map}). We then obtain \( \int_{\gamma} \omega = 2\pi \), which yields the desired conclusion.
\end{proof}
The next lemma gives an expression for a lift of a curve in $\ads_*$ to $\widetilde{\ads_*}$
\begin{lemma}\label{Curve_lift_in_ads}
    Let $T:(0, +\infty) \times \mathbb{R} \times \mathbb{R}  \to \ads_*$ be the universal covering map of $\ads_*$ defined in \eqref{agnesmap}.
    Let $c: \mathbb{R} \to \ads_*$ be a curve in $\ads_*$ and let $\widetilde{c}: \mathbb{R} \to \widetilde{\ads_*}$ be a lift of the curve $c$ such that $\widetilde{c}(0) = (r_0, x_0, y_0)$. Then, 
    \[
    \widetilde{c}(t) = \left(d_{\sigma}(i, c(t) \cdot i), x_0 + \int_{c([0,t])}\mathrm{Val}^*(\omega), y_0 + \int_{c([0,t])} \mathrm{\overline{Val}}^*(\omega)\right),
    \]
    where $\mathrm{Val},\overline{\mathrm{Val}}:\ads_*\to \mathbb{H}^2_*$ are the valuations map defined by  $\mathrm{Val}(A) = A(i)$ and $\overline{\mathrm{Val}}(A) = A^{-1}(i)$.
\end{lemma}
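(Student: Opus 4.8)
The plan is to work directly in the coordinates of the universal cover. Write the lift as $\widetilde{c}(t) = (r(t), x(t), y(t)) \in (0,+\infty)\times\mathbb{R}\times\mathbb{R}$; this is a continuous path because $\widetilde{c}$ is, and $(r(0),x(0),y(0)) = (r_0,x_0,y_0)$. Since $T\circ\widetilde{c} = c$ and $T(r,\theta,\eta) = R^{\theta}A(r)R^{-\eta}$ by \eqref{agnesmap}, we obtain the identity $c(t) = R^{x(t)}A(r(t))R^{-y(t)}$, and the proof reduces to reading off $r(t)$, $x(t)$, $y(t)$ from this identity, one coordinate at a time.

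For the first coordinate, I would evaluate $c(t)$ at $i$: the rotation $R^{-y(t)}$ fixes $i$, the translation $A(r(t))$ sends $i$ to $ie^{r(t)}$, and $R^{x(t)}$ is an isometry fixing $i$. Hence $d_{\sigma}(i, c(t)\cdot i) = d_{\sigma}(i, ie^{r(t)}) = r(t)$, using $r(t) > 0$ and the fact that $t\mapsto ie^{t}$ is the arc-length parametrization of the geodesic through $i$. This is precisely the first component in the statement.

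For the second coordinate, observe that $\mathrm{Val}(c(t)) = c(t)\cdot i = R^{x(t)}(ie^{r(t)}) = \pi(r(t), x(t))$, where $\pi(r,\theta) = R^{\theta}(ie^{r})$ is the universal covering map of $\mathbb{H}^2_*$ from \eqref{eq_pi_map}. Therefore $t\mapsto (r(t),x(t))$ is the continuous $\pi$-lift of the path $\mathrm{Val}\circ c$ starting at $(r_0,x_0)$. Since $\pi^{*}\omega = \mathrm{d}\theta$ by Definition \ref{angular_form}, the line integral of $\omega$ along $\mathrm{Val}\circ c|_{[0,t]}$ equals the increment of the $\theta$-coordinate of this lift, so $\int_{c([0,t])}\mathrm{Val}^{*}\omega = \int_{\mathrm{Val}\circ c|_{[0,t]}}\omega = x(t) - x_0$, giving the second component. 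The third coordinate is identical in spirit: from $c(t)^{-1} = R^{y(t)}A(-r(t))R^{-x(t)}$ one gets $\overline{\mathrm{Val}}(c(t)) = c(t)^{-1}\cdot i = R^{y(t)}(ie^{-r(t)}) = R^{y(t)+\pi}(ie^{r(t)}) = \pi(r(t), y(t)+\pi)$, using $R^{\pi}(ie^{r}) = ie^{-r}$. Thus $t\mapsto (r(t), y(t)+\pi)$ is a continuous $\pi$-lift of $\overline{\mathrm{Val}}\circ c$, and the same argument yields $\int_{c([0,t])}\overline{\mathrm{Val}}^{*}\omega = y(t) - y_0$, the additive constant $\pi$ being immaterial since only the difference of $\theta$-coordinates enters.

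The computations are elementary; the only points that require care are the bookkeeping of the conventions for $R^{\theta}$, $A(r)$, and the parametrization $ie^{r}$ (in particular the identity $R^{\pi}(ie^{r}) = ie^{-r}$ used for $\overline{\mathrm{Val}}$), and the verification that $t\mapsto(r(t),x(t))$ and $t\mapsto(r(t),y(t)+\pi)$ are genuinely continuous lifts through $\pi$, so that the defining relation $\pi^{*}\omega = \mathrm{d}\theta$ may legitimately be used to evaluate the two line integrals. If $c$ is only assumed continuous rather than piecewise $C^{1}$, one simply interprets $\int_{c([0,t])}\mathrm{Val}^{*}\omega$ as the $\theta$-increment of a continuous $\pi$-lift of $\mathrm{Val}\circ c$ (which is exactly how the identity is proved anyway), so no genuine obstacle arises.
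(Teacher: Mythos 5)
Your proposal is correct and follows essentially the same route as the paper: write the lift in coordinates $(r(t),x(t),y(t))$, identify $\mathrm{Val}\circ c = \pi\circ(r,x)$, and use $\pi^*\omega = d\theta$ to evaluate the integral. The only genuine addition is that you work out the third ($\overline{\mathrm{Val}}$) coordinate explicitly via $c(t)^{-1} = R^{y(t)}A(-r(t))R^{-x(t)}$ and the identity $R^{\pi}(ie^{r}) = ie^{-r}$, thereby exhibiting $(r(t),\,y(t)+\pi)$ as a $\pi$-lift of $\overline{\mathrm{Val}}\circ c$; the paper dismisses this coordinate as "essentially the same," and your computation confirms that it is, while also showing the small normalisation $\pi$-shift one has to absorb. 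Your closing remark on extending the identity to merely continuous $c$ (interpreting the integral as the $\theta$-increment of the lift) is a minor but honest improvement over the paper's chain-rule computation, which implicitly assumes $c$ is at least piecewise $C^1$.
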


\begin{proof}
    Assume that $\widetilde{c}(t) = (r(t), \theta(t), \eta(t))$, so that 
    \[
    c(t) = R^{\theta(t)} A(r(t)) R^{-\eta(t)}.
    \]
    The equality $r(t) = d_{\sigma}(i, c(t) \cdot i)$ follows easily. We now focus on deriving the formula for $\theta(t)$ using the angular form $\omega$. Observe that
    \begin{equation}\label{eq_val}
        \mathrm{Val}(c(t)) = \pi(r(t), \theta(t)),
    \end{equation}
    where $\pi:(0,+\infty)\times\mathbb{R}\to \mathbb{H}^2_*$ is the map from \eqref{eq_pi_map}. We compute,
    
    \begin{align*}
        \int_{c([0,t])} \mathrm{Val}^*(\omega) 
        &= \int_0^t \omega_{\mathrm{Val}(c(s))}\left(\mathrm{d}_{c(s)} \mathrm{Val}(c'(s))\right) \mathrm{d}s \\
        &= \int_0^t \omega_{\pi(r(s), \theta(s))}\left(\mathrm{d}_{(r(s), \theta(s))} \pi(r'(s), \theta'(s))\right) \mathrm{d}s \\
        &= \int_0^t \mathrm{d}\theta(r'(s), \theta'(s)) \mathrm{d}s \\
        &= \int_0^t \theta'(s) \mathrm{d}s \\
        &= \theta(t) - x_0,
    \end{align*}
    where in the second equality, we used Equation \eqref{eq_val}, and in the third equality, we used Lemma \ref{angular_form}. The proof for the third coordinate of $\widetilde{c}$ is essentially the same. This completes the proof.
\end{proof}

The final lemma toward Proposition \ref{holonomy_around_puncture} is below. The proof follows line by line the argument of \cite[Lemma~3.5.3]{Agnese_thesis}.

\begin{lemma}\cite[Lemma~3.5.3]{Agnese_thesis}\label{Near_curve}
 Let \( \gamma_1 : \mathbb{R} \to \mathbb{H}^2_* \) and \( \gamma_2 : \mathbb{R} \to \mathbb{H}^2_* \) be two paths in \( \mathbb{H}^2_* \) satisfying \( \gamma_j(t + t_0) = R^{\theta_0} \gamma_j(t) \), for some \( t_0, \theta_0 \in \mathbb{R} \). Assume that for all \( t \in \mathbb{R} \), the geodesic segment $[\gamma_1(t),\gamma_2(t)]$ does not contain $i$. Then,
    \[
    \int_{\gamma_1([0, t_0])} \omega = \int_{\gamma_2([0, t_0])} \omega.
    \]
\end{lemma}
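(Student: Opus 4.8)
The plan is to interpolate between $\gamma_1$ and $\gamma_2$ by geodesic segments, producing a homotopy $G$ into $\mathbb{H}^2_*$, and then to apply Stokes' theorem using that the angular form $\omega$ is closed and $R^{\theta_0}$-invariant. First I would record that $\omega$ is a closed $1$-form on $\mathbb{H}^2_*$: since the covering map $\pi$ of \eqref{eq_pi_map} is a local diffeomorphism and $\pi^*\omega = \mathrm{d}\theta$, we get $\pi^*(\mathrm{d}\omega) = \mathrm{d}(\mathrm{d}\theta) = 0$, and hence $\mathrm{d}\omega = 0$. Recall also, from the remark following Definition~\ref{angular_form}, that $(R^{\theta_0})^*\omega = \omega$.

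Next I would define $G : [0,1]\times[0,t_0] \to \mathbb{H}^2$ by letting $G(s,t)$ be the point at proportional arc length $s$ along the unique geodesic segment from $\gamma_1(t)$ to $\gamma_2(t)$, i.e.\ $G(s,t) = \exp_{\gamma_1(t)}\!\bigl(s\,\exp_{\gamma_1(t)}^{-1}(\gamma_2(t))\bigr)$. Since $\mathbb{H}^2$ is a Cartan--Hadamard manifold, each $\exp_p$ is a global diffeomorphism depending smoothly on $p$, so $G$ is (piecewise) smooth; and by the hypothesis that $[\gamma_1(t),\gamma_2(t)]$ never contains $i$, the map $G$ takes values in $\mathbb{H}^2_*$. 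Applying Stokes' theorem to $G^*\omega$ over the rectangle $Q = [0,1]\times[0,t_0]$ gives $\int_{\partial Q} G^*\omega = \int_Q G^*(\mathrm{d}\omega) = 0$. Reading off the four sides of $\partial Q$ with its induced orientation: the side $s=0$ contributes $\int_{\gamma_1([0,t_0])}\omega$, the side $s=1$ contributes $-\int_{\gamma_2([0,t_0])}\omega$, the side $t=t_0$ contributes $\int_{[\gamma_1(t_0),\gamma_2(t_0)]}\omega$, and the side $t=0$ contributes $-\int_{[\gamma_1(0),\gamma_2(0)]}\omega$. Hence
\[
\int_{\gamma_1([0,t_0])}\omega - \int_{\gamma_2([0,t_0])}\omega \;=\; \int_{[\gamma_1(0),\gamma_2(0)]}\omega - \int_{[\gamma_1(t_0),\gamma_2(t_0)]}\omega .
\]

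Finally I would invoke the equivariance assumption: since $R^{\theta_0}$ is an isometry and $\gamma_j(t_0) = R^{\theta_0}\gamma_j(0)$ for $j=1,2$, the segment $[\gamma_1(t_0),\gamma_2(t_0)]$ is exactly $R^{\theta_0}$ applied to $[\gamma_1(0),\gamma_2(0)]$; combined with $(R^{\theta_0})^*\omega = \omega$, this yields $\int_{[\gamma_1(t_0),\gamma_2(t_0)]}\omega = \int_{[\gamma_1(0),\gamma_2(0)]}\omega$. The right-hand side above therefore vanishes, and we conclude $\int_{\gamma_1([0,t_0])}\omega = \int_{\gamma_2([0,t_0])}\omega$.

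The only point that needs genuine care -- the main (mild) obstacle -- is justifying that the geodesic homotopy $G$ is regular enough for Stokes and that it avoids $i$; both follow from the Cartan--Hadamard property of $\mathbb{H}^2$ together with the hypothesis on the segments. Alternatively, one may bypass Stokes entirely: $G$ exhibits the concatenated loop $\gamma_1|_{[0,t_0]} * [\gamma_1(t_0),\gamma_2(t_0)] * \overline{\gamma_2|_{[0,t_0]}} * \overline{[\gamma_1(0),\gamma_2(0)]}$ as null-homotopic in $\mathbb{H}^2_*$, so the integral of the closed form $\omega$ over it is zero, and the same computation applies.
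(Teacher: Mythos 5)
Your proof is correct. Note, however, that the paper itself does not include a proof of this lemma: it cites \cite[Lemma~3.5.3]{Agnese_thesis} and remarks that "the proof follows line by line" that argument, so there is no in-text argument to compare against. On its own merits, your argument is sound and complete: $\omega$ is closed because $\pi^*\omega = d\theta$ and $\pi$ is a local diffeomorphism; the geodesic interpolation $G(s,t)$ is a well-defined smooth map into $\mathbb{H}^2$ by the Cartan--Hadamard property, and it lands in $\mathbb{H}^2_*$ precisely because of the hypothesis that $[\gamma_1(t),\gamma_2(t)]$ avoids $i$; the Stokes computation and its sign bookkeeping are correct; and the cancellation of the two end-segment contributions uses exactly the periodicity $\gamma_j(t+t_0) = R^{\theta_0}\gamma_j(t)$ together with the fact that $R^{\theta_0}$ is an isometry (so it carries the geodesic segment $[\gamma_1(0),\gamma_2(0)]$ onto $[\gamma_1(t_0),\gamma_2(t_0)]$) and $(R^{\theta_0})^*\omega = \omega$. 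Your final remark that one can bypass Stokes entirely — the bounding rectangle shows the concatenated loop is null-homotopic in $\mathbb{H}^2_*$, so the integral of the closed form vanishes — is the cleanest phrasing and also handles the minor regularity question (the statement only assumes $\gamma_1,\gamma_2$ are paths). A natural alternative route, which Janigro's thesis may well take, is to work in the universal cover $(0,\infty)\times\mathbb{R}$ where $\omega$ pulls back to the exact form $d\theta$: lift both curves, observe that the non-containment hypothesis forces $|\theta_1(t)-\theta_2(t)|<\pi$ for all $t$, and use periodicity to conclude that both lifts gain the same $\theta$-increment over $[0,t_0]$. That version makes the geometric role of the "segment avoids $i$" hypothesis (bounding the angular separation) more explicit, while your Stokes version is shorter and more conceptual; both are valid.
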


We now turn to the proof of Proposition \ref{holonomy_around_puncture}. Recall that $\alpha$ is a nontrivial loop in $\pi_1(\fhads_*)$ that is not homotopic to the fiber of $\mathcal{F}$, and that $\alpha$ projects to a loop $\gamma$ around the puncture of $\mathbb{H}^2_{*}$ that generates the $\pi_1$. Let \( \widetilde{\alpha} \) be a lift of $\alpha$ in \( \widetilde{\fhads_*} \). For every \( t \in \mathbb{R} \), we denote
$$ \widetilde{A}_t := \widetilde{\text{Dev}}(\widetilde{\alpha}(t)) $$ 
and 
$$ A_t := \mathrm{Dev}(\widetilde{\alpha}(t)) = T(\widetilde{\mathrm{Dev}}(\widetilde{\alpha}(t)))$$ (see \eqref{eq_dev_on_ads}). Consequently, we have the following.
\begin{itemize}
\item By the construction of the manifold \( \fhadss \), the path \( A_t \) in \( \ads_* \) has the property
    \[
    A_t(f(\gamma(t))) = h(\gamma(t))
    \]
    for every \( t \). 
    \item By definition of the action of the fundamental group, we have
    \begin{equation}\label{eq_deck_transformation}
    \widetilde{\text{Dev}}(\widetilde{\alpha}(1)) = \widetilde{\text{Dev}}(\alpha \cdot \widetilde{\alpha}(0)) = \widetilde{H}(\alpha) \widetilde{\text{Dev}}(\widetilde{\alpha}(0)),
    \end{equation}
    where $\widetilde{H}(\alpha)=\varphi_{(\theta_0,\eta_0)}$ is the holonomy of $\alpha$, see \eqref{isometry_ads}.

\end{itemize}
Before proving Proposition \ref{holonomy_around_puncture}, we need a basic lemma on hyperbolic geometry. For each $p\in \mathbb{H}^2_*$, we denote by \( B_p \) the unique hyperbolic isometry that sends \( i \) to \( p \) and whose axis is the oriented geodesic joining \( i \) to \( p \).
\begin{lemma}\label{Up_Lemma}
    Let $p,q\in \mathbb{H}^2_*$ be such that $d_{\sigma}(i,p)\neq d_{\sigma}(i,q)$. Then, $B_pB_q^{-1}$ does not fix $i$.
\end{lemma}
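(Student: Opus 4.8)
The plan is to argue by contradiction, using nothing about $B_p$ and $B_q$ beyond the fact that they are isometries of $\mathbb{H}^2$ sending $i$ to $p$ and $q$ respectively. Suppose that $B_pB_q^{-1}$ fixes $i$, that is, $B_pB_q^{-1}(i)=i$. Applying $B_p^{-1}$ to both sides, this is equivalent to the equality $B_q^{-1}(i)=B_p^{-1}(i)$.

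Next I would compute the distance of these points from $i$. Since $B_p$ is an isometry with $B_p(i)=p$, applying $B_p$ to both arguments gives
$$ d_\sigma\bigl(i,\,B_p^{-1}(i)\bigr)=d_\sigma\bigl(B_p(i),\,i\bigr)=d_\sigma(p,i), $$
and, in exactly the same way, $d_\sigma\bigl(i,\,B_q^{-1}(i)\bigr)=d_\sigma(q,i)$. Feeding in the identity $B_q^{-1}(i)=B_p^{-1}(i)$ obtained above, these two equalities force $d_\sigma(i,p)=d_\sigma(i,q)$, which contradicts the hypothesis $d_\sigma(i,p)\neq d_\sigma(i,q)$. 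Hence $B_pB_q^{-1}(i)\neq i$, which is the assertion of the lemma.

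I do not expect any genuine obstacle here. The detailed description of $B_p$ as the hyperbolic translation along the oriented geodesic from $i$ to $p$ (and even the fact that $p\neq i$, guaranteed by $p\in\mathbb{H}^2_*$) plays no role; only the combination "isometry" plus "$B_p(i)=p$" is used. The sole point requiring a line of care is the equivalence $B_pB_q^{-1}(i)=i \iff B_q^{-1}(i)=B_p^{-1}(i)$, which is immediate upon composing with $B_p^{-1}$.
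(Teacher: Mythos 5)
Your proof is correct and essentially the same as the paper's: both argue by contradiction, using only that $B_p,B_q$ are isometries sending $i$ to $p,q$ respectively, to derive $d_\sigma(i,p)=d_\sigma(i,q)$. The paper applies $R=B_pB_q^{-1}$ to the pair $(i,q)$ directly, while you rephrase the fixed-point hypothesis as $B_p^{-1}(i)=B_q^{-1}(i)$ and compare distances to $i$; the two are trivial rearrangements of one another.
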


\begin{proof}
    By contradiction, assume that $R:=B_pB_q^{-1}$ fixes $i$. By definition, $Rq=p$, and hence $d_{\sigma}(i,q)=d_{\sigma}(Ri,Rp)=d_{\sigma}(i,p)$, which is a contradiction.
\end{proof}

\begin{proof}[Proof of Proposition \ref{holonomy_around_puncture}]
In this proof, we take explicit $\gamma$ and $\alpha$ in order to make computations.  
We set $\gamma(t) = re^{2i\pi t}$ and 
\[
A_t = B_{h(\gamma(t))} B^{-1}_{f(\gamma(t))},
\]
where \( B_p \) the unique hyperbolic isometry that sends \( i \) to \( p \) and whose axis is the oriented geodesic joining \( i \) to \( p \). Since $\gamma(t+1) = \gamma(t)$, it follows that $A_{t+1} = A_t$, and therefore $ \alpha(t) = [A_t] $ is a loop in $\fhadss$ that is not homotopic to a fiber of the fibration $\mathcal{F} : \fhadss \to \mathbb{H}^2_*$.

The goal is now to compute the holonomy of the loop $\alpha$. Let $\widetilde{\alpha}$ be the lift of $\alpha$ to $\widetilde{\fhadss}$.  
By definition, we have 
\[
\mathrm{Dev}(\widetilde{\alpha}(t)) = \mathcal{D}(\alpha(t)) = A_t,
\]
and so $\widetilde{\mathrm{Dev}}(\widetilde{\alpha}(t))$ is the lift to $\widetilde{\ads_*}$ of $\mathrm{Dev}(\widetilde{\alpha}(t))$.  
Consider $\theta_0,\eta_0\in \mathbb{R}$ such that $\widetilde{\mathrm{Hol}}(\alpha) = \varphi_{(\theta_0,\eta_0)}$ (see notation in \eqref{isometry_ads}).  
Thus, by Proposition \ref{Curve_lift_in_ads}, we have
\begin{equation}\label{lift_curve_dev}
\widetilde{\mathrm{Dev}}(\widetilde{\alpha}(t)) 
= \left( \mathrm{d}_{\mathbb{H}^2}\!\left(i, \mathrm{Dev}(\widetilde{\alpha}(t)) \cdot i\right),\ 
x_0 + \int_{\mathrm{Dev \circ \widetilde{\alpha}}([0,t])} \mathrm{Val}^*(\omega),\ 
y_0 + \int_{\mathrm{Dev} \circ \widetilde{\alpha}([0,t])} \overline{\mathrm{Val}}^*(\omega) \right),
\end{equation}
for some $x_0,y_0 \in \mathbb{R}$. This leads to
\begin{equation}\label{eq_proof}
\begin{aligned}
\widetilde{\mathrm{Dev}}(\widetilde{\alpha}(1)) 
&= \left( \mathrm{d}_{\sigma}\!\left(i, \mathrm{Dev}(\widetilde{\alpha}(1)) \cdot i\right),\ 
x_0 + \int_{\mathrm{Dev \circ \widetilde{\alpha}}([0,1])} \mathrm{Val}^*(\omega),\ 
y_0 + \int_{\mathrm{Dev} \circ \widetilde{\alpha}([0,1])} \overline{\mathrm{Val}}^*(\omega) \right).
\end{aligned}
\end{equation}

We claim that
\begin{equation}\label{eq_claim}
    \int_{\mathrm{Dev} \circ \widetilde{\alpha}([0,1])} \mathrm{Val}^*(\omega) = 2n\pi.
\end{equation}

To prove the claim, let us consider the paths 
\[
\gamma_1(t) := A_t(i), \qquad 
\gamma_2(t) := h(\gamma(t)) = A_t(f(\gamma(t))).
\]
Clearly $\gamma_2$ is a loop in $\mathbb{H}^2_*$, and the same holds for $\gamma_1$ by Lemma \ref{Up_Lemma}, which ensures that $\gamma_1(t) \in \mathbb{H}^2_*$ for all $t \in \mathbb{R}$. Next, we want to apply Lemma \ref{Near_curve}. The curves $\gamma_j$ satisfy $\gamma_j(t+1) = \gamma_j(t)$ for $j=1,2$ (because $\gamma(t+1)=\gamma(t)$ and $A_{t+1}=A_t$). By Lemma \ref{week_contraction_around_singular_point}, we have
\[
d_{\sigma}(\gamma_1(t), \gamma_2(t)) = d_{\sigma}(i, f(\gamma(t))) < d_{\sigma}(i, h(\gamma(t))) = d_{\sigma}(i, \gamma_2(t)).
\] This implies in particular that $i$ does not belong to the geodesic segment $[\gamma_1(t),\gamma_2(t)]$. Therefore,
\begin{equation}\label{computation}
\int_{\mathrm{Dev}\circ\widetilde{\alpha}([0,1])}\mathrm{Val}^*(\omega)
=\int_{\gamma_2}\omega
=\int_{h\circ\gamma}\omega
=2n\pi,
\end{equation}
where the last equality follows from the fact that $h$ is a branched covering of degree $n$ (see Lemma \ref{theta_as_integral}). Next, since
\[
\widetilde{\mathrm{Dev}}(\widetilde{\alpha}(1))
= \varphi_{(\theta_0,\eta_0)}\left(\widetilde{\mathrm{Dev}}(\widetilde{\alpha}(0))\right),
\]
it follows from \eqref{lift_curve_dev} that
\[
x_0 + \int_{\mathrm{Dev}\circ\widetilde{\alpha}([0,1])}\mathrm{Val}^*(\omega)
= x_0 + \theta_0.
\]
Combined with \eqref{computation}, we obtain $\theta_0 = 2n\pi,$ which finishes the proof of the claim.  
  
The remaining part is to compute $$\int_{\mathrm{Dev} \circ \widetilde{\alpha}([0,1])} \overline{\mathrm{Val}}^*(\omega),$$ which is equal to $\int_{\gamma_3}\omega$, where $\gamma_3(t) := A_t^{-1}\cdot i$. Since $\gamma_3$ is a closed loop, it follows from classical degree theory and the calculation at the end of Lemma \ref{theta_as_integral} that this last integral equals $2k\pi$ for some $k \in \mathbb{Z}$. This concludes the proof, as
\[
\widetilde{\mathrm{Dev}}(\widetilde{\alpha}(1)) = \widetilde{H}(\alpha)\, \widetilde{\mathrm{Dev}}(\widetilde{\alpha}(0))
\]
and
\begin{equation}
\begin{aligned}
\widetilde{\mathrm{Dev}}(\widetilde{\alpha}(1)) 
&= \left( \mathrm{d}_{\sigma}\!\left(i, \mathrm{Dev}(\widetilde{\alpha}(0)) \cdot i\right),\ 
x_0 + 2n\pi,\ 
y_0 + 2k\pi \right).
\end{aligned}
\end{equation}
\end{proof}
\subsection{Holonomy of the fiber in the fundamental example}
The main goal of this subsection is to prove the following Proposition.

\begin{prop}\label{holonomy_of_the_fiber}
    Let \(\beta\) be a generator of \(\pi_1(\fhadss)\) that is homotopic to a future-directed timelike geodesic fiber of \(\mathcal{F} : \fhadss \to \hh\) (see \eqref{group_fonda_M}). Then,
    \[
    \widetilde{\mathrm{Hol}}(\beta) = \varphi_{(0, 2\pi)}.
    \]
\end{prop}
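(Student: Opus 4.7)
The plan is to parametrize the loop $\beta$ explicitly as a closed curve in one timelike geodesic fibre, lift it to $\widetilde{\ads_*}$ using Lemma \ref{Curve_lift_in_ads}, and read off $\widetilde{\mathrm{Hol}}(\beta)$ from the change in the $(\theta,\eta)$-coordinates; Lemma \ref{week_contraction_around_singular_point} will be used to identify the winding numbers that appear.

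Fix $p\in \mathbb{H}^2_*$. The fibre $\mathcal{F}^{-1}([p])$ is carried by $\mathcal{D}$ onto the complete timelike geodesic $\ell_{h(p),f(p)} = \{A\in\psl : A\cdot f(p) = h(p)\}$, and, since $g^{\ads}$ is bi-invariant, an arc-length parametrization is
\begin{equation*}
A_t \;:=\; B_{h(p)}\,R^{2t}\,B_{f(p)}^{-1}, \qquad t\in[0,\pi],
\end{equation*}
with $A_\pi = A_0$ because $R^{2\pi}=\mathrm{Id}$ in $\mathrm{PSL}(2,\mathbb{R})$, and total Lorentzian length $\pi$ by \eqref{Length}. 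Choosing the orientation so that this parametrization is future-directed, the loop $\beta(t):=[A_t]$ represents the generator in \eqref{group_fonda_M}. Letting $\widetilde{\beta}$ be a lift to $\widetilde{\fhadss}$ and writing $\widetilde{\mathrm{Dev}}(\widetilde{\beta}(t))=(r(t),\theta(t),\eta(t))$, Lemma \ref{Curve_lift_in_ads} together with \eqref{eq_deck_transformation} identifies $\widetilde{\mathrm{Hol}}(\beta)=\varphi_{(\theta_0,\eta_0)}$ with
\begin{equation*}
\theta_0 \;=\; \int_{\mathrm{Val}\circ A_\bullet}\omega, \qquad \eta_0 \;=\; \int_{\overline{\mathrm{Val}}\circ A_\bullet}\omega,
\end{equation*}
since $r(\pi)=r(0)$ by $A_\pi=A_0$.

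To evaluate the first integral, set $q:=B_{f(p)}^{-1}\cdot i$, so that $\mathrm{Val}(A_t)=B_{h(p)}\cdot R^{2t}q$. As $t$ traverses $[0,\pi]$ the family $R^{2t}$ sweeps out a full loop in $\mathrm{PSO}(2)$, so $\mathrm{Val}\circ A_\bullet$ traces once the hyperbolic circle of radius $d_\sigma(i,q)=d_\sigma(i,f(p))$ centred at $B_{h(p)}\cdot i = h(p)$. By Lemma \ref{week_contraction_around_singular_point}, $d_\sigma(i,f(p))<d_\sigma(i,h(p))$, so $i$ lies strictly outside this circle; its winding number about $i$ is $0$, and hence $\theta_0=0$. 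For $\eta_0$, the dual computation $\overline{\mathrm{Val}}(A_t)=B_{f(p)}\cdot R^{-2t}q'$ with $q':=B_{h(p)}^{-1}\cdot i$ produces the hyperbolic circle of radius $d_\sigma(i,h(p))$ centred at $f(p)$; now Lemma \ref{week_contraction_around_singular_point} forces $i$ to lie strictly inside this circle, so the winding number about $i$ equals $\pm 1$, whence $\eta_0=\pm 2\pi$.

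The only delicate step, and the place where I expect the main friction, is pinning down the sign of $\eta_0$. I would settle it by tracing how the future-directed orientation of $\ell_{h(p),f(p)}$ transports, through the orientation-preserving isometry $B_{f(p)}$ and the reversed rotation $R^{-2t}$, to a sense of winding of $\overline{\mathrm{Val}}\circ A_\bullet$ around $i$; the convention that $1\in\mathbb{Z}$ corresponds to the future-directed fibre fixes this sign as $+1$. Substituting yields $\widetilde{\mathrm{Hol}}(\beta)=\varphi_{(0,2\pi)}$, as claimed.
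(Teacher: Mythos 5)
Your proof follows a genuinely different route from the paper's. The paper reduces the computation of $\widetilde{\mathrm{Hol}}(\beta)$ to the structural fact, recorded as Lemma \ref{stab_timelike}, that the stabilizer in $\mathrm{Isom}(\widetilde{\ads_*})$ of a lifted timelike geodesic disjoint from $\widetilde{\ell}_{i,i}$ is exactly $\{0\}\times 2\pi\mathbb{Z}$; it then pins the integer down to $+1$ by observing that the induced map on $\pi_1$ of the fiber is an isomorphism that respects time-orientation. You instead run the same machine that the paper uses for the meridian in Proposition \ref{holonomy_around_puncture}: explicitly parametrize the fiber as $A_t=B_{h(p)}R^{2t}B_{f(p)}^{-1}$, lift via Lemma \ref{Curve_lift_in_ads}, and read $\theta_0,\eta_0$ off as $\int \mathrm{Val}^*\omega$ and $\int\overline{\mathrm{Val}}^*\omega$. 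The winding-number bookkeeping is correct: $\mathrm{Val}\circ A_\bullet$ is a circle of radius $d_\sigma(i,f(p))$ about $h(p)$, so by Lemma \ref{week_contraction_around_singular_point} it misses $i$ and $\theta_0=0$; and $\overline{\mathrm{Val}}\circ A_\bullet$ is a circle of radius $d_\sigma(i,h(p))$ about $f(p)$, so it encloses $i$ and $\eta_0=\pm 2\pi$. This approach is appealing precisely because it treats $\alpha$ and $\beta$ uniformly; the price is that all the information in the stabilizer lemma gets recomputed by hand, and in particular the sign of $\eta_0$ has to be chased through an explicit orientation argument rather than invoked abstractly.

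That last step is where your proposal has a real gap. You acknowledge the sign is delicate, but you only sketch a plan, and the plan as stated is not quite a proof: "the convention that $1\in\mathbb{Z}$ corresponds to the future-directed fibre fixes this sign as $+1$" is a statement of what you want to conclude, not an argument. To carry it out you would need to (i) determine which orientation of $t\mapsto B_{h(p)}R^{2t}B_{f(p)}^{-1}$ is future-directed in $\ads$, (ii) track how the reversed rotation $R^{-2t}$ and the orientation-preserving isometry $B_{f(p)}$ transport this to a sense of winding of $\overline{\mathrm{Val}}\circ A_\bullet$ about $i$, and (iii) reconcile this with the sign convention built into the angular form $\omega$ and the covering map $T(r,\theta,\eta)=R^\theta A(r)R^{-\eta}$. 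None of this is hard, but it is exactly the kind of hairy sign-chasing the paper's stabilizer-plus-time-orientation argument is designed to avoid. As written, your proof pins down $\widetilde{\mathrm{Hol}}(\beta)$ up to the sign of $\eta_0$, and the remaining sign verification is deferred.
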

The proof of this proposition proceeds through the study of the stabilizers of timelike geodesics in $\widetilde{\ads_*}$.  
For $x, y \in \mathbb{H}^2$, let $\ell_{x,y}$ be the timelike geodesic in $\ads$ defined in \eqref{eq_timelike}.  
If $\ell_{x,y}$ belongs to $\ads_{*}$, we denote by $\widetilde{\ell}_{x,y}$ its lift to $\widetilde{\ads_{*}}$. We denote by $\mathrm{Stab}(\ell_{x,y})$ the stabilizer of $\ell_{x,y}$ in $\mathrm{Isom}(\ads)$, and by $\mathrm{Stab}(\widetilde{\ell}_{x,y})$ the stabilizer of $\widetilde{\ell}_{x,y}$ in $\mathrm{Isom}(\widetilde{\ads_*})$. That is,
\[
\mathrm{Stab}(\ell_{x,y}) :=  \left\{ \varphi \in \mathrm{Isom}(\ads)\ \mid\ \varphi(\ell_{x,y}) = \ell_{x,y} \right\}
\]
and
\[
\mathrm{Stab}(\widetilde{\ell}_{x,y}) :=  \left\{ \varphi \in \mathrm{Isom}(\widetilde{\ads_*})\ \mid\ \varphi(\widetilde{\ell}_{x,y}) = \widetilde{\ell}_{x,y} \right\}.
\]
We have the following lemma.

\begin{lemma}\cite[Propositions~2.2.5 and~2.2.6]{Agnese_thesis}\label{stab_timelike}
    Let \(x, y \in \mathbb{H}^2\) be such that \(d_{\sigma}(i, x) < d_{\sigma}(i, y)\). Consider the timelike geodesic \(\ell_{x,y}\), which is disjoint from \(\ell_{i,i}\), and let \(\widetilde{\ell_{x,y}}\) be its lift in \(\widetilde{\ads_*}\). Then,
    \[
    \mathrm{Stab}(\widetilde{\ell_{x,y}}) = \{0\} \times 2\pi\mathbb{Z}.
    \]
\end{lemma}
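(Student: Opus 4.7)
The plan is to decompose the computation of $\mathrm{Stab}(\widetilde{\ell_{x,y}})$ using the covering homomorphism $T_* : \mathrm{Isom}(\widetilde{\ads_*}) \to \mathrm{Isom}(\ads_*)$ via the short exact sequence
\[
1 \to \mathrm{Stab}(\widetilde{\ell_{x,y}}) \cap \ker T_* \to \mathrm{Stab}(\widetilde{\ell_{x,y}}) \to T_*\bigl(\mathrm{Stab}(\widetilde{\ell_{x,y}})\bigr) \to 1,
\]
treating separately the image downstairs in $\mathrm{Isom}(\ads_*) = \mathrm{PSO}(2) \times \mathrm{PSO}(2)$ and the kernel, which is the image of $\pi_1(\ell_{x,y}) \cong \mathbb{Z}$ in the deck group under the monodromy of $\ell_{x,y}$ regarded as a loop in $\ads_*$.

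For the downstairs image, I would use the explicit formula $T_*(\varphi_{(\theta_0, \eta_0)}) = (R^{\theta_0}, R^{\eta_0})$, together with the identity $(L,M) \cdot \ell_{p,q} = \ell_{Lp, Mq}$: a pair $(R^{\theta_0}, R^{\eta_0})$ preserves $\ell_{x,y}$ if and only if $R^{\theta_0}$ fixes $x$ and $R^{\eta_0}$ fixes $y$. Since each $R^\theta$ is a rotation of $\mathbb{H}^2$ around $i$, it has $i$ as its unique fixed point in $\mathbb{H}^2$ unless $\theta \in 2\pi\mathbb{Z}$, in which case it is the identity in $\psl$. The hypothesis $d(i,x) < d(i,y)$ forces $y \neq i$ (and, disposing of the degenerate case $x = i$, we also have $x \neq i$), so both $R^{\theta_0}$ and $R^{\eta_0}$ must be trivial, yielding $\theta_0, \eta_0 \in 2\pi\mathbb{Z}$. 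Hence $\mathrm{Stab}(\widetilde{\ell_{x,y}})$ is contained in $\ker T_* = 2\pi\mathbb{Z} \times 2\pi\mathbb{Z}$ and is cyclic, generated by the monodromy of $\ell_{x,y}$.

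For the monodromy computation, I would parametrize $\ell_{x,y}$ as a closed loop $\beta : [0,1] \to \ads_*$ and apply Lemma \ref{Curve_lift_in_ads}, which gives the monodromy as $\varphi_{(\Delta\theta, \Delta\eta)}$ with $\Delta\theta = \int_\beta \mathrm{Val}^*\omega$ and $\Delta\eta = \int_\beta \overline{\mathrm{Val}}^*\omega$. Taking $\beta(s) = A_0 \, R^{2\pi s}_y$ for $s \in [0,1]$ with $A_0 y = x$, the two valuations trace hyperbolic circles in $\mathbb{H}^2$: $\mathrm{Val} \circ \beta$ is the circle of hyperbolic radius $d(i, y)$ centered at $x = A_0 y$, while $\overline{\mathrm{Val}} \circ \beta$ is the circle of hyperbolic radius $d(i, x)$ centered at $y$. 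The hypothesis $d(i, x) < d(i, y)$ is precisely what ensures that the first circle strictly encloses the puncture $i$ while the second does not: indeed, the center-to-puncture distance of the first is $d(i, x)$, which is smaller than its radius $d(i, y)$, whereas the center-to-puncture distance of the second is $d(i, y)$, larger than its radius $d(i, x)$. Applying the degree calculation from Lemma \ref{theta_as_integral}, exactly one of the two integrals contributes $\pm 2\pi$ and the other vanishes, yielding the generator of $\mathrm{Stab}(\widetilde{\ell_{x,y}})$ of the form stated.

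The main obstacle is tracking orientation conventions carefully enough to confirm that the nontrivial winding is picked up by the $\eta$-coordinate (via $\overline{\mathrm{Val}}$) rather than by the $\theta$-coordinate, so that the generator is $\varphi_{(0, 2\pi)}$ as claimed; this requires a direct computation relating the parametrization $T(r, \theta, \eta) = R^\theta A(r) R^{-\eta}$ to the orientations of the two valuation circles. The case $x = i$, where the downstairs stabilizer is formally enlarged to $\mathrm{PSO}(2) \times \{I\}$, must also be addressed: one checks either that this degenerate configuration is excluded in practice, or that the corresponding rotations upstairs fail to preserve the specific connected component $\widetilde{\ell_{x,y}}$.
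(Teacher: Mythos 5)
The paper itself gives no proof of this lemma (it is imported from \cite[Propositions 2.2.5 and 2.2.6]{Agnese_thesis}), so your argument has to stand on its own. Your strategy is sound and mostly carried out correctly: the reduction via $T_*$ shows that, away from the degenerate case, the stabilizer lies in the deck group and equals the cyclic subgroup generated by the monodromy of the loop $\ell_{x,y}$, and your identification of the two valuation circles (the curve $A\mapsto A(i)$ traces the circle of center $x$ and radius $d_\sigma(i,y)$, the curve $A\mapsto A^{-1}(i)$ the circle of center $y$ and radius $d_\sigma(i,x)$) is correct. The genuine gap is the step you defer as ``tracking orientation conventions'': this is not an orientation issue, and it cannot come out the way you hope. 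With the paper's conventions one has $\mathrm{Val}(T(r,\theta,\eta))=R^{\theta}A(r)R^{-\eta}(i)=R^{\theta}(ie^{r})$, so the $\theta$-coordinate of a lift records the winding of $A(i)$ and the $\eta$-coordinate that of $A^{-1}(i)$ (this is exactly Lemma \ref{Curve_lift_in_ads} and its proof). Under the stated hypothesis $d_\sigma(i,x)<d_\sigma(i,y)$ the circle enclosing the puncture is the $\mathrm{Val}$-circle, so the monodromy is $\varphi_{(\pm 2\pi,0)}$ and your argument, completed honestly, yields $\mathrm{Stab}(\widetilde{\ell_{x,y}})=2\pi\mathbb{Z}\times\{0\}$, the transpose of the displayed conclusion; orientation bookkeeping can only change the sign of the winding, never which factor carries it. What your computation actually reveals is that the lemma as printed has its two slots (equivalently, the inequality) interchanged relative to the conventions \eqref{eq_timelike} and \eqref{agnesmap}: the stabilizer is $\{0\}\times 2\pi\mathbb{Z}$ when the point \emph{closer} to $i$ sits in the \emph{second} slot, which is precisely the form needed later, since Proposition \ref{holonomy_of_the_fiber} applies the lemma to $\ell_{h(x),f(x)}$ with $d_\sigma(i,f(x))<d_\sigma(i,h(x))$. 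A correct write-up must either prove that transposed statement or flag and repair the mismatch; leaving it as an open ``obstacle'' leaves the proof incomplete at its decisive point.

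Second, the degenerate case $x=i$ (which the hypothesis $d_\sigma(i,x)<d_\sigma(i,y)$ does not exclude, and for which $\ell_{x,y}$ is still disjoint from $\ell_{i,i}$) is not handled correctly: of your two proposed resolutions, the second is false. For $x=i$ one computes directly that $\ell_{i,y}=\{T(r_0,\theta,\pi):\theta\in\mathbb{R}\}$ with $r_0=d_\sigma(i,y)$, so each component of the preimage is an entire line in the $\theta$-direction and is preserved by every $\varphi_{(\theta_0,0)}$; the stabilizer is then the full one-parameter group $\mathbb{R}\times\{0\}$ and the claimed equality fails. So the rotations upstairs do preserve the component, and the only viable option is your first alternative: this configuration must be excluded by hypothesis (as it effectively is in the application, where one may compute the holonomy on a fiber over a point with $f(x)\neq i$).
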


\begin{proof}[Proof of Proposition \ref{holonomy_of_the_fiber}]
    Let \(x \in \mathbb{H}^2_*\). By Lemma \ref{week_contraction_around_singular_point},
    \[
    d_{\sigma}(i, f(x)) < d_{\sigma}(i, h(x))
    \]
and the timelike geodesics \(\ell_{h(x),f(x)}\) and \(\ell_{i,i}\) are distinct. In particular, \(\ell_{h(x),f(x)} \subset \ads_*\).  
Let \(\beta\) be a generator of an oriented fiber of the fibration \(\mathcal{F} : \fhadss \to \mathbb{H}^2_*\). We may choose a representative of \(\beta\) so that \(\mathcal{D}(\beta)\) is the oriented timelike geodesic \(\ell_{h(x),f(x)}\) (see \eqref{D_cal}). Let $\widetilde{\beta}$ be the lift of $\beta$ to $\widetilde{\fhadss}$ that is preserved by the deck transformation induced by \(\beta\).  
By the equivariance of the developing map \(\widetilde{\mathrm{Dev}} : \widetilde{\fhadss} \to \widetilde{\ads_*}\), the holonomy \(\widetilde{\mathrm{Hol}}(\beta)\) preserves the timelike geodesic $\widetilde{\mathrm{Dev}}(\widetilde{\beta}) = \widetilde{\ell}_{h(x),f(x)}.$ Hence, by Lemma \ref{stab_timelike}, there exists \(m \in \mathbb{Z}\) such that 
\begin{equation}\label{eq_hol_beta}
\widetilde{\mathrm{Hol}}(\beta) = (0, 2m\pi).
\end{equation}
    On the other hand, \(T_*(\widetilde{\mathrm{Hol}}(\beta))\) induces a map from
    \[
    \mathbb{S}^1 \cong \ell_{h(x),f(x)} \to \mathbb{S}^1 \cong \ell_{h(x),f(x)},
    \]
    which induces an isomorphism \(\mathbb{Z} \to \mathbb{Z}\) on the level of the fundamental group. This is because \(T_*(\widetilde{\mathrm{Hol}}(\beta))\) is an isometry and, in particular, a homeomorphism. The isomorphism $\mathbb{Z}\to\mathbb{Z}$ is given by \(p \mapsto mp\), where \(m\) is the integer in \eqref{eq_hol_beta}. Thus, \(m = \pm 1\). However, \(m = -1\) is excluded since \(T_*(\widetilde{\mathrm{Hol}}(\beta))\) is a time-orientation-preserving isometry. This concludes the proof.
\end{proof}
Returning to the original goal of this subsection \ref{sec_fonda_ex}, by combining Lemma \ref{extension_of_dev} with Propositions \ref{holonomy_around_puncture} and \ref{holonomy_of_the_fiber}, we obtain the proof of Proposition \ref{fonda_ex}.

\subsection{Proof of Theorem \ref{principal_ads}}
In this final subsection, we prove Theorem \ref{principal_ads}.
 
\begin{proof}[Proof of Theorem~\ref{principal_ads}]
Let $\Sigma$ be a hyperbolic surface with fundamental group $\Gamma$ and let $f, h : \widetilde{\Sigma} \to \mathbb{H}^2$ be smooth maps that are equivariant with respect to representations $\rho, j : \Gamma \to \mathrm{PSL}(2,\mathbb{R})$, as in Theorem \ref{principal_ads}. As in Section \ref{sec5}, let $C \subset \widetilde{\Sigma}$ be the singular set of the branched immersion $h$, and set $C_0=C/\Gamma$. Around each point of $C$, the branched immersion $h$ is a degree-$n$ branched covering. 

Due to Theorem \ref{ads_manifold} and Lemma \ref{Mfh}, the only remaining point to verify in Theorem \ref{principal_ads} is that \( \fhadss / \Gamma_{j,\rho} \) is a branched AdS manifold.

Fix a point \( x_0 \in C \), and let \( U \subset \widetilde{\Sigma} \) be a small disc centered at \( x_0 \) on which \( h \) is a branched covering of degree \( n \). We may identify \( U \) with a small disc in \( \mathbb{H}^2 \) centered at \( i \); by applying isometries, we can further assume that \( x_0 = i \) and \( f(i) = h(i) = i \). Denote by \( \overline{f}, \overline{h} : U \to \mathbb{H}^2_* \) the restrictions of \( f \) and \( h \) respectively. Applying Proposition~\ref{fonda_ex} together with Remark \ref{local_remark}, we obtain that \( \mathbb{A}\mathrm{d}\mathbb{S}_*^{\overline{f}, \overline{h}} \) is a branched anti-de Sitter manifold that fibers over \( U \).

Since the construction of $\fhadss$ is local, the behavior of $\fhadss$ around the fiber $[\ell_{i,i}]$ is the same as that of $\mathbb{A}\mathrm{d}\mathbb{S}_*^{\overline{f},\overline{h}}$. In fact, we will show that $\mathbb{A}\mathrm{d}\mathbb{S}_*^{\overline{f},\overline{h}}$ can be identified with an open subset of the quotient $\mathcal{M}^{f,h}_* = \fhadss / \Gamma_{j,\rho}$. Indeed, one can see without difficulties that $\mathbb{A}\mathrm{d}\mathbb{S}_*^{\overline{f},\overline{h}}$ is an open set inside $\fhadss$. Considering the natural projection $p:\fhadss \to \mathcal{M}^{f,h}_*,$ we claim that, upon shrinking the neighborhood $U$, we may assume that $p:\mathbb{A}\mathrm{d}\mathbb{S}_*^{\overline{f},\overline{h}} \to p(\mathbb{A}\mathrm{d}\mathbb{S}_*^{\overline{f},\overline{h}}) \subset \mathcal{M}^{f,h}_*$ is injective. 

To prove this, we use the fact that $\Gamma$ acts properly discontinuously on $\widetilde{\Sigma}$ to shrink the open set $U$ so that $\gamma U \cap U = \emptyset$ for all $\gamma \neq e$. 
Now, by contradiction, if the restriction of $p$ to $\mathbb{A}\mathrm{d}\mathbb{S}_*^{\overline{f},\overline{h}}$ were not injective, then there would exist $A \in \mathbb{A}\mathrm{d}\mathbb{S}_*^{\overline{f},\overline{h}}$ and $\gamma \in \Gamma$ such that $\gamma \cdot A \in \mathbb{A}\mathrm{d}\mathbb{S}_*^{\overline{f},\overline{h}}$. Using the equivariant fibration $\mathcal{F}:\fhads \to \widetilde{\Sigma}$, we deduce that $\mathcal{F}(A) \in U$ and $\gamma \mathcal{F}(A) \in U$, and hence $\gamma = e$. This shows that $p:\mathbb{A}\mathrm{d}\mathbb{S}_*^{\overline{f},\overline{h}} \to p(\mathbb{A}\mathrm{d}\mathbb{S}_*^{\overline{f},\overline{h}})$ is injective. 

Therefore, $\mathbb{A}\mathrm{d}\mathbb{S}_*^{\overline{f},\overline{h}}$ is identified via $p$ with an open subset of $\mathcal{M}^{f,h}_* = \fhadss / \Gamma_{j,\rho}$. By repeating this argument around each point of the singular locus of $h$, we conclude that $\mathcal{M}^{f,h}_*$ is a branched anti-de Sitter manifold, since $\mathbb{A}\mathrm{d}\mathbb{S}_*^{\overline{f},\overline{h}}$ is one. 

Finally, we discuss the holonomy representation of $\mathcal{M}^{f,h}_*$. This is computed by studying the developing map from the universal cover $\widetilde{\fhadss}$ to $\ads$ and how it transforms under the action of $\pi_1(\mathcal{M}^{f,h}_*)$. This developing map is realized explicitly as the projection $\widetilde{\fhadss}\to \fhadss$ post-composed with the map $\mathcal{I}:\fhadss\to \ads$ defined in Section \ref{sec:local} (see also equations \eqref{D_cal} and \eqref{eq_dev_on_ads}). Since $\mathcal{F} : \mathcal{M}^{f,h}* \to \Sigma \setminus C_0$ is a circle bundle, the fundamental group $\pi_1(\mathcal{M}^{f,h}_*)$ fits into the (splitting) exact sequence $$0\to \pi_1(\mathbb{S}^1) \to \pi_1(\mathcal{M}^{f,h}_*)\to \pi_1(\Sigma\backslash C_0)\to 0.$$
By the formula for $\widetilde{\textrm{Hol}}(\beta)$ from Proposition \ref{fonda_ex}, $\widetilde{\textrm{Hol}}(\beta)$ acts trivially on $\ads$ and hence the holonomy factors through the map $\pi_1(\mathcal{M}^{f,h}_*)\to \pi_1(\Sigma\backslash C_0)$. By the formula for $\widetilde{\textrm{Hol}}(\alpha)$ from Proposition \ref{fonda_ex}, applied around each of the singular points, we see that the holonomy further factors through the map $\pi_1(\Sigma\backslash C_0)\to \pi_1(\Sigma)$ induced by inclusion. From the definition of $\mathcal{I}$ in Section \ref{sec:local} and the definition of the action of $\Gamma_{j,\rho}$, i.e., equation (\ref{eq: H action}), we see that the induced homomorphism $\pi_1(\Sigma)\to \textrm{Isom}_0(\ads)$ is $(j,\rho).$ This completes the proof.
\end{proof}

\bibliographystyle{alpha}
\bibliography{harmonic_domination}

\begin{thebibliography}{BBDH21}

\bibitem[AY19]{ALVAREZ201927}
Sébastien Alvarez and Jiagang Yang.
\newblock Physical measures for the geodesic flow tangent to a transversally conformal foliation.
\newblock {\em Annales de l'Institut Henri Poincaré C, Analyse non linéaire}, 36(1):27--51, 2019.

\bibitem[BBDH21]{BBDH}
Indranil Biswas, Steven Bradlow, Sorin Dumitrescu, and Sebastian Heller.
\newblock Uniformization of branched surfaces and higgs bundles.
\newblock {\em International Journal of Mathematics}, 32(13):2150096, 2021.

\bibitem[BG25]{Barman_Gupta}
Pabitra Barman and Subhojoy Gupta.
\newblock Dominating surface-group representations via fock–goncharov coordinates.
\newblock {\em Geometriae Dedicata}, 219, 01 2025.

\bibitem[BM12]{BS_cone}
Thierry Barbot and Catherine Meusburger.
\newblock Particles with spin in stationary flat spacetimes.
\newblock {\em Geom. Dedicata}, 161:23--50, 2012.

\bibitem[BS20]{BS_ads}
Francesco Bonsante and Andrea Seppi.
\newblock Anti-de {Sitter} geometry and {Teichm{\"u}ller} theory.
\newblock In {\em In the tradition of Thurston. Geometry and topology}, pages 545--643. Cham: Springer, 2020.

\bibitem[Cor88]{Corlette}
Kevin Corlette.
\newblock Flat {$G$}-bundles with canonical metrics.
\newblock {\em J. Differential Geom.}, 28(3):361--382, 1988.

\bibitem[CTT19]{CTT}
Brian Collier, Nicolas Tholozan, and J{\'e}r{\'e}my Toulisse.
\newblock The geometry of maximal representations of surface groups into {{\(\mathrm{SO}_0(2,n)\)}}.
\newblock {\em Duke Math. J.}, 168(15):2873--2949, 2019.

\bibitem[DL20]{Dai_Li_cyclic}
Song Dai and Qiongling Li.
\newblock {On cyclic Higgs bundles}.
\newblock {\em Mathematische Annalen}, 376(3–4):1225--1260, November 2020.

\bibitem[DL22]{Dai_Li_Domination}
Song Dai and Qiongling Li.
\newblock Domination results in $n$‐fuchsian fibers in the moduli space of higgs bundles.
\newblock {\em Proceedings of the London Mathematical Society}, 124:427--477, 03 2022.

\bibitem[Don87]{Donaldson}
S.~K. Donaldson.
\newblock Twisted harmonic maps and the self-duality equations.
\newblock {\em Proc. London Math. Soc. (3)}, 55(1):127--131, 1987.

\bibitem[DT16]{Deroin_thlozan_domination}
Bertrand Deroin and Nicolas Tholozan.
\newblock Dominating surface group representations by {Fuchsian} ones.
\newblock {\em Int. Math. Res. Not.}, 2016(13):4145--4166, 2016.

\bibitem[Far21]{Faraco+2021+99+108}
Gianluca Faraco.
\newblock Geometrisation of purely hyperbolic representations in {{\(\mathrm{PSL}_2\mathbb{R}\)}}.
\newblock {\em Adv. Geom.}, 21(1):99--108, 2021.

\bibitem[GK17]{GK_lamination}
Fran{\c{c}}ois Gu{\'e}ritaud and Fanny Kassel.
\newblock Maximally stretched laminations on geometrically finite hyperbolic manifolds.
\newblock {\em Geom. Topol.}, 21(2):693--840, 2017.

\bibitem[GKW15]{GKW_folded}
Fran{\c{c}}ois Gu{\'e}ritaud, Fanny Kassel, and Maxime Wolff.
\newblock Compact anti-de {Sitter} 3-manifolds and folded hyperbolic structures on surfaces.
\newblock {\em Pac. J. Math.}, 275(2):325--359, 2015.

\bibitem[Gol80]{goldmanthesis}
William~M. Goldman.
\newblock {\em Discontinuous Groups and the Euler Class}.
\newblock PhD thesis, University of California, Berkeley, Ann Arbor, MI, 1980.
\newblock Thesis (Ph.D.)--University of California, Berkeley.

\bibitem[GPG24]{GP_G}
Oscar García-Prada and Miguel González.
\newblock Cyclic higgs bundles and the toledo invariant, 2024.

\bibitem[GS22]{Gupta_Su}
Subhojoy Gupta and Weixu Su.
\newblock Dominating surface-group representations into $\mathrm{PSL}(2,\mathbb{C})$ in the relative representation variety.
\newblock {\em manuscripta mathematica}, 172, 12 2022.

\bibitem[Hit87]{Hitchin:1986vp}
Nigel~J. Hitchin.
\newblock {The Selfduality equations on a Riemann surface}.
\newblock {\em Proc. Lond. Math. Soc.}, 55:59--131, 1987.

\bibitem[Jan22]{Agnese_thesis}
Agnese Janigro.
\newblock {\em \textnormal{Compact 3-dimensional Anti-de Sitter manifolds with spin-cone singularities}}.
\newblock PhD thesis, Università degli Studi di Milano-Bicocca, 2022.

\bibitem[Kas10]{Kassel}
Fanny Kassel.
\newblock Quotients compacts des groupes ultram\'{e}triques de rang un.
\newblock {\em Ann. Inst. Fourier (Grenoble)}, 60(5):1741--1786, 2010.

\bibitem[KR85]{Kulkarni_Raymond}
Ravi~S. Kulkarni and Frank Raymond.
\newblock {$3$}-dimensional {L}orentz space-forms and {S}eifert fiber spaces.
\newblock {\em J. Differential Geom.}, 21(2):231--268, 1985.

\bibitem[MB23]{MartinBaillon2020DominatingCS}
Florestan Martin-Baillon.
\newblock Dominating {CAT}{{\((-1)\)}} surface group representations by {Fuchsian} ones.
\newblock {\em Geom. Dedicata}, 217(3):19, 2023.
\newblock Id/No 45.

\bibitem[McI25]{McIntosh}
Ian McIntosh.
\newblock The geometric toda equations for noncompact symmetric spaces.
\newblock {\em Differential Geometry and its Applications}, 99:102249, 2025.

\bibitem[Min87]{Minda}
C.~Minda.
\newblock The strong form of ahlfors' lemma.
\newblock {\em Rocky Mountain Journal of Mathematics}, 17, 09 1987.

\bibitem[Sag23]{Nathaniel_JDG}
Nathaniel Sagman.
\newblock {Infinite energy equivariant harmonic maps, domination, and anti-de Sitter $3$-manifolds}.
\newblock {\em Journal of Differential Geometry}, 124(3):553 -- 598, 2023.

\bibitem[Sag24]{Almost_domination_Nat}
Nathaniel Sagman.
\newblock Almost strict domination and anti-de {Sitter} 3-manifolds.
\newblock {\em J. Topol.}, 17(1):51, 2024.
\newblock Id/No e12323.

\bibitem[Sal00]{Salein}
Fran\c{c}ois Salein.
\newblock Vari\'{e}t\'{e}s anti-de {S}itter de dimension 3 exotiques.
\newblock {\em Ann. Inst. Fourier (Grenoble)}, 50(1):257--284, 2000.

\bibitem[Sam78]{Sampson}
J.~H. Sampson.
\newblock Some properties and applications of harmonic mappings.
\newblock {\em Ann. Sci. \'{E}cole Norm. Sup. (4)}, 11(2):211--228, 1978.

\bibitem[SG24]{Gothen_Silva}
Pedro~M. Silva and Peter~B. Gothen.
\newblock The conformal limit and projective structures.
\newblock {\em Int. Math. Res. Not.}, 2024(16):11812--11831, 2024.

\bibitem[ST25]{Nathaniel_ognjen}
Nathaniel Sagman and Ognjen Tošić.
\newblock On {Hitchin}'s equations for cyclic {G}-higgs bundles.
\newblock {\em Advances in Mathematics}, 482:110599, 2025.

\bibitem[SY97]{Schoen_Yau_book}
R.~Schoen and S.~T. Yau.
\newblock {\em Lectures on harmonic maps}.
\newblock Conference Proceedings and Lecture Notes in Geometry and Topology, II. International Press, Cambridge, MA, 1997.

\bibitem[Tan94]{Tan1994}
Ser~Peow Tan.
\newblock Branched {{\(\mathbb{C} {P}^ 1\)}}-structures on surfaces with prescribed real holonomy.
\newblock {\em Math. Ann.}, 300(4):649--667, 1994.

\bibitem[Tho15]{Tho_Duke}
Nicolas Tholozan.
\newblock Entropy of hilbert metrics and length spectrum of {Hitchin} representations in $\mathrm{PSL}(3,\mathbb{R})$.
\newblock {\em Duke Mathematical Journal}, 166, 06 2015.

\bibitem[Tho17]{Tholozan}
Nicolas Tholozan.
\newblock Dominating surface group representations and deforming closed anti-de {S}itter 3-manifolds.
\newblock {\em Geom. Topol.}, 21(1):193--214, 2017.

\bibitem[Tro91]{Troyanoc}
Marc Troyanov.
\newblock Prescribing curvature on compact surfaces with conical singularities.
\newblock {\em Trans. Amer. Math. Soc.}, 324(2):793--821, 1991.

\bibitem[Wol89]{Wolf_thesis}
Michael Wolf.
\newblock The {T}eichm\"{u}ller theory of harmonic maps.
\newblock {\em J. Differential Geom.}, 29(2):449--479, 1989.

\bibitem[Woo77]{wood_paper}
John~C. Wood.
\newblock Singularities of harmonic maps and applications of the {G}auss-{B}onnet formula.
\newblock {\em Amer. J. Math.}, 99(6):1329--1344, 1977.

\end{thebibliography}

\end{document}